\def\input@path{{figures/}}\makeatother
\newtheorem{theorem}{Theorem}
\newtheorem{corollary}[theorem]{Corollary}
\newtheorem{proposition}[theorem]{Proposition}
\newtheorem*{theorem*}{Theorem}
\theoremstyle{definition}
\newtheorem{definition}[theorem]{Definition}
\newtheorem{example}[theorem]{Example}
\newtheorem{remark}[theorem]{Remark}
\newtheorem{notation}[theorem]{Notation}
\crefname{notation}{Notation}{Notations}
\newcommand{\R}{\mathbb{R}} 
\newcommand{\N}{\mathbb{N}} 
\renewcommand{\c}[1]{\mathcal{#1}} 
\renewcommand{\b}[1]{\boldsymbol{#1}} 
\newcommand{\f}[1]{\mathfrak{#1}} 
\newcommand{\set}[2]{\left\{ #1 \;\middle|\; #2 \right\}} 
\newcommand{\bigset}[2]{\big\{ #1 \;\big|\; #2 \big\}} 
\newcommand{\ssm}{\smallsetminus} 
\newcommand{\dotprod}[2]{\left\langle \, #1 \; \middle| \; #2 \, \right\rangle} 
\newcommand{\one}{\b{1}} 
\newcommand{\eqdef}{\mbox{\,\raisebox{0.2ex}{\scriptsize\ensuremath{\mathrm:}}\ensuremath{=}\,}} 
\newcommand{\simplex}{\triangle} 
\DeclareMathOperator{\conv}{conv} 
\DeclareMathOperator{\rev}{rev} 
\newcommand{\ie}{\textit{i.e.}~} 
\newcommand{\aka}{\textit{a.k.a.}~} 
\definecolor{darkblue}{rgb}{0,0,0.7} 
\definecolor{green}{RGB}{57,181,74} 
\definecolor{violet}{RGB}{147,39,143} 
\newcommand{\darkblue}{\color{darkblue}} 
\newcommand{\defn}[1]{\textsl{\darkblue #1}} 
\newcommand{\OEIS}[1]{{\rm \href{http://oeis.org/#1}{\texttt{#1}}}}
\def\part{\@startsection{part}{1}%
\z@{.7\linespacing\@plus\linespacing}{.8\linespacing}%
{\LARGE\sffamily\centering}}
\def\l@part{\@tocline{1}{8pt}{0pc}{}{}}
\def\l@section{\@tocline{1}{3pt}{0pc}{}{}}
\let\oldtocpart=\tocpart
\renewcommand{\tocpart}[2]{\sc\large\oldtocpart{#1}{#2}}
\let\oldtocsection=\tocsection
\renewcommand{\tocsection}[2]{\bf\oldtocsection{#1}{#2}}
\let\oldtocsubsubsection=\tocsubsubsection
\renewcommand{\tocsubsubsection}[2]{\quad\oldtocsubsubsection{#1}{#2}}
\newcommand{\ostar}{\mathbin{\mathpalette\make@circled\star}}
\newcommand{\make@circled}[2]{%
  \ooalign{$\m@th#1\smallbigcirc{#1}$\cr\hidewidth$\m@th#1#2$\hidewidth\cr}%
}
\newcommand{\smallbigcirc}[1]{%
  \vcenter{\hbox{\scalebox{0.77778}{$\m@th#1\bigcirc$}}}%
}
\newcommand{\joinGraph}{\ostar} 
\newcommand{\shuffleDP}{\star} 
\newcommandx{\poset}[1][1=P]{\mathbb{#1}} 
\newcommand{\less}{\vartriangleleft} 
\newcommand{\more}{\vartriangleright} 
\newcommand{\meet}{\wedge} 
\newcommand{\join}{\vee} 
\newcommand{\projDown}{\pi_\downarrow} 
\newcommand{\projUp}{\pi^\uparrow} 
\newcommandx{\Fan}[1][1=n]{\mathcal{F}(#1)} 
\newcommand{\polytope}[1]{\mathds{#1}} 
\newcommandx{\DefoPerms}[1][1=n]{\polytope{DP}(#1)} 
\newcommandx{\Perm}[1][1=n]{\polytope{P}\mathrm{erm}(#1)} 
\newcommandx{\Asso}[1][1=n]{\polytope{A}\mathrm{sso}(#1)} 
\newcommandx{\Ossa}[1][1=n]{\polytope{A}\overline{\mathrm{sso}}(#1)} 
\newcommandx{\Para}[1][1=n]{\polytope{P}\mathrm{ara}(#1)} 
\newcommandx{\Zono}[1][1=G]{\polytope{Z}\mathrm{ono}(#1)} 
\newcommandx{\Point}[1][1=n]{\polytope{P}\mathrm{oint}(#1)} 
\newcommandx{\Multiplihedron}[2][1=m, 2=n]{\polytope{M}\mathrm{ul}(#1, #2)} 
\newcommandx{\Constrainahedron}[2][1=m, 2=n]{\polytope{C}\mathrm{onstr}(#1, #2)} 
\newcommandx{\Biassociahedron}[2][1=m, 2=n]{\polytope{B}\mathrm{ias}(#1, #2)} 
\newcommand{\surjections}[2]{\mathsf{S}(#1,#2)} 
\newcommand{\nc}[1]{\mathsf{NC}(#1)} 
\newcommand{\pp}[2]{\mathsf{PP}_{#1}(#2)} 
\newcommand{\children}{\mathsf{C}} 
\newcommand{\node}[1]{\mathsf{#1}} 
\newcommand*\circled[1]{\tikz[baseline=(char.base)]{
            \node[shape=circle,draw,inner sep=2pt] (char) {#1};}}
\newcommand{\sylv}{\mathrm{sylv}} 
\DeclareMathOperator{\rank}{rk} 
\newcommand{\stump}[1]{\overline{#1}} 
\newcommand{\cut}[1]{\underline{#1}} 
\newcommand{\imagetop}[1]{\vtop{\null\hbox{#1}}} 
\newcommandx{\tree}[3][2=15pt, 3=22pt]{
		\begin{tikzpicture}[level 1/.style={level distance=#2}, level distance=#3]
		\Tree #1
		\end{tikzpicture}
}
\long\def\ifnodedefined#1#2#3{%
    \@ifundefined{pgf@sh@ns@#1}{#3}{#2}%
}
\newcommand{\paintedTree}[3][1]{
	\imagetop{
		\scalebox{#1}{
		\begin{forest}
			for tree={l=2pt, l sep=0pt, s sep=1pt, parent anchor=center, child anchor=center, edge={color=black, thick}, if n children=0{tier=bottom}{}}, 
			delay={
				for tree={
					tier/.process={_O= ? p}{}{content}{O{tier}}{O{content}},
					name'/.process={_O= ? p}{}{content}{_{}}{O{content}},
					content={},
				}
			}
			[#2]
			\coordinate[xshift=-2pt] (bbl) at (current bounding box.north west);
			\coordinate[xshift=2pt] (bbr) at (current bounding box.south east);
			\foreach \i in {#3} {
				\ifnodedefined{\i}{
					\node[label={[label distance=-10pt]180:{\color{red}\scriptsize\i}}] (l) at (bbl|-\i.center) {};
					\node[label={[label distance=-10pt]0:{\color{red}\scriptsize\i}}] (r) at (bbr|-\i.center) {};
					\draw[red] (l) -- (r);
				}{}
			}
		\end{forest}
		}
	}
}
\newcommandx{\PT}[1][1=T]{\mathbb{#1}} 
\newcommandx{\CT}[1][1=T]{\mathbb{#1}} 
\newcommand{\cotree}[4][1]{
	\imagetop{
		\scalebox{#1}{
		\begin{forest}
			for tree={l=2pt, l sep=0pt, s sep=1pt, parent anchor=center, child anchor=center, edge={color=black, thick}, if n children=0{tier=top}{}},
			delay={
				for tree={
					tier/.process={_O= ? p}{}{content}{O{tier}}{O{content}},
					name'/.process={_O= ? p}{}{content}{_{}}{O{content}},
					content={},
				}
			},
			[#2]
			\coordinate[xshift=-2pt] (bbl) at (current bounding box.north west);
			\coordinate[xshift=5pt] (bbr) at (current bounding box.south east);
			\foreach \i in {#4} {
				\ifnodedefined{\i}{
					\node (l) at (bbl|-\i.center) {};
					\node (r) at (bbr|-\i.center) {};
					\draw[red] (l) -- (r);
				}{}
			}
		\end{forest}
		\hspace{-.73cm}
		\begin{forest}
			for tree={l=2pt, l sep=0pt, s sep=1pt, parent anchor=center, child anchor=center, edge={color=black, thick}, if n children=0{tier=bottom}{}}, 
			delay={
				for tree={
					tier/.process={_O= ? p}{}{content}{O{tier}}{O{content}},
					name'/.process={_O= ? p}{}{content}{_{}}{O{content}},
					content={},
				}
			},
			[#3]
			\coordinate[xshift=-5pt] (bbl) at (current bounding box.north west);
			\coordinate[xshift=2pt] (bbr) at (current bounding box.south east);
			\foreach \i in {#4} {
				\ifnodedefined{\i}{
					\node (l) at (bbl|-\i.center) {};
					\node (r) at (bbr|-\i.center) {};
					\draw[red] (l) -- (r);
				}{}
			}
		\end{forest}
		}
	}
}
\newcommandx{\BT}[1][1=T]{\mathbb{#1}} 
\newcommand{\bitree}[4][1]{
	\imagetop{
		\scalebox{#1}{
		\begin{forest}
			for tree={grow'=90, l=2pt, l sep=0pt, s sep=1pt, parent anchor=center, child anchor=center, edge={color=black, thick}, if n children=0{tier=top}{}},
			delay={
				for tree={
					tier/.process={_O= ? p}{}{content}{O{tier}}{O{content}},
					name'/.process={_O= ? p}{}{content}{_{}}{O{content}},
					content={},
				}
			},
			[#2]
			\coordinate[xshift=-2pt] (bbl) at (current bounding box.north west);
			\coordinate[xshift=5pt] (bbr) at (current bounding box.south east);
			\foreach \i in {#4} {
				\ifnodedefined{\i}{
					\node (l) at (bbl|-\i.center) {};
					\node (r) at (bbr|-\i.center) {};
					\draw[red] (l) -- (r);
				}{}
			}
		\end{forest}
		\hspace{-.73cm}
		\begin{forest}
			for tree={l=2pt, l sep=0pt, s sep=1pt, parent anchor=center, child anchor=center, edge={color=black, thick}, if n children=0{tier=bottom}{}}, 
			delay={
				for tree={
					tier/.process={_O= ? p}{}{content}{O{tier}}{O{content}},
					name'/.process={_O= ? p}{}{content}{_{}}{O{content}},
					content={},
				}
			},
			[#3]
			\coordinate[xshift=-5pt] (bbl) at (current bounding box.north west);
			\coordinate[xshift=2pt] (bbr) at (current bounding box.south east);
			\foreach \i in {#4} {
				\ifnodedefined{\i}{
					\node (l) at (bbl|-\i.center) {};
					\node (r) at (bbr|-\i.center) {};
					\draw[red] (l) -- (r);
				}{}
			}
		\end{forest}
		}
	}
}
\newcommand{\PTGF}{\mathcal{PT}} 
\newcommand{\BTGF}{\mathcal{BT}} 
\newcommand{\CTGF}{\mathcal{BT}} 
\newcommand{\CGF}{\mathcal{C}} 
\newcommand{\tCGF}{\mathcal{C}_\ast} 
\newcommand{\SGF}{\mathcal{S}} 
\newcommand{\tSGF}{\mathcal{S}_\ast} 
\title[Shuffles of deformed permutahedra]{Shuffles of deformed permutahedra, \\ multiplihedra, constrainahedra, and biassociahedra}
\thanks{Partially supported by the French ANR grants CAPPS~17\,CE40\,0018 and CHARMS~19\,CE40\,0017, by the French--Austrian project PAGCAP (ANR-21-CE48-0020 \& FWF I 5788), by the Spanish grant PID2022-137283NB-C21 of MCIN/AEI/10.13039/501100011033 / FEDER, UE, and by Departament de Recerca i Universitats de la Generalitat de Catalunya (2021 SGR 00697).}
\author{Fr\'ed\'eric Chapoton}
\address[FC]{CNRS \& IRMA, Universit\'e de Strasbourg}
\email{chapoton@math.unistra.fr}
\urladdr{\url{https://irma.math.unistra.fr/~chapoton/}}
\author{Vincent Pilaud}
\address{Universitat de Barcelona}
\email{vincent.pilaud@ub.edu}
\urladdr{\url{https://www.ub.edu/comb/vincentpilaud/}}
\begin{document}

\begin{abstract}
We introduce the shuffle of deformed permutahedra (\aka generalized permutahedra), a simple associative operation obtained as the Cartesian product followed by the Minkowski sum with the graphical zonotope of a complete bipartite graph.
Besides preserving the class of graphical zonotopes (the shuffle of two graphical zonotopes is the graphical zonotope of the join of the graphs), this operation is particularly relevant when applied to the classical permutahedra and associahedra.
First, the shuffle of an $m$-permutahedron with an $n$-associahedron gives the $(m,n)$-multiplihedron, whose face structure is encoded by $m$-painted $n$-trees, generalizing the classical multiplihedron.
We show in particular that the graph of the $(m,n)$-multiplihedron is the Hasse diagram of a lattice generalizing the weak order on permutations and the Tamari lattice on binary trees.
Second, the shuffle of an $m$-associahedron with an $n$-associahedron gives the $(m,n)$-constrainahedron, whose face structure is encoded by $(m,n)$-cotrees, and reflects collisions of particles constrained on a grid.
Third, the shuffle of an $m$-anti-associahedron with an $n$-associahedron gives the $(m,n)$-biassociahedron, whose face structure is encoded by \mbox{$(m,n)$-bitrees}, with relevant connections to bialgebras up to homotopy.
We provide explicit vertex, facet, and Minkowski sum descriptions of these polytopes, as well as summation formulas for their $f$-polynomials based on generating functionology of decorated trees.

\medskip
\noindent
\textsc{msc classes.} 52B11, 52B12, 05A15, 05E99, 06B99
\end{abstract}

\vspace*{-1.7cm} 

\maketitle

\vspace*{-.8cm} 

\begin{figure}[h]
	\centerline{\input{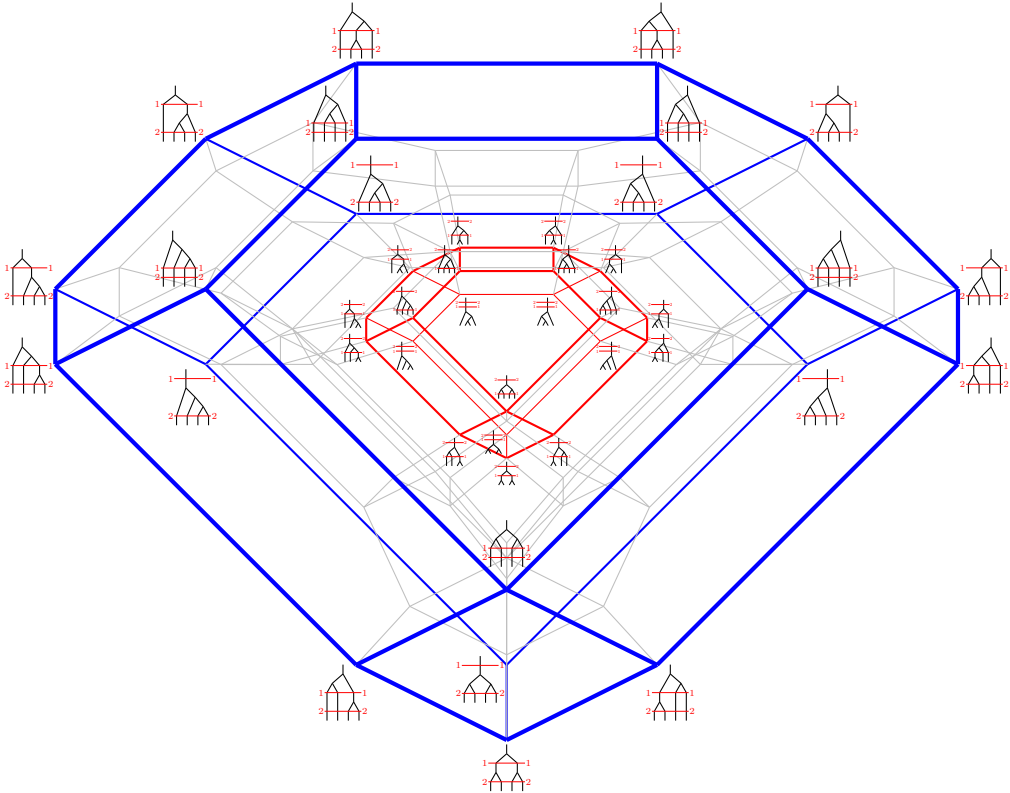}}
	\caption{The $(2,3)$-multiplihedron.}
	\label{fig:multiplihedron23}
\end{figure}

\newpage
\tableofcontents
\vspace{-.5cm}


\section*{Introduction}
\label{sec:intro}

The \defn{associahedra}, now very classical objects, have their origin in algebraic topology \cite{Stasheff}, where they are used to define \defn{associative spaces up-to-homotopy} and \defn{associative algebras up-to-homotopy}.
They were first defined as topological cell complexes and later realized as convex polytopes, as explained in \cite{CeballosZiegler}. For any integer~$n$, the associahedron~$\Asso$ is a polytope of dimension~$n-1$ whose vertices are labeled by binary trees with $n$ nodes.
The axioms of $A_\infty$ algebras encode the fact that each facet of an associahedron can be identified with a product of two smaller associahedra.

One important and natural question is to search for a similar clean description of the axioms for \defn{bialgebras up-to-homotopy}.
This has been studied by several people~\cite{Markl, SaneblidzeUmble-matrads, MerkulovWillwacher} who have found that one meets new difficulties.
The expected picture is the existence, for any pair of integers~$(m,n)$, of a \defn{$(m,n)$-biassociahedron} $\Biassociahedron$, a polytope of dimension~$m+n-1$ whose vertices are labeled by \defn{binary $(m,n)$-bitrees} (which are pairs of binary trees, growing in opposite directions, with $n$ and $m$ nodes respectively, and that are somehow shuffled).
These polytopes are called \defn{step-one biassociahedra} in~\cite{Markl}.
The new difficulty is that the facets of these polytopes are no longer products of two smaller biassociahedra, but rather fiber products with respect to natural projection maps to associahedra.
This implies that, in order to associate an algebraic homotopy to each facet of a biassociahedra, one needs to decompose each problematic facet into several cells.
In the smallest concrete case, the $(2,1)$-biassociahedron $\polytope{B}_{2,1}$ is an hexagon, but one of its edges appears as the diagonal of a square, and must be replaced by half the boundary of this square.
For more details on all this, the reader may consult the given references.
In this article, we give the first complete description of all biassociahedra as convex polytopes, with detailed vertex and facet descriptions.
As far as we know, these objects were previously only known as topological cell complexes, except in small dimensions.
As an historical and futile remark, the first author had the idea of the corresponding fans more than twenty years ago, and asked at least twice the second author whether these fans could be normal fans of convex polytopes.
While we do not consider here the question of finding good axioms for the bialgebras up-to-homotopy, we hope that our simple setting could be helpful to make progress on this subtle question, whose current status is not really satisfactory.

The \defn{constrainahedra} are another family of polytopes closely related to the associahedra and arising in algebraic topology.
They should describe the up-to-homotopy version of double semigroups, namely structures endowed with two associative products (horizontal $\bullet$ and vertical $\circ$) that satisfy the compatibility axiom $(a \bullet b) \circ (c \bullet d) = (a \circ b) \bullet (c \circ d)$.
To our knowledge, this has not appeared in the literature, possibly because it would involve a variant of operads with two-dimensional inputs. 
Instead, constrainahedra were introduced as constrained versions of the $2$-associahedra of~\cite{Bottman}.
A first sketch-definition of the constrainahedra appeared in~\cite{Tierney}, which was based on a private communication from N.~Bottman.
The complete rigorous definition appeared in~\cite{Poliakova, BottmanPoliakova}, where the constrainahedra are also realized as convex polytopes.
For any pair of integers~$(m,n)$, the \defn{$(m,n)$-constrainahedron} $\Constrainahedron$ is a polytope of dimension~$m+n-1$ whose vertices are labeled by good rectangular orders on~$[m+n]$ or equivalently by maximal rectangular bracketings in the $(m \times n)$-grid (see~\cite{Poliakova, BottmanPoliakova} for details).
Here, we prefer to interpret the vertices as \defn{binary $(m,n)$-cotrees} (which are pairs of binary trees, growing in the same direction, with $n$ and $m$ nodes respectively, and that are somehow shuffled).
We provide alternative polytopal realizations of these polytopes, with detailed vertex, facet, and Minkowski sum descriptions.
As a side note, let us mention that arbitrary $2$-associahedra do not fit in the framework of this paper.

The \defn{multiplihedra} are yet other close relatives of the associahedra, with a very similar story in algebraic topology~\cite{Stasheff-HSpaces}.
Their original source is the study of maps between~$A_\infty$ algebras, but they appear under various other guises~\cite{SaneblidzeUmble-diagonals, ForceyLauveSottile, MauWoodward}.
For instance, because the coproduct in Hopf algebras is a morphism of algebras, the multiplihedra belong to the family of biassociahedra.
The interested reader can find a detailed historical exposition in the introduction of~\cite{Forcey-multiplihedra}.
As the associahedra, the multiplihedra were originally built as topological cell complexes (or polytopes with subdivided faces), until a polytopal realization was provided in \cite{Forcey-multiplihedra, ArdilaDoker}.
For any integer~$n$, the multiplihedron~$\polytope{M}\mathrm{ul}(n)$ is a polytope of dimension~$n-1$ whose vertices are labeled by painted binary trees with~$n$ nodes.
In this paper, we include the multiplihedra in a larger family of polytopes.
Namely, for any pair of integers~$(m,n)$, we construct a \defn{$(m,n)$-multiplihedron}~$\Multiplihedron$, a polytope of dimension~$m+n-1$ whose vertices are labeled by \defn{$m$-painted binary $n$-trees} (which are binary trees with $n$ nodes painted with $m$ colors).
Again, we give detailed vertex, facet, and Minkowski sum descriptions of these polytopes.
The classical multiplihedra are obtained when~$m = 1$, but our polytopes provide a different realization from that of~\cite{Forcey-multiplihedra, ArdilaDoker}.
For general $m$, there is no clear interpretation of the $(m,n)$-multiplihedra in terms of algebraic topology.

Our main result is that these three constructions are actually instances of a common natural \defn{shuffle operation} on the family of \defn{deformed permutahedra}.
These polytopes are those obtained from the classical permutahedron by moving facets parallely without passing a vertex, or equivalently those whose normal fans coarsen the braid fan.
They were studied under the name \defn{polymatroids} by J.~Edmonds~\cite{Edmonds} and rediscovered under the name \defn{generalized permutahedra} by A.~Postnikov~\cite{Postnikov}.
Relevant examples of deformed permutahedra include the permutahedra themselves, the graphical zonotopes, the matroid polytopes, the associahedra of~\cite{ShniderSternberg, Loday, HohlwegLange}, the permutreehedra of~\cite{PilaudPons-permutrees}, the quotientopes of~\cite{PilaudSantos-quotientopes, PadrolPilaudRitter}, etc.
This paper focuses on the following simple operation on deformed permutahedra.

\begin{definition}
The \defn{shuffle} of two deformed permutahedra~$\polytope{P} \subseteq \R^m$ and~$\polytope{Q} \subseteq \R^n$ is the Minkowski sum of the Cartesian product~$\polytope{P} \times \polytope{Q}$ with the segments~$[\b{e}_i, \b{e}_{m+j}]$ for all~$i \in [m]$ and~$j \in [n]$.
\end{definition}

This shuffle operation preserves deformed permutahedra (since the Cartesian product and the Minkowski sum do).
It also preserves the family of graphical zonotopes: the shuffle of two graphical zonotopes is the graphical zonotope of the join of the graphs.
But more importantly, it turns out that shuffles of permutahedra and associahedra provide polytopal realizations of the above-mentioned algebraic structures, whose combinatorics is described in details in \mbox{\cref{sec:multiplihedra,sec:constrainahedra,sec:biassociahedra}}.

\begin{proposition}
Let~$m$ and~$n$ be two positive integers.
\begin{enumerate}
\item The shuffle of an $m$-permutahedron by an $n$-associahedron is an \defn{$(m,n)$-multiplihedron}, whose faces are encoded by \defn{$m$-painted $n$-trees},
\item The shuffle of an $m$-associahedron by an $n$-associahedron is an \defn{$(m,n)$-constrainahedron}, whose faces are encoded by \defn{$(m,n)$-cotrees},
\item The shuffle of an $m$-anti-associahedron by an $n$-associahedron is an \defn{$(m,n)$-biassociahedron}, whose faces are encoded by \defn{$(m,n)$-bitrees}.
\end{enumerate}
\end{proposition}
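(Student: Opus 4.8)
The plan is to compute the face structure of the shuffle through its normal fan and then match that fan combinatorially to painted trees and bitrees. Recall that a deformed permutahedron is determined by its normal fan, which coarsens the braid fan, and that its nonempty faces are in inclusion-reversing bijection with the cones of this fan. Since the shuffle of $\polytope{P} \subseteq \R^m$ and $\polytope{Q} \subseteq \R^n$ is, by definition, the Minkowski sum of the Cartesian product $\polytope{P} \times \polytope{Q}$ with the bipartite graphical zonotope $\Zono[K_{m,n}]$ (whose generating segments are the $[\b{e}_i, \b{e}_{m+j}]$), its normal fan is the common refinement of the normal fan of $\polytope{P} \times \polytope{Q}$ (the product of the two normal fans, viewed in $\R^{m+n}$) and the normal fan of $\Zono[K_{m,n}]$. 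Thus the whole statement reduces to identifying the cones of this common refinement.

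First I would fix a linear functional $c = (a_1, \dots, a_m, b_1, \dots, b_n) \in \R^{m+n}$ and read off the three pieces of data it determines on the cone through it: (i) the face of $\polytope{P}$ in direction $a = (a_1,\dots,a_m)$, (ii) the face of $\polytope{Q}$ in direction $b = (b_1,\dots,b_n)$, and (iii) the chamber of the bipartite arrangement $\{x_i = x_{m+j}\}$ containing $c$, i.e. the acyclic orientation of $K_{m,n}$ recording the sign of each $a_i - b_j$. For the multiplihedron, $\polytope{P} = \Perm[m]$ contributes an ordered set partition of $[m]$ and $\polytope{Q} = \Asso$ contributes a Schröder tree on $[n]$ (binary when $c$ is generic) via the sylvester/Loday fan; for the biassociahedron, $\polytope{P} = \Ossa[m]$ instead contributes an opposite Schröder tree. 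Because the face of a Minkowski sum in direction $c$ is the Minkowski sum of the faces of the summands in direction $c$, two functionals index the same face of the shuffle exactly when all three pieces of data agree. The core of the argument is then to show that these compatible triples are precisely enumerated by $m$-painted $n$-trees (respectively $(m,n)$-bitrees), with refinement of the data corresponding to the face-containment order.

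To set up the bijection I would describe the interleaving explicitly: the bipartite orientation in (iii) assigns to each node of the tree on $[n]$ a position relative to the blocks of the ordered set partition of $[m]$, that is, a colour among the $m$ levels, and lying in a single cone of the refinement forces this colouring to be monotone along the tree. This monotone colouring is exactly the painting condition defining $m$-painted $n$-trees, and the analogous shuffling of the two opposite trees through $K_{m,n}$ yields $(m,n)$-bitrees. I would then verify that (a) every painted tree or bitree is realised by some compatible triple, so the map is surjective; (b) distinct objects yield distinct cones, so it is injective; and (c) the elementary coarsenings — contracting an edge of the tree, merging two adjacent blocks of the ordered set partition, or relaxing one comparison to $a_i = b_j$ — each correspond to passing to a lower-dimensional cone of the refinement, that is, to a higher-dimensional face of the shuffle, matching the covering relations in the poset of painted trees or bitrees. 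A dimension count then closes the loop: the product has dimension $(m-1)+(n-1)=m+n-2$, while $\Zono[K_{m,n}]$ has rank $m+n-1$ and contributes exactly the one direction transverse to the affine hull of $\polytope{P} \times \polytope{Q}$, so the shuffle has dimension $m+n-1$, as required of $\Multiplihedron$ and $\Biassociahedron$.

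The main obstacle I anticipate is step (c) together with the well-definedness of the painting, namely proving that the three normal fans refine one another in exactly the way painted trees and bitrees predict — neither coarser nor finer. Concretely, one must check that every wall of the bipartite arrangement $\{x_i = x_{m+j}\}$ either already appears in the product fan or cuts a product cone into subcones indexed precisely by the admissible monotone colourings, with no spurious cells created. This is where the interaction between the sylvester/Loday fan of the associahedron and the bipartite orientation has to be controlled node by node, and where the monotonicity (painting) constraint genuinely \emph{emerges} from the geometry rather than being imposed by hand. Once this local compatibility is established, the global face-poset bijection — and hence the identification of the shuffles with the $(m,n)$-multiplihedron and the $(m,n)$-biassociahedron — follows formally.
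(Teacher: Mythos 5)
Your proposal is correct and takes essentially the same route as the paper: there, the shuffle is likewise written as $(\polytope{P} \times \polytope{Q}) + \polytope{Z}_{m,n}$, its faces are described via the common refinement of the product fan with the fan of the complete bipartite zonotope (phrased as compatible triples of a face of~$\polytope{P}$, a face of~$\polytope{Q}$, and an interleaving ordered partition in \cref{prop:CartesianProductMinkowskiSumFacePreposets,prop:completeMultipartiteGraphicalZonotopeFaces,prop:shuffleFacePreposets}), and these triples are then matched with $m$-painted $n$-trees and $(m,n)$-bitrees in \cref{prop:faceLatticeMultiplihedron,prop:faceLatticeBiassociahedron}. Your monotone-colouring condition is exactly the paper's requirement that $\preccurlyeq_\mu$ extend both $\preccurlyeq_{\polytope{F}}$ and ${\preccurlyeq_{\polytope{G}}}^{+m}$, so the remaining verifications you outline coincide with the ones the paper carries out (or leaves to the reader) in those proofs.
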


This enables us to give precise integer vertex and facet descriptions of polytopal realizations of the $(m,n)$-multiplihedron, the $(m,n)$-constrainahedron, and the $(m,n)$-biassociahedron.
Along the way, we also provide summation formulas for their $f$-polynomials based on generating functionology of decorated~trees.
As a side note, observe that the shuffle of an $m$-permutahedron with a graph associahedron also generalizes the graph multiplihedra of~\cite{DevadossForcey}.

Finally, we study the behavior of the shuffle operation with respect to lattice properties of the deformed permutahedra.
Our motivation is the classical fact that, when oriented in the direction~$\b{\omega} \eqdef (n,\dots,1) - (1,\dots,n)$, the graph of the permutahedron is the Hasse diagram of the weak order on permutations, and the graph of the associahedron is the Hasse diagram of the Tamari lattice on binary trees.
In view of these examples, we say that a deformed permutahedron has the \defn{lattice property} when its graph oriented in the direction~$\b{\omega}$ is the Hasse diagram of a lattice.
Unfortunately, the shuffle operation does not preserve the lattice property: for instance, the \mbox{$(3,3)$-constrainahedron} and the $(3,3)$-biassociahedron do not have the lattice property.
However, the shuffle with a permutahedron preserves the lattice property.

\begin{proposition}
If a deformed permutahedron~$\polytope{P}$ has the lattice property, then the shuffle~${\polytope{P} \shuffleDP \Perm}$ has the lattice property for any integer~$n \ge 1$.
In particular, the graph of the $(m,n)$-multiplihedron oriented by~$\b{\omega}$ defines a lattice structure on the $m$-painted $n$-trees.
\end{proposition}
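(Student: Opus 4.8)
\emph{Reduction.} The plan is to peel the permutahedron off one coordinate at a time and to treat a single insertion by fibring over $\polytope{P}$. The segment description shows that the $n$-permutahedron is itself a shuffle, $\Perm = \Perm[n-1] \shuffleDP \Perm[1]$, obtained from $\Perm[n-1]$ by shuffling in one new coordinate. Since the shuffle is associative, $\polytope{P} \shuffleDP \Perm = (\polytope{P} \shuffleDP \Perm[n-1]) \shuffleDP \Perm[1]$, so by induction on $n$ (the case $n = 0$ being the hypothesis on $\polytope{P}$) it suffices to prove the single statement: if a deformed permutahedron $\polytope{Q} \subseteq \R^d$ has the lattice property, then so does the single insertion $\polytope{Q} \shuffleDP \Perm[1] = \polytope{Q} \times \{0\} + \sum_{i \in [d]} [\b{e}_i, \b{e}_{d+1}]$. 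Here I use that the $\b{\omega}$-orientation of a deformed permutahedron depends only on the underlying decreasing order of coordinates, so that discarding the last coordinate is harmless.

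\emph{Model for one insertion.} The normal cone of a vertex $\mathsf{v}$ of $\polytope{Q}$ is cut out by braid inequalities $\phi_a \geq \phi_b$ and hence defines a poset $P_{\mathsf{v}}$ on $[d]$ whose linear extensions are the braid chambers it contains. A generic functional $(\phi, t)$ on $\R^{d+1}$ selects $\mathsf{v}$ from $\phi$ and records where the new coordinate $\ast = d+1$ of height $t$ sits, the only new datum being the set $U = \{a : \phi_a > t\}$, which must be a filter of $P_{\mathsf{v}}$ for the choice to be feasible. Thus the vertices of $\polytope{Q} \shuffleDP \Perm[1]$ are the pairs $(\mathsf{v}, U)$ with $U$ a filter of $P_{\mathsf{v}}$, and the projection $\pi \colon (\mathsf{v}, U) \mapsto \mathsf{v}$ onto the vertex set of $\polytope{Q}$ has fibres the distributive lattices of filters. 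The $\b{\omega}$-edges are of two kinds: moving $\mathsf{v}$ along an edge of $\polytope{Q}$, or sliding $\ast$ past a single old coordinate, that is, adding or deleting one generator of $U$.

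\emph{Assembling the lattice.} I would show that $\pi$ is an order-preserving surjection onto the lattice of vertices of $\polytope{Q}$ and compute meets fibrewise: the candidate meet of $(\mathsf{v}_1, U_1)$ and $(\mathsf{v}_2, U_2)$ is $(\mathsf{v}_1 \wedge \mathsf{v}_2, U)$, where $\mathsf{v}_1 \wedge \mathsf{v}_2$ is the meet in $\polytope{Q}$ and $U$ is obtained by transporting $U_1$ and $U_2$ to filters of $P_{\mathsf{v}_1 \wedge \mathsf{v}_2}$ along the edges of $\polytope{Q}$ and intersecting, with joins treated dually. Equivalently, I would lift to the weak order on $S_{d+1}$: the fibre of the surjection $S_{d+1} \twoheadrightarrow V(\polytope{Q} \shuffleDP \Perm[1])$ over $(\mathsf{v}, U)$ is the set of linear extensions of the refinement of $P_{\mathsf{v}}$ that places $U$ entirely above its complement. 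The key observation is that this block splitting rigidifies each such fibre into a weak-order \emph{interval}, even when the fibres of $S_d \twoheadrightarrow V(\polytope{Q})$ themselves are not intervals; one then verifies that these intervals form a lattice congruence, whence the quotient is a lattice realised by the $\b{\omega}$-oriented graph.

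\emph{Main obstacle.} The crux is precisely this compatibility. Since $\polytope{Q}$ is only assumed to have the lattice property and need not be a quotientope, there is no congruence on $\polytope{Q}$ to pull back; it is the new coordinate, joined to \emph{every} old one, that restores the interval structure, and the heart of the proof is to check that the filter-transport across $\polytope{Q}$-edges is monotone and that $(\mathsf{v}_1 \wedge \mathsf{v}_2, U)$ is genuinely the greatest lower bound. Granting this, the final assertion follows by taking $\polytope{P} = \Asso$, whose vertices form the Tamari lattice: the $\b{\omega}$-oriented graph of the $(m,n)$-multiplihedron is then a lattice on $m$-painted $n$-trees.
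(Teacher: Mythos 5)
Your main line is the same as the paper's: use associativity and $\Perm[n-1] \shuffleDP \Perm[1] = \Perm$ to reduce to a single insertion $\polytope{Q} \shuffleDP \Perm[1] = \polytope{Q} \shuffleDP \b{0}$, model the vertices of the insertion as pairs (vertex poset of $\polytope{Q}$, filter of that poset) --- exactly the paper's painted $\polytope{Q}$-posets of \cref{def:paintedPoset,prop:paintedPreposets} --- and compute meets and joins componentwise with a closure correction on the second coordinate (\cref{prop:latticeShufflePoint,coro:latticeShufflePermutahedron}). The trouble is that your proposal stops exactly where the proof has to do its work. Your ``filter-transport along the edges of $\polytope{Q}$'' is never defined, and the statement that $(\mathsf{v}_1 \wedge \mathsf{v}_2, U)$ is genuinely a greatest lower bound is explicitly deferred (``Granting this\dots''). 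The paper closes this step by combining the exact description of the rotation order on painted posets (\cref{prop:paintedPoset}: componentwise comparison in $\le_\polytope{Q}$ together with containment of the painting) with an explicit formula in which the second coordinate of the join is the lower set of the \emph{join poset} generated by the union of the paintings (dually for meets); your ``intersect the transports'' is, up to orientation conventions, this same formula, but the least-upper-bound/greatest-lower-bound verification is precisely the content of \cref{prop:latticeShufflePoint}, not something one can grant.

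More seriously, the fallback you call ``equivalent'' --- show the fibres of $\f{S}_{d+1} \to V(\polytope{Q} \shuffleDP \Perm[1])$ are weak-order intervals and form a lattice congruence, then quotient --- cannot work, and the paper says so. First, the fibre over $(\mathsf{v}, U)$ is the set of linear extensions of the vertex poset of $\polytope{Q}$ extended by the relations to the new coordinate $d+1$; by \cref{prop:characterizationWOIP} the obstruction to being a weak-order interval involves only triples $i < j < k \le d$ of \emph{old} coordinates, so adjoining the new coordinate cannot repair it, and the lattice property assumed on $\polytope{Q}$ does not imply the interval property (\cref{def:intervalLatticeProperty}); your claim that the block splitting ``rigidifies each fibre into an interval'' is therefore false in the generality needed. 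Second, even when the classes are intervals, the relation is provably not a lattice congruence in general: \cref{rem:latticeShuffle} states that the shuffle with $\Perm$ preserves the lattice property \emph{but not the congruence property}, and \cref{rem:notSemidistributive} states that the multiplihedron lattice is not a lattice quotient of the weak order (indeed not even semidistributive) for $m \ge 1$ and $n \ge 3$. If your congruence verification could be carried out, these lattices would be weak-order quotients, a contradiction. So the congruence route must be abandoned; only the direct fibrewise meet/join computation (the paper's route) succeeds, and it is exactly the step your proposal leaves open.
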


In fact, it is well-known that the Tamari lattice is the quotient of the weak order by the sylvester congruence (where two permutations are equivalent when the corresponding cones of the braid fan belong to the same cone of the normal fan of the associahedron).
This implies in particular that the classes of the sylvester congruence are intervals of the weak order.
We say that a deformed permutahedron has the \defn{congruence property} (resp.~the  \defn{interval property}) when the corresponding equivalence relation on permutations is a lattice congruence of the weak order (resp.~admits only intervals as equivalence classes).
We observe that the shuffle operation preserves the interval property but not the congruence property.

\medskip
The paper is organized as follows.
In \cref{sec:preliminaries}, we recall classical definitions and properties concerning permutahedra, associahedra, graphical zonotopes, deformed permutahedra and lattice congruences.
In \cref{sec:shuffles}, we define the shuffle of two deformed permutahedra, provide a combinatorial description of its faces, and discuss the shuffle with a point and the shuffle of graphical zonotopes.
Finally, using shuffles of permutahedra and associahedra, we construct the $(m,n)$-multiplihedron in \cref{sec:multiplihedra}, the $(m,n)$-constrainahedron in \cref{sec:constrainahedra}, and the $(m,n)$-associahedron in \cref{sec:biassociahedra}, provide their vertex and facet descriptions, describe their face lattices, and compute their $f$-polynomials.


\section*{Acknowledgements}
\label{sec:acknowledgments}

We are deeply grateful to Spencer Backman, Nathaniel Bottman and Daria Poliakova for pointing us to the constrainahedra~\cite{Poliakova, BottmanPoliakova} and asking for their connection to the biassociahedra of~\cite{Markl}.
As explained in \cref{sec:constrainahedra,sec:biassociahedra}, the former are shuffles of associahedra with associahedra while the latter are shuffles of anti-associahedra with associahedra.
This motivated us to include both in the present version (while the former was previously omitted due to our lack of \mbox{algebraic motivation}).
We also thank two anonymous referees for corrections.


\section{Preliminaries}
\label{sec:preliminaries}

This section recalls classical definitions and properties concerning polyhedral geometry (\cref{subsec:fansPolytopes}), permutahedra (\cref{subsec:permutahedra}), associahedra (\cref{subsec:associahedra}), graphical zonotopes (\cref{subsec:graphicalZonotopes}), deformed permutahedra (\cref{subsec:deformedPermutahedra}), and lattice congruences (\cref{subsec:latticeProperties}).
The reader familiar with these notions is invited to jump directly to \cref{sec:shuffles} and to refer to this section only for conventions and notations.
We omit the proofs of all results of this section as they are either well-known or immediate.
In fact, we try to attribute properly the results of this section, but consider some of them as folklore, and do not claim anything new in this section.


\subsection{Fans and polytopes}
\label{subsec:fansPolytopes}

We refer to~\cite{Ziegler-polytopes} for a standard reference on polyhedral geometry.
We denote by~$(\b{e}_i)_{i \in [n]}$ the standard basis of~$\R^n$.

\begin{definition}
\label{def:cone}
A (polyhedral) \defn{cone} is defined equivalently as 
\begin{itemize}
\item the cone~$\R_{\ge0} \b{R} \eqdef \set{\sum_{\b{r} \in \b{R}}\lambda_{\b{r}} \b{r}}{\lambda_{\b{r}} \ge 0 \text{ for all } \b{r} \in \b{R}}$ generated by a finite set~$\b{R} \subset \R^n$,
\item the cone~$\set{\b{x} \in \R^n}{\dotprod{\b{n}}{\b{x}} \ge 0 \text{ for all } \b{n} \in \b{N}}$ defined by a finite set~$\b{N} \subset \R^n$.
\end{itemize}
A \defn{face} of a cone~$\polytope{C}$ is the intersection of~$\polytope{C}$ with a supporting hyperplane of~$\polytope{C}$.
In this paper, we also consider $\polytope{C}$ itself as a face, but ignore the empty face.
\end{definition}

\begin{definition}
\label{def:fan}
A (polyhedral) \defn{fan} is a collection~$\c{F}$ of cones of~$\R^n$ such that
\begin{itemize}
\item any face of a cone in~$\c{F}$ is also in~$\c{F}$,
\item the intersection of any two cones of~$\c{F}$ is a face of both.
\end{itemize}
The \defn{rays} (resp.~\defn{walls}, resp.~\defn{chambers}) of~$\c{F}$ are its $1$-dimensional (resp.~codimension~$1$, resp.~full-dimensional) cones.
\end{definition}

\begin{definition}
\label{def:polytope}
A \defn{polytope} is defined equivalently as
\begin{itemize}
\item the convex hull~$\set{\sum_{\b{v} \in \b{V}} \lambda_{\b{v}} \b{v}}{\lambda_{\b{v}} \ge 0 \text{ for all } \b{v} \in \b{V} \text{ and } \sum_{\b{v} \in \b{V}} \lambda_{\b{v}} = 1}$ of a finite set~${\b{V} \in \R^n}$,
\item a bounded intersection of a finite number of affine half-spaces of~$\R^n$.
\end{itemize}
A \defn{face} of a polytope~$\polytope{P}$ is the intersection of~$\polytope{P}$ with a supporting hyperplane of~$\polytope{P}$.
The \defn{vertices} (resp.~\defn{edges}, resp.~\defn{facets}) are the $0$-dimensional (resp.~$1$-dimensional, resp.~codimension~$1$) faces.
In this paper, we also consider $\polytope{P}$ itself as a face, but ignore the empty face.
\end{definition}

Any polytope defines a fan as follows (in contrast, not all fans come from polytopes).

\begin{definition}
\label{def:normalFan}
Let~$\polytope{P}$ be a polytope and~$\polytope{F}$ be a face of~$\polytope{P}$.
The \defn{normal cone} of~$\polytope{F}$ is the cone $\c{N}(\polytope{F}) \eqdef \set{\b{v} \in \R^n}{\dotprod{\b{v}}{\b{f}} \ge \dotprod{\b{v}}{\b{p}} \text{ for all } \b{f} \in \polytope{F} \text{ and } \b{p} \in \polytope{P}}$ of linear functions maximized over~$\polytope{P}$ by all the face~$\polytope{F}$.
The \defn{normal fan} of~$\polytope{P}$ is the fan~$\c{N}(\polytope{P}) \eqdef \set{\c{N}(\polytope{F})}{\polytope{F} \text{ face of } \polytope{P}}$ containing the normal cones of all faces of~$\polytope{P}$.
\end{definition}

In this paper, we will use the following standard operations on fans and polytopes.

\begin{definition}
\label{def:directSumCommonRefinement}
Let~$\c{F} \subset \R^m$ and~$\c{G} \subset \R^n$ be two fans. Then
\begin{itemize}
\item the \defn{direct sum} of~$\c{F}$ and~$\c{G}$ is the fan~$\c{F} \oplus \c{G} \eqdef \set{C \times D}{C \in \c{F} \text{ and } D \in \c{G}}$,
\item if~$m = n$, the \defn{common refinement} of~$\c{F}$ and~$\c{G}$ is the fan~$\c{F} \wedge \c{G} \eqdef \set{C \cap D}{C \in \c{F} \text{ and } D \in \c{G}}$.
\end{itemize}
\end{definition}

\begin{definition}
\label{def:CartesianProductMinkowskiSum}
Let~$\polytope{P} \subset \R^m$ and~$\polytope{Q} \subset \R^n$ be two polytopes. Then
\begin{itemize}
\item the \defn{Cartesian product} of~$\polytope{P}$ and~$\polytope{Q}$ is the polytope~$\polytope{P} \times \polytope{Q} \eqdef \set{(\b{p},\b{q})}{\b{p} \in \polytope{P} \text{ and } \b{q} \in \polytope{Q}}$,
\item if~$m = n$, the \defn{Minkowski sum} of~$\polytope{P}$ and~$\polytope{Q}$ is the polytope~$\polytope{P} + \polytope{Q} \eqdef \set{\b{p}+\b{q}}{\b{p} \in \polytope{P} \text{ and } \b{q} \in \polytope{Q}}$.
\end{itemize}
\end{definition}

The following connection between \cref{def:directSumCommonRefinement,def:CartesianProductMinkowskiSum} is classical, see \cite[Lems.~7.7 \& 7.12]{Ziegler-polytopes}.

\pagebreak

\begin{proposition}
\label{prop:CartesianProductMinkowskiSum}
Let~$\polytope{P} \subset \R^m$ and~$\polytope{Q} \subset \R^n$ be two polytopes. Then
\begin{itemize}
\item the normal fan of the Cartesian product~$\polytope{P} \times \polytope{Q}$ is the direct sum of the normal fans of~$\polytope{P}$~and~$\polytope{Q}$, that is~$\c{N}(\polytope{P} \times \polytope{Q}) = \c{N}(\polytope{P}) \oplus \c{N}(\polytope{Q})$,
\item if~$m = n$, the normal fan of the Minkowski sum~$\polytope{P} + \polytope{Q}$ is the common refinement of the normal fans of~$\polytope{P}$ and~$\polytope{Q}$, that is~$\c{N}(\polytope{P} + \polytope{Q}) = \c{N}(\polytope{P}) \wedge \c{N}(\polytope{Q})$.
\end{itemize}
\end{proposition}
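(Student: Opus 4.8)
The plan is to reduce everything to the behaviour of \emph{maximizing faces} under the two operations. For a polytope~$\polytope{R} \subset \R^d$ and a direction~$\b{v} \in \R^d$, write~$\polytope{R}^{\b{v}}$ for the face of~$\polytope{R}$ on which the linear function~$\b{x} \mapsto \dotprod{\b{v}}{\b{x}}$ attains its maximum. Recall from \cref{def:normalFan} that~$\b{v} \in \c{N}(\polytope{F})$ precisely when~$\polytope{F} \subseteq \polytope{R}^{\b{v}}$, and that the cones~$\c{N}(\polytope{F})$, as~$\polytope{F}$ ranges over the faces of~$\polytope{R}$, are exactly the cones of~$\c{N}(\polytope{R})$. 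So it suffices to identify, for each face of the product (resp.\ of the sum), the directions that are maximized by all of it.

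For the Cartesian product, the key point is that a direction~$(\b{v},\b{w}) \in \R^m \times \R^n$ splits the objective additively, $\dotprod{(\b{v},\b{w})}{(\b{p},\b{q})} = \dotprod{\b{v}}{\b{p}} + \dotprod{\b{w}}{\b{q}}$, and the two summands are maximized independently over~$\polytope{P}$ and~$\polytope{Q}$. Hence~$(\polytope{P} \times \polytope{Q})^{(\b{v},\b{w})} = \polytope{P}^{\b{v}} \times \polytope{Q}^{\b{w}}$, every face of~$\polytope{P} \times \polytope{Q}$ is a product~$\polytope{F} \times \polytope{G}$ of faces, and~$(\b{v},\b{w})$ is maximized over all of~$\polytope{F} \times \polytope{G}$ if and only if~$\b{v}$ is maximized over~$\polytope{F}$ and~$\b{w}$ over~$\polytope{G}$. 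This reads~$\c{N}(\polytope{F} \times \polytope{G}) = \c{N}(\polytope{F}) \times \c{N}(\polytope{G})$, and ranging over all faces shows that the normal cones of~$\polytope{P} \times \polytope{Q}$ are exactly the products~$C \times D$ with~$C \in \c{N}(\polytope{P})$ and~$D \in \c{N}(\polytope{Q})$, that is, the cones of the direct sum~$\c{N}(\polytope{P}) \oplus \c{N}(\polytope{Q})$ of \cref{def:directSumCommonRefinement}.

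For the Minkowski sum (with~$m = n$) I would first establish the analogous factorization of faces, $(\polytope{P} + \polytope{Q})^{\b{v}} = \polytope{P}^{\b{v}} + \polytope{Q}^{\b{v}}$: writing~$M_\polytope{P}$ and~$M_\polytope{Q}$ for the maxima of~$\dotprod{\b{v}}{\cdot}$ over~$\polytope{P}$ and~$\polytope{Q}$, a point~$\b{p} + \b{q}$ attains the maximum~$M_\polytope{P} + M_\polytope{Q}$ over~$\polytope{P} + \polytope{Q}$ if and only if~$\b{p}$ and~$\b{q}$ attain~$M_\polytope{P}$ and~$M_\polytope{Q}$ separately, since~$\dotprod{\b{v}}{\b{p}} \le M_\polytope{P}$ and~$\dotprod{\b{v}}{\b{q}} \le M_\polytope{Q}$ force both inequalities to be equalities. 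The same squeezing argument, applied to a fixed direction~$\b{w}$ and the face~$\polytope{F} + \polytope{G}$ with~$\polytope{F} = \polytope{P}^{\b{v}_0}$ and~$\polytope{G} = \polytope{Q}^{\b{v}_0}$, gives~$\c{N}(\polytope{F} + \polytope{G}) = \c{N}(\polytope{F}) \cap \c{N}(\polytope{G})$: if every~$\b{f} + \b{g}$ maximizes~$\b{w}$ over~$\polytope{P} + \polytope{Q}$, then each~$\b{f}$ maximizes~$\b{w}$ over~$\polytope{P}$ and each~$\b{g}$ over~$\polytope{Q}$, and the converse inclusion is immediate.

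It then remains to match the cones: for~$\b{v}$ in the relative interior of a cone of the common refinement~$\c{N}(\polytope{P}) \wedge \c{N}(\polytope{Q})$, that cone is~$C \cap D$, where~$C \in \c{N}(\polytope{P})$ and~$D \in \c{N}(\polytope{Q})$ are the unique cones containing~$\b{v}$ in their relative interiors, namely~$C = \c{N}(\polytope{P}^{\b{v}})$ and~$D = \c{N}(\polytope{Q}^{\b{v}})$; by the previous paragraph this equals the normal cone of the face~$\polytope{P}^{\b{v}} + \polytope{Q}^{\b{v}}$ of~$\polytope{P} + \polytope{Q}$. This sets up a bijection between the cones of the common refinement and the faces of~$\polytope{P} + \polytope{Q}$ compatible with the normal-cone description, yielding~$\c{N}(\polytope{P} + \polytope{Q}) = \c{N}(\polytope{P}) \wedge \c{N}(\polytope{Q})$ of \cref{def:directSumCommonRefinement}. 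I expect the only delicate point to be exactly this last bookkeeping step for the Minkowski sum, namely checking that the nonempty intersections~$C \cap D$ in the refinement are precisely those whose corresponding faces of~$\polytope{P}$ and~$\polytope{Q}$ are maximized by a common direction, and hence arise from a genuine face~$\polytope{F} + \polytope{G}$; the product case is entirely formal once the objective is split.
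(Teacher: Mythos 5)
Your proof is correct and follows essentially the same route as the source the paper delegates to: the paper omits the argument entirely, citing it as classical (Ziegler, Lems.~7.7 \& 7.12), and those lemmas are proved exactly by your strategy of identifying maximizing faces, namely $(\polytope{P} \times \polytope{Q})^{(\b{v},\b{w})} = \polytope{P}^{\b{v}} \times \polytope{Q}^{\b{w}}$ and $(\polytope{P} + \polytope{Q})^{\b{v}} = \polytope{P}^{\b{v}} + \polytope{Q}^{\b{v}}$ via the squeezing argument. The one step you flag as delicate is indeed fine: for any~$\b{v}$ in the relative interior of a cone~$C \cap D$ of the common refinement, the fan property forces $\c{N}(\polytope{P}^{\b{v}})$ and $\c{N}(\polytope{Q}^{\b{v}})$ to be faces of~$C$ and~$D$ respectively, and an extreme-point argument (any point of $C \cap D$ enters a convex combination representing~$\b{v}$) then gives $C \cap D = \c{N}(\polytope{P}^{\b{v}}) \cap \c{N}(\polytope{Q}^{\b{v}}) = \c{N}\big(\polytope{P}^{\b{v}} + \polytope{Q}^{\b{v}}\big)$, so every cone of the refinement is a genuine normal cone of the sum.
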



\subsection{Permutahedra}
\label{subsec:permutahedra}

Let~$\f{S}_n$ denote the symmetric group of permutations of~$[n] \eqdef \{1, \dots, n\}$.

\begin{figure}[b]
	\centerline{\includegraphics[scale=.6]{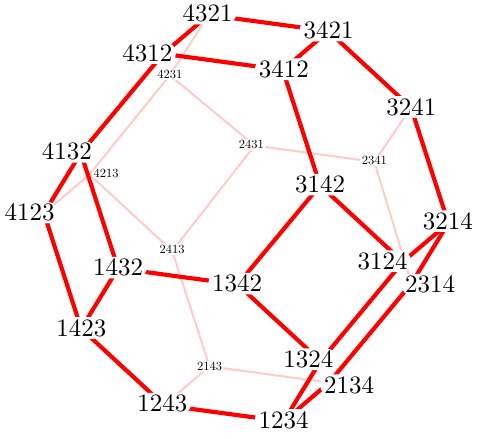} \quad \raisebox{-.1cm}{\includegraphics[scale=.6]{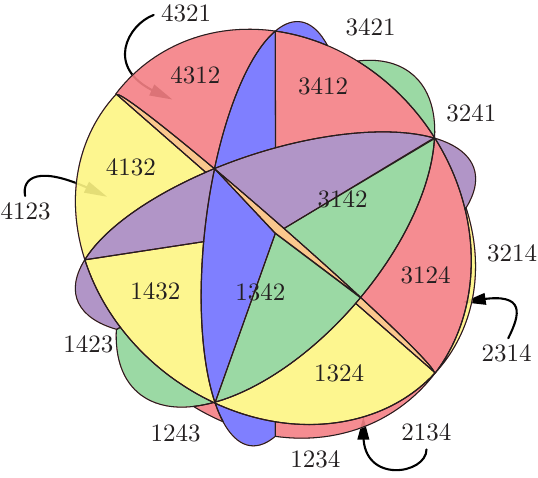}} \quad \includegraphics[scale=.55]{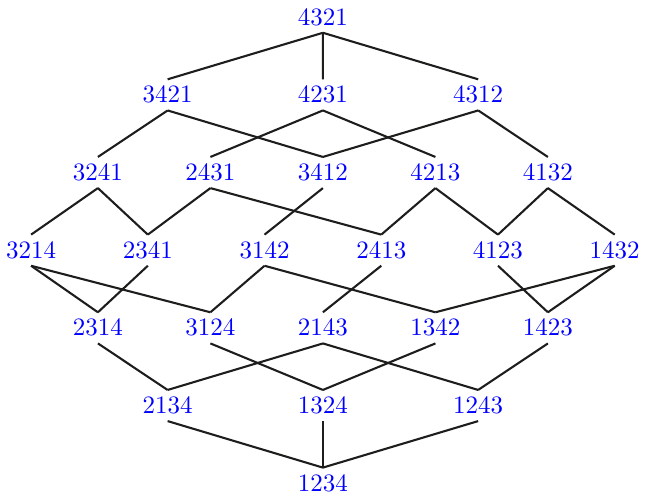}}
	\caption{The permutahedron, the braid fan, and the weak order.}
	\label{fig:permutahedron}
\end{figure}

\begin{definition}
\label{def:permutahedron}
The \defn{permutahedron}~$\Perm$ is the polytope in~$\R^n$ equivalently defined as:
\begin{itemize}
\item the convex hull of the points~$\sum_{i \in [n]} i \, \b{e}_{\sigma(i)}$ for all permutations~$\sigma \in \f{S}_n$,
\item or the intersection of the hyperplane~$\bigset{\b{x} \in \R^n}{\sum_{i \in [n]} x_i = \binom{n+1}{2}}$ with the affine half-spaces~$\bigset{\b{x} \in \R^n}{\sum_{i \in I} x_i \ge \binom{|I|+1}{2}}$ for all~${\varnothing \ne I \subsetneq [n]}$,
\item or (a translate of) the Minkowski sum of all segments~$[\b{e}_i, \b{e}_j]$ for all~$1 \le i < j \le n$.
\end{itemize}
See \cref{fig:permutahedron}\,(left).
\end{definition}

The permutahedron~$\Perm$ has dimension~$n-1$ but it will be convenient to consider it embedded in~$\R^n$.
Note that the point corresponding to a permutation~$\sigma$ is the point of coordinates $(\sigma^{-1}(1), \dots, \sigma^{-1}(n))$.
The face structure of the permutahedron is encoded by ordered~partitions.

\begin{definition}
\label{def:orderedPartitions}
An \defn{ordered partition} of~$[n]$ is a partition~$\mu \eqdef \mu_1 | \dots | \mu_p$ of~$[n]$ into non-empty parts, with a total order on the parts (but each part is unordered).
It defines a preposet (\ie a reflexive and transitive binary relation) $\preccurlyeq_\mu$ on~$[n]$ where~$i \preccurlyeq_\mu j$ if the part of~$\mu$ containing~$i$ is before or equal to the part of~$\mu$ containing~$j$.
The resulting preposets are all total preposets, that is, where any two elements of~$[m]$ are comparable (\ie~$i \preccurlyeq j$ or $i \succcurlyeq j$ or both).
For two ordered partitions~$\mu$ and~$\nu$, we say that~$\mu$ \defn{refines}~$\nu$ (and~$\nu$ \defn{coarsens} $\mu$) when~$i \preccurlyeq_\mu j$ implies $i \preccurlyeq_\nu j$ for any~$i,j \in [n]$.
We denote by~$\f{P}_n$ the set of ordered partitions of~$[n]$.
\end{definition}

\begin{proposition}
\label{prop:permutahedron}
The face lattice of the permutahedron~$\Perm$ is isomorphic to the refinement poset on $\f{P}_n$ (augmented with a minimal element).
\end{proposition}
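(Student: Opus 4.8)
The plan is to realize the bijection between faces and ordered partitions explicitly through linear optimization, and then to check that it is an order isomorphism. Recall from \cref{def:permutahedron} that the vertices of~$\Perm$ are the points $\b{x}_\sigma \eqdef (\sigma^{-1}(1), \dots, \sigma^{-1}(n))$ for $\sigma \in \f{S}_n$, and that by \cref{def:polytope} every nonempty face of~$\Perm$ is the set of points maximizing some linear functional $\b{c} \in \R^n$. First I would rewrite the objective as $\langle \b{c}, \b{x}_\sigma \rangle = \sum_{k \in [n]} k \, c_{\sigma(k)}$ (substituting $j = \sigma(k)$) and invoke the rearrangement inequality: this sum is maximal exactly when $\sigma$ lists the indices so that $(c_{\sigma(1)}, \dots, c_{\sigma(n)})$ is weakly increasing. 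Thus $\b{c}$ determines the ordered partition $\mu(\b{c})$ obtained by grouping the indices of~$[n]$ according to the value of the corresponding coordinate of~$\b{c}$ and ordering these groups by increasing value.

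Second, I would observe that the face maximizing~$\b{c}$ is $F_{\mu(\b{c})} \eqdef \conv\set{\b{x}_\sigma}{\sigma \text{ compatible with } \mu(\b{c})}$, where $\sigma$ is \emph{compatible} with an ordered partition $\mu = \mu_1 | \dots | \mu_p$ when it lists the elements of~$\mu_1$ first, then those of~$\mu_2$, and so on. This face depends only on~$\mu(\b{c})$, and conversely every ordered partition~$\mu$ arises as~$\mu(\b{c})$ for the functional whose $i$-th coordinate is the index of the part of~$\mu$ containing~$i$. This yields a surjection $\mu \mapsto F_\mu$ from~$\f{P}_n$ onto the nonempty faces of~$\Perm$. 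Moreover $F_\mu$ is the product $\Perm[|\mu_1|] \times \cdots \times \Perm[|\mu_p|]$, so its dimension is $n - p$, where $p$ is the number of parts; in particular the finest ordered partitions (the total orders, i.e.\ the permutations) correspond to the vertices, and the coarsest one (the single block~$[n]$) to~$\Perm$ itself.

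Third, I would check that this bijection is an order isomorphism, i.e.\ that $F_\mu \subseteq F_\nu$ if and only if $\mu$ refines~$\nu$. This is cleanest through the preposet description of \cref{def:orderedPartitions}: a permutation~$\sigma$ is compatible with~$\mu$ exactly when~$\preccurlyeq_\mu$ is contained in the total order induced by~$\sigma$, so the vertex set of~$F_\mu$ grows precisely as the relation~$\preccurlyeq_\mu$ shrinks, that is, as~$\mu$ gets coarser; since $\mu$ refines~$\nu$ means $\preccurlyeq_\mu \, \subseteq \, \preccurlyeq_\nu$, refinement matches inclusion of faces. Finally, since the refinement poset on~$\f{P}_n$ has a unique maximum but many minimal elements, adjoining a formal minimal element — matching the empty face, which \cref{def:polytope} excludes but which is the bottom of every face lattice — turns it into a lattice isomorphic to the full face lattice of~$\Perm$.

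The genuinely non-formal point is the one hidden in the second paragraph: verifying that the maximizer of~$\b{c}$ is \emph{exactly} $F_{\mu(\b{c})}$, with no additional vertices and with the dimension claimed, and that distinct ordered partitions give distinct faces. The standard way to dispatch this is to identify the normal fan of~$\Perm$ with the braid arrangement fan, whose cones are naturally indexed by the preposets~$\preccurlyeq_\mu$ (the cone of~$\mu$ being the functionals that are constant on each part and strictly increasing across parts, of dimension $p-1$ modulo the line~$\R(\b{e}_1 + \dots + \b{e}_n)$). The face–cone anti-isomorphism of \cref{def:normalFan} then transports the inclusion order on faces to the refinement order on~$\f{P}_n$ and makes the dimension count $\dim F_\mu = (n-1) - (p-1) = n-p$ automatic. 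Everything else is routine bookkeeping with the rearrangement inequality.
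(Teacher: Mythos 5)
Your overall route---faces of $\Perm$ as maximizers of linear functionals, the rearrangement inequality, and the identification of the normal fan with the braid arrangement---is the standard classical argument; the paper itself gives no proof of \cref{prop:permutahedron} (all results of \cref{sec:preliminaries} are omitted as ``well-known or immediate''), so the only question is internal correctness. Your first, second and fourth paragraphs are sound: the identity $\langle \b{c}, \b{x}_\sigma \rangle = \sum_{k \in [n]} k \, c_{\sigma(k)}$, the surjection $\mu \mapsto F_\mu$ onto the nonempty faces, the product decomposition $F_\mu \cong \Perm[|\mu_1|] \times \cdots \times \Perm[|\mu_p|]$ with $\dim F_\mu = n - p$, and the dispatch of injectivity and dimension through the normal fan all check out.

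The third paragraph, however, where the order isomorphism is actually verified, is wrong as written: both containments are inverted relative to \cref{def:orderedPartitions}. A permutation~$\sigma$ is compatible with~$\mu$ exactly when $\preccurlyeq_\sigma \, \subseteq \, \preccurlyeq_\mu$ (\ie $\sigma$ refines~$\mu$), not when ``$\preccurlyeq_\mu$ is contained in the total order induced by~$\sigma$'': as soon as~$\mu$ has a part of size at least~$2$, the preposet~$\preccurlyeq_\mu$ contains both $i \preccurlyeq_\mu j$ and $j \preccurlyeq_\mu i$ for distinct~$i,j$, so it is contained in no total order, and under your criterion $F_\mu$ would have no vertices at all. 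Likewise, coarsening~$\mu$ \emph{adds} relations to~$\preccurlyeq_\mu$, so ``as the relation~$\preccurlyeq_\mu$ shrinks'' should read ``grows''. These two slips do not cancel: combined with the refinement definition you quote correctly ($\mu$ refines~$\nu$ iff $\preccurlyeq_\mu \subseteq \preccurlyeq_\nu$), your literal chain yields $F_\mu \supseteq F_\nu$ whenever $\mu$ refines~$\nu$, \ie an anti-isomorphism rather than the claimed isomorphism. The repair is short: the vertex set of~$F_\mu$ is $\set{\b{x}_\sigma}{\preccurlyeq_\sigma \subseteq \preccurlyeq_\mu}$, which grows as~$\preccurlyeq_\mu$ grows; and since $\preccurlyeq_\mu$ is precisely the union of the total orders~$\preccurlyeq_\sigma$ it contains, inclusion of vertex sets is equivalent to $\preccurlyeq_\mu \subseteq \preccurlyeq_\nu$, \ie to $\mu$ refining~$\nu$---which simultaneously gives injectivity of $\mu \mapsto F_\mu$. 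Alternatively, your fourth paragraph already contains the clean fix: face inclusion reverses normal-cone inclusion, and cone inclusion $\polytope{C}(\mu) \subseteq \polytope{C}(\nu)$ reverses preposet inclusion, so the two reversals compose to the order isomorphism claimed in the proposition.
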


\begin{proposition}
\label{prop:braidFan}
The normal fan of the permutahedron~$\Perm$ is the \defn{braid fan} with one cone $\polytope{C}(\mu) \eqdef \set{\b{x} \in \R^n}{x_i \le x_j \text{ if } i \preccurlyeq_\mu j}$ for each~$\mu \in \f{P}_n$.
Its walls are given by the arrangement of the hyperplanes~$\set{\b{x} \in \R^n}{x_i = x_j}$ for all~$1 \le i < j \le n$.
See \cref{fig:permutahedron}\,(middle).
\end{proposition}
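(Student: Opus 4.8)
The plan is to read off the normal fan directly from the vertex description of $\Perm$ via the rearrangement inequality, and then to match the equality case of that inequality with the combinatorics of ordered partitions. I would start from \cref{def:permutahedron}: the vertex associated with~$\sigma \in \f{S}_n$ is~$\b{p}_\sigma \eqdef \sum_{i \in [n]} i\,\b{e}_{\sigma(i)}$, so that for any direction~$\b{x} \in \R^n$ one has~$\dotprod{\b{x}}{\b{p}_\sigma} = \sum_{i \in [n]} i\, x_{\sigma(i)}$. Maximizing this pairing of the increasing weights~$1 < 2 < \dots < n$ against the coordinates~$x_{\sigma(1)}, \dots, x_{\sigma(n)}$ is exactly the setting of the rearrangement inequality: the sum is maximal precisely when~$\sigma$ lists the coordinates of~$\b{x}$ in weakly increasing order, that is when~$x_{\sigma(1)} \le \dots \le x_{\sigma(n)}$.

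Next I would translate the equality case into the language of preposets. Given~$\b{x}$, group the indices of~$[n]$ by the level sets of their~$\b{x}$-coordinate and order these groups by increasing value, producing an ordered partition~$\mu = \mu_1 | \dots | \mu_p \in \f{P}_n$. The permutations maximizing~$\dotprod{\b{x}}{\cdot}$ are then exactly those compatible with~$\mu$ (listing~$\mu_1$ first in any order, then~$\mu_2$, and so on), so the set of maximizing vertices is the face~$\conv\set{\b{p}_\sigma}{\sigma \text{ compatible with } \mu}$. Conversely, a direction~$\b{x}$ selects (at least) this face if and only if~$\b{x}$ is constant on each part of~$\mu$ with values weakly increasing along the part order, which is precisely the condition~$x_i \le x_j$ whenever~$i \preccurlyeq_\mu j$ (the two-sided relation~$i \preccurlyeq_\mu j \preccurlyeq_\mu i$ inside a part forcing~$x_i = x_j$). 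Hence the normal cone of this face is~$\polytope{C}(\mu)$, which yields~$\c{N}(\Perm) = \set{\polytope{C}(\mu)}{\mu \in \f{P}_n}$. Along the way this also recovers \cref{prop:permutahedron}, since the inclusion-reversing face/normal-cone correspondence matches the face lattice with the refinement order on~$\f{P}_n$.

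For the statement about walls, I would observe that each~$\polytope{C}(\mu)$ contains the lineality line~$\R\,\one$ and has dimension equal to its number~$p$ of parts: the chambers are the~$\polytope{C}(\mu)$ with~$\mu$ a permutation (all parts singletons), and the walls are the codimension-one cones~$\polytope{C}(\mu)$ with exactly one part of size two. Each such wall lies on a hyperplane~$\set{\b{x}}{x_i = x_j}$, and taken together these walls sweep out exactly the union~$\bigcup_{1 \le i < j \le n} \set{\b{x}}{x_i = x_j}$, so the wall arrangement is the braid arrangement as claimed.

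The only point requiring care is the equality analysis in the rearrangement inequality: one must verify that permuting indices within a common level set leaves the pairing unchanged while any interchange across level sets strictly decreases it, so that the maximizing set of permutations is exactly the compatibility class of~$\mu$ and neither larger nor smaller. This is the step that pins the cone down as~$\polytope{C}(\mu)$ on the nose rather than some coarser or finer cone; everything else is routine bookkeeping.
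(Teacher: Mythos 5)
Your proof is correct, but there is nothing in the paper to compare it against: \cref{prop:braidFan} sits in the preliminaries, where the authors explicitly omit all proofs as ``either well-known or immediate.'' Your argument is the standard one that the omission implicitly points to, and it is complete: the rearrangement inequality identifies the vertices $\b{p}_\sigma = \sum_{i} i\,\b{e}_{\sigma(i)}$ maximizing $\dotprod{\b{x}}{\cdot}$ as exactly those $\sigma$ listing the level sets of $\b{x}$ in increasing order of value, so each direction selects the face~$\polytope{F}_\mu$ indexed by its level-set ordered partition~$\mu$; and you correctly isolate the one nontrivial point, namely the converse inclusion $\c{N}(\polytope{F}_\mu) \subseteq \polytope{C}(\mu)$, which follows from the adjacent-transposition computation $\dotprod{\b{x}}{\b{p}_\sigma} - \dotprod{\b{x}}{\b{p}_{\sigma'}} = x_b - x_a$ when $\sigma'$ swaps consecutive entries $a,b$: permutations moving within a part force equality of coordinates, and violations of the part order are excluded outright by the rearrangement inequality. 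The wall statement also checks out (a point with $x_i = x_j$ lies in $\polytope{C}(\rho)$ for any ordered partition $\rho$ obtained from its level-set partition by splitting all parts into singletons except a doubled part $\{i,j\}$, so the union of walls is the full braid arrangement), though you assert this step rather than spell it out; it is routine at the level of rigor of this section.
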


\enlargethispage{.3cm}
We now recall the connection between the graph of the permutahedron~$\Perm$ and the weak order on permutations of~$\f{S}_n$.

\begin{definition}
\label{def:weakOrder}
An \defn{inversion} of a permutation~$\sigma$ is a pair~$(\sigma_i, \sigma_j)$ such that~${i < j}$ but~${\sigma_i > \sigma_j}$.
The \defn{weak order} is the lattice on permutations of~$[n]$ defined by inclusion of their inversion sets.
See \cref{fig:permutahedron}\,(right).
\end{definition}

\begin{proposition}
\label{prop:weakOrder}
When oriented in the direction~$\b{\omega} \eqdef (n,\dots,1) - (1,\dots,n) = \sum_{i \in [n]} (n+1-2i) \, \b{e}_i$, the graph of the permutahedron~$\Perm$ is the Hasse diagram of the weak order on~$\f{S}_n$.
\end{proposition}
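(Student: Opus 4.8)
The plan is to compare the combinatorial gadgets indexing the vertices and edges of $\Perm$ with those indexing the elements and cover relations of the weak order, and then to check that the linear functional $\dotprod{\b{\omega}}{\cdot}$ orients each edge in agreement with inclusion of inversion sets.

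First I would use \cref{prop:permutahedron} to describe the graph of $\Perm$. The vertices of $\Perm$ are the points $\b{v}_\sigma \eqdef \sum_{i \in [n]} i\,\b{e}_{\sigma(i)} = (\sigma^{-1}(1), \dots, \sigma^{-1}(n))$ indexed by $\sigma \in \f{S}_n$, corresponding to the finest ordered partitions of~$[n]$. The edges correspond to ordered partitions with a single block of size two and all other blocks singletons; such an edge joins the two permutations that differ only in the order of the two values placed in some pair of consecutive positions $k, k+1$. Hence two vertices are adjacent in $\Perm$ if and only if the corresponding permutations differ by right multiplication by a simple transposition $s_k = (k, k+1)$, that is, by swapping the entries in positions $k$ and $k+1$ of the one-line notation.

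Next I would compute the height function $\b{x} \mapsto \dotprod{\b{\omega}}{\b{x}}$ on the vertices. Substituting $\b{v}_\sigma$ and reindexing by $j = \sigma(i)$ gives $\dotprod{\b{\omega}}{\b{v}_\sigma} = \sum_{j \in [n]} (n+1-2j)\,\sigma^{-1}(j) = (n+1)\binom{n+1}{2} - 2\sum_{i \in [n]} i\,\sigma(i)$, so that maximizing $\dotprod{\b{\omega}}{\cdot}$ amounts to minimizing $\sum_i i\,\sigma(i)$. For an edge joining $\sigma$ to $\tau \eqdef \sigma\, s_k$, only the two coordinates indexed by $a \eqdef \sigma(k)$ and $b \eqdef \sigma(k+1)$ change, and a direct computation yields $\dotprod{\b{\omega}}{\b{v}_\tau} - \dotprod{\b{\omega}}{\b{v}_\sigma} = 2\big(\sigma(k+1) - \sigma(k)\big)$. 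Thus the edge is oriented from $\sigma$ to $\tau$ precisely when $\sigma(k) < \sigma(k+1)$; in that case $\tau$ turns the ascent at positions $k, k+1$ into a descent, so that its inversion set is $\mathrm{Inv}(\sigma)$ together with exactly the one new pair $(\sigma(k+1), \sigma(k))$.

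Finally I would conclude by matching cover relations. The previous step shows that every oriented edge of $\Perm$ adds a single pair to the inversion set, hence is a cover relation of the weak order (\cref{def:weakOrder}); conversely, every weak-order cover turns an ascent in consecutive positions into a descent and is therefore an oriented edge of $\Perm$. Since the weak order is graded by the number of inversions and each edge changes this number by exactly one, matching the cover relations is enough to identify the oriented graph of $\Perm$ with the Hasse diagram. The only delicate point is bookkeeping of conventions: the vertex–permutation bijection involves $\sigma^{-1}$, the adjacent transpositions act on positions whereas inversions are recorded as pairs of values, and one must invoke gradedness of the weak order; once these are pinned down, the sign computation of $\dotprod{\b{\omega}}{\b{v}_\tau} - \dotprod{\b{\omega}}{\b{v}_\sigma}$ is routine.
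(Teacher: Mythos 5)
Your proposal is correct, and it is essentially the standard argument; note that the paper itself gives no proof of this statement (all results of Section~1 are explicitly omitted as ``well-known or immediate''), so there is no competing approach to compare against. Your coordinate computation $\dotprod{\b{\omega}}{\b{v}_\tau} - \dotprod{\b{\omega}}{\b{v}_\sigma} = 2\big(\sigma(k+1)-\sigma(k)\big)$ is right under the paper's conventions ($\b{v}_\sigma = (\sigma^{-1}(1),\dots,\sigma^{-1}(n))$, edges given by right multiplication by~$s_k$), and it correctly identifies oriented edges with single-inversion additions. The only ingredient you assert rather than prove is the converse inclusion --- that every cover of the weak order is an adjacent-transposition move at an ascent (equivalently, that inclusion-covers of inversion sets add exactly one inversion at adjacent positions); this is a classical fact about the weak order, and you appropriately flag it, so the proof stands at the same level of rigor the paper intends for this preliminary section.
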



\pagebreak

\subsection{Associahedra}
\label{subsec:associahedra}

We now recall the classical associahedra, whose vertices (resp.~faces) correspond to binary trees (resp.~Schr\"oder trees).
We start with some formal definitions on rooted plane trees used later in \cref{subsec:paintedTrees,subsec:cotrees,subsec:bitrees}.
\cref{def:tree,def:inorder,def:treeDeletion,def:binaryTreeSchroderTree} are illustrated in \cref{fig:trees}.

\begin{definition}
\label{def:tree}
A (rooted plane) \defn{tree} is either a \defn{leaf}~$|$ or a \defn{node}~$\node{n}$ with an ordered non-empty list~$\children(\node{n})$ of (rooted plane) trees.
The node~$\node{n}$ is the \defn{parent} of the nodes in~$\children(\node{n})$, which are the \defn{children} of~$\node{n}$.
The \defn{degree} of~$\node{n}$ is its number of children.
The \defn{root} is the unique node with no parent.
An \defn{$n$-tree} is a tree with $n+1$ leaves.
\end{definition}

\begin{definition}
\label{def:inorder}
A $n$-tree~$T$ is labeled in \defn{inorder} when each degree~$\ell$ node is labeled by an $(\ell-1)$-subset~$\{x_1, \dots, x_{\ell-1}\}$ of~$[n]$ such that all labels in its $i$-th subtree are larger than~$x_{i-1}$ and smaller than~$x_i$ (where by convention~$x_0 = 0$ and~$x_\ell = n+1$).
It defines a preposet~$\preccurlyeq_T$ on~$[n]$ where~$i \preccurlyeq_T j$ if there is a (possibly empty) path from the node containing~$i$ to the node containing~$j$ in the tree~$T$ oriented towards the root.
The resulting preposets are all preposets~$\preccurlyeq$ such that any~$1 \le i < k \le n$ are comparable (\ie~$i \preccurlyeq k$ or $i \succcurlyeq k$ or both) if and only if there is no~$i < j < k$ such that~$i \prec j \succ k$.
\end{definition}

\begin{definition}
\label{def:treeDeletion}
The \defn{deletion} of a node~$\node{n}$ with parent~$\node{p}$ consists in replacing~$\node{n}$ by the list~$\children(\node{n})$ in the list~$\children(\node{p})$.
Intuitively, this operation contracts the edge from~$\node{n}$ to~$\node{p}$ in the tree.
\begin{figure}[b]
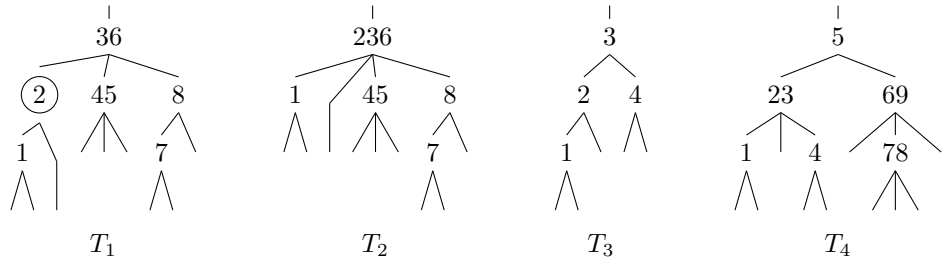

	\centerline{
	\begin{tabular}{c@{\qquad}c@{\qquad}c@{\qquad}c}
		\tree{[ [.36 [.\circled{2} [.1 {} {} ] [ {} ] ] [.45 {} {} {} ] [.8 [.7 {} {} ] {} ] ] ]}
		&
		\tree{[ [.236 [.1 {} {} ] [ {} ] [.45 {} {} {} ] [.8 [.7 {} {} ] {} ] ] ]}
		&
		\tree{[ [.3 [.2 [.1 {} {} ] {} ] [.4 {} {} ] ] ]}
		&
		\tree{[ [.5 [.23 [.1 {} {} ] {} [.4 {} {} ] ] [.69 {} [.78 {} {} {} ] {} ] ] ]}
		\\[-.1cm]
		$T_1$
		&
		$T_2$
		&
		$T_3$
		&
		$T_4$
	\end{tabular}
	}
	\caption{A (plane rooted) tree~$T_1$ with a circled node, the tree~$T_2$ obtained by deletion of the circled node in~$T_1$, a binary tree~$T_3$, and a Schr\"oder tree~$T_4$. All trees are labeled in inorder (at each node, we simply write the word~$x_1 \dots x_{\ell-1}$ for the set~$\{x_1, \dots, x_{\ell-1}\}$).}
	\label{fig:trees}
\end{figure}
\end{definition}

\begin{definition}
\label{def:binaryTreeSchroderTree}
A \defn{binary} (resp.~\defn{Schr\"oder}) \defn{tree} is a rooted plane tree whose internal nodes have degree exactly (resp.~at least)~$2$.
We denote by~$\f{B}_n$ (resp.~$\f{T}_n$) the set of \mbox{binary (resp.~Schr\"oder)~$n$-trees.}
\end{definition}

\begin{proposition}
\label{prop:SchroderTreeDeletion}
For any integer~$n \ge 0$, the set~$\f{T}_n$ is stable by deletion, and the deletion graph is the Hasse diagram of a poset ranked by ${\rank(T) = \sum_{\node{n} \in T} \big(\!\deg(\node{n})-2 \big)} = n - |T|$.
In this poset, $S$ is smaller than~$T$ if and only if~$\preccurlyeq_S$ refines~$\preccurlyeq_T$.
The binary trees are the minimal elements, and the corolla is the unique maximal element of this poset.
\end{proposition}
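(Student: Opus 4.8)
The plan is to prove the four assertions in turn, reducing everything to the behaviour of a single deletion and to one reconstruction argument.

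\textbf{Stability and rank.} First I would check stability of~$\f{T}_n$ by a degree count. Deleting a non-root internal node~$\node{n}$, of degree~$\deg(\node{n}) \ge 2$, splices the list~$\children(\node{n})$ into~$\children(\node{p})$, leaving~$\node{p}$ of degree~$\deg(\node{p}) + \deg(\node{n}) - 1 \ge 2$, leaving every other node untouched, and creating or destroying no leaf; the result is again a Schröder $n$-tree with~$|T|$ decreased by exactly one. For the rank formula I would count the edges of~$T$ in two ways: since each non-root node (internal or leaf) is the child of a unique node, $\sum_{\node{n} \in T} \deg(\node{n}) = |T| + (n+1) - 1 = |T| + n$, whence $\sum_{\node{n} \in T}\big(\deg(\node{n}) - 2\big) = n - |T|$. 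In particular a single deletion raises this quantity by one.

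\textbf{Deletion coarsens the preposet; the order is well defined.} Contracting the edge between~$\node{n}$ and~$\node{p}$ merges their inorder labels into one node, so the only effect on the preposet is that the elements formerly in~$\node{p}$ become~$\preccurlyeq$-equivalent to those formerly in~$\node{n}$ instead of lying strictly above them, while all other ancestry relations are preserved. Hence a deletion $S \to T$ gives $\preccurlyeq_S \subsetneq \preccurlyeq_T$, that is, $\preccurlyeq_S$ strictly refines~$\preccurlyeq_T$. Setting $S \preceq T$ iff~$\preccurlyeq_S$ refines~$\preccurlyeq_T$ yields a reflexive and transitive relation, and I would get antisymmetry from injectivity of~$T \mapsto \preccurlyeq_T$: a Schröder $n$-tree is reconstructed from its preposet by taking its nodes to be the~$\preccurlyeq_T$-classes, the root to be the unique maximal class~$\{x_1 < \dots < x_{\ell-1}\}$, and recursing on the intervals~$(x_{t-1}, x_t)$ of~$[n]$, which recovers both the ancestry and the left-to-right order of the subtrees.

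\textbf{The converse (main obstacle).} The hard direction is that $\preccurlyeq_S$ refining~$\preccurlyeq_T$ forces~$T$ to be obtainable from~$S$ by deletions. The key claim is that each~$\preccurlyeq_T$-class~$C$, regarded as a union of nodes of~$S$, is a connected subtree of~$S$. The crucial ingredient is an inorder constraint: if two elements~$a, b \in C$ sat in distinct subtrees of their common ancestor node~$\node{c}$ in~$S$, then some slot~$c'$ of~$\node{c}$ would satisfy~$a < c' < b$; but~$a \preccurlyeq_S \node{c}$ forces~$a \preccurlyeq_T \node{c}$, so in~$T$ the node~$\node{c}$ is a strict ancestor of~$C$, and the label of a strict ancestor can never lie inorder-between two elements of a descendant node (the descendant occupies a single gap), a contradiction; thus~$\node{c} \in C$. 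The same sandwiching, using that~$\preccurlyeq_T$ identifies~$a$ with~$\node{c}$, shows every node on the~$S$-path between two elements of~$C$ lies in~$C$. Hence the~$\preccurlyeq_T$-classes are connected subtrees partitioning the nodes of~$S$, and contracting each of them (a sequence of edge contractions, i.e.\ deletions) turns~$S$ into a tree with preposet~$\preccurlyeq_T$, which by injectivity equals~$T$.

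\textbf{Assembly and extreme elements.} A strict refinement collapses at least one nontrivial connected subtree, hence strictly decreases~$|T|$, so~$\rank$ is strictly monotone along~$\preceq$; together with the fact that a single deletion raises~$\rank$ by exactly one, this identifies the deletions as precisely the cover relations of~$\preceq$, so the deletion graph is its Hasse diagram. Finally, the rank-$0$ trees are those all of whose internal nodes have degree~$2$, namely the binary trees, which are therefore the minimal elements; the unique rank-$(n-1)$ tree is the corolla~($|T| = 1$), whose all-related preposet refines no other, making it the unique maximum.
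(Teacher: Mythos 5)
First, a point of reference: the paper never proves this proposition --- it sits in the preliminaries, whose proofs are explicitly omitted as ``well-known or immediate'' --- so your argument has to be judged on its own merits. Most of it holds up: the stability and rank computations are correct, a deletion does coarsen the preposet, the reconstruction argument for injectivity of $T \mapsto {\preccurlyeq_T}$ is fine, and the connectedness of the $\preccurlyeq_T$-classes inside $S$, proved by the inorder sandwiching (``a label of a strict ancestor cannot lie inorder-between two labels of a descendant class''), is exactly the right key lemma for the hard direction.

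The genuine gap is in your last step. After contracting the classes you obtain a Schr\"oder tree $T'$ whose label sets are the $\preccurlyeq_T$-classes, but what the construction actually yields is only the containment ${\preccurlyeq_{T'}} \subseteq {\preccurlyeq_T}$ (ancestry in $T'$ is induced by ancestry in $S$ between representatives, which maps into $\preccurlyeq_T$); it does \emph{not} yield ${\preccurlyeq_{T'}} = {\preccurlyeq_T}$, which is what you need in order to invoke injectivity. Moreover the implication ``same classes plus nested order relations implies equal trees'' cannot be waved through: it is false for abstract rooted trees --- with singleton classes $\{a\},\{b\},\{r\}$, the tree with root $r$ and children $a,b$ and the path $b \prec a \prec r$ have nested orders but differ --- so this step genuinely needs the inorder structure a second time. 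The repair is short and in the spirit of your own reconstruction: the maximal class of both preposets coincides, so $T'$ and $T$ have the same root label set $R$; by the inorder condition, in either tree the label sets of the subtrees of the root are exactly the intervals of $[n]$ cut out by $R$, hence they coincide; now induct on these subtrees. (Alternatively, rule out that two $T$-comparable elements lie in $T'$-incomparable classes by the same ``single gap'' argument you used for connectedness.) A second, much smaller omission: to conclude that the binary trees are \emph{all} the minimal elements you must also show that every non-binary tree lies above something, which follows by splitting a node of degree at least $3$ into two nodes --- your rank argument only gives the inclusion ``binary implies minimal''.
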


\begin{definition}
\label{def:SchroderTreeDeletionPoset}
The \defn{$n$-Schr\"oder tree deletion poset} is the poset on $\f{T}_n$ where a Schr\"oder tree is covered by all Schr\"oder trees that can be obtained by a deletion.
\end{definition}

We now recall a classical geometric realization of this poset, tracing back to~\cite{ShniderSternberg, Loday, Postnikov}.
Generalizations of this construction were explored in~\cite{HohlwegLange, HohlwegLangeThomas, HohlwegPilaudStella, PilaudPons-permutrees, PadrolPaluPilaudPlamondon, Pilaud-acyclicReorientationLattices} among others. 
See \cite{PilaudSantosZiegler} for a recent survey.

\begin{figure}
	\centerline{\includegraphics[scale=.6]{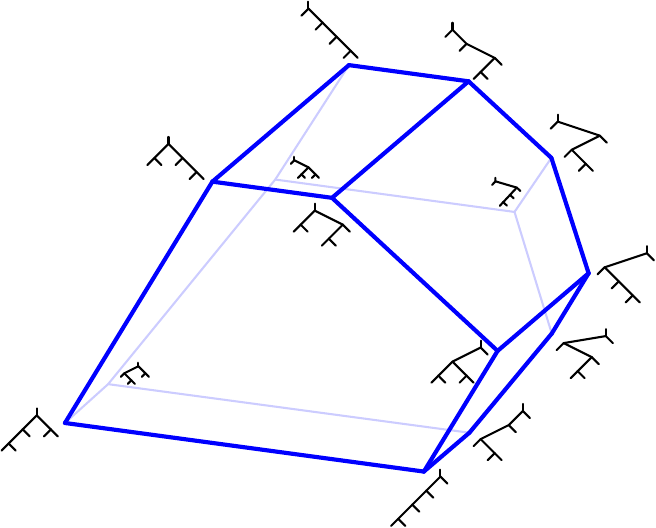} \quad \raisebox{-.1cm}{\includegraphics[scale=.6]{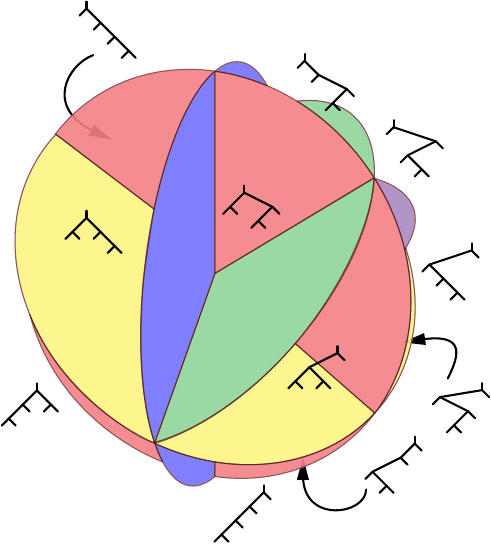}} \quad \includegraphics[scale=.5]{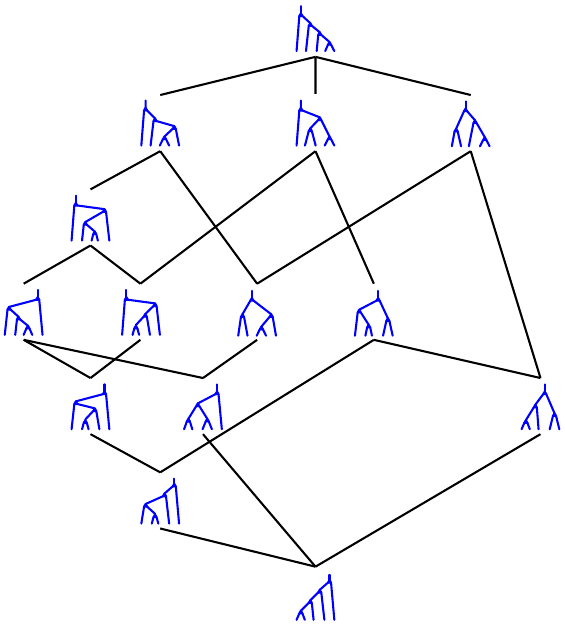}}
	\caption{The associahedron, the sylvester fan, and the Tamari lattice.}
	\label{fig:associahedron}
\end{figure}

\begin{definition}
\label{def:associahedron}
The \defn{associahedron}~$\Asso$ is the polytope in~$\R^n$ equivalently defined as:
\begin{itemize}
\item the convex hull of the points~$\sum_{i \in [n]} \ell(T,i) \, r(T,i) \, \b{e}_i$ for all binary trees~$T \in \f{B}_n$, where $\ell(T,i)$ and~$r(T,i)$ respectively denote the numbers of leaves in the left and right subtrees of the $i$-th node of~$T$ in inorder (see~\cite{Loday}),
\item or the intersection of the hyperplane~$\bigset{\b{x} \in \R^n}{\sum_{i \in [n]} x_i = \binom{n+1}{2}}$ with the affine half-spaces~$\bigset{\b{x} \in \R^n}{\sum_{i \le \ell \le j} x_\ell \ge \binom{j-i+2}{2}}$ for all~$1 \le i \le j \le n$ (see~\cite{ShniderSternberg}),
\item or (a translate of) the Minkowski sum of the faces~$\simplex_{[i,j]}$ of the standard simplex~$\simplex_{[n]}$ for all ${1 \le i \le j \le n}$, where~$\simplex_X \eqdef \conv\set{\b{e}_x}{x \in X}$ for~$X \subseteq [n]$ (see~\cite{Postnikov}).
\end{itemize}
See \cref{fig:associahedron}\,(left).
\end{definition}

The associahedron~$\Asso$ has dimension~$n-1$, although it is convenient to consider it embedded in~$\R^n$.
Note that any facet defining inequality for~$\Asso$ is also a facet defining inequality for~$\Perm$.
In other words, the associahedron~$\Asso$ is a \defn{removahedron}: it can be obtained by deleting some inequalities in the facet description of the permutahedron~$\Perm$.

\begin{proposition}[\cite{Loday}]
\label{prop:associahedron}
The face lattice of the associahedron~$\Asso$ is isomorphic to the deletion poset on $\f{T}_n$ (augmented with a minimal element).
\end{proposition}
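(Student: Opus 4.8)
The plan is to route the proof through the normal fan and to invoke the general fact that the face lattice of a polytope is anti-isomorphic to the poset of cones of its normal fan ordered by inclusion, the correspondence being $\polytope{F} \mapsto \c{N}(\polytope{F})$ (\cref{def:normalFan}). Concretely, I would prove that the normal fan $\c{N}(\Asso)$ is the \emph{sylvester fan}, whose cones are exactly the cones $\polytope{C}(T) \eqdef \set{\b{x} \in \R^n}{x_i \le x_j \text{ if } i \preccurlyeq_T j}$ for all Schröder trees $T \in \f{T}_n$, where $\preccurlyeq_T$ is the tree preposet of \cref{def:inorder}. Granting this identification, the conclusion is immediate: by \cref{prop:SchroderTreeDeletion}, one has $S \le T$ in the deletion poset if and only if $\preccurlyeq_S$ refines $\preccurlyeq_T$, which is precisely the condition $\polytope{C}(T) \subseteq \polytope{C}(S)$. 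Hence $T \mapsto \polytope{C}(T)$ is an anti-isomorphism from the deletion poset to the cone poset of $\c{N}(\Asso)$ under inclusion, and therefore an isomorphism between the deletion poset and the face lattice of $\Asso$ (the extra minimal element matching the empty face, which we ignore in the fan).

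The core step is the identification of the normal cones, for which I would use the Minkowski sum description of \cref{def:associahedron} together with \cref{prop:CartesianProductMinkowskiSum}. Since the maximum of a linear functional over a Minkowski sum is the sum of the maxima over the summands, maximizing $\dotprod{\b{x}}{\cdot}$ over $\Asso = \sum_{1 \le i \le j \le n} \simplex_{[i,j]}$ amounts to selecting, in each face $\simplex_{[i,j]}$, a vertex $\b{e}_k$ with $k \in \{i, \dots, j\}$ maximizing $x_k$. For generic $\b{x}$ (all coordinates distinct) each choice is forced, and I would check that the resulting maximizer $\sum_{i \le j} \b{e}_{k(i,j)}$ is exactly the Loday point $\sum_k \ell(T,k)\, r(T,k)\, \b{e}_k$ of the binary tree $T$ read off from $\b{x}$ as a binary search tree (priorities decreasing towards the leaves). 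The point is that the index selected on $[i,j]$ is the highest node (closest to the root) among $\{i, \dots, j\}$, which is an ancestor of all the others; consequently the intervals $[i,j]$ on which a fixed node $k$ is selected are exactly those with $a_k \le i \le k \le j \le b_k$, where $[a_k, b_k]$ is the inorder span of the subtree rooted at $k$, and their number is $(k - a_k + 1)(b_k - k + 1) = \ell(T,k)\, r(T,k)$. This shows that the vertices of $\Asso$ are precisely the Loday points indexed by $\f{B}_n$ and that the normal cone of $\ell(T)$ is $\polytope{C}(T)$.

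It then remains to pass from binary trees (the chambers) to all Schröder trees (all cones) by analysing non-generic functionals. For arbitrary $\b{x}$, ties among the coordinates make several selections admissible on some intervals, so the set of maximizers of $\dotprod{\b{x}}{\cdot}$ is a positive-dimensional face; I would show its normal cone is again of the form $\polytope{C}(T)$, now for the Schröder tree $T$ that records the nested tie pattern of $\b{x}$ (the binary resolutions that $\b{x}$ fails to separate are collapsed into internal nodes of higher degree, exactly the deletion operation of \cref{def:treeDeletion}). This produces a bijection between the faces of $\Asso$ and $\f{T}_n$ under which face inclusion matches reverse cone inclusion, establishing that $\c{N}(\Asso)$ is the sylvester fan and closing the argument sketched in the first paragraph.

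The main obstacle is the combinatorial bookkeeping of the middle step: verifying that the greedy maximization over the faces $\simplex_{[i,j]}$ reproduces \emph{exactly} Loday's coordinates $\ell(T,k)\, r(T,k)$, and that the tie patterns of non-generic functionals biject with Schröder trees compatibly with the preposets $\preccurlyeq_T$. Both hinge on the inorder labelling and on the binary-search-tree reading of the braid chambers; once these are in place, the refinement/deletion dictionary of \cref{prop:SchroderTreeDeletion} does the rest. An alternative that avoids the explicit vertex computation would start from the fact that $\Asso$ is a removahedron, so $\c{N}(\Asso)$ coarsens the braid fan of \cref{prop:braidFan} (its facet inequalities being a subset of those of $\Perm$), and then determine directly which braid chambers $\polytope{C}(\mu)$ merge into each $\polytope{C}(T)$; but this still reduces to the same sylvester-class analysis.
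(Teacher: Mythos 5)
The paper offers no proof to compare against: this proposition sits in the preliminaries, is attributed to \cite{Loday}, and the authors explicitly omit all proofs of \cref{sec:preliminaries} as well known. What you reconstruct is exactly the route encoded by the surrounding statements of the paper: identify $\c{N}(\Asso)$ with the sylvester fan (this is \cref{prop:sylvesterFan}, also stated without proof), translate cone inclusion into refinement of the preposets $\preccurlyeq_T$, which \cref{prop:SchroderTreeDeletion} identifies with the deletion order, and finish with the standard anti-isomorphism between the face lattice and the poset of normal cones. Your middle computation is correct: for generic $\b{x}$, the selected index on each $\simplex_{[i,j]}$ is the common ancestor of $[i,j]$ in the max-heap binary tree read from $\b{x}$, and the intervals on which a node $k$ is selected are exactly those with $k \in [i,j] \subseteq [a_k,b_k]$, giving the Loday coordinate $\ell(T,k)\,r(T,k)$.

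One step deserves tightening. The generic analysis alone only yields $\c{N}(v_T) \supseteq \polytope{C}(T)$ for the Loday point $v_T$; to conclude equality (\ie that the normal fan is not strictly coarser than the sylvester fan) you would also need that distinct binary trees give distinct vertices, which you never argue. The clean repair is the one your third paragraph gestures at: apply \cref{prop:CartesianProductMinkowskiSum} uniformly to every (possibly non-generic) $\b{x}$, so that the normal cone of the optimal face is the intersection over all $1 \le i \le j \le n$ of the cones $\set{\b{y} \in \R^n}{y_k = \max_{l \in [i,j]} y_l \text{ for all } k \text{ maximizing } x_k \text{ over } [i,j]}$, and check combinatorially that this intersection equals $\polytope{C}(T(\b{x}))$, where $T(\b{x})$ is built recursively (root node = set of maximizing positions, children = the trees of the restrictions to the gaps). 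This computes the fan outright, and injectivity on both sides is then automatic, since distinct faces have normal cones with disjoint relative interiors and distinct Schr\"oder trees have distinct preposets. Note also that with this recursive reading your phrase ``nested tie pattern'' becomes precise: only ties along ancestor chains merge nodes, while ties between incomparable positions (\eg $\b{x} = (1,2,1)$) are irrelevant and impose no equation on the cone. This check is exactly the bookkeeping you flag as the main obstacle, and it is where the actual content of the classical proof lies.
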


\begin{proposition}
\label{prop:sylvesterFan}
The normal fan of the associahedron~$\Asso$ is the \defn{sylvester fan} with one cone $\polytope{C}(S) \eqdef \set{\b{x} \in \R^n}{x_i \le x_j \text{ if } i \preccurlyeq_S j}$ for each~$S \in \f{T}_n$.
See \cref{fig:associahedron}\,(middle).
\end{proposition}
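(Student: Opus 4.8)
The plan is to compute the normal fan of $\Asso$ directly from its Minkowski sum description and then to match it with the sylvester fan through the face lattice provided by \cref{prop:associahedron}.

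First I would use the third description in \cref{def:associahedron}, which presents~$\Asso$ (up to translation) as the Minkowski sum~$\sum_{1 \le i \le j \le n} \simplex_{[i,j]}$. Applying \cref{prop:CartesianProductMinkowskiSum} iteratively, the normal fan~$\c{N}(\Asso)$ is the common refinement~$\bigwedge_{1 \le i \le j \le n} \c{N}(\simplex_{[i,j]})$. Each summand is simple to describe: the normal cone of the face~$\conv\set{\b{e}_k}{k \in Y}$ of~$\simplex_{[i,j]}$ (for~$\varnothing \ne Y \subseteq [i,j]$) is the set of~$\b{x} \in \R^n$ whose restriction to~$[i,j]$ attains its maximum exactly on~$Y$, so these cones only impose inequalities and equalities among the coordinates~$x_i, \dots, x_j$.

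Next I would identify the chambers (maximal cones) of this common refinement with binary trees. A generic~$\b{x}$ (all coordinates distinct) selects in each~$\simplex_{[i,j]}$ the vertex~$\b{e}_{m(i,j)}$, where~$m(i,j) \eqdef \arg\max_{i \le \ell \le j} x_\ell$, so that the corresponding vertex of~$\Asso$ is~$\sum_{1 \le i \le j \le n} \b{e}_{m(i,j)}$. Reading off the recursive ``maximum is the root'' (Cartesian tree) construction, $m(1,n)$ is the root and one recurses on~$[1,m(1,n)-1]$ and~$[m(1,n)+1,n]$; the resulting binary tree~$T$ satisfies~$\ell(T,k)\,r(T,k) = \#\set{(i,j)}{i \le j, \; m(i,j) = k}$, since the node~$k$ governs a contiguous inorder interval, so this vertex is (up to the translation of \cref{def:associahedron}) the Loday point~$\sum_k \ell(T,k)\,r(T,k)\,\b{e}_k$ of~$T$. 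Conversely, the standard characterisation of Cartesian trees---$j$ is an ancestor of~$i$ in~$T$ if and only if~$x_j = \max_{\min(i,j) \le \ell \le \max(i,j)} x_\ell$---shows that the closure of the locus of generic~$\b{x}$ producing a fixed~$T$ is precisely~$\polytope{C}(T) = \set{\b{x}}{x_i \le x_j \text{ if } i \preccurlyeq_T j}$. Hence the chambers of~$\c{N}(\Asso)$ are exactly the cones~$\polytope{C}(T)$ for~$T \in \f{B}_n$.

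Finally I would extend the identification to all faces. By \cref{prop:associahedron} the faces of~$\Asso$ are indexed by~$\f{T}_n$, the face~$\polytope{F}_S$ attached to~$S \in \f{T}_n$ having as vertices precisely the Loday points of the binary trees~$T$ with~${\preccurlyeq_T} \subseteq {\preccurlyeq_S}$. Using the standard identity~$\c{N}(\polytope{F}) = \bigcap_{\b{v}} \c{N}(\b{v})$ over the vertices~$\b{v}$ of~$\polytope{F}$, this gives~$\c{N}(\polytope{F}_S) = \bigcap_{{\preccurlyeq_T} \subseteq {\preccurlyeq_S}} \polytope{C}(T)$, namely the cone cut out by the inequalities~$x_i \le x_j$ for~$(i,j) \in R \eqdef \bigcup_T {\preccurlyeq_T}$. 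As~$R \subseteq {\preccurlyeq_S}$, this cone automatically contains~$\polytope{C}(S)$; the content lies in the reverse inclusion, which amounts to the combinatorial claim that the transitive closure of~$R$ is all of~$\preccurlyeq_S$. This is the step where I expect the real work to be: one must check that every relation~$i \preccurlyeq_S j$ is realised by a binary refinement of~$S$---a strict ancestor relation of~$S$ survives in every refinement, whereas two labels sharing a node of~$S$ can be placed in either order by suitable binary refinements, so that across all such~$T$ both~$(i,j)$ and~$(j,i)$ occur in~$R$ and their forced equality~$x_i = x_j$ is recovered. Granting this, $\c{N}(\polytope{F}_S) = \polytope{C}(S)$ for every~$S \in \f{T}_n$; since the normal cones of the faces of~$\Asso$ constitute exactly its normal fan, this identifies~$\c{N}(\Asso)$ with the sylvester fan.
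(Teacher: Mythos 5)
Your route is the classical one. The paper itself gives no proof of this proposition: it sits in \cref{subsec:associahedra}, whose results are explicitly left unproved as well known (they trace back to \cite{ShniderSternberg, Loday, Postnikov}). The standard argument is essentially what you outline: compute $\c{N}(\Asso)$ as the common refinement of the fans $\c{N}(\simplex_{[i,j]})$ via \cref{prop:CartesianProductMinkowskiSum}, match its chambers with binary trees through the Cartesian-tree (max-at-root) correspondence, and pass to arbitrary faces by intersecting vertex normal cones. Your first two steps are correct, including the counting identity $\ell(T,k)\,r(T,k) = \#\set{(i,j)}{i \le j, \; m(i,j) = k}$ and the closure argument identifying the chambers with the cones $\polytope{C}(T)$ for $T \in \f{B}_n$.

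The last step, however, rests on a false assertion: a strict ancestor relation of~$S$ does \emph{not} survive in every binary refinement of~$S$. Concretely, let $S \in \f{T}_3$ be the Schr\"oder tree whose root is labeled $\{1,2\}$ and has three children: a leaf, a leaf, and the node labeled $\{3\}$ (with two leaf children); then $3 \preccurlyeq_S 1$. One of the two binary trees refining~$S$ is the tree~$T$ with root~$2$, left child~$1$ and right child~$3$ (deleting the node~$1$ in~$T$ recovers~$S$, and indeed ${\preccurlyeq_T} \subseteq {\preccurlyeq_S}$); but in~$T$ the nodes~$1$ and~$3$ are siblings, hence incomparable, so $3 \preccurlyeq_T 1$ fails. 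What your argument actually needs is only the weaker (and true) statement that every relation $i \preccurlyeq_S j$ with~$i,j$ in distinct nodes of~$S$ is realized in \emph{some} refinement: binarize the node of~$S$ containing~$j$ so that~$j$ sits at its root (and binarize all other nodes arbitrarily); in the resulting binary tree, $j$ is an ancestor of every label in the subtree of~$S$ rooted at its node, in particular of~$i$. Together with your same-node argument this even gives $R = {\preccurlyeq_S}$ exactly, so no transitive closure is needed and the proof closes as intended. (Alternatively, keep an arbitrary refinement~$T$: the label~$k$ at the root of the subtree of~$T$ contracting onto~$j$'s node satisfies $i \preccurlyeq_T k$, and $k$ and~$j$ share a node of~$S$, so transitivity through your same-node relations also recovers $x_i \le x_j$.)
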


Note that here and throughout, sylvester is an ancient adjective for woody, and has nothing to do with the mathematician James Joseph Sylvester.

It turns out that the sylvester fan of \cref{prop:sylvesterFan} coarsens the braid fan of \cref{prop:braidFan}.

\begin{proposition}
\label{prop:fanRefinementSylvester}
The braid fan refines the sylvester fan.
More precisely, for any Schr\"oder tree~$S$, the sylvester cone~$\polytope{C}(S)$ is the union of the braid cones~$\polytope{C}(\mu)$ for the ordered partitions~$\mu$ such that~$\preccurlyeq_\mu$ extends~$\preccurlyeq_S$ (meaning that~$i \preccurlyeq_S j$ implies $i \preccurlyeq_\mu j$ for any~$i,j \in [n]$).
\end{proposition}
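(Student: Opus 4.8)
The plan is to work entirely at the level of preposets, exploiting the fact that the braid cones $\polytope{C}(\mu)$ of \cref{prop:braidFan} and the sylvester cones $\polytope{C}(S)$ of \cref{prop:sylvesterFan} are instances of one and the same template $\polytope{C}(\preccurlyeq) \eqdef \set{\b{x} \in \R^n}{x_i \le x_j \text{ whenever } i \preccurlyeq j}$, applied to the preposet $\preccurlyeq_\mu$ of an ordered partition (\cref{def:orderedPartitions}) and to the tree preposet $\preccurlyeq_S$ (\cref{def:inorder}). The only structural property I need is that $\preccurlyeq \mapsto \polytope{C}(\preccurlyeq)$ is \emph{inclusion-reversing}: if a preposet $\preccurlyeq'$ extends a preposet $\preccurlyeq$, then imposing more inequalities gives $\polytope{C}(\preccurlyeq') \subseteq \polytope{C}(\preccurlyeq)$. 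In particular, whenever $\preccurlyeq_\mu$ extends $\preccurlyeq_S$ we obtain $\polytope{C}(\mu) \subseteq \polytope{C}(S)$, which already yields the inclusion of $\bigcup_\mu \polytope{C}(\mu)$ into $\polytope{C}(S)$ in the claimed decomposition.

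For the reverse inclusion, I would attach a total preorder to each point. Given $\b{x} \in \R^n$, let $\preccurlyeq_{\b{x}}$ be defined by $i \preccurlyeq_{\b{x}} j$ exactly when $x_i \le x_j$; grouping the indices according to the value of their coordinate and ordering the groups by that value exhibits $\preccurlyeq_{\b{x}}$ as the preposet $\preccurlyeq_{\mu(\b{x})}$ of a genuine ordered partition $\mu(\b{x}) \in \f{P}_n$, and by construction $\b{x} \in \polytope{C}(\mu(\b{x}))$. Now suppose $\b{x} \in \polytope{C}(S)$. Then $i \preccurlyeq_S j$ forces $x_i \le x_j$, that is $i \preccurlyeq_{\mu(\b{x})} j$; hence $\preccurlyeq_{\mu(\b{x})}$ extends $\preccurlyeq_S$, so $\mu(\b{x})$ is one of the ordered partitions indexing the union and $\b{x}$ lies in the corresponding braid cone. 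This gives $\polytope{C}(S) = \bigcup_\mu \polytope{C}(\mu)$, the union running over all $\mu \in \f{P}_n$ whose preposet $\preccurlyeq_\mu$ extends $\preccurlyeq_S$, which is the stated equality.

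The fan refinement assertion then follows formally: the braid fan and the sylvester fan are both complete (being normal fans, by \cref{prop:braidFan,prop:sylvesterFan}), and by the standard equivalence a complete fan refines another complete fan of the same support precisely when every cone of the coarser fan is a union of cones of the finer one — which is exactly what the displayed equality provides for each sylvester cone. I expect the only point demanding genuine care to be the bookkeeping of the second paragraph: keeping straight that "$\mu$ refines $\nu$" means $\preccurlyeq_\nu$ extends $\preccurlyeq_\mu$, and verifying that the sorting preorder $\preccurlyeq_{\b{x}}$ really is the preposet of an honest ordered partition, so that it furnishes a bona fide cone of the braid fan. Everything else reduces to the inclusion-reversing monotonicity of $\preccurlyeq \mapsto \polytope{C}(\preccurlyeq)$.
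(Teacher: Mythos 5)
Your proof is correct. The paper gives no proof of this proposition at all — it sits in the preliminaries, whose results are explicitly left unproved as ``well-known or immediate'' — and your argument is precisely the immediate one being alluded to: the inclusion-reversing monotonicity of $\preccurlyeq \,\mapsto\, \polytope{C}(\preccurlyeq)$ gives $\polytope{C}(\mu) \subseteq \polytope{C}(S)$ whenever $\preccurlyeq_\mu$ extends $\preccurlyeq_S$, and sorting the level sets of a point $\b{x} \in \polytope{C}(S)$ into the ordered partition $\mu(\b{x})$ gives the reverse inclusion, for every Schr\"oder tree $S$ (hence for all cones of the sylvester fan, not just the maximal ones).
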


\enlargethispage{.3cm}
This can be interpreted as equivalence relations on permutations and on ordered partitions.

\begin{definition}
\label{def:sylvesterRelation}
The \defn{sylvester relation} on ordered partitions of~$[n]$ is the equivalence relation~$\equiv_\sylv$ defined by~$\mu \equiv_\sylv \nu$ if and only if the cones~$\polytope{C}(\mu)$ and~$\polytope{C}(\nu)$ of the braid fan belong precisely to the same cones of the sylvester fan.
It also restricts to an equivalence relation on permutations.
\end{definition}

\begin{remark}
\label{rem:sylvesterRelation}
The sylvester relation on permutations admits several equivalent definitions. Namely, two permutations~$\sigma, \tau \in \f{S}_n$ are equivalent when:
\begin{itemize}
\item $\sigma$ and~$\tau$ are linear extensions of the poset~$\preccurlyeq_T$ for the same binary tree~$T$ of~$\f{B}_n$,
\item $\sigma$ and~$\tau$ are sent to the same binary tree~$T$ via right-to-left binary search tree insertions,
\item the braid cones~$\polytope{C}(\sigma)$ and~$\polytope{C}(\tau)$ of the braid fan belong to the same sylvester cone~$\polytope{C}(T)$,
\item $\sigma$ and~$\tau$ are connected via a sequence of rewritings of the form~$UacVbW \equiv UcaVbW$ where~$1 \le a < b < c \le n$ and~$U,V,W$ are words on~$[n]$.
\end{itemize}
\end{remark}

We now recall the connection between the graph of the associahedron~$\Asso$ and the Tamari lattice on binary trees of~$\f{B}_n$ illustrated in \cref{fig:associahedron}\,(right).

\parpic(4.5cm,1.5cm)(0pt, 55pt)[r][b]{
	\tree{[[[ {A} {B} ] {C} ]]}[5pt][18pt] \raisebox{.5cm}{$\longrightarrow$} \tree{[[ {A} [ {B} {C} ]]]}[5pt][18pt]
}{
\begin{definition}
\label{def:TamariLattice}
A \defn{right rotation} is the operation on binary trees illustrated on the right (this operation can be applied locally anywhere in the tree).
The \defn{Tamari lattice} is the lattice on~$\f{B}_n$ whose Hasse diagram is the graph of right rotations.
See \cref{fig:associahedron}\,(right).
\end{definition}
}

\begin{proposition}
\label{prop:TamariLattice}
When oriented in the direction~$\b{\omega} \eqdef (n,\dots,1) - (1,\dots,n) = \sum_{i \in [n]} (n+1-2i) \, \b{e}_i$, the graph of the associahedron~$\Asso$ is the Hasse diagram of the Tamari lattice on~$\f{B}_n$.
\end{proposition}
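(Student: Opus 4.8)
The plan is to compare the $1$-skeleton of $\Asso$, oriented by the linear functional~$\b{\omega}$, edge by edge with the graph of right rotations from \cref{def:TamariLattice}, using the explicit Loday vertex coordinates.

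First I would identify the edges of the associahedron. By \cref{prop:associahedron} the face lattice of~$\Asso$ is the deletion poset on~$\f{T}_n$, and by \cref{prop:SchroderTreeDeletion} a Schr\"oder tree~$S$ indexes a face of dimension $\rank(S) = n - |S|$. Hence the vertices are the binary trees (rank~$0$, all internal nodes binary) and the edges are the Schr\"oder trees~$S$ with $|S| = n-1$, that is, those with exactly one ternary node and all other internal nodes binary. Such an~$S$ is refined by exactly two binary trees, obtained by splitting its ternary node with children subtrees~$L, M, R$ either as~$((LM)R)$ or as~$(L(MR))$; these two binary trees differ by exactly one right rotation. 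Conversely, contracting the edge created by a right rotation produces such an~$S$. This yields a bijection between the edges of~$\Asso$ and the right rotations, so the $1$-skeleton of~$\Asso$ is precisely the graph of right rotations underlying the Tamari Hasse diagram.

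Next I would check that~$\b{\omega}$ orients each edge in the direction of its right rotation. Let~$T$ and~$T'$ be binary trees related by a right rotation $((A\,B)\,C) \to (A\,(B\,C))$ performed locally, and write $M(T) = \sum_{i \in [n]} \ell(T,i)\,r(T,i)\,\b{e}_i$ for the Loday point of \cref{def:associahedron}. A right rotation preserves the inorder labelling, so only the two coordinates indexed by the rotated nodes change; denoting by~$p < q$ their inorder labels and by~$a, b, c \ge 1$ the numbers of leaves of~$A, B, C$, the coordinate at position~$p$ changes by $ab \to a(b+c)$, that is by~$+ac$, and the coordinate at position~$q$ changes by $(a+b)c \to bc$, that is by~$-ac$, all other coordinates being unchanged. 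Therefore
\[
\dotprod{\b{\omega}}{M(T')} - \dotprod{\b{\omega}}{M(T)} = (n+1-2p)\,(ac) + (n+1-2q)\,(-ac) = 2ac\,(q-p) > 0 .
\]
In particular~$\b{\omega}$ is perpendicular to no edge, and it orients every edge from~$T$ towards its right rotation~$T'$.

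Finally I would conclude: orienting the $1$-skeleton of a polytope by a functional that is generic on its edges yields an acyclic orientation in which each edge points towards the endpoint of larger functional value. By the previous two paragraphs this orientation coincides, edge by edge, with the graph of right rotations oriented as Tamari cover relations, which is by definition the Hasse diagram of the Tamari lattice. I expect no serious obstacle here: the only points requiring care are the precise rank-to-dimension dictionary (ensuring that edges correspond to \emph{single} rotations rather than to coarser moves) and the sign bookkeeping in the Loday coordinates, both of which are routine. As an alternative route one could instead descend the statement from \cref{prop:weakOrder} via the fan coarsening of \cref{prop:fanRefinementSylvester}, realizing the Tamari order as a quotient of the weak order, but this requires the additional machinery of lattice congruences, so I prefer the self-contained computation above.
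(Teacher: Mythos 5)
Your proof is correct. Note that the paper itself contains no proof of this proposition: it sits in \cref{sec:preliminaries}, where the authors explicitly omit all proofs as either well-known or immediate (this statement is Loday's classical result), so there is no internal argument to compare against. Your route is the standard one, and every step checks out. The rank-to-dimension dictionary $\rank(S) = n - |S|$ from \cref{prop:SchroderTreeDeletion,prop:associahedron} correctly identifies the edges of $\Asso$ with the Schr\"oder trees having exactly one ternary node, whose two binary refinements $((L\,M)\,R)$ and $(L\,(M\,R))$ differ by a single right rotation; this gives the undirected isomorphism between the $1$-skeleton and the rotation graph. Your coordinate bookkeeping in the Loday realization is also right: a right rotation leaves all inorder leaf-counts unchanged except at the two rotated nodes $p < q$, whose coordinates move by $+ac$ and $-ac$ respectively, so that
\[
\dotprod{\b{\omega}}{M(T')} - \dotprod{\b{\omega}}{M(T)} = 2\,ac\,(q-p) > 0,
\]
which shows $\b{\omega}$ is generic on edges and orients each edge toward its right rotation. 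Since \cref{def:TamariLattice} \emph{defines} the Tamari lattice as the lattice whose Hasse diagram is the right rotation graph, your conclusion is then immediate; no separate verification that rotations are cover relations is needed.
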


\begin{remark}
\label{rem:latticeQuotient}
In fact, the sylvester relation is a lattice congruence of the weak order (meaning that it respects meets and joins), and the Tamari lattice is the quotient of the weak order by the sylvester congruence.
This perspective has been largely explored by N.~Reading in his study of lattice congruences of the weak order~\cite{Reading-latticeCongruences}.
In this paper, we do not consider this property as it is not stable by the shuffle operation we focus on.
See \cref{subsec:latticeProperties}.
\end{remark}

We conclude with some classical numerology on binary and Schr\"oder trees that will be generalized to multiplihedra, constrainahedra, and biassociahedra in \cref{subsec:numerologyMultiplihedra,subsec:numerologyConstrainahedra,subsec:numerologyBiassociahedra}.

\begin{notation}
\label{not:CatalanSchroderGF}
Let~$C(n) = \frac{1}{n+1} \binom{2n}{n}$ denote the \defn{Catalan number} of binary trees with~$n+1$ leaves and let~$S(n,p)$ denote the \defn{Schr\"oder number} of Schr\"oder trees with $n+1$ leaves and $n-p$ internal nodes.
We denote the corresponding generating functions by
\[
\CGF(y) \eqdef \sum_{n \ge 1} C(n) \, y^n
\qquad\text{and}\qquad
\SGF(y,z) \eqdef \sum_{n \ge 1, p \ge 0} S(n,p) \, y^n \, z^p.
\]
\end{notation}

\begin{proposition}
\label{prop:CatalanSchroderGF}
The generating functions~$\CGF(y)$ of binary trees (\ie vertices of associahedra) and~$\SGF(y,z)$ of Schr\"oder trees (\ie faces of associahedra) satisfy
\[
\CGF(y) = y + \CGF(y)^2
\qquad\text{and}\qquad
\SGF(y,z) = y + \frac{\SGF(y,z)^2}{1-z\SGF(y,z)}
\]
and are therefore given by
\[
\CGF(y) = \frac{1-\sqrt{1-4y}}{2}
\qquad\text{and}\qquad
\SGF(y,z) = \frac{1+yz-\sqrt{1-4y-2yz+y^2z^2}}{2(z+1)}.
\]
\end{proposition}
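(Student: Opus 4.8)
The plan is to prove the two functional equations combinatorially, by decomposing a tree at its root, and then to read off the closed forms by solving the resulting quadratics and retaining the branch that is a power series with vanishing constant term. It is convenient to view $y$ as marking the leaves of a tree and $z$ as marking the statistic $\sum_{\node{n}\in T}\bigl(\deg(\node{n})-2\bigr)$; by \cref{prop:SchroderTreeDeletion} this statistic equals $\rank(T)=n-|T|$, so it coincides with the exponent $p$, and the binary trees are exactly the Schr\"oder trees on which it vanishes. Hence $\CGF(y)=\SGF(y,0)$, and it suffices to establish the claims for $\SGF$ and then specialize at $z=0$.

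First I would set up the root decomposition. A Schr\"oder tree is either a single leaf, which contributes the monomial $y$, or a root $\node{n}$ of some degree $k\ge 2$ carrying an ordered list $\children(\node{n})$ of $k$ Schr\"oder subtrees. In the second case the subtrees contribute independently, giving a factor $\SGF^{k}$, while the root itself contributes the factor $z^{k-2}$ coming from its term $\deg(\node{n})-2=k-2$ in the statistic marked by $z$ (the root carries no leaf of its own). Summing over all admissible root degrees and resumming the geometric series in $z\SGF$ as a formal power series gives
\[
\SGF = y + \sum_{k\ge 2} z^{k-2}\,\SGF^{k} = y + \frac{\SGF^2}{1-z\SGF},
\]
which is the asserted equation; setting $z=0$ collapses the sum to its $k=2$ term and yields $\CGF=y+\CGF^2$.

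Finally I would solve for the closed forms. Clearing the denominator turns the $\SGF$-equation into the quadratic $(1+z)\SGF^2-(1+yz)\SGF+y=0$, whose discriminant is $(1+yz)^2-4y(1+z)=1-4y-2yz+y^2z^2$. The quadratic formula then produces the two roots $\frac{1+yz\pm\sqrt{1-4y-2yz+y^2z^2}}{2(1+z)}$; since $\SGF$ is a power series with $\SGF(0,z)=0$ while the radical evaluates to $1$ at $y=0$, only the minus branch qualifies, giving the stated expression for $\SGF$, whose specialization at $z=0$ is $\frac{1-\sqrt{1-4y}}{2}$. The one point demanding care is the weighting by $z$: one must verify that each internal node of degree $k$ contributes exactly $z^{k-2}$, which is precisely the rank formula of \cref{prop:SchroderTreeDeletion}, and that the geometric resummation is legitimate as formal power series (each coefficient of $y^n z^p$ receives only finitely many contributions). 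Everything else is the routine manipulation of a quadratic together with the choice of branch.
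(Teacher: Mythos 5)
Your proof is correct. The paper itself contains no argument for this statement — it sits in the preliminaries, whose proofs are explicitly omitted as "well-known or immediate" — and your root decomposition (a Schr\"oder tree is a leaf, or a root of degree $k \ge 2$ weighted by $z^{k-2}$ with $k$ independent subtrees; then geometric resummation and selection of the power-series branch of the quadratic) is precisely the standard argument the authors are invoking. One point in your favour is worth recording: your decision to let $y$ mark leaves is not merely convenient but necessary, since taken literally \cref{not:CatalanSchroderGF} gives $\CGF(y) = \sum_{n \ge 1} C(n)\,y^n = y + 2y^2 + 5y^3 + \cdots$, which does \emph{not} satisfy $\CGF = y + \CGF^2$; the closed forms in the proposition, as well as the later coefficient extractions such as $[y^{n+1}]\,\CGF^{(m+1)}(y)$ in \cref{prop:numberVerticesMultiplihedra}, all presuppose your convention $\CGF(y) = \sum_{n \ge 0} C(n)\,y^{n+1}$, \ie $y$ marking leaves, so your reading resolves the paper's off-by-one in the only way compatible with the statement.
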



\subsection{Graphical zonotopes}
\label{subsec:graphicalZonotopes}

In this section, we consider a simple (no loop nor multiple edges) non-oriented graph~$G$, with vertex set~$V(G)$ and edge set~$E(G)$.
We say that $G$ is an integer graph when~$V(G) = [n]$, and we then represent the edges of~$G$ by ordered pairs~$(i,j)$ with~$1 \le i < j \le n$.

\begin{definition}
\label{def:graphicalZonotope}
The \defn{graphical zonotope}~$\Zono[G]$ of an integer graph~$G$ is the Minkowski sum of the segments~$[\b{e}_i, \b{e}_j]$ for~${(i,j) \in E(G)}$.
See \cref{fig:graphicalZonotopes}\,(left).
\end{definition}

\begin{figure}
	\centerline{
		\raisebox{.7cm}{

\begin{tikzpicture}%
	[x={(-0.366215cm, -0.789554cm)},
	y={(0.235950cm, -0.590693cm)},
	z={(0.900119cm, -0.166391cm)},
	scale=1.200000,
	back/.style={very thin, opacity=0.5},
	edge/.style={color=red, thick, decoration={markings, mark=at position 0.5 with {}}},
	facet/.style={fill=red,fill opacity=0},
	vertex/.style={}]
%
%

\coordinate (0.00000, 0.00000, 0.00000) at (0.00000, 0.00000, 0.00000);
\coordinate (0.00000, 0.00000, 1.63299) at (0.00000, 0.00000, 1.63299);
\coordinate (0.00000, 1.41421, -0.81650) at (0.00000, 1.41421, -0.81650);
\coordinate (0.00000, 1.41421, 0.81650) at (0.00000, 1.41421, 0.81650);
\coordinate (1.33333, -0.94281, 0.00000) at (1.33333, -0.94281, 0.00000);
\coordinate (1.33333, -0.94281, 1.63299) at (1.33333, -0.94281, 1.63299);
\coordinate (1.33333, 0.47140, -0.81650) at (1.33333, 0.47140, -0.81650);
\coordinate (2.66667, 0.94281, 1.63299) at (2.66667, 0.94281, 1.63299);
\coordinate (1.33333, 0.47140, 2.44949) at (1.33333, 0.47140, 2.44949);
\coordinate (1.33333, 1.88562, 0.00000) at (1.33333, 1.88562, 0.00000);
\coordinate (1.33333, 1.88562, 1.63299) at (1.33333, 1.88562, 1.63299);
\coordinate (2.66667, -0.47140, 0.81650) at (2.66667, -0.47140, 0.81650);
\coordinate (2.66667, -0.47140, 2.44949) at (2.66667, -0.47140, 2.44949);
\coordinate (2.66667, 0.94281, 0.00000) at (2.66667, 0.94281, 0.00000);
\draw[edge,back,postaction={decorate}] (0.00000, 1.41421, -0.81650) -- (0.00000, 0.00000, 0.00000);
\draw[edge,back,postaction={decorate}] (0.00000, 1.41421, 0.81650) -- (0.00000, 0.00000, 1.63299);
\draw[edge,back,postaction={decorate}] (0.00000, 1.41421, 0.81650) -- (0.00000, 1.41421, -0.81650);
\draw[edge,back,postaction={decorate}] (1.33333, 0.47140, -0.81650) -- (0.00000, 1.41421, -0.81650);
\draw[edge,back,postaction={decorate}] (1.33333, 1.88562, 0.00000) -- (0.00000, 1.41421, -0.81650);
\draw[edge,back,postaction={decorate}] (1.33333, 1.88562, 1.63299) -- (0.00000, 1.41421, 0.81650);
\draw[edge,back,postaction={decorate}] (1.33333, 1.88562, 1.63299) -- (1.33333, 1.88562, 0.00000);
\draw[edge,back,postaction={decorate}] (2.66667, 0.94281, 0.00000) -- (1.33333, 1.88562, 0.00000);
\node[vertex] at (0.00000, 1.41421, -0.81650)     {};
\node[vertex] at (1.33333, 1.88562, 0.00000)     {};
\node[vertex] at (0.00000, 1.41421, 0.81650)     {};
\fill[facet] (2.66667, 0.94281, 0.00000) -- (1.33333, 0.47140, -0.81650) -- (1.33333, -0.94281, 0.00000) -- (2.66667, -0.47140, 0.81650) -- cycle {};
\fill[facet] (1.33333, -0.94281, 1.63299) -- (0.00000, 0.00000, 1.63299) -- (0.00000, 0.00000, 0.00000) -- (1.33333, -0.94281, 0.00000) -- cycle {};
\fill[facet] (2.66667, -0.47140, 2.44949) -- (1.33333, -0.94281, 1.63299) -- (0.00000, 0.00000, 1.63299) -- (1.33333, 0.47140, 2.44949) -- cycle {};
\fill[facet] (2.66667, -0.47140, 2.44949) -- (1.33333, -0.94281, 1.63299) -- (1.33333, -0.94281, 0.00000) -- (2.66667, -0.47140, 0.81650) -- cycle {};
\fill[facet] (1.33333, 0.47140, 2.44949) -- (2.66667, -0.47140, 2.44949) -- (2.66667, 0.94281, 1.63299) -- (1.33333, 1.88562, 1.63299) -- cycle {};
\fill[facet] (2.66667, -0.47140, 0.81650) -- (2.66667, 0.94281, 0.00000) -- (2.66667, 0.94281, 1.63299) -- (2.66667, -0.47140, 2.44949) -- cycle {};
\draw[edge,postaction={decorate}] (0.00000, 0.00000, 1.63299) -- (0.00000, 0.00000, 0.00000);
\draw[edge,postaction={decorate}] (1.33333, -0.94281, 0.00000) -- (0.00000, 0.00000, 0.00000);
\draw[edge,postaction={decorate}] (1.33333, -0.94281, 1.63299) -- (0.00000, 0.00000, 1.63299);
\draw[edge,postaction={decorate}] (1.33333, 0.47140, 2.44949) -- (0.00000, 0.00000, 1.63299);
\draw[edge,postaction={decorate}] (1.33333, -0.94281, 1.63299) -- (1.33333, -0.94281, 0.00000);
\draw[edge,postaction={decorate}] (1.33333, 0.47140, -0.81650) -- (1.33333, -0.94281, 0.00000);
\draw[edge,postaction={decorate}] (2.66667, -0.47140, 0.81650) -- (1.33333, -0.94281, 0.00000);
\draw[edge,postaction={decorate}] (2.66667, -0.47140, 2.44949) -- (1.33333, -0.94281, 1.63299);
\draw[edge,postaction={decorate}] (2.66667, 0.94281, 0.00000) -- (1.33333, 0.47140, -0.81650);
\draw[edge,postaction={decorate}] (2.66667, 0.94281, 1.63299) -- (1.33333, 1.88562, 1.63299);
\draw[edge,postaction={decorate}] (2.66667, 0.94281, 1.63299) -- (2.66667, -0.47140, 2.44949);
\draw[edge,postaction={decorate}] (2.66667, 0.94281, 1.63299) -- (2.66667, 0.94281, 0.00000);
\draw[edge,postaction={decorate}] (1.33333, 1.88562, 1.63299) -- (1.33333, 0.47140, 2.44949);
\draw[edge,postaction={decorate}] (2.66667, -0.47140, 2.44949) -- (1.33333, 0.47140, 2.44949);
\draw[edge,postaction={decorate}] (2.66667, -0.47140, 2.44949) -- (2.66667, -0.47140, 0.81650);
\draw[edge,postaction={decorate}] (2.66667, 0.94281, 0.00000) -- (2.66667, -0.47140, 0.81650);
\node[vertex] at (0.00000, 0.00000, 0.00000)     {};
\node[vertex] at (0.00000, 0.00000, 1.63299)     {};
\node[vertex] at (1.33333, -0.94281, 0.00000)     {};
\node[vertex] at (1.33333, -0.94281, 1.63299)     {};
\node[vertex] at (1.33333, 0.47140, -0.81650)     {};
\node[vertex] at (2.66667, 0.94281, 1.63299)     {};
\node[vertex] at (1.33333, 0.47140, 2.44949)     {};
\node[vertex] at (1.33333, 1.88562, 1.63299)     {};
\node[vertex] at (2.66667, -0.47140, 0.81650)     {};
\node[vertex] at (2.66667, -0.47140, 2.44949)     {};
\node[vertex] at (2.66667, 0.94281, 0.00000)     {};
\end{tikzpicture}} \quad
		\includegraphics[scale=.6]{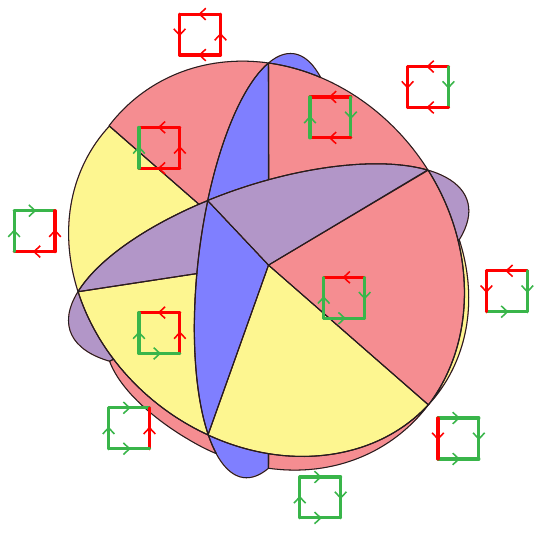} \quad
		\includegraphics[scale=.8]{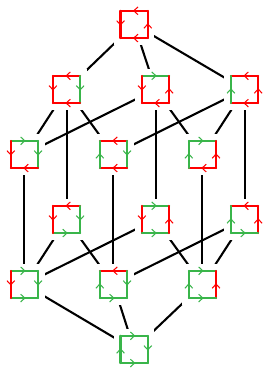}
	}
	\caption{A graphical zonotope, its graphical fan, and its acyclic orientation poset.} 
	\label{fig:graphicalZonotopes}
\end{figure}

For instance, the graphical zonotope of the complete graph (resp.~path, resp.~empty graph) on~$[n]$ is the permutahedron~$\Perm$ (resp.~a parallelotope denoted~$\Para$, resp.~a point denoted $\Point$).
We need the following definition to describe the face structure of graphical zonotopes.
We refer to~\cite{Greene} and~\cite[Sect.~7]{GreeneZaslavsky} for the original references.

\begin{definition}
\label{def:Gpartition}
A \defn{$G$-ordered partition} is a pair~$\Pi = (\pi, \omega)$, where
\begin{itemize}
\item $\pi$ is a partition of~$[n]$ where each part induces a connected subgraph of~$G$,
\item $\omega$ is an acyclic orientation on the quotient graph~$G/\pi$.
\end{itemize}
It defines a preposet~$\preccurlyeq_\Pi$ on~$[n]$, where $i \preccurlyeq_\Pi j$ if and only if there is a (possibly empty) oriented path in~$\omega$ joining the part of~$\pi$ containing~$i$ to the part of~$\pi$ containing~$j$.
For two $G$-ordered partitions~$\Pi$ and~$\Theta$, we say that~$\Pi$ refines~$\Theta$ (and~$\Theta$ coarsens~$\Pi$) when~$i \preccurlyeq_\Pi j$ implies $i \preccurlyeq_\Theta j$ for any~$i,j \in [n]$.
\end{definition}

\begin{proposition}[{\cite[Sect.~7]{GreeneZaslavsky}}]
\label{prop:facesGraphicalZonotope}
The face lattice of the graphical zonotope~$\Zono[G]$ is isomorphic to the refinement poset on $G$-ordered partitions (augmented with a minimal element).
In particular, 
\begin{itemize}
\item the vertices of~$\Zono[G]$ are in bijection with \defn{acyclic orientations} of~$G$,
\item the facets of~$\Zono[G]$ are in bijection with \defn{biconnected subsets} of~$G$, \ie non-empty connected subset~$U \subset V$ whose complement~$\bar U$ in its connected component of~$G$ is also non-empty and connected.
\end{itemize}
\end{proposition}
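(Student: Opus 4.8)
The plan is to realise $\Zono[G]$ through its normal fan and then read off the three claims from the combinatorics of that fan.

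First I would compute the normal fan of $\Zono[G]$. Since by definition $\Zono[G]$ is the Minkowski sum $\sum_{(i,j) \in E(G)} [\b{e}_i, \b{e}_j]$, \cref{prop:CartesianProductMinkowskiSum} gives that its normal fan is the common refinement of the normal fans of the segments $[\b{e}_i, \b{e}_j]$. A single segment has normal fan consisting of the two closed half-spaces bounded by the hyperplane $\set{\b{x} \in \R^n}{x_i = x_j}$ together with that hyperplane itself (the endpoint $\b{e}_j$ being selected on $\{x_i \le x_j\}$ and $\b{e}_i$ on $\{x_i \ge x_j\}$). Hence the normal fan of $\Zono[G]$ is the \emph{graphical fan} cut out by the arrangement $\set{\b{x} \in \R^n}{x_i = x_j}$ for $(i,j) \in E(G)$, a coarsening of the braid fan of \cref{prop:braidFan} obtained by keeping only the hyperplanes indexed by edges of $G$.

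Second I would set up the bijection between the cones of this graphical fan and the $G$-ordered partitions of \cref{def:Gpartition}. Given a point $\b{x}$, I record on each edge $(i,j)$ whether $x_i < x_j$, $x_i > x_j$, or $x_i = x_j$; two points lie in the relative interior of the same cone exactly when these data agree. From such data I recover a pair $(\pi,\omega)$ by letting $\pi$ be the partition into connected components of the subgraph of \emph{tight} edges $\set{(i,j)}{x_i = x_j}$ (each part being connected in $G$ by construction) and letting $\omega$ orient each edge of the quotient $G/\pi$ according to the now strict inequality between the common values on its endpoints. The orientation $\omega$ is acyclic because the common values give a strictly order-preserving real labelling of the quotient. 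Conversely, every $G$-ordered partition $\Pi$ arises this way: acyclicity of $\omega$ yields a topological order, hence a point $\b{x}$ constant on each part of $\pi$ and strictly increasing along $\omega$, whose data recover $\Pi$. This shows the map is a bijection and that the cone of $\Pi$ is exactly $\polytope{C}(\Pi) \eqdef \set{\b{x} \in \R^n}{x_i \le x_j \text{ whenever } i \preccurlyeq_\Pi j}$.

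Third I would transport the face order through normal-fan duality. The assignment $F \mapsto \c{N}(F)$ is an inclusion-reversing bijection from faces of $\Zono[G]$ to cones of its normal fan, and refinement of $G$-ordered partitions translates precisely into reverse inclusion of the cones $\polytope{C}(\Pi)$, since a finer partition imposes more tightness and more orientation constraints, hence a smaller preposet $\preccurlyeq_\Pi$ but a larger cone. Composing the two order-reversals yields the claimed isomorphism between the face lattice and the refinement poset, the artificial minimal element matching the empty face. The two \emph{in particular} statements then come out by specialising the dimension $|\pi|$ of the cone: the full-dimensional chambers, where the tight subgraph is edgeless so $\pi$ is the partition into singletons, biject with the full acyclic orientations of $G$ and correspond to the vertices. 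The step I expect to require the most care is the facet characterisation: a facet corresponds to a cone of dimension $c(G) + 1$, where $c(G)$ is the number of connected components (the dimension of the lineality space), which forces exactly one extra coordinate equality beyond those within the components. This amounts to a partition $\pi$ that splits a single component $C$ into two connected parts $U$ and $\bar U = C \setminus U$ while leaving the others whole, with $\omega$ directing the unique resulting quotient edge one of two ways. That is exactly the data of a biconnected subset ($U$ connected with connected non-empty complement in its component, the choice $U$ versus $\bar U$ recording the two orientations), so biconnected subsets biject with walls and hence with facets. Verifying the dimension count and that this cut condition coincides verbatim with biconnectedness (including that complementary subsets give the two distinct facets, consistent with the $2^n - 2$ facets of $\Perm$ when $G$ is complete) is the delicate part; the remainder is the standard dictionary between a zonotope and its defining arrangement.
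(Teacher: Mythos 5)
The paper never proves this proposition: it sits in the preliminaries, where the authors explicitly omit all proofs as ``well-known or immediate,'' so there is nothing to compare against except the standard argument --- and yours is a correct and essentially complete write-up of it. The three steps are sound: the normal fan of $\Zono[G]$ is the common refinement of the segment fans by \cref{prop:CartesianProductMinkowskiSum} (this is exactly \cref{prop:graphicalFan}); the identification of its cones with $G$-ordered partitions via sign data on edges and topological orderings is right, including the key converse that every $\Pi$ is realized by a point; and the dimension count $\dim \polytope{C}(\Pi) = |\pi|$, forcing $|\pi| = c(G)+1$ for facets and hence a split of a single component into connected nonempty $U$ and $\bar U$ with two orientations of the unique quotient edge, is exactly the biconnected-subset statement. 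Two small repairs. First, the facet normal cones are \emph{not} the walls of the fan in the sense of \cref{def:fan}: walls are codimension-one cones and correspond to \emph{edges} of $\Zono[G]$; what you want (and what your own dimension count identifies) are the minimal cones strictly containing the lineality space, so the phrase ``biconnected subsets biject with walls'' should be rephrased. Second, the clause ``a finer partition imposes more tightness and more orientation constraints'' is backwards: with \cref{def:Gpartition}, $\Pi$ finer than $\Theta$ means $\preccurlyeq_\Pi$ has \emph{fewer} relations, whence the larger cone --- the conclusion you then state is correct, but the justification as worded contradicts it. Neither slip affects the validity of the argument.
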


For instance for the complete graph~$K_n$, the $K_n$-ordered partitions are all ordered partitions (in the classical sense), the acyclic orientations are given by permutations, and the biconnected subsets are all proper subsets.

\begin{proposition}
\label{prop:graphicalFan}
The normal fan of the graphical zonotope~$\Zono[G]$ is the \defn{graphical fan}~$\Fan[G]$ with one cone~$\polytope{C}(\pi) \eqdef \set{\b{x} \in \R^n}{x_i \le x_j \text{ if } i \preccurlyeq_\Pi j}$ for each~$G$-ordered partition~$\Pi$.
Its walls are given by the arrangement of the hyperplanes~$\set{\b{x} \in \R^n}{x_i = x_j}$ for all~$(i,j) \in E(G)$.
See \cref{fig:graphicalZonotopes}\,(middle).
\end{proposition}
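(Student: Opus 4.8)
The plan is to compute~$\c{N}(\Zono[G])$ directly from the Minkowski sum description and then translate the resulting fan into the language of $G$-ordered partitions. By \cref{def:graphicalZonotope}, the graphical zonotope is the Minkowski sum~$\Zono[G] = \sum_{(i,j) \in E(G)} [\b{e}_i, \b{e}_j]$, so iterating the second part of \cref{prop:CartesianProductMinkowskiSum} gives that~$\c{N}(\Zono[G])$ is the common refinement~$\bigwedge_{(i,j) \in E(G)} \c{N}([\b{e}_i, \b{e}_j])$ of the normal fans of the segments. The whole computation thus reduces to understanding a single segment and then refining.

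Next I would compute the normal fan of one segment~$[\b{e}_i, \b{e}_j]$. Since~$\dotprod{\b{v}}{\b{e}_i} = v_i$ and~$\dotprod{\b{v}}{\b{e}_j} = v_j$, the maximization convention of \cref{def:normalFan} shows that the vertex~$\b{e}_i$ (resp.~$\b{e}_j$) is the face maximizing~$\b{v}$ exactly when~$v_i \ge v_j$ (resp.~$v_j \ge v_i$), while the whole segment is maximal exactly when~$v_i = v_j$. Hence~$\c{N}([\b{e}_i, \b{e}_j])$ consists of the two half-spaces~$\set{\b{x}}{x_i \ge x_j}$ and~$\set{\b{x}}{x_i \le x_j}$ together with their common wall~$\set{\b{x}}{x_i = x_j}$. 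Taking the common refinement, the cones of~$\c{N}(\Zono[G])$ are precisely the nonempty sets obtained by choosing, for each edge~$(i,j) \in E(G)$, one of the relations~$x_i < x_j$, $x_i > x_j$, or~$x_i = x_j$; that is, $\c{N}(\Zono[G])$ is the fan induced by the hyperplane arrangement~$\set{\b{x}}{x_i = x_j}$ for~$(i,j) \in E(G)$. This already settles the claim about walls, since the codimension-$1$ cones of this fan are exactly those lying on a single hyperplane~$x_i = x_j$.

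It then remains to identify these arrangement cones with the cones~$\polytope{C}(\Pi)$ of \cref{def:Gpartition}. Given a point~$\b{x}$, I would read off a candidate~$\Pi = (\pi, \omega)$ by letting~$\pi$ be the partition whose parts are the connected components of the subgraph of~$G$ spanned by the edges~$(i,j)$ with~$x_i = x_j$ (so each part is automatically connected in~$G$), and by orienting each edge of the quotient~$G/\pi$ from the smaller to the larger~$\b{x}$-value, which is well defined and acyclic because it refines the total order of the coordinates. One then checks that the arrangement cone containing~$\b{x}$ in its relative interior is exactly~$\polytope{C}(\Pi) = \set{\b{x}'}{x'_i \le x'_j \text{ if } i \preccurlyeq_\Pi j}$: the defining inequalities coming from the edges chosen with relation~$\le$ or~$=$ generate, by transitivity, precisely the oriented-reachability preposet~$\preccurlyeq_\Pi$ of~$G/\pi$, with no further collapses. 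This produces a bijection between the cones of the arrangement and the $G$-ordered partitions, compatible with the refinement order, which agrees with the face lattice computed in \cref{prop:facesGraphicalZonotope}; hence~$\c{N}(\Zono[G]) = \Fan[G]$.

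The main obstacle is this last identification, and in particular the interplay between the acyclicity of~$\omega$ and the nonemptiness of~$\polytope{C}(\Pi)$. A contradictory chain of chosen inequalities~$x_{a_1} \le x_{a_2} \le \dots \le x_{a_1}$ containing a strict step would force an empty cone, and it is exactly the acyclicity of the induced orientation on~$G/\pi$, together with the connectivity of the parts making~$\pi$ a genuine quotient, that guarantees both that~$\preccurlyeq_\Pi$ is the correct preposet and that~$\polytope{C}(\Pi)$ is nonempty (indeed full-dimensional, with lineality line~$\R\one$ when~$G$ is connected). Once these combinatorial checks are in place, the equality of fans is immediate.
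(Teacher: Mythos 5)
Your proof is correct. Note that the paper does not actually supply a proof here: \cref{sec:preliminaries} explicitly states that all proofs in that section are omitted as ``either well-known or immediate,'' and your argument is precisely the standard one that the paper's setup anticipates --- decompose~$\Zono[G]$ into the segments of \cref{def:graphicalZonotope}, apply the Minkowski-sum/common-refinement statement of \cref{prop:CartesianProductMinkowskiSum} to identify~$\c{N}(\Zono[G])$ with the fan of the arrangement of hyperplanes~$x_i = x_j$ for~$(i,j) \in E(G)$, and then read off a $G$-ordered partition from a relative-interior point of each cone. Your handling of the only delicate point (going from points to partitions, so that acyclicity of the induced orientation and connectivity of the parts come for free, rather than trying to match arbitrary sign choices to partitions) is exactly what makes the identification with the cones~$\polytope{C}(\Pi)$ of \cref{def:Gpartition} go through cleanly.
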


As for the sylvester fan of \cref{prop:sylvesterFan}, the graphical fan of \cref{prop:graphicalFan} coarsens the braid fan of \cref{prop:braidFan}.

\begin{proposition}
\label{prop:fanRefinementGraphical}
The braid fan refines the graphical fan~$\Fan[G]$.
More precisely, for a $G$-ordered partition~$\Pi$, the cone~$\polytope{C}(\Pi)$ is the union of the braid cones~$\polytope{C}(\mu)$ for the ordered partitions~$\mu$ such that~$\preccurlyeq_\mu$ extends~$\preccurlyeq_\Pi$.
\end{proposition}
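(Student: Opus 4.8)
The plan is to prove the displayed equality $\polytope{C}(\Pi) = \bigcup_\mu \polytope{C}(\mu)$ directly, by a bookkeeping of preposets, and to obtain the bare refinement statement through a short Minkowski-sum argument. The guiding observation is that, by \cref{prop:braidFan,prop:graphicalFan}, both the braid cone $\polytope{C}(\mu)$ and the graphical cone $\polytope{C}(\Pi)$ are cut out by half-spaces of the exact same shape $\set{\b{x} \in \R^n}{x_i \le x_j}$, indexed respectively by the comparabilities of the preposet $\preccurlyeq_\mu$ and of the preposet $\preccurlyeq_\Pi$; so the geometry is entirely governed by containment of these two preposets. I would fix a $G$-ordered partition $\Pi$ and prove the two inclusions separately, mirroring the proof of the analogous \cref{prop:fanRefinementSylvester} for the sylvester fan.

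For the inclusion $\bigcup_\mu \polytope{C}(\mu) \subseteq \polytope{C}(\Pi)$, I would simply unfold the hypothesis that $\preccurlyeq_\mu$ extends $\preccurlyeq_\Pi$, that is, that $i \preccurlyeq_\Pi j$ implies $i \preccurlyeq_\mu j$. This says that every defining inequality of $\polytope{C}(\Pi)$ is among the defining inequalities of $\polytope{C}(\mu)$, whence $\polytope{C}(\mu) \subseteq \polytope{C}(\Pi)$, and taking the union over all such $\mu$ preserves the inclusion.

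For the reverse inclusion $\polytope{C}(\Pi) \subseteq \bigcup_\mu \polytope{C}(\mu)$, I would produce, for each point $\b{x} \in \polytope{C}(\Pi)$, an explicit ordered partition witnessing membership in the union. Let $\mu(\b{x}) \in \f{P}_n$ be the ordered partition whose parts are the fibres $\set{i \in [n]}{x_i = c}$ over the distinct coordinate values $c$ of $\b{x}$, listed in order of increasing $c$. This is a genuine ordered partition, and its preposet satisfies $i \preccurlyeq_{\mu(\b{x})} j \iff x_i \le x_j$; in particular $\b{x} \in \polytope{C}(\mu(\b{x}))$ automatically. Since $\b{x} \in \polytope{C}(\Pi)$ means precisely that $i \preccurlyeq_\Pi j$ forces $x_i \le x_j$, the chain $i \preccurlyeq_\Pi j \implies x_i \le x_j \implies i \preccurlyeq_{\mu(\b{x})} j$ shows that $\preccurlyeq_{\mu(\b{x})}$ extends $\preccurlyeq_\Pi$. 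Hence $\b{x}$ lies in a braid cone appearing in the union, which proves the equality. For the bare refinement of the braid fan over $\Fan[G]$ I find it cleanest to argue independently: since $\Perm$ is a translate of the Minkowski sum of $\Zono[G]$ with the graphical zonotope of the complementary graph (the graph on $[n]$ whose edges are the non-edges of $G$), \cref{prop:CartesianProductMinkowskiSum} identifies the braid fan $\c{N}(\Perm)$ as a common refinement having $\Fan[G] = \c{N}(\Zono[G])$ as one of its two factors, so it refines $\Fan[G]$.

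The step I expect to require the most care is not any single computation but the preposet bookkeeping. I must keep the direction of ``extends'' consistent, namely $\preccurlyeq_\Pi \, \subseteq \, \preccurlyeq_\mu$, so that the finer braid cone sits inside the coarser graphical cone; I must check that $\preccurlyeq_\Pi$ is genuinely a preposet, so that the parts of $\pi$ behave as equivalence classes and the coordinates of any $\b{x} \in \polytope{C}(\Pi)$ are forced to be constant on each part; and I must confirm that $\mu(\b{x})$ is a well-formed element of $\f{P}_n$. None of these is deep, but a reversed inequality or a failure of transitivity would silently break the argument, so this is precisely where I would be most careful.
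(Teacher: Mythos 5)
Your proof is correct. Note that the paper never writes a proof of this proposition at all: it sits in the preliminaries, whose results are explicitly stated without proof as ``well-known or immediate,'' and your two-inclusion argument is precisely the immediate argument being alluded to. Both halves check out: when $\preccurlyeq_\mu$ extends $\preccurlyeq_\Pi$, the defining inequalities of $\polytope{C}(\Pi)$ are a subset of those of $\polytope{C}(\mu)$, giving $\polytope{C}(\mu) \subseteq \polytope{C}(\Pi)$; and for $\b{x} \in \polytope{C}(\Pi)$ the level-set partition $\mu(\b{x})$ is a well-formed ordered partition with $i \preccurlyeq_{\mu(\b{x})} j \iff x_i \le x_j$, so it both contains $\b{x}$ in its braid cone and extends $\preccurlyeq_\Pi$. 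One small remark: your closing Minkowski-sum detour for the bare refinement statement (writing $\Perm$ as a translate of $\Zono[G] + \Zono[\bar G]$ and invoking \cref{prop:CartesianProductMinkowskiSum}) is sound but unnecessary. Since your equality holds for \emph{every} $G$-ordered partition $\Pi$, every cone of $\Fan[G]$ is already exhibited as a union of cones of the braid fan, which is exactly what it means for the braid fan to refine $\Fan[G]$; the first sentence of the proposition is a formal consequence of the second.
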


This can be interpreted as an equivalence relation on permutations and on ordered partitions, similar to the sylvester relation discussed in \cref{def:sylvesterRelation,rem:sylvesterRelation}.

\begin{definition}
\label{def:graphicalRelation}
The \defn{graphical relation~$\equiv_G$} on ordered partitions of~$[n]$ is defined by~$\mu \equiv_G \nu$ if and only if the cones~$\polytope{C}(\mu)$ and~$\polytope{C}(\nu)$ of the braid fan belong precisely to the same cones of the graphical fan~$\Fan[G]$. Equivalently, $\mu \equiv_G \nu$ if and only if $i \preccurlyeq_\mu j \iff i \preccurlyeq_\nu j$ for any edge~$(i,j)$ of~$G$.
It restricts to an equivalence relation on permutations, which can also be seen as the transitive closure of the rewriting rule~$UabV \equiv_G UbaV$ for all words~$U,V$ on~$[n]$ and elements~$a,b$ in~$[n]$ which do not form an edge of~$G$.
\end{definition}

We now orient the graph of the graphical zonotope~$\Zono[G]$ as in \cref{prop:weakOrder,prop:TamariLattice}.

\begin{definition}
\label{def:inversionAcyclicOrientation}
An \defn{inversion} of an acyclic orientation~$\omega$ of an integer graph~$G$ is an edge~$\{i, j\}$ of~$G$ such that~$i < j$ but the edge goes from~$j$ to~$i$ in the orientation~$\omega$.
The \defn{acyclic orientation poset} of~$G$ is the poset on acyclic orientations of~$G$ defined by inclusion of their inversion sets.
See \cref{fig:graphicalZonotopes}\,(right).
\end{definition}

\begin{proposition}
\label{prop:acyclicOrientationPoset}
When oriented in the direction~${\b{\omega} \eqdef (n,\dots,1) - (1,\dots,n) = \sum_{i \in [n]} (n+1-2i) \, \b{e}_i}$, the graph of the graphical zonotope~$\Zono[G]$ is the Hasse diagram of the acyclic orientation poset of~$G$.
\end{proposition}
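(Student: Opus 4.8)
The plan is to read off the vertices, edges, and the $\b{\omega}$-orientation of $\Zono[G]$ directly from its description as a Minkowski sum of the segments $[\b{e}_i, \b{e}_j]$, and to match this data against the cover relations of the acyclic orientation poset. For the vertices I would simply invoke \cref{prop:facesGraphicalZonotope}: the vertices of $\Zono[G]$ are in bijection with the acyclic orientations of $G$, which is also the ground set of the acyclic orientation poset. Concretely, for a generic cost vector $\b{c} \in \R^n$, the vertex of $\Zono[G]$ maximizing $\dotprod{\b{c}}{\cdot}$ is $\b{v}(\b{c}) = \sum_{(i,j) \in E(G)} \b{e}_{k(i,j)}$, where $k(i,j)$ is the endpoint of $\{i,j\}$ with the larger $\b{c}$-coordinate; the associated acyclic orientation orients each edge towards its larger endpoint. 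I write $\b{v}_\omega$ for the vertex attached to the acyclic orientation~$\omega$.

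Next I would identify the edges and compute their orientation. Since $\Zono[G]$ is a Minkowski sum of segments, each of its edges is parallel to some segment direction $\b{e}_j - \b{e}_i$ with $(i,j) \in E(G)$, and moving along such an edge keeps the chosen endpoint of every other segment fixed while flipping the one attached to $\{i,j\}$. Hence the endpoints $\b{v}_\omega, \b{v}_{\omega'}$ of an edge correspond to two acyclic orientations differing by reversing the single edge $\{i,j\}$. The $\b{\omega}$-orientation is then immediate: if $i < j$ and $\omega$ orients $i \to j$ while $\omega'$ orients $j \to i$, then $\b{v}_{\omega'} - \b{v}_\omega = \b{e}_i - \b{e}_j$ and
\[
\dotprod{\b{\omega}}{\b{e}_i - \b{e}_j} = (n+1-2i) - (n+1-2j) = 2(j-i) > 0.
\]
Thus the $\b{\omega}$-orientation points from $\omega$ to $\omega'$, that is, from the orientation in which $\{i,j\}$ is not an inversion to the one in which it is. In particular no edge is orthogonal to $\b{\omega}$, and along each edge the inversion set (in the sense of \cref{def:inversionAcyclicOrientation}) gains exactly one element. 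Since gaining a single inversion is precisely a cover relation for the inclusion order on inversion sets, this already shows that the $\b{\omega}$-oriented graph of $\Zono[G]$ is a subgraph of the Hasse diagram of the acyclic orientation poset.

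It remains to establish the converse inclusion: every cover $\omega \lessdot \omega'$ of the acyclic orientation poset is realized by an edge of $\Zono[G]$. Such a cover means that the inversion sets of $\omega$ and $\omega'$ differ by a single edge $\{i,j\}$, so the two acyclic orientations differ exactly by reversing $\{i,j\}$. The hard part is to exhibit a linear functional maximized precisely on the segment $[\b{v}_\omega, \b{v}_{\omega'}]$, certifying that it is a genuine edge. The key observation is that, because both $\omega$ and $\omega'$ are acyclic, there is no directed path between $i$ and $j$ using the other edges of $G$ (a path $i \leadsto j$ avoiding $\{i,j\}$ would create a cycle in $\omega'$, and a path $j \leadsto i$ would create one in $\omega$); hence the common orientation of $G \smallsetminus \{i,j\}$ admits a linear extension in which $i$ and $j$ can be made equal. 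Choosing a cost vector $\b{c}$ realizing this extension with $c_i = c_j$ and strictly monotone otherwise leaves the segment $[\b{e}_i, \b{e}_j]$ free while fixing the chosen endpoint of every other segment, so $\b{c}$ is maximized exactly on $[\b{v}_\omega, \b{v}_{\omega'}]$. I expect this step---verifying that the acyclicity of both orientations guarantees such a separating functional---to be the main obstacle, the rest being bookkeeping. Combining the two inclusions, the $\b{\omega}$-oriented graph of $\Zono[G]$ coincides with the Hasse diagram of the acyclic orientation poset of $G$, as claimed.
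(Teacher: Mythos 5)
The paper offers no proof of this proposition to compare against: it sits in \cref{sec:preliminaries}, whose results are explicitly left unproved as ``well-known or immediate''. Judged on its own terms, most of your argument is correct and is the standard one. The vertex correspondence via generic functionals, the fact that an edge of~$\Zono[G]$ flips exactly one generator (distinct edges of a simple graph give pairwise non-parallel segments $[\b{e}_i,\b{e}_j]$, so each edge of the zonotope frees exactly one of them), and the sign computation $\dotprod{\b{\omega}}{\b{e}_i - \b{e}_j} = 2(j-i) > 0$ showing that every $\b{\omega}$-oriented edge adds exactly one inversion, are all fine. The separating-functional step, which you flagged as the main obstacle, is also handled correctly: acyclicity of both $\omega$ and $\omega'$ forces $i$ and $j$ to be incomparable in the common orientation of~$G \ssm \{i,j\}$, so one may merge $i$ and $j$ (the quotient is still acyclic), take a linear extension, and obtain a functional maximized exactly on $[\b{v}_\omega, \b{v}_{\omega'}]$. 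Thus single edge flips between acyclic orientations are precisely the edges of~$\Zono[G]$, oriented by~$\b{\omega}$ towards the larger inversion set.

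The genuine gap is the sentence ``Such a cover means that the inversion sets of $\omega$ and $\omega'$ differ by a single edge~$\{i,j\}$.'' This is not the definition of a cover. By \cref{def:inversionAcyclicOrientation} the poset is ordered by \emph{inclusion} of inversion sets, and a cover is a strict inclusion $\inv(\omega) \subsetneq \inv(\omega')$ with no acyclic orientation strictly in between; since not every set of edges is the inversion set of an acyclic orientation, a cover could \emph{a priori} differ by several edges, in which case the Hasse diagram would contain pairs that are not edges of the zonotope and your two inclusions would not close up. This is exactly the content of the proposition that remains to be proved. The missing lemma is: if $\inv(\omega) \subsetneq \inv(\omega')$, then some $e \in \inv(\omega') \ssm \inv(\omega)$ can be flipped in~$\omega$ while preserving acyclicity, so that $\inv(\omega) \cup \{e\}$ is again the inversion set of an acyclic orientation, strictly between the two unless they differ by a single edge. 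One clean proof is geometric: take generic points in the interiors of the chambers of the graphical fan (\cref{prop:graphicalFan}) corresponding to~$\omega$ and~$\omega'$ and walk along the segment joining them; the hyperplanes $x_i = x_j$ crossed are exactly those indexed by $\inv(\omega') \ssm \inv(\omega)$, they are crossed one at a time, and each intermediate chamber is an acyclic orientation whose inversion set is sandwiched between $\inv(\omega)$ and $\inv(\omega')$. With this lemma, covers coincide with single flips and your argument is complete.
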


\begin{remark}
\label{rem:acyclicOrientationPosetNotLattice}
In contrast to \cref{prop:weakOrder,prop:TamariLattice}, the acyclic orientation poset is not always a lattice, as will be discussed in more details in \cref{prop:latticePropertiesGraphicalZonotopes} (see also \cite{Pilaud-acyclicReorientationLattices}).
\end{remark}

In contrast to the permutahedra and braid fans of \cref{subsec:permutahedra} and as illustrated in \cref{fig:graphicalZonotopes}, the graphical zonotope~$\Zono[G]$ is not always simple and the graphical fan~$\Fan[G]$ is not always simplicial.
The following characterization was stated in \cite[Rem.~6.2]{Kim}, \cite[Prop.~5.2]{PostnikovReinerWilliams} and \cite[Prop.~52]{Pilaud-acyclicReorientationLattices} (the immediate proof is omitted in the first two).

\begin{proposition}
\label{prop:simpleGraphicalZonotope}
The graphical zonotope~$\Zono[G]$ is simple (or equivalently, the graphical fan~$\Fan[G]$ is simplicial) if and only if~$G$ is chordful, meaning that any cycle of~$G$ induces a clique of~$G$.
\end{proposition}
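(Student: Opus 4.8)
The plan is to work entirely with the graphical fan $\Fan[G]$ and its chambers, since $\Zono[G]$ is simple if and only if $\Fan[G]$ is simplicial, that is, every chamber is a simplicial cone. A full-dimensional chamber corresponds to an acyclic orientation $\omega$ of $G$ (a $G$-ordered partition into singletons), and is the cone $\polytope{C}(\omega) = \set{\b{x} \in \R^n}{x_i \le x_j \text{ if } i \to j \text{ in } \omega}$; its lineality space has dimension equal to the number $c$ of connected components of $G$, so modulo lineality it is a pointed full-dimensional cone of dimension $n-c$. First I would identify the facets of $\polytope{C}(\omega)$: the inequality $x_i \le x_j$ attached to an arc $i \to j$ is redundant exactly when there is a directed path of length $\ge 2$ from $i$ to $j$, so the irredundant inequalities are indexed by the covering relations of the reachability order $\preccurlyeq_\omega$, i.e.\ by the edges of its Hasse diagram $H_\omega$ (a spanning subgraph of $G$ with the same connected components). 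The facet normal of a covering arc $i \lessdot j$ is $\b{e}_j - \b{e}_i$, and such edge vectors of a graph are linearly independent if and only if that graph is a forest. Hence $\polytope{C}(\omega)$ is simplicial if and only if $H_\omega$ is a forest, and therefore $\Zono[G]$ is simple if and only if $H_\omega$ is a forest for every acyclic orientation $\omega$.

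For the implication ``$G$ chordful $\implies$ simple'' I would argue by contradiction: suppose some $H_\omega$ contains an undirected cycle $u_1 u_2 \cdots u_m u_1$ with $m \ge 3$. Its vertices form a cycle of $G$, hence a clique by chordfulness, on which $\omega$ restricts to a transitive tournament, i.e.\ a total order. Let $u_j$ be its maximum; its two cycle-neighbours $u_{j-1}$ and $u_{j+1}$ both point to $u_j$, and being in the clique they are adjacent, say $u_{j-1} \to u_{j+1}$. Then $u_{j-1} \to u_{j+1} \to u_j$ is a directed path of length $2$, so $u_{j-1} \to u_j$ is not a covering relation, contradicting that $u_{j-1}u_j$ is an edge of $H_\omega$.

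For the converse ``$G$ not chordful $\implies$ not simple'' I would first record a reduction lemma: if $G'$ is an induced subgraph of $G$ and some orientation $\omega'$ of $G'$ has $H_{\omega'}$ containing a cycle, then $\Zono[G]$ is not simple. Indeed, realize $\omega'$ by a generic height function on $V(G')$ and extend it to $G$ by placing every other vertex strictly below $\min$ or strictly above $\max$ of the heights used on $V(G')$; any potential length-$\ge 2$ shortcut for a covering arc of $\omega'$ is height-monotone, hence stays inside the height band of $V(G')$, hence (as $G'$ is induced) inside $G'$, so covering arcs of $\omega'$ remain covering in the extension and $H_\omega \supseteq H_{\omega'}$ still has a cycle. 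It then suffices to exhibit, inside any non-chordful $G$, an induced subgraph supporting a cyclic Hasse diagram. Taking a non-clique cycle $Z$ of minimum length, either $Z$ is chordless, giving an induced cycle $C_\ell$ with $\ell \ge 4$, or a chord splits $Z$ into two cliques and produces an induced diamond $K_4 - e$ on the two chord endpoints together with one private vertex from each side. For an induced $C_\ell$ I would orient it with at least two sources, so that no edge is the long way round and all $\ell$ edges are covering, giving $H_\omega = C_\ell$; for the diamond $K_4 - e$ I would put its two degree-$3$ vertices at the global minimum and maximum, which makes the chord $xy$ non-covering while the four rim edges stay covering, giving $H_\omega = C_4$.

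I expect the converse to be the delicate part. The cleanest route is this reduction to an induced $C_{\ge 4}$ or induced diamond, which is exactly the forbidden-induced-subgraph characterization of chordful graphs (the block graphs); the work lies in the minimal-cycle analysis guaranteeing one of these two induced configurations, and in checking that the extreme-height extension genuinely prevents any shortcut through the chords of $Z$ or through the rest of $G$. The forward direction and the linear-algebra reformulation of simpliciality are routine by comparison.
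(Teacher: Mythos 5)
The paper does not actually prove this proposition: it sits in the preliminaries, whose proofs are declared ``well-known or immediate'', and the statement is attributed to \cite{Kim}, \cite{PostnikovReinerWilliams} and \cite{Pilaud-acyclicReorientationLattices}. So there is no in-paper argument to match yours against; judged on its own, your proof is correct and essentially follows the route taken in that literature. Your reformulation is the right one: a chamber of~$\Fan[G]$ corresponds to an acyclic orientation~$\omega$, its irredundant inequalities are exactly the covering relations of the reachability order, the facet normals~$\b{e}_j - \b{e}_i$ span the orthogonal of the lineality space, and by the graphic-matroid criterion they are independent if and only if the Hasse diagram~$H_\omega$ is a forest; hence $\Zono[G]$ is simple if and only if the transitive reduction of every acyclic orientation of~$G$ is a forest. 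Your forward direction is exactly right: a cycle of~$H_\omega$ lies on a cycle of~$G$, hence on a clique, the restriction of~$\omega$ to that clique is a total order, and the maximum of the cycle vertices receives a length-two directed shortcut, contradicting covering. The reduction lemma for the converse is also sound: orienting all of~$G$ by a height function that places $V(G)\ssm V(G')$ strictly outside the height band of~$V(G')$ produces an acyclic orientation of~$G$, and any directed shortcut between endpoints of a covering arc of~$\omega'$ is height-monotone, hence confined to the band, hence (since $G'$ is induced) to~$G'$ itself.

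One spot needs tightening in your dichotomy, in the diamond case. Taking the chord endpoints~$x,y$ of a minimum-length non-clique cycle~$Z$ together with an \emph{arbitrary} private vertex from each side need not give a diamond: those two private vertices could be adjacent, in which case $\{x,y,a,b\}$ induces~$K_4$, which is useless (every acyclic orientation of a clique has a path as Hasse diagram). You must choose the pair non-adjacent, and such a pair exists: by minimality both sub-cycles cut off by the chord induce cliques, so $x$ and~$y$ are adjacent to every vertex of~$Z$, and therefore any non-adjacent pair in~$V(Z)$ --- which exists since $Z$ is not a clique --- consists of one private vertex from each side. With that one-line repair, the chain of steps (forest criterion for each chamber, clique-maximum argument, height-band extension, induced $C_{\ell}$ with $\ell \ge 4$ or induced diamond, and the explicit non-forest orientations of these two configurations) constitutes a complete and correct proof.
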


We now want to underline that the Cartesian products and Minkowski sums of \cref{def:CartesianProductMinkowskiSum} preserve the family of graphical zonotopes.

\begin{definition}
\label{def:disjointUnionEdgeUnionGraphs}
For two graphs~$G$ and~$H$,
\begin{itemize}
\item if~$V(G) \cap V(H) = \varnothing$, then the \defn{disjoint union}~$G \sqcup H$ is the graph with~$V(G \sqcup H) = V(G) \sqcup V(H)$ and $E(G \sqcup H) = E(G) \sqcup E(H)$,
\item if~$V(G) = V(H)$ and~$E(G) \cap E(H) = \varnothing$, then the \defn{superposition}~$G \oplus H$ is the graph with ${V(G \oplus H) = V(G) = V(H)}$ and $E(G \oplus H) = E(G) \sqcup E(H)$.
\end{itemize}
\end{definition}

\begin{definition}
\label{def:shiftedIntegerGraphs}
For two graphs~$G$ on~$[m]$ and~$H$ on~$[n]$, define
\begin{itemize}
\item the \defn{shifted graph}~$H^{+m}$ as the graph with vertices~$[n]^{+m} \eqdef \{m+1, \dots, m+n\}$ and edges~$E(H)^{+m} \eqdef \set{(m+i, m+j)}{(i,j) \in E(H)}$,
\item the \defn{shifted union} as~$G \otimes H \eqdef G \sqcup H^{+m}$.
\end{itemize}
\end{definition}

\begin{proposition}
\label{prop:operationsGraphsZonotopes}
For all graphs~$G$ on~$[m]$ and~$H$ on~$[n]$,
\begin{itemize}
\item $\Zono[G] \times \Zono[H] = \Zono[G \otimes H]$.
\item if~$m = n$ and~$E(G) \cap E(H) = \varnothing$, then~$\Zono[G] + \Zono[H] = \Zono[G \oplus H]$.
\end{itemize}
\end{proposition}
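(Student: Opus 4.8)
The plan is to argue directly from \cref{def:graphicalZonotope}, which presents every graphical zonotope as a Minkowski sum of the segments~$[\b{e}_i,\b{e}_j]$ over its edges, and to exploit the elementary compatibility between Cartesian products and Minkowski sums. The only ingredients I will need are the associativity and commutativity of the Minkowski sum, the fact that an affine-coordinate embedding~$\R^k \hookrightarrow \R^{m+n}$ commutes with Minkowski sums, and the identity
\[
\polytope{P} \times \polytope{Q} = (\polytope{P} \times \{0\}) + (\{0\} \times \polytope{Q})
\]
valid in~$\R^{m+n}$ for any~$\polytope{P} \subseteq \R^m$ and~$\polytope{Q} \subseteq \R^n$, where each factor is embedded in the appropriate coordinate subspace.

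For the first identity, I would first embed~$\R^m$ as the first~$m$ coordinates of~$\R^{m+n}$: since this embedding commutes with Minkowski sums, it turns~$\Zono[G]$ into~$\sum_{(i,j) \in E(G)} [\b{e}_i,\b{e}_j]$, now with~$\b{e}_i,\b{e}_j$ denoting the first~$m$ basis vectors of~$\R^{m+n}$. Symmetrically, embedding~$\R^n$ as the last~$n$ coordinates sends each segment~$[\b{e}_i,\b{e}_j]$ of~$\Zono[H]$ to the segment~$[\b{e}_{m+i},\b{e}_{m+j}]$, so that~$\{0\} \times \Zono[H] = \sum_{(i,j) \in E(H)} [\b{e}_{m+i},\b{e}_{m+j}]$. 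Feeding these two computations into the product-equals-sum identity above yields
\[
\Zono[G] \times \Zono[H] = \sum_{(i,j) \in E(G)} [\b{e}_i,\b{e}_j] \;+\; \sum_{(i,j) \in E(H)} [\b{e}_{m+i},\b{e}_{m+j}].
\]
Since by \cref{def:shiftedIntegerGraphs} the edge set of~$G \otimes H = G \sqcup H^{+m}$ is exactly~$E(G) \sqcup E(H)^{+m}$, the right-hand side is precisely~$\Zono[G \otimes H]$, as desired.

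For the second identity I assume~$m = n$ and~$E(G) \cap E(H) = \varnothing$, so both zonotopes live in the same~$\R^n$. Associativity and commutativity of the Minkowski sum give
\[
\Zono[G] + \Zono[H] = \sum_{(i,j) \in E(G)} [\b{e}_i,\b{e}_j] \;+\; \sum_{(i,j) \in E(H)} [\b{e}_i,\b{e}_j] = \sum_{(i,j) \in E(G) \sqcup E(H)} [\b{e}_i,\b{e}_j],
\]
and the disjointness hypothesis ensures both that the superposition~$G \oplus H$ is well-defined (\cref{def:disjointUnionEdgeUnionGraphs}) and that~$E(G \oplus H) = E(G) \sqcup E(H)$; hence the last sum equals~$\Zono[G \oplus H]$.

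The argument is routine and I anticipate no genuine obstacle. The one place to watch is the bookkeeping of the shift by~$m$ in the first part: one must verify that embedding~$\R^n$ as the last~$n$ coordinates of~$\R^{m+n}$ sends~$\b{e}_i$ to~$\b{e}_{m+i}$, so that the segments line up with the shifted edges~$E(H)^{+m}$. In the second part it is worth \emph{emphasizing} that disjointness of the edge sets is not a mere formality guaranteeing that~$G \oplus H$ is defined: it is exactly what prevents a segment from appearing twice in the Minkowski sum (which would produce a dilated segment~$2\,[\b{e}_i,\b{e}_j]$ rather than a single edge), so it is genuinely required for the identification with the graphical zonotope of a simple graph.
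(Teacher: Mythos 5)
Your proof is correct, and it is precisely the routine verification the paper has in mind: this proposition sits in the preliminaries, where the authors explicitly omit all proofs as ``well-known or immediate,'' and the immediate argument is exactly yours — expand each zonotope as a Minkowski sum of segments via \cref{def:graphicalZonotope}, use $\polytope{P} \times \polytope{Q} = (\polytope{P} \times \{0\}) + (\{0\} \times \polytope{Q})$ together with the fact that coordinate embeddings commute with Minkowski sums, and match edge sets. Your closing remark on why disjointness of the edge sets is genuinely needed (to avoid dilated segments $2\,[\b{e}_i,\b{e}_j]$) is also consistent with the paper's comment following the proposition that $\Zono[G \oplus H]$ and $\Zono[G] + \Zono[H]$ share combinatorics but not geometry when $E(G) \cap E(H) \ne \varnothing$.
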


Note that if $E(G) \cap E(H) \ne \varnothing$, then~$\Zono[G \oplus H]$ has the same combinatorics, but not the same geometry as~$\Zono[G] + \Zono[H]$.
In this paper, we will anyway only need Minkowski sums of graphical zonotopes of graphs with disjoint edge sets.

\enlargethispage{.1cm}
Finally, we briefly describe the graphical zonotopes of complete multipartite graphs, that will play a crucial role in this paper.

\begin{definition}
\label{def:completeMultipartiteGraphicalZonotope}
We consider a $k$-tuple~$\b{n} = (n_1, \dots, n_k)$ of positive integers, and let~${n \eqdef n_1 + \dots + n_k}$ and~$V_1 \eqdef [n_1]$, $V_2 \eqdef [n_2]^{+n_1}$, \dots, $V_k \eqdef [n_k]^{+n_1 + \dots + n_{k-1}}$.
We denote by~$K_{\b{n}}$ the \defn{complete multipartite graph} with vertex set~$V(K_{\b{n}}) \eqdef [n] = V_1 \sqcup \dots \sqcup V_k$ and edge set~$E(K_{\b{n}}) \eqdef \bigcup_{1 \le i < j \le k} V_i \times V_j$.
We denote by~${\polytope{Z}_{\b{n}} = \Zono[K_{\b{n}}]}$ its graphical zonotope.
For~$m,n \in \N$, we often write~$K_{m,n}$ and~$\polytope{Z}_{m,n}$ instead of~$K_{(m,n)}$ and~$\polytope{Z}_{(m,n)}$.
\end{definition}

\begin{figure}
	\centerline{\includegraphics[scale=1]{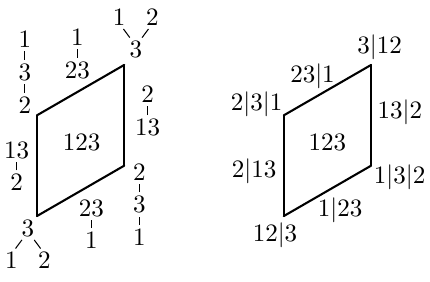}}
	\caption{The graphical zonotope~$\polytope{Z}_{(2,1)}$ with faces labeled by $K_{(2,1)}$-ordered partitions (left) and by ordered partitions of~$[3]$ with no two consecutive parts contained in $\{1,2\}$ (right).}
	\label{fig:labelingsGraphicalZonotope}
\end{figure}

As discussed in \cref{def:Gpartition,prop:facesGraphicalZonotope}, the combinatorial structure of~$\polytope{Z}_{\b{n}}$ is given by $K_{\b{n}}$-ordered partitions.
It turns out that these $K_{\b{n}}$-ordered partitions are almost ordered partitions of~$[n]$ in the classical sense.
Indeed, note that 
\begin{itemize}
\item a subset of~$[n]$ induces a connected subgraph of~$K_{\b{n}}$ if and only if either it is a singleton or it is not contained in one of the~$V_i$'s,
\item two such sets are connected by an edge except if they are two singletons in the same~$V_i$.
\end{itemize}
Therefore, given a $K_{\b{n}}$-ordered partition~$\Pi = (\pi,\omega)$, the preposet~$\preccurlyeq_\Pi^\star$ obtained from~$\preccurlyeq_\Pi$ by adding all relations between incomparable elements of~$\preccurlyeq_\Pi$ is the preposet of an ordered partition, with the property that no two consecutive parts are included in the same~$V_i$.
Conversely, given such an ordered partition~$\mu$, the preposet~$\preccurlyeq_\mu^{\b{n}}$ obtained from~$\preccurlyeq_\mu$ by deleting all relations inside each part of~$\mu$ completely contained in one of the~$V_i$'s is the preposet of a $K_{\b{n}}$-ordered partition.
The correspondence between these two combinatorial descriptions of the faces of~$\polytope{Z}_{\b{n}}$ is illustrated in \cref{fig:labelingsGraphicalZonotope}.
The following statement summarizes this observation.

\begin{proposition}
\label{prop:completeMultipartiteGraphicalZonotopeFaces}
The faces of the graphical zonotope~$\polytope{Z}_{\b{n}}$ are in bijection with the ordered partitions of~$[n]$ where no two consecutive parts are included in the same~$V_i$.
The vertices of~$\polytope{Z}_{\b{n}}$ then correspond to those ordered partitions where each part is included in some~$V_i$.
\end{proposition}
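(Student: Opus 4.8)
The plan is to combine \cref{prop:facesGraphicalZonotope}, which identifies the faces of~$\polytope{Z}_{\b{n}} = \Zono[K_{\b{n}}]$ with the $K_{\b{n}}$-ordered partitions~$\Pi = (\pi,\omega)$, with an explicit bijection between these $K_{\b{n}}$-ordered partitions and the ordered partitions of~$[n]$ having no two consecutive parts in a common~$V_i$. The two maps realizing this bijection are exactly the ones announced before the statement: from a $K_{\b{n}}$-ordered partition~$\Pi$, the map~$\Pi \mapsto \preccurlyeq_\Pi^\star$ that forces all incomparable elements of~$\preccurlyeq_\Pi$ to become equivalent; and from an ordered partition~$\mu$, the map~$\mu \mapsto \preccurlyeq_\mu^{\b{n}}$ that erases the relations internal to each part contained in some~$V_i$. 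First I would record the two structural observations stated above: a subset of~$[n]$ is connected in~$K_{\b{n}}$ if and only if it is a singleton or meets at least two of the~$V_i$, and two connected subsets fail to be joined by an edge of~$K_{\b{n}}$ exactly when they are two singletons lying in the same~$V_i$.

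The crux, and the step I expect to be the main obstacle, is to show that~$\Pi \mapsto \preccurlyeq_\Pi^\star$ is well defined, i.e.\ that~$\preccurlyeq_\Pi^\star$ really is the preposet of an ordered partition. For this I would prove the key lemma that the incomparability relation among the classes of~$\preccurlyeq_\Pi$ is transitive. The starting point is that two incomparable classes must be non-adjacent in the quotient~$K_{\b{n}}/\pi$, hence (by the second observation) singletons~$\{u\},\{v\}$ in a common~$V_i$. Given three classes with~$\{u\} \| \{v\}$ and~$\{v\} \| \{w\}$, the shared vertex~$v$ forces~$u,v,w$ into the same~$V_i$; assuming for contradiction an oriented path from~$u$ to~$w$, its first and last intermediate classes each contain a vertex outside~$V_i$ (otherwise no edge to~$u$, resp.~to~$w$, could exist), so each is adjacent to~$\{v\}$. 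Orienting these edges compatibly with~$\omega$ to avoid~$u \prec v$ and~$v \prec w$ forces the first intermediate class above~$v$ and the last one below~$v$, whereas the path orders the last above the first, contradicting the acyclicity of~$\omega$. Transitivity of incomparability makes the relation $\approx$ (equality or incomparability) an equivalence whose classes are linearly ordered by~$\preccurlyeq_\Pi$, so~$\preccurlyeq_\Pi^\star$ is a total preorder. A short level argument, using again that a class adjacent to a vertex of~$V_i$ must leave~$V_i$, then shows that no two consecutive parts of~$\preccurlyeq_\Pi^\star$ lie in a common~$V_i$.

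Next I would check that~$\mu \mapsto \preccurlyeq_\mu^{\b{n}}$ lands in the $K_{\b{n}}$-ordered partitions: erasing the internal relations of a part~$\mu_s \subseteq V_i$ splits it into singletons (each trivially connected), the surviving parts~$\mu_s \not\subseteq V_i$ stay connected by the first observation, and the induced order on classes comes from an acyclic orientation of the quotient since every edge of~$K_{\b{n}}/\pi$ joins classes from distinct parts of~$\mu$, hence comparable ones. Finally I would verify that the two maps are mutually inverse. In~$\preccurlyeq_\mu^{\b{n}}$ two elements are~$\approx$-equivalent precisely when they lie in the same part of~$\mu$ (elements of different parts keep their strict inter-part relation), so applying~$\star$ recovers~$\mu$; conversely, the incomparability lemma shows that each~$\approx$-class of~$\preccurlyeq_\Pi$ either is a single~$\pi$-class or is a union of singletons all lying in one~$V_i$, so splitting the parts contained in some~$V_i$ recovers exactly~$\pi$ and its acyclic orientation. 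The vertex statement then follows by specialization: vertices correspond to $K_{\b{n}}$-ordered partitions with~$\pi$ the all-singletons partition, which under the bijection are precisely the ordered partitions all of whose parts are contained in some~$V_i$.
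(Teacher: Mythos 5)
Your proposal is correct and takes essentially the same route as the paper: the paper presents this proposition as a summary of the observation immediately preceding it (the two connectivity facts about~$K_{\b{n}}$ and the mutually inverse maps~$\Pi \mapsto \preccurlyeq_\Pi^\star$ and~$\mu \mapsto \preccurlyeq_\mu^{\b{n}}$), with all remaining verifications omitted as immediate. Your transitivity-of-incomparability lemma and the mutual-inverse and vertex specialization checks are exactly the details the paper leaves to the reader, worked out correctly.
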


The poset of \cref{prop:acyclicOrientationPoset} can then be read on the partition model as follows.

\begin{proposition}
\label{prop:completeMultipartiteGraphicalZonotopeRotationPosets}
Consider two ordered partitions~$\mu$ and~$\mu'$ where each part is contained in some~$V_i$, and let~$\b{v}$ and~$\b{v}'$ denote the corresponding vertices of~$\polytope{Z}_{\b{n}}$.
There is a path from $\b{v}$ to~$\b{v}'$ in the graph of~$\polytope{Z}_{\b{n}}$ oriented in the direction~${\b{\omega} \eqdef (n,\dots,1) - (1,\dots,n) = \sum_{i \in [n]} (n+1-2i) \, \b{e}_i}$ if and only if $p \preccurlyeq_\mu q$ implies $p \preccurlyeq_{\mu'} q$ for any~$p \in V_i$ and~$q \in V_j$ with~$1 \le i < j \le k$.
\end{proposition}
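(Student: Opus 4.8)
The plan is to reduce the statement to a comparison of inversion sets, for which \cref{prop:acyclicOrientationPoset} is tailor-made. Since $\b{v}$ and $\b{v}'$ are vertices of the graphical zonotope $\polytope{Z}_{\b{n}} = \Zono[K_{\b{n}}]$, that proposition identifies the graph of $\polytope{Z}_{\b{n}}$ oriented in the direction $\b{\omega}$ with the Hasse diagram of the acyclic orientation poset of $K_{\b{n}}$, ordered by inclusion of inversion sets. The first step is then the elementary order-theoretic fact that, in any finite poset, there is a directed path in the Hasse diagram from an element $x$ to an element $y$ if and only if $x \le y$ (a directed path is a composition of cover relations, and conversely any relation $x \le y$ is witnessed by a saturated chain). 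Applying this, a directed path from $\b{v}$ to $\b{v}'$ exists if and only if the inversion set of the acyclic orientation $\omega_\mu$ attached to $\b{v}$ is contained in the inversion set of the orientation $\omega_{\mu'}$ attached to $\b{v}'$.

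The second step is to read these inversion sets off the partition data. By \cref{prop:facesGraphicalZonotope,prop:completeMultipartiteGraphicalZonotopeFaces}, the vertex encoded by an ordered partition $\mu$ (each part inside some $V_i$) corresponds to an acyclic orientation of $K_{\b{n}}$, and by the cone description of \cref{prop:graphicalFan} each cross edge $\{p,q\}$ with $p \in V_i$, $q \in V_j$ and $i \ne j$ is oriented according to the relative order in $\mu$ of the part containing $p$ and the part containing $q$. Within-group pairs span no edge of $K_{\b{n}}$ and hence never contribute an inversion, which is precisely why the condition quantifies only over cross pairs with $i < j$. Comparing with \cref{def:inversionAcyclicOrientation}, the inversion set of $\omega_\mu$ is determined exactly by the restriction of $\preccurlyeq_\mu$ to cross pairs.

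Combining the two steps, inclusion of inversion sets becomes a pointwise implication on cross pairs: the relative order forced on each cross pair by $\mu$ must persist in $\mu'$. Since the two endpoints of such a pair lie in distinct groups, hence in distinct parts, every cross pair is comparable in both $\preccurlyeq_\mu$ and $\preccurlyeq_{\mu'}$, so this implication loses no information and transcribes back into the preposet notation as the stated condition. This completes the equivalence.

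The main obstacle is the bookkeeping of conventions rather than any conceptual difficulty: one must match the direction $\b{\omega}$, the selection rule that picks one endpoint of each generating segment $[\b{e}_p,\b{e}_q]$ of $\polytope{Z}_{\b{n}}$, and the definition of an inversion, so that ``more inversions'' is consistently identified with ``higher in the $\b{\omega}$ direction'' (as already verified for the permutahedron through \cref{prop:acyclicOrientationPoset}). It is exactly at this point that the precise orientation of the implication — which element of a cross pair must precede the other — gets pinned down, and I would double-check it on the smallest case $\b{n}=(1,1)$, where $\polytope{Z}_{\b{n}}$ is an edge, before trusting the general transcription; everything else is a routine translation between the orientation model and the ordered-partition model of the vertices.
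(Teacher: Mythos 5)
Your reduction is certainly the argument the paper has in mind (this statement sits in the preliminaries, whose proofs the paper omits as immediate): by \cref{prop:acyclicOrientationPoset} and the standard fact that a finite poset admits a directed Hasse-diagram path from $x$ to $y$ exactly when $x \le y$, the existence of a path from $\b{v}$ to $\b{v}'$ is equivalent to the inclusion $\inv(\omega_\mu) \subseteq \inv(\omega_{\mu'})$ of inversion sets; and, as you say, the inversions of $\omega_\mu$ are exactly cross pairs, read off from $\preccurlyeq_\mu$, and every cross pair is comparable in both $\preccurlyeq_\mu$ and $\preccurlyeq_{\mu'}$. Up to there your proposal is correct.

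The genuine gap is the last step, which you assert but explicitly decline to carry out: turning $\inv(\omega_\mu) \subseteq \inv(\omega_{\mu'})$ into an implication between $\preccurlyeq_\mu$ and $\preccurlyeq_{\mu'}$. That step is the entire content of the proposition, and it does not come out as you claim. With the paper's conventions (\cref{def:normalFan}: normal cones consist of \emph{maximized} functionals; \cref{prop:graphicalFan}: the vertex of $\mu$ has normal cone $\set{\b{x}}{x_p \le x_q \text{ if } p \preccurlyeq_\mu q}$; \cref{def:inversionAcyclicOrientation}), a cross pair $p < q$ with $p \in V_i$, $q \in V_j$, $i < j$ is an inversion of $\omega_\mu$ precisely when $q \preccurlyeq_\mu p$. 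Hence $\inv(\omega_\mu) \subseteq \inv(\omega_{\mu'})$ says that $q \preccurlyeq_\mu p$ implies $q \preccurlyeq_{\mu'} p$, i.e., by comparability, that $p \preccurlyeq_{\mu'} q$ implies $p \preccurlyeq_\mu q$ --- the \emph{reverse} of the stated condition. Your own proposed test on $\b{n} = (1,1)$ detects this: $\polytope{Z}_{(1,1)} = [\b{e}_1, \b{e}_2]$, the normal cone of $\b{e}_2$ is $\set{\b{x}}{x_1 \le x_2}$, so $\b{e}_2 \leftrightarrow 1|2$ and $\b{e}_1 \leftrightarrow 2|1$; since $\dotprod{\b{\omega}}{\b{e}_1} = 1 > -1 = \dotprod{\b{\omega}}{\b{e}_2}$, the unique $\b{\omega}$-oriented edge goes from the vertex of $\mu = 1|2$ to the vertex of $\mu' = 2|1$, yet $1 \preccurlyeq_\mu 2$ while $1 \not\preccurlyeq_{\mu'} 2$. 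So under the conventions of the preliminaries, the displayed condition characterizes paths from $\b{v}'$ to $\b{v}$, not from $\b{v}$ to $\b{v}'$; to obtain the proposition as printed one must either exchange $\mu$ and $\mu'$ (a reversal that then propagates consistently to \cref{prop:shuffleRotationPosets,prop:paintedPoset}) or label vertices by the opposite, minimizing cones. A subsidiary slip points the same way: ``the relative order forced on each cross pair by $\mu$ must persist in $\mu'$'' is not what inversion-set inclusion says --- since every cross pair is comparable in both, persistence of \emph{all} relative orders would force $\mu = \mu'$; only the inverted pairs must persist. As submitted, the proof leaves its only substantive step unverified, and that step, done carefully against the paper's stated conventions, contradicts rather than confirms the transcription you assert.
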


One also easily derives from \cref{prop:completeMultipartiteGraphicalZonotopeFaces} the number of vertices of~$\polytope{Z}_{\b{n}}$ by encoding such an ordered partition into a word with no consecutive identical letters and some surjections.
For ${\b{n} = (m,n)}$, this yields poly-Bernoulli numbers, see \mbox{\cite{Kaneko-polyBernoulli, ArakawaKaneko-polyBernoulli, ArakawaKaneko-multipleZetaValues, CameronGlassSchumacher, BenyiHajnal-polyBernoulli1, BenyiHajnal-polyBernoulli2}}.

\begin{proposition}
\label{prop:completeMultipartiteGraphicalZonotopeNumberVertices}
Let~$\surjections{n}{k}$ denotes the number of surjections from~$[n]$ to~$[k]$ (see \OEIS{A019538} in~\cite{OEIS}).
The number of vertices of the graphical zonotope~$\polytope{Z}_{\b{n}}$ is given by the summation formula
\[
\sum_{w \in W_k} \prod_{i \in [k]} \surjections{n_i}{|w|_i},
\]
where $W_k$ is the set of words on the alphabet~$[k]$ containing at least one copy of each letter and no consecutive identical letters, and $|w|_i$ denotes the number of letters~$i$ in the word~$w$.
In particular, when~$\b{n} = (m,n)$, we obtain the poly-Bernoulli number (see \cite{Kaneko-polyBernoulli}, \OEIS{A099594} in~\cite{OEIS}, and \cite{CameronGlassSchumacher} for an explanation of the formula)
\[
B(-m,n) \eqdef \sum_{\ell \ge 0} \frac{\surjections{m+1}{\ell+1} \, \surjections{n+1}{\ell+1}}{(\ell+1)^2}.
\]
\end{proposition}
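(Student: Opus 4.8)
The plan is to read the vertices straight off \cref{prop:completeMultipartiteGraphicalZonotopeFaces}, which identifies them with the ordered partitions $\mu = \mu_1 | \dots | \mu_p$ of $[n]$ in which every part $\mu_s$ is contained in some block $V_{c(s)}$ and no two consecutive parts lie in the same block, that is $c(s) \ne c(s+1)$ for all $s$. First I would encode such a $\mu$ by two pieces of data: its \emph{colour word} $w = c(1) c(2) \cdots c(p)$, recording in which block each successive part lives, and, for each colour $i \in [k]$, the ordered set partition of $V_i$ formed by the parts of $\mu$ of colour $i$, listed in the order in which they appear in $\mu$.

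Since the parts of an ordered partition are non-empty and $c(s) \ne c(s+1)$, the word $w$ has no two equal consecutive letters; and since each $V_i$ is non-empty it must be cut into at least one part, so $w$ contains every letter of $[k]$. Hence $w \in W_k$. For a fixed $i$, an ordered partition of $V_i$ into $|w|_i$ non-empty blocks is exactly a surjection from $V_i \cong [n_i]$ onto $[|w|_i]$, of which there are $\surjections{n_i}{|w|_i}$; conversely, $w$ together with one such surjection for each $i$ rebuilds $\mu$ uniquely (the $s$-th part is the preimage of $t$ under the surjection for colour $c(s)$, where position $s$ is the $t$-th occurrence of the letter $c(s)$ in $w$). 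This bijection is clean and immediately yields the first formula $\sum_{w \in W_k} \prod_{i \in [k]} \surjections{n_i}{|w|_i}$.

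For the specialisation $\b{n} = (m,n)$ I would make the words explicit: the elements of $W_2$ are the alternating binary words containing both letters, classified by length. The two words of length $2\ell$ have $|w|_1 = |w|_2 = \ell$, while the two words of length $2\ell+1$ have $(|w|_1, |w|_2)$ equal to $(\ell+1, \ell)$ and to $(\ell, \ell+1)$. Summing over $\ell \ge 1$ therefore turns the first formula into
\[
2 \sum_{\ell \ge 1} \surjections{m}{\ell} \surjections{n}{\ell} + \sum_{\ell \ge 1} \surjections{m}{\ell+1}\surjections{n}{\ell} + \sum_{\ell \ge 1}\surjections{m}{\ell}\surjections{n}{\ell+1}.
\]

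It remains to match this with $B(-m,n)$, and this identity is the one genuinely computational step, and hence the main obstacle. I would use the classical surjection recurrence $\surjections{N+1}{k} = k\big(\surjections{N}{k} + \surjections{N}{k-1}\big)$ (equivalently $\surjections{N}{k} = k!\,S(N,k)$ with the Stirling recurrence) to rewrite $\surjections{m+1}{\ell+1} = (\ell+1)\big(\surjections{m}{\ell+1} + \surjections{m}{\ell}\big)$ and likewise for $n$. The two factors $(\ell+1)$ then cancel the denominator $(\ell+1)^2$, giving
\[
B(-m,n) = \sum_{\ell \ge 0}\big(\surjections{m}{\ell+1}+\surjections{m}{\ell}\big)\big(\surjections{n}{\ell+1}+\surjections{n}{\ell}\big).
\]
Expanding the product, reindexing the diagonal term $\surjections{m}{\ell+1}\surjections{n}{\ell+1}$ by $j = \ell+1$, and discarding the vanishing boundary terms at $\ell = 0$ that carry a factor $\surjections{m}{0}=\surjections{n}{0}=0$ (valid since $m,n\ge 1$) recovers precisely the three sums displayed above. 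The only delicate point is the bookkeeping of these index shifts; once it is carried out the two expressions coincide, which completes the proof.
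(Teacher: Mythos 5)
Your proof is correct, and its main part is exactly the argument the paper intends: this proposition lives in the preliminaries section, whose proofs are declared omitted as immediate, and the sentence preceding the statement describes precisely your encoding, namely reading a vertex of~$\polytope{Z}_{\b{n}}$ as an ordered partition (via \cref{prop:completeMultipartiteGraphicalZonotopeFaces}) and splitting it into a colour word~$w \in W_k$ with no two consecutive equal letters together with one surjection~$V_i \to [|w|_i]$ per colour. Where you genuinely add content is the specialisation to~$\b{n} = (m,n)$: the paper does not verify the poly-Bernoulli identity itself but points to~\cite{CameronGlassSchumacher} for an explanation of the formula, whereas you prove it directly from the recurrence~$\surjections{N+1}{k} = k \big( \surjections{N}{k} + \surjections{N}{k-1} \big)$, whose two factors~$(\ell+1)$ cancel the denominator~$(\ell+1)^2$, after which expanding the product, reindexing the diagonal term, and discarding the boundary terms (which vanish because~$\surjections{m}{0} = \surjections{n}{0} = 0$ for~$m, n \ge 1$) recovers your sum over the four families of alternating binary words. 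I checked this computation and it is sound; your write-up thus makes the second half of the statement self-contained at the cost of a short calculation that the paper outsources to a citation.
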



The number of facets of~$\polytope{Z}_{\b{n}}$ will appear later as a special case of \cref{prop:numberFacetsShuffleGraphicalZonotopes}.


\subsection{Deformed permutahedra}
\label{subsec:deformedPermutahedra}

We now consider deformations of the permutahedron of \cref{subsec:permutahedra}, introduced by A.~Postnikov~\cite{Postnikov, PostnikovReinerWilliams}.
They are usually called ``generalized permutahedra'' but we prefer the term ``deformed permutahedra'' which we find more explicit.

\begin{definition}
\label{def:generalizedPermutahedron}
A \defn{deformed permutahedron} is a polytope whose normal fan coarsens that of the permutahedron~$\Perm$.
We denote by~$\DefoPerms$ the set of deformed permutahedra in~$\R^n$.
\end{definition}

\begin{remark}
\label{rem:alternativeDefinitionsDeformedPermutahedra}
There are further equivalent definitions of deformed permutahedra, among others:
\begin{itemize}
\item they are all polytopes obtained by moving parallely the facet defining inequalities of the permutahedron~$\Perm$ without passing any vertex~\cite{Postnikov,PostnikovReinerWilliams},
\item their right-hand-sides are described by submodular functions~\cite{Edmonds,Postnikov,PostnikovReinerWilliams},
\item they are all weak Minkowski summands of the permutahedron~\cite{Meyer,McMullen-typeCone},
\item they are all polytopes obtained by Minkowski sums and differences of faces of the standard simplex~\cite{ArdilaBenedettiDoker}.
\end{itemize}
\end{remark}

\begin{example}
Examples of deformed permutahedra include permutahedra (see \cref{subsec:permutahedra}), associahedra (see \cref{subsec:associahedra}), graphical zonotopes (see \cref{subsec:graphicalZonotopes}), and all polytopes discussed in this paper in particular multiplihedra (see \cref{subsec:multiplihedra}), constrainahedra (see \cref{subsec:constrainahedra}), and biassociahedra (see \cref{subsec:biassociahedra}).
\end{example}

By \cref{def:generalizedPermutahedron}, the normal cones of the faces of a deformed permutahedron are defined by inequalities of the form~$x_i \le x_j$.
This justifies the following definition.

\begin{definition}
\label{def:facePreposet}
Each face~$\polytope{F}$ of a deformed permutahedron~$\polytope{P}$ defines a preposet~$\preccurlyeq_\polytope{F}$ on~$[n]$ such that the normal cone of~$\polytope{F}$ is given by~$\set{\b{x} \in \R^n}{x_i \le x_j \text{ if } i \preccurlyeq_\polytope{F} j}$.
This preposet is a poset when~$\polytope{F}$ is a vertex of~$\polytope{P}$.
We call them \defn{face preposets} of~$\polytope{P}$ or shortly \defn{$\polytope{P}$-preposets}, and \defn{vertex poset} of~$\polytope{P}$ or shortly \defn{$\polytope{P}$-posets}.
The face lattice of~$\polytope{P}$ is isomorphic to the refinement lattice on the $\polytope{P}$-preposets.
\end{definition}

\begin{remark}
\label{rem:simpleDeformedPermutahedra}
In contrast to the permutahedra of \cref{subsec:permutahedra} and the associahedra of \cref{subsec:associahedra}, not all deformed permutahedra are simple polytopes.
It is immediate that a deformed permutahedron~$\polytope{P}$ is simple if and only if the Hasse diagrams of its vertex posets are all forests.
\end{remark}

These preposets (resp.~posets) naturally define an equivalence relation on ordered partitions (resp.~on permutations), similar to the sylvester congruence presented in \cref{def:sylvesterRelation}.

\begin{definition}
\label{def:equivalenceRelationDeformedPermutahedron}
A deformed permutahedron~$\polytope{P}$ defines an equivalence relation~$\equiv_{\polytope{P}}$ on ordered partitions by~$\mu \equiv_{\polytope{P}} \nu$ if and only if the cones~$\polytope{C}(\mu)$ and~$\polytope{C}(\nu)$ of the braid fan belong precisely to the same cones of the normal fan of~$\polytope{P}$.
Said differently, each face~$\polytope{F}$ of~$\polytope{P}$ defines an equivalence class of~$\equiv_\polytope{P}$ consisting in all ordered partitions~$\mu$ such that~$i \preccurlyeq_\polytope{F} j$ implies $i \preccurlyeq_\mu j$ for all~$i,j \in [n]$.
This relation~$\equiv_\polytope{P}$ also restrict to an equivalence relation on permutations, with one equivalence class for each vertex of~$\polytope{P}$.
\end{definition}

\begin{remark}
\label{rem:equivalenceNotCongruence}
In contrast to the sylvester congruence~$\equiv_\sylv$ presented in \cref{def:sylvesterRelation}, the equivalence relation~$\equiv_{\polytope{P}}$ is not necessarily a lattice congruence of the weak order.
See \cref{subsec:latticeProperties}.
\end{remark}

We now orient the graphs of arbitrary deformed permutahedra as in \cref{prop:weakOrder,prop:TamariLattice,prop:acyclicOrientationPoset}.

\begin{definition}
\label{def:rotationPoset}
The \defn{rotation graph} of~$\polytope{P} \in \DefoPerms[n]$ is the directed graph on~$\polytope{P}$-posets obtained by orienting the graph of~$\polytope{P}$ in the direction~${\b{\omega} \eqdef (n,\dots,1) - (1,\dots,n) = \sum_{i \in [n]} (n+1-2i) \, \b{e}_i}$.
The \defn{rotation poset}~$\le_\polytope{P}$ is the transitive closure of the rotation graph.
\end{definition}

\begin{remark}
\label{rem:equivalenceNotLattice}
In contrast to the Tamari lattice presented in \cref{def:TamariLattice}, the rotation poset~$\le_\polytope{P}$ is not always a lattice.
See \cref{subsec:latticeProperties}.
\end{remark}

We finally want to underline that the Cartesian products and Minkowski sums of \cref{def:CartesianProductMinkowskiSum,prop:CartesianProductMinkowskiSum} preserve deformed permutahedra.

\begin{proposition}
\label{prop:CartesianProductMinkowskiSumDeformedPermutahedra}
Let~$\polytope{P} \in \DefoPerms[m]$ and~$\polytope{Q} \in \DefoPerms[n]$ be two deformed permutahedra. Then
\begin{itemize}
\item the Cartesian product~${\polytope{P} \times \polytope{Q}}$ is a deformed permutahedron in~$\DefoPerms[m+n]$,
\item if~$m = n$, then the Minkowski sum~$\polytope{P} + \polytope{Q}$ is a deformed permutahedra in~$\DefoPerms[m]$.
\end{itemize}
\end{proposition}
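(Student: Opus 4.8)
The plan is to verify, in each case, the defining condition of \cref{def:generalizedPermutahedron}: that the normal fan of the resulting polytope coarsens the braid fan. \cref{prop:CartesianProductMinkowskiSum} already expresses these normal fans as $\c{N}(\polytope{P} \times \polytope{Q}) = \c{N}(\polytope{P}) \oplus \c{N}(\polytope{Q})$ and, when $m = n$, as $\c{N}(\polytope{P} + \polytope{Q}) = \c{N}(\polytope{P}) \wedge \c{N}(\polytope{Q})$, so the task reduces to checking that the operations $\oplus$ and $\wedge$ of \cref{def:directSumCommonRefinement} interact well with coarsening. I would first record two elementary monotonicity facts, both proved by the same cone-chasing argument. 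If $\c{F}$ coarsens $\c{F}'$ and $\c{G}$ coarsens $\c{G}'$, then $\c{F} \oplus \c{G}$ coarsens $\c{F}' \oplus \c{G}'$: a cone $C \times D$ of $\c{F}' \oplus \c{G}'$ has $C$ contained in some cone of $\c{F}$ and $D$ contained in some cone of $\c{G}$, so $C \times D$ lies in their product. Likewise, if $\c{F}$ and $\c{G}$ both coarsen a common fan $\c{H}$, then the common refinement $\c{F} \wedge \c{G}$ also coarsens $\c{H}$: each cone of $\c{H}$ sits inside a cone $C$ of $\c{F}$ and a cone $D$ of $\c{G}$, hence inside $C \cap D \in \c{F} \wedge \c{G}$. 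Since coarsening is plainly transitive, these two facts are the only structural ingredients needed.

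For the Minkowski sum this already finishes the argument: both $\c{N}(\polytope{P})$ and $\c{N}(\polytope{Q})$ coarsen the braid fan $\c{N}(\Perm[m])$ by \cref{def:generalizedPermutahedron}, so the second monotonicity fact shows that $\c{N}(\polytope{P} + \polytope{Q}) = \c{N}(\polytope{P}) \wedge \c{N}(\polytope{Q})$ coarsens $\c{N}(\Perm[m])$, whence $\polytope{P} + \polytope{Q} \in \DefoPerms[m]$.

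For the Cartesian product there is one additional comparison, and it is the only genuinely non-formal step: I would show that the braid fan $\c{N}(\Perm[m+n])$ refines the direct sum $\c{N}(\Perm[m]) \oplus \c{N}(\Perm[n])$ of the two smaller braid fans. This is transparent at the level of defining hyperplane arrangements (\cref{prop:braidFan}): the cones of $\c{N}(\Perm[m]) \oplus \c{N}(\Perm[n])$ are cut out by the hyperplanes $x_i = x_j$ with $1 \le i < j \le m$ or with $m+1 \le i < j \le m+n$, whereas $\c{N}(\Perm[m+n])$ is cut out by all hyperplanes $x_i = x_j$ with $1 \le i < j \le m+n$; the extra walls $x_i = x_j$ with $i \le m < j$ only subdivide further, so every cone of the direct sum is a union of braid cones. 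Equivalently, $\c{N}(\Perm[m]) \oplus \c{N}(\Perm[n])$ coarsens $\c{N}(\Perm[m+n])$. Combining this with the first monotonicity fact and transitivity yields the chain in which $\c{N}(\polytope{P} \times \polytope{Q}) = \c{N}(\polytope{P}) \oplus \c{N}(\polytope{Q})$ coarsens $\c{N}(\Perm[m]) \oplus \c{N}(\Perm[n])$, which in turn coarsens $\c{N}(\Perm[m+n])$. Hence $\polytope{P} \times \polytope{Q} \in \DefoPerms[m+n]$.

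I expect no real obstacle beyond phrasing the direct-sum comparison cleanly, namely that the large braid fan $\c{N}(\Perm[m+n])$ refines $\c{N}(\Perm[m]) \oplus \c{N}(\Perm[n])$; everything else is the formal transitivity of coarsening together with its monotonicity under $\oplus$ and $\wedge$.
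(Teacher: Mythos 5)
Your proof is correct, and it is exactly the argument the paper has in mind: the paper omits the proof of \cref{prop:CartesianProductMinkowskiSumDeformedPermutahedra} as ``well-known or immediate,'' and what it considers immediate is precisely your reduction via \cref{prop:CartesianProductMinkowskiSum} to the monotonicity of $\oplus$ and $\wedge$ under coarsening, together with the observation from \cref{prop:braidFan} that the braid fan $\c{N}(\Perm[m+n])$ refines $\c{N}(\Perm[m]) \oplus \c{N}(\Perm[n])$ since the latter is cut out by a sub-arrangement of the braid hyperplanes. Your write-up supplies the details cleanly and contains no gaps.
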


To describe the resulting face preposets, equivalence relations on ordered partitions (or on permutations), and rotation posets on vertex posets, we need the following standard notations.

\begin{definition}
\label{def:restrictionShift}
For a preposet~$\preccurlyeq$ on~$[n]$ and an integer~$m \in [n]$, we define
\begin{itemize}
\item by~$\preccurlyeq_{[m]}$ the \defn{restriction} of~$\preccurlyeq$ to~$[m]$,
\item by~$\preccurlyeq^{\pm m}$ the \defn{shift} of~$\preccurlyeq$ by~$\pm m$, defined by~$i \pm m \preccurlyeq^{\pm m} j \pm m \iff i \preccurlyeq j$.
\end{itemize}
We use similar notations for ordered partitions and permutations.
\end{definition}

Using the notations of \cref{def:restrictionShift}, we first describe the behavior of the Cartesian product and Minkowski sum on the face preposets of \cref{def:facePreposet}.

\begin{proposition}
\label{prop:CartesianProductMinkowskiSumFacePreposets}
For two deformed permutahedra~$\polytope{P} \in \DefoPerms[m]$ and~$\polytope{Q} \in \DefoPerms[n]$ and two faces~$\polytope{F}$ of~$\polytope{P}$ and~$\polytope{G}$ of~$\polytope{Q}$,
\begin{itemize}
\item $\preccurlyeq_\polytope{F} \sqcup \; {\preccurlyeq_\polytope{G}}^{+m}$ is a face preposet of~$\polytope{P} \times \polytope{Q}$,
\item when~$m = n$, if~$\preccurlyeq_\polytope{F}$ and~$\preccurlyeq_\polytope{G}$ have a common extension and any three of the relations~${i \preccurlyeq_\polytope{F} j}$, ${j \preccurlyeq_\polytope{F} i}$, ${i \preccurlyeq_\polytope{G} j}$, ${j \preccurlyeq_\polytope{G} i}$ imply the fourth, then the transitive closure of ${\preccurlyeq_\polytope{F} \cup \preccurlyeq_\polytope{G}}$ is a face preposet~of~$\polytope{P} + \polytope{Q}$.
\end{itemize}
Moreover, any face preposet of~$\polytope{P} \times \polytope{Q}$ and~$\polytope{P} + \polytope{Q}$ is uniquely obtained this way.
\end{proposition}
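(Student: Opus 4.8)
The plan is to translate \cref{prop:CartesianProductMinkowskiSum} through the face--preposet dictionary of \cref{def:facePreposet}: a face~$\polytope{F}$ of a deformed permutahedron carries the normal cone $\c{N}(\polytope{F}) = \set{\b{x}}{x_i \le x_j \text{ if } i \preccurlyeq_\polytope{F} j}$, so reading off a face preposet is the same as reading off the linear inequalities cutting out the corresponding normal cone. Since \cref{prop:CartesianProductMinkowskiSum} identifies $\c{N}(\polytope{P} \times \polytope{Q})$ (resp.~$\c{N}(\polytope{P} + \polytope{Q})$) with the direct sum (resp.~common refinement) of $\c{N}(\polytope{P})$ and $\c{N}(\polytope{Q})$, in both cases the relevant cones are obtained from $\c{N}(\polytope{F})$ and $\c{N}(\polytope{G})$ by the operations of \cref{def:directSumCommonRefinement}, and it remains only to identify which preposet each resulting cone carries.

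For the Cartesian product I would first recall the standard fact that every face of $\polytope{P} \times \polytope{Q}$ is uniquely a product $\polytope{F} \times \polytope{G}$, with normal cone $\c{N}(\polytope{F}) \times \c{N}(\polytope{G})$ by the direct sum part of \cref{prop:CartesianProductMinkowskiSum}. Writing $\R^{m+n} = \R^m \times \R^n$ with the second block of coordinates indexed by~$[n]^{+m}$, this product cone is exactly $\set{\b{z} \in \R^{m+n}}{z_i \le z_j \text{ if } i \preccurlyeq_\polytope{F} j \text{ or } i \, {\preccurlyeq_\polytope{G}^{+m}} \, j}$, which is the normal cone attached to the disjoint union $\preccurlyeq_\polytope{F} \sqcup \; {\preccurlyeq_\polytope{G}^{+m}}$. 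This proves the first bullet, and uniqueness is immediate: restricting this preposet to~$[m]$ and to~$[n]^{+m}$ recovers $\preccurlyeq_\polytope{F}$ and ${\preccurlyeq_\polytope{G}^{+m}}$, while no relation crosses between the two blocks.

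For the Minkowski sum, the common refinement part of \cref{prop:CartesianProductMinkowskiSum} says that the cones of $\c{N}(\polytope{P}+\polytope{Q})$ are the intersections $\c{N}(\polytope{F}) \cap \c{N}(\polytope{G}) = \set{\b{x}}{x_i \le x_j \text{ if } i \preccurlyeq_\polytope{F} j \text{ or } i \preccurlyeq_\polytope{G} j}$; since a system of inequalities $x_i \le x_j$ cuts out the same cone as its transitive consequences, this cone is the normal cone of the transitive closure $\preccurlyeq^{\mathrm{tc}}$ of $\preccurlyeq_\polytope{F} \cup \preccurlyeq_\polytope{G}$, so $\preccurlyeq^{\mathrm{tc}}$ is a face preposet. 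The real content is the bijection: each face preposet must arise from exactly one admissible pair $(\polytope{F},\polytope{G})$. Here I would use the standard description of Minkowski sum faces, namely that the face maximized in a direction~$\b{x}$ is $\polytope{F}(\b{x}) + \polytope{G}(\b{x})$, so the faces $\polytope{H} = \polytope{F}+\polytope{G}$ are in bijection with the pairs for which $\c{N}(\polytope{F})$ and $\c{N}(\polytope{G})$ share a relative-interior point, that shared point being a direction that singles out $\polytope{F}$ on~$\polytope{P}$ and $\polytope{G}$ on~$\polytope{Q}$ simultaneously.

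The main obstacle is to show that this relative-interior condition is precisely the conjunction of the two combinatorial hypotheses. Since $\b{x} \in \operatorname{relint} \c{N}(\polytope{F})$ forces $x_i = x_j$ when $i \sim_\polytope{F} j$ and $x_i < x_j$ when $i \prec_\polytope{F} j$ (and likewise for~$\polytope{G}$), a common relative-interior point is exactly a solution of the system of all equalities forced by $\sim_\polytope{F}$ or $\sim_\polytope{G}$ together with all strict inequalities forced by $\prec_\polytope{F}$ or $\prec_\polytope{G}$. I would argue that this system is solvable exactly when (i) $\preccurlyeq_\polytope{F}$ and $\preccurlyeq_\polytope{G}$ admit a common extension, so that the combined strict order is acyclic and realizable by a strict order on the equivalence classes, and (ii) no pair is tied by one preposet while strictly ordered by the other, which is precisely the requirement that any three of $i \preccurlyeq_\polytope{F} j$, $j \preccurlyeq_\polytope{F} i$, $i \preccurlyeq_\polytope{G} j$, $j \preccurlyeq_\polytope{G} i$ force the fourth. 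The delicate point, where transitivity of the face preposets is essential, is that a would-be mixed equality/strict cycle of arbitrary length must, after contracting the forced equalities and using that $\preccurlyeq_\polytope{F}$ and $\preccurlyeq_\polytope{G}$ are already transitively closed, collapse either to a strict cycle (forbidden by the common extension) or to a single-pair clash between a tie and a strict relation (forbidden by the three-imply-the-fourth condition); making this reduction airtight is the crux. Uniqueness then follows by recovering $\polytope{F}$ and $\polytope{G}$ from $\preccurlyeq^{\mathrm{tc}}$ as the faces selected by any common relative-interior direction.
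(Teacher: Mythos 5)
The paper offers no proof of this proposition to compare against: it sits in \cref{sec:preliminaries}, all of whose proofs are omitted as ``well-known or immediate,'' so your proposal has to stand on its own. Your treatment of the Cartesian product bullet is complete, and your setup for the Minkowski bullet is the right (and surely intended) one: faces of $\polytope{P}+\polytope{Q}$ correspond bijectively to pairs $(\polytope{F},\polytope{G})$ with $\operatorname{relint}\c{N}(\polytope{F})\cap\operatorname{relint}\c{N}(\polytope{G})\neq\varnothing$, and the face preposet of such a face is the transitive closure of $\preccurlyeq_\polytope{F}\cup\preccurlyeq_\polytope{G}$. The genuine gap is exactly at the step you yourself flag as the crux, and the dichotomy you propose there does not close it.

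Concretely, take $m=n=4$, $\polytope{P}=[\b{e}_1,\b{e}_2]+[\b{e}_3,\b{e}_4]$ and $\polytope{Q}=[\b{e}_2,\b{e}_3]+[\b{e}_1,\b{e}_4]$; let $\polytope{F}$ be the vertex of~$\polytope{P}$ whose poset has only the strict relations $1\prec_\polytope{F}2$ and $3\prec_\polytope{F}4$, and let $\polytope{G}=\polytope{Q}$ itself, whose preposet has only the ties $2\sim_\polytope{G}3$ and $1\sim_\polytope{G}4$ (writing $\prec$ for strict relations and $\sim$ for ties). The three-imply-the-fourth hypothesis holds vacuously (no pair $(i,j)$ satisfies three of the four relations), and $\preccurlyeq_\polytope{F}$ and $\preccurlyeq_\polytope{G}$ do have a common extension in the weak sense used for ``extends'' elsewhere in the paper (as in \cref{prop:fanRefinementSylvester}; indeed the one-part ordered partition weakly extends every preposet). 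Yet the relative interiors are disjoint: a common point would have to satisfy $x_1<x_2=x_3<x_4=x_1$. This mixed cycle contracts, on the equivalence classes generated by the ties, to a strict $2$-cycle $\{1,4\}\to\{2,3\}\to\{1,4\}$; it is neither a strict cycle among the original elements nor a single-pair clash between a tie and a strict relation, which is precisely the case your dichotomy misses: chains of ties alternating between the two preposets cannot be collapsed by transitivity of either preposet alone, so no pairwise condition of the three-imply-the-fourth type can detect this obstruction. The same example even falsifies the uniqueness assertion under that weak reading, since the pair above and the pair $(\polytope{P},\polytope{Q})$ both satisfy the hypotheses and both produce the all-tied preposet. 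For the proposition to be correct, ``common extension'' must be read as an ordered partition~$\mu$ extending both preposets \emph{and preserving their strict relations} ($i\prec_\polytope{F}j\Rightarrow i\prec_\mu j$, and likewise for~$\polytope{G}$); but such a~$\mu$ is nothing other than a point of $\operatorname{relint}\c{N}(\polytope{F})\cap\operatorname{relint}\c{N}(\polytope{G})$ recorded through its level sets, so under this reading your crux equivalence becomes essentially definitional, the second hypothesis is an automatic consequence of the first rather than an independent ingredient, and the cycle-collapsing reduction should be discarded rather than made airtight.
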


We next describe the behavior of the Cartesian product and Minkowski sum on the equivalence relations on ordered partitions of \cref{def:equivalenceRelationDeformedPermutahedron}.

\begin{proposition}
\label{prop:CartesianProductMinkowskiSumEquivalenceRelations}
For two deformed permutahedra~$\polytope{P} \in \DefoPerms[m]$ and~$\polytope{Q} \in \DefoPerms[n]$, and two ordered partitions~$\mu \in \f{P}_m$ and~$\nu \in \f{P}_n$,
\begin{itemize}
\item $\mu \equiv_{\polytope{P} \times \polytope{Q}} \nu$ if and only if~$\mu_{[m]} \equiv_{\polytope{P}} \nu_{[m]}$ and~${\mu^{-m}}_{[n]} \equiv_{\polytope{Q}} {\nu^{-m}}_{[n]}$,
\item if~$m = n$, then~${\mu \equiv_{\polytope{P} + \polytope{Q}} \nu}$ if and only if $\mu \equiv_{\polytope{P}} \nu$ and~$\mu \equiv_{\polytope{Q}} \nu$.
\end{itemize}
\end{proposition}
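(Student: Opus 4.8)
The plan is to translate the equivalence $\equiv_\polytope{P}$ into an equality of minimal cones, and then to follow these minimal cones through the direct sum and common refinement of \cref{prop:CartesianProductMinkowskiSum}. Since $\c{N}(\polytope{P})$ coarsens the braid fan (by \cref{def:generalizedPermutahedron}), each braid cone $\polytope{C}(\mu)$ is contained in a unique \emph{minimal} cone $\kappa_\polytope{P}(\mu)$ of $\c{N}(\polytope{P})$, namely the one whose relative interior contains $\operatorname{relint} \polytope{C}(\mu)$. As the cones of a fan containing $\polytope{C}(\mu)$ are exactly those admitting $\kappa_\polytope{P}(\mu)$ as a face, \cref{def:equivalenceRelationDeformedPermutahedron} gives the reformulation $\mu \equiv_\polytope{P} \nu \iff \kappa_\polytope{P}(\mu) = \kappa_\polytope{P}(\nu)$. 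The whole statement thus reduces to computing $\kappa_{\polytope{P} \times \polytope{Q}}$ and $\kappa_{\polytope{P} + \polytope{Q}}$ in terms of $\kappa_\polytope{P}$ and $\kappa_\polytope{Q}$.

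For the Cartesian product, \cref{prop:CartesianProductMinkowskiSum} gives $\c{N}(\polytope{P} \times \polytope{Q}) = \c{N}(\polytope{P}) \oplus \c{N}(\polytope{Q})$, whose cones are the products $C \times D$. The key point I would establish is the projection identity: writing $\pi_1 \colon \R^{m+n} \to \R^m$ and $\pi_2 \colon \R^{m+n} \to \R^n$ for the two coordinate projections, one has $\pi_1(\polytope{C}(\mu)) = \polytope{C}(\mu_{[m]})$ and $\pi_2(\polytope{C}(\mu)) = \polytope{C}({\mu^{-m}}_{[n]})$ (notation of \cref{def:restrictionShift}). The inclusion $\subseteq$ is immediate since $\preccurlyeq_\mu$ restricts to $\preccurlyeq_{\mu_{[m]}}$ on $[m]$; for $\supseteq$, any weakly increasing, constant-on-parts labelling of the first block extends to a point of $\polytope{C}(\mu)$ by interpolating values on the remaining parts along the order of $\mu$, which is always possible. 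Consequently $\polytope{C}(\mu) \subseteq C \times D$ iff $\polytope{C}(\mu_{[m]}) \subseteq C$ and $\polytope{C}({\mu^{-m}}_{[n]}) \subseteq D$, so $\kappa_{\polytope{P} \times \polytope{Q}}(\mu) = \kappa_\polytope{P}(\mu_{[m]}) \times \kappa_\polytope{Q}({\mu^{-m}}_{[n]})$. Since a product of cones determines its two factors, equating these for $\mu$ and $\nu$ yields exactly $\mu_{[m]} \equiv_\polytope{P} \nu_{[m]}$ and ${\mu^{-m}}_{[n]} \equiv_\polytope{Q} {\nu^{-m}}_{[n]}$, the first bullet.

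For the Minkowski sum ($m = n$), \cref{prop:CartesianProductMinkowskiSum} gives $\c{N}(\polytope{P} + \polytope{Q}) = \c{N}(\polytope{P}) \wedge \c{N}(\polytope{Q})$, with cones $C \cap D$. Here $\polytope{C}(\mu) \subseteq C \cap D$ iff $\polytope{C}(\mu) \subseteq C$ and $\polytope{C}(\mu) \subseteq D$, so $\kappa_{\polytope{P} + \polytope{Q}}(\mu) = \kappa_\polytope{P}(\mu) \cap \kappa_\polytope{Q}(\mu)$, and the implication $\Leftarrow$ of the statement is then clear. For $\Rightarrow$, I would recover the two factors from the intersection using relative interiors: since $\operatorname{relint} \kappa_\polytope{P}(\mu)$ and $\operatorname{relint} \kappa_\polytope{Q}(\mu)$ both meet $\operatorname{relint} \polytope{C}(\mu)$, they intersect, whence $\operatorname{relint}(\kappa_\polytope{P}(\mu) \cap \kappa_\polytope{Q}(\mu)) = \operatorname{relint} \kappa_\polytope{P}(\mu) \cap \operatorname{relint} \kappa_\polytope{Q}(\mu)$. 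Thus $\kappa_\polytope{P}(\mu)$ is the unique cone of $\c{N}(\polytope{P})$ whose relative interior contains $\operatorname{relint} \kappa_{\polytope{P}+\polytope{Q}}(\mu)$ (this uses that $\c{N}(\polytope{P}+\polytope{Q})$ is a genuine fan refining $\c{N}(\polytope{P})$). Hence $\kappa_{\polytope{P}+\polytope{Q}}(\mu) = \kappa_{\polytope{P}+\polytope{Q}}(\nu)$ forces $\kappa_\polytope{P}(\mu) = \kappa_\polytope{P}(\nu)$ and, symmetrically, $\kappa_\polytope{Q}(\mu) = \kappa_\polytope{Q}(\nu)$, i.e.\ $\mu \equiv_\polytope{P} \nu$ and $\mu \equiv_\polytope{Q} \nu$, the second bullet.

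The two steps I expect to require the most care are exactly these geometric inputs: the extension half of the projection identity $\pi_1(\polytope{C}(\mu)) = \polytope{C}(\mu_{[m]})$ in the product case, and the relative-interior recovery of the factor cones from their intersection in the Minkowski case. Both rest on the facts that $\c{N}(\polytope{P})$ and $\c{N}(\polytope{Q})$ coarsen the braid fan and that their common refinement is a bona fide fan, as guaranteed by \cref{prop:CartesianProductMinkowskiSum}; the remaining manipulations are routine bookkeeping with ordered partitions and their restrictions.
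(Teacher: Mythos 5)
Your proof is correct, but there is no proof in the paper to compare it against: this proposition sits in \cref{sec:preliminaries}, all of whose proofs are explicitly omitted as ``well-known or immediate'', so your write-up supplies details the paper never gives. Your route is a legitimate way of making that ``immediate'' precise: reformulating \cref{def:equivalenceRelationDeformedPermutahedron} as equality of minimal cones, $\mu \equiv_\polytope{P} \nu \iff \kappa_\polytope{P}(\mu) = \kappa_\polytope{P}(\nu)$, and then computing $\kappa_{\polytope{P} \times \polytope{Q}}(\mu) = \kappa_\polytope{P}(\mu_{[m]}) \times \kappa_\polytope{Q}({\mu^{-m}}_{[n]})$ and $\kappa_{\polytope{P} + \polytope{Q}}(\mu) = \kappa_\polytope{P}(\mu) \cap \kappa_\polytope{Q}(\mu)$ from \cref{prop:CartesianProductMinkowskiSum}. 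You also correctly isolate the two steps that are not pure bookkeeping. In the product case, the extension half of $\pi_1(\polytope{C}(\mu)) = \polytope{C}(\mu_{[m]})$ works exactly as you sketch, because points of a braid cone are constant on parts and weakly increasing along the part order, so the parts of $\mu$ disjoint from $[m]$ can be assigned interpolated values. In the Minkowski case, the ``only if'' direction is the genuinely delicate point --- an intersection of two cones does not determine the two cones in general --- and your relative-interior argument (the identity $\operatorname{relint}(C_1 \cap C_2) = \operatorname{relint} C_1 \cap \operatorname{relint} C_2$ for convex sets whose relative interiors meet, combined with the disjointness of relative interiors of distinct cones of $\c{N}(\polytope{P})$) is precisely what closes it; this is the step a reader reconstructing the ``immediate'' proof is most likely to gloss over incorrectly. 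One cosmetic remark: the hypothesis ``$\mu \in \f{P}_m$ and $\nu \in \f{P}_n$'' in the paper's statement is a typo; for the first bullet both must be ordered partitions of $[m+n]$, which is how you (correctly) read it.
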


Finally, we describe the behavior of the Cartesian product and Minkowski sum on the rotation posets of \cref{def:rotationPoset}.

\begin{proposition}
\label{prop:CartesianProductMinkowskiSumRotationPosets}
For two deformed permutahedra~$\polytope{P} \in \DefoPerms[m]$ and~$\polytope{Q} \in \DefoPerms[n]$, and four vertices ${\b{v}, \b{v}' \in \polytope{P}}$ and $\b{w}, \b{w}' \in \polytope{Q}$, we have 
\begin{itemize}
\item ${\preccurlyeq_{\b{v}} \sqcup \; {\preccurlyeq_{\b{w}}}^{+m}} \le_{\polytope{P} \times \polytope{Q}} {\preccurlyeq_{\b{v}'} \sqcup \; {\preccurlyeq_{\b{w}'}}^{+m}}$ if and only if ${\preccurlyeq_{\b{v}}} \le_\polytope{P} {\preccurlyeq_{\b{v}'}}$ and~${\preccurlyeq_{\b{w}}} \le_\polytope{Q} {\preccurlyeq_{\b{w}'}}$,
\item if~$m = n$ and the transitive closure~$\preccurlyeq$ of $\preccurlyeq_{\b{v}} \cup \preccurlyeq_{\b{w}}$ (resp.~$\preccurlyeq'$ of $\preccurlyeq_{\b{v}'} \cup \preccurlyeq_{\b{w}'}$) is a vertex poset of~$\polytope{P} + \polytope{Q}$, then~${\preccurlyeq} \; \le_{\polytope{P} + \polytope{Q}} \; {\preccurlyeq}'$ if and only if ${\preccurlyeq_{\b{v}}} \le_\polytope{P} {\preccurlyeq_{\b{v}'}}$ and~${\preccurlyeq_{\b{w}}} \le_\polytope{Q} {\preccurlyeq_{\b{w}'}}$.
\end{itemize}
\end{proposition}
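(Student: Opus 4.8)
The plan is to reduce both items to a single \emph{orientation lemma} together with the identification of vertex posets from \cref{prop:CartesianProductMinkowskiSumFacePreposets}. The orientation lemma is the following: in \emph{any} deformed permutahedron, in any ambient dimension~$N$, every edge is parallel to a difference~$\b{e}_i - \b{e}_j$. Indeed, the normal cone of an edge is a codimension~$1$ cone of the normal fan, hence (since the fan refines the braid fan, as in \cref{prop:fanRefinementGraphical}) a union of braid walls all lying in a single hyperplane~$\set{\b{x} \in \R^N}{x_i = x_j}$, so the dual edge is parallel to~$\b{e}_i - \b{e}_j$. Since~$\dotprod{\b{\omega}}{\b{e}_i - \b{e}_j} = \omega_i - \omega_j = 2(j-i)$ does \emph{not} depend on the ambient dimension~$N$, the orientation of \cref{def:rotationPoset} points every such edge in the direction~$\b{e}_i - \b{e}_j$ with~$i < j$, uniformly across~$\polytope{P}$, $\polytope{Q}$, $\polytope{P} \times \polytope{Q}$ and~$\polytope{P} + \polytope{Q}$ at once. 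This is exactly what lets me compare rotation orientations across the two operations.

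For the Cartesian product, the graph of~$\polytope{P} \times \polytope{Q}$ is the Cartesian product of the graphs of~$\polytope{P}$ and~$\polytope{Q}$: its edges either move the~$\polytope{P}$-vertex (keeping the~$\polytope{Q}$-vertex fixed) along a direction~$\b{e}_i - \b{e}_j$ with~$i,j \in [m]$, or move the~$\polytope{Q}$-vertex along~$\b{e}_{m+i} - \b{e}_{m+j}$. By the orientation lemma these are oriented exactly as the corresponding edges of the rotation graphs of~$\polytope{P}$ and of~$\polytope{Q}$, so the rotation graph of~$\polytope{P} \times \polytope{Q}$ is the directed Cartesian product of the two rotation graphs. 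I then invoke the elementary fact that in a directed Cartesian product of two acyclic digraphs there is a directed path from~$(\b{v}, \b{w})$ to~$(\b{v}', \b{w}')$ if and only if there are directed paths from~$\b{v}$ to~$\b{v}'$ and from~$\b{w}$ to~$\b{w}'$ (interleave two monotone paths; conversely project a path onto each factor). Taking transitive closures yields the product order, and \cref{prop:CartesianProductMinkowskiSumFacePreposets} identifies the vertex at~$(\b{v}, \b{w})$ with the poset~$\preccurlyeq_{\b{v}} \sqcup {\preccurlyeq_{\b{w}}}^{+m}$, proving the first item.

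For the Minkowski sum (where~$m = n$, so~$\b{\omega}$ is literally the same vector for~$\polytope{P}$, $\polytope{Q}$ and~$\polytope{P}+\polytope{Q}$) I first prove \emph{necessity}. Since~$\c{N}(\polytope{P}+\polytope{Q}) = \c{N}(\polytope{P}) \wedge \c{N}(\polytope{Q})$ by \cref{prop:CartesianProductMinkowskiSum}, each wall of~$\c{N}(\polytope{P}+\polytope{Q})$ lies in a wall of~$\c{N}(\polytope{P})$ and/or of~$\c{N}(\polytope{Q})$; dually, crossing an edge of~$\polytope{P}+\polytope{Q}$ moves the~$\polytope{P}$-vertex and/or the~$\polytope{Q}$-vertex of the unique decomposition~$\b{r} = \b{v} + \b{w}$, along the direction of the corresponding edge of~$\polytope{P}$ or~$\polytope{Q}$. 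By the orientation lemma the projection~$\b{v}+\b{w} \mapsto (\b{v},\b{w})$ therefore sends each up-edge of~$\polytope{P}+\polytope{Q}$ to an up-edge or a constant step in each factor, so~${\preccurlyeq} \le_{\polytope{P}+\polytope{Q}} {\preccurlyeq}'$ forces~${\preccurlyeq_{\b{v}}} \le_\polytope{P} {\preccurlyeq_{\b{v}'}}$ and~${\preccurlyeq_{\b{w}}} \le_\polytope{Q} {\preccurlyeq_{\b{w}'}}$.

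The \emph{sufficiency} in the Minkowski case is the main obstacle, because the obvious staircase through the product interval need not stay inside~$\polytope{P}+\polytope{Q}$: an intermediate pair~$(\b{v}'', \b{w}'')$ need not be a vertex, i.e.\ the transitive closure of~$\preccurlyeq_{\b{v}''} \cup \preccurlyeq_{\b{w}''}$ may fail to be a poset. To get around this I would pass to the braid fan via the reachability principle that for \emph{any} deformed permutahedron~$\polytope{D}$ one has~$\b{a} \le_\polytope{D} \b{b}$ if and only if some linear extension of~$\preccurlyeq_{\b{a}}$ lies below some linear extension of~$\preccurlyeq_{\b{b}}$ in the weak order (one direction projects a saturated weak-order chain through~$\c{N}(\polytope{D})$, each braid-wall crossing becoming an up-edge or a stay as in \cref{prop:fanRefinementGraphical}; the converse lifts a monotone edge-path using polytopality of the fan). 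Writing~$L(\polytope{F})$ for the set of linear extensions of~$\preccurlyeq_{\polytope{F}}$, the restriction to permutations of \cref{prop:CartesianProductMinkowskiSumEquivalenceRelations} gives~$L(\b{r}) = L(\b{v}) \cap L(\b{w})$ for~$\b{r} = \b{v}+\b{w}$, so the task reduces to producing a single weak-order-comparable pair of permutations, one in~$L(\b{v}) \cap L(\b{w})$ and the other in~$L(\b{v}') \cap L(\b{w}')$. The delicate point — where I expect the real work to lie — is that these intersection fibers need not be intervals of the weak order, so one cannot simply take extremal extensions; instead I would construct the comparable pair by a merging argument on the two monotone paths in~$\polytope{P}$ and~$\polytope{Q}$, performing at each step whichever of the two next rotations is compatible with the current opposite component, and showing that at least one always is until both targets are reached.
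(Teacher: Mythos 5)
Your treatment of the first bullet and of the ``only if'' half of the second bullet is correct. The orientation lemma is exactly the right key fact (modulo a wording slip: the braid fan \emph{refines} the normal fan of a deformed permutahedron, not the other way around), it identifies the rotation graph of $\polytope{P} \times \polytope{Q}$ with the directed Cartesian product of the two rotation graphs, and projecting the up-edges of $\polytope{P} + \polytope{Q}$ onto the two summands does prove necessity. Also, your ``reachability principle'' is precisely \cref{prop:rotationPoset}, so you could cite it rather than re-derive it (your sketch of its lifting direction has its own chaining subtlety, but this is moot). Note that the paper offers no proof to compare against: the statement sits in \cref{sec:preliminaries}, all of whose proofs are declared omitted as ``well-known or immediate''.

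The genuine gap is the ``if'' direction of the Minkowski bullet, which you explicitly defer (``where I expect the real work to lie''), and it is not a fixable omission: that direction is \emph{false} for general deformed permutahedra, so no merging argument can complete your proof. Take $\polytope{P} \eqdef \simplex_{[3]} = \conv\{\b{e}_1, \b{e}_2, \b{e}_3\}$ and $\polytope{Q} \eqdef -\simplex_{[3]}$, both in $\DefoPerms[3]$. Orienting the two triangles by $\b{\omega} = (2,0,-2)$, their rotation posets are the chains $\b{e}_3 <_{\polytope{P}} \b{e}_2 <_{\polytope{P}} \b{e}_1$ and $-\b{e}_1 <_{\polytope{Q}} -\b{e}_2 <_{\polytope{Q}} -\b{e}_3$, while $\polytope{P} + \polytope{Q}$ is a hexagon with vertices $\b{e}_i - \b{e}_j$ whose normal fan is the braid fan, so that by \cref{prop:weakOrder} its rotation poset is the weak order on $\f{S}_3$. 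Now take $\b{v} = \b{e}_3$, $\b{w} = -\b{e}_2$, $\b{v}' = \b{e}_2$, $\b{w}' = -\b{e}_3$. The transitive closure of $\preccurlyeq_{\b{v}} \cup \preccurlyeq_{\b{w}}$ is the chain $2 \preccurlyeq 1 \preccurlyeq 3$, \ie the vertex poset of $\b{e}_3 - \b{e}_2$ (the permutation $213$), and that of $\preccurlyeq_{\b{v}'} \cup \preccurlyeq_{\b{w}'}$ is $3 \preccurlyeq 1 \preccurlyeq 2$, the vertex poset of $\b{e}_2 - \b{e}_3$ (the permutation $312$); both hypotheses of the bullet hold, and moreover $\preccurlyeq_{\b{v}} \le_{\polytope{P}} \preccurlyeq_{\b{v}'}$ and $\preccurlyeq_{\b{w}} \le_{\polytope{Q}} \preccurlyeq_{\b{w}'}$. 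Yet $213$ and $312$ are incomparable in the weak order. In your own reduction this reads: $L(\b{v}) \cap L(\b{w}) = \{213\}$ and $L(\b{v}') \cap L(\b{w}') = \{312\}$, so no weak-order comparable pair exists; operationally, your merging stalls at once, since the unique up-edge out of $213$ goes to $231 = \b{e}_1 - \b{e}_2$, whose $\polytope{P}$-component $\b{e}_1$ already overshoots $\b{v}' = \b{e}_2$. The mechanism is that a wall of $\c{N}(\polytope{P}) \wedge \c{N}(\polytope{Q})$ can be a wall of \emph{both} summands' fans, so a single edge of the sum can force both components to jump simultaneously. In the paper's actual use of this bullet (\cref{prop:shuffleRotationPosets}, where the summands are $\polytope{P} \times \polytope{Q}$ and $\polytope{Z}_{m,n}$), the walls of the two fans lie in disjoint families of hyperplanes, which rules this mechanism out; a correct proof must isolate such an additional hypothesis and exploit it, which neither your write-up nor the statement as given does.
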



\subsection{Lattice properties of rotation posets}
\label{subsec:latticeProperties}

As mentioned in \cref{prop:weakOrder,prop:TamariLattice,prop:acyclicOrientationPoset}, the graphs of the permutahedron~$\Perm$, of the associahedron~$\Asso$, and of the graphical zonotope~$\Zono[G]$, oriented in the direction~$\b{\omega}$ are the Hasse diagrams of the weak order, of the Tamari lattice, and of the acyclic orientation poset of~$G$, respectively.
More generally, we have defined in \cref{def:rotationPoset} the rotation poset~$\le_\polytope{P}$ of a deformed permutahedron~$\polytope{P}$ by orienting its graph in the direction~$\b{\omega}$.
It turns out that the Tamari lattice is a lattice quotient of the weak order.
More generally, the graph of any quotientope of~\cite{PilaudSantos-quotientopes} oriented by~$\b{\omega}$ is the Hasse diagram of a lattice quotient of the weak order.
In contrast, not all acyclic orientation posets are lattices, even less lattice quotients of the weak order.
In this section, we discuss lattice properties of the rotation posets of deformed permutahedra.
We start by recalling some basic facts about lattice congruences.

\begin{definition}
\label{def:quotientRelation}
Given a binary relation~$R$ and an equivalence relation~$\equiv$ on the same set~$X$, the \defn{quotient relation}~$R/{\equiv}$ is the binary relation on~$X/{\equiv}$ defined by~$I \; R/{\equiv} \; J$ if and only if there is $i \in I$ and $j \in J$ such that $i \; R \; j$.
\end{definition}

\begin{proposition}
\label{prop:rotationPoset}
For any deformed permutahedron~$\polytope{P}$, the rotation poset~$\le_\polytope{P}$ is the poset quotient of the weak order on~$\f{S}_n$ by the equivalence relation~$\equiv_\polytope{P}$.
\end{proposition}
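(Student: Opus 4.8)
The plan is to realize both orders on a common ground set and to compare them edge by edge. Write $\pi \colon \f{S}_n \to V(\polytope{P})$ for the map sending a permutation $\sigma$ to the unique vertex of $\polytope{P}$ whose normal cone contains the braid cone $\polytope{C}(\sigma)$; this is well defined because $\c{N}(\polytope{P})$ coarsens the braid fan (\cref{def:generalizedPermutahedron}) and $\polytope{C}(\sigma)$ is full-dimensional, and by \cref{def:equivalenceRelationDeformedPermutahedron} it induces a bijection $\f{S}_n/{\equiv_\polytope{P}} \to V(\polytope{P})$. By \cref{def:quotientRelation}, the poset quotient of the weak order by $\equiv_\polytope{P}$ is the transitive closure of the relation $\bar R$ on $V(\polytope{P})$ with $\pi(\sigma) \, \bar R \, \pi(\tau)$ whenever $\sigma \le \tau$ in the weak order. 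I will show that $\bar R \subseteq {\le_\polytope{P}}$ and that every oriented edge of the rotation graph lies in $\bar R$; since $\le_\polytope{P}$ is by definition (\cref{def:rotationPoset}) the transitive closure of the rotation graph, the two transitive closures then coincide, which is the claim.

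The geometric heart is an orientation-preservation statement for the refinement of fans. First, every edge of $\polytope{P}$ is parallel to a root $\b{e}_a - \b{e}_b$: the normal cone $\c{N}(f)$ of an edge $f$ is a codimension-$1$ cone of $\c{N}(\polytope{P})$, hence a union of braid cones, and choosing a braid wall in its relative interior shows that $\c{N}(f)$ spans a braid hyperplane $\{x_a = x_b\}$, so that $f$ is directed by $\pm(\b{e}_a - \b{e}_b)$. In particular $\langle \b{\omega}, \b{e}_a - \b{e}_b \rangle = 2(b-a) \ne 0$, so $\b{\omega}$ is generic on $\polytope{P}$ and the rotation graph is well defined. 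Second, I use the elementary fact that if $u, u'$ are adjacent vertices of a polytope with $u' - u$ a positive multiple of a direction $d$, then $\c{N}(u) \subseteq \{\langle \cdot, d\rangle \le 0\}$ and $\c{N}(u') \subseteq \{\langle \cdot, d\rangle \ge 0\}$.

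For $\bar R \subseteq {\le_\polytope{P}}$ it suffices to treat a weak-order cover $\sigma \lessdot \tau$, an $\b{\omega}$-increasing edge of $\Perm$ in some direction $d = \b{e}_a - \b{e}_b$ with $\langle \b{\omega}, d\rangle > 0$; its normal cone is the braid wall $\polytope{C}(\sigma) \cap \polytope{C}(\tau) \subseteq \{x_a = x_b\}$. If this wall lies in the interior of a maximal cone of $\c{N}(\polytope{P})$, then $\pi(\sigma) = \pi(\tau)$. Otherwise it lies in the interior of a codimension-$1$ cone $N$ of $\c{N}(\polytope{P})$, whose dual edge $f$ (with $\c{N}(f) = N$) has endpoints $\pi(\sigma), \pi(\tau)$ and is again directed by $\pm d$; since $\polytope{C}(\sigma) \subseteq \c{N}(\pi(\sigma))$ is full-dimensional and lies in $\{\langle \cdot, d\rangle \le 0\}$ by the elementary fact applied on $\Perm$, the same fact on $\polytope{P}$ forces $f$ to run from $\pi(\sigma)$ to $\pi(\tau)$ along $d$, hence $\b{\omega}$-increasing, so $\pi(\sigma) \le_\polytope{P} \pi(\tau)$. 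Composing along a saturated weak-order chain shows $\pi$ is order-preserving, so $\bar R \subseteq {\le_\polytope{P}}$.

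Conversely, let $v \to v'$ be an oriented edge $f$ of the rotation graph, directed by $d = \b{e}_a - \b{e}_b$ with $\langle \b{\omega}, d\rangle > 0$ and $\c{N}(f) \subseteq \{x_a = x_b\}$. Choose a braid wall $W$ in the relative interior of $\c{N}(f)$, and label its two adjacent braid cones $\polytope{C}(\sigma), \polytope{C}(\tau)$ so that $\polytope{C}(\sigma) \subseteq \{\langle \cdot, d\rangle \le 0\}$ and $\polytope{C}(\tau) \subseteq \{\langle \cdot, d\rangle \ge 0\}$; then $\pi(\sigma) = v$ and $\pi(\tau) = v'$, while on $\Perm$ the edge $\{\sigma, \tau\}$ runs from $\sigma$ to $\tau$ along $d$, so $\sigma \lessdot \tau$ in the weak order and $v \, \bar R \, v'$. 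Thus the rotation graph is contained in $\bar R$, and taking transitive closures gives ${\le_\polytope{P}} \subseteq \bar R^{\mathrm{tc}} \subseteq {\le_\polytope{P}}$, the last inclusion by the previous paragraph; antisymmetry is automatic since $\b{\omega}$ strictly increases along every rotation edge. Hence $\le_\polytope{P}$ is exactly the poset quotient. The main obstacle is the orientation-preservation step, namely checking that the root direction and the side of the separating hyperplane are \emph{simultaneously} preserved when passing between the braid fan and $\c{N}(\polytope{P})$; the remainder is bookkeeping with transitive closures.
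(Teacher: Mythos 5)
There is no proof in the paper to compare yours against: \cref{prop:rotationPoset} sits in the preliminaries, all of whose proofs the authors explicitly omit as ``well-known or immediate,'' so your write-up is the only actual argument on the table. It is correct. The elementary normal-cone fact (adjacent vertices $\b{u}, \b{u}'$ with $\b{u}'-\b{u} \in \R_{>0}\, \b{d}$ force $\c{N}(\b{u}) \subseteq \{\dotprod{\cdot}{\b{d}} \le 0\}$ and $\c{N}(\b{u}') \subseteq \{\dotprod{\cdot}{\b{d}} \ge 0\}$) is exactly the right tool, and you use it in both directions: every weak order cover projects under $\pi$ to a loop or to an $\b{\omega}$-increasing edge of $\polytope{P}$, so $\pi$ is order-preserving; and a braid wall chosen in the relative interior of $\c{N}(f)$ lifts every rotation edge to a weak order cover between representatives of the two classes. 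Together these give $\le_\polytope{P} = \bar R^{\mathrm{tc}}$, with antisymmetry from the strict increase of $\b{\omega}$ along rotation edges.

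One point that you present as a reading of \cref{def:quotientRelation} is in fact a substantive (and necessary) correction, and you should flag it as such. Literally, \cref{def:quotientRelation} makes the quotient the relation $\bar R$ itself, with no transitive closure, and under that reading the proposition is \emph{false}: $\bar R$ need not be transitive. Concretely, take $\polytope{P} = \Zono[G]$ where $G$ is $K_4$ minus the edge $\{2,3\}$, and let $v, v'$ be the vertices whose posets are the total orders $3 \prec 1 \prec 2 \prec 4$ and $2 \prec 4 \prec 3 \prec 1$, so that their classes are the singletons $\{3124\}$ and $\{2431\}$. Flipping the edges $\{1,2\}$, then $\{1,4\}$, then $\{3,4\}$ gives an $\b{\omega}$-increasing path in the graph of $\polytope{P}$ from $v$ to $v'$ through the vertices with classes $\{2314, 3214\}$ and $\{2341, 3241\}$, so $v \le_\polytope{P} v'$; but $3124 \not\le 2431$ in the weak order, since $(3,2)$ is an inversion of $3124$ and not of $2431$, so $v$ and $v'$ are unrelated in $\bar R$. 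Thus the transitive closure you insert is not cosmetic: weak order chains witnessing consecutive rotation edges cannot in general be concatenated inside a single pair of representatives, precisely because the classes need not be intervals (\cref{def:intervalLatticeProperty}). Your proof establishes the statement under the only interpretation that makes it true, which is surely the intended one; it would strengthen your write-up to state this explicitly rather than attribute the transitive closure to \cref{def:quotientRelation}.
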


\begin{definition}
\label{def:latticeCongruence}
A \defn{congruence} of a lattice~$(L, \le, \meet, \join)$ is an equivalence relation on~$L$ compatible with the meet and join operations, meaning that~$x \meet y \equiv x' \meet y'$ and~$x \join y \equiv x' \join y'$ for any~$x \equiv x'$ and~$y \equiv y'$.
The quotient $\le/{\equiv}$ is then automatically a lattice on~$L/{\equiv}$.
\end{definition}

\begin{proposition}
\label{prop:characterizationLatticeCongruence}
An equivalence relation~$\equiv$ on a lattice~$L$ is a lattice congruence if and only if
\begin{itemize}
\item its equivalent classes are intervals of~$L$, 
\item the map~$\projDown$ (resp.~$\projUp$) sending an element to the minimum (resp.~maximum) element in its equivalence class is order preserving.
\end{itemize}
\end{proposition}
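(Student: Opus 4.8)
The plan is to prove the two implications separately, working throughout in the finite lattice $L$ so that every nonempty class automatically acquires a minimum and a maximum once it is known to be a sublattice.

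For the forward implication, I would assume $\equiv$ is a congruence and extract two facts about an arbitrary class~$C$. First, $C$ is convex: if $a \equiv b$ and $a \le c \le b$, then join-compatibility applied to $a \equiv b$ and $c \equiv c$ gives $c = a \join c \equiv b \join c = b$, so $c \in C$. Second, $C$ is a sublattice: from $a \equiv b$ and $a \equiv a$, meet-compatibility gives $a = a \meet a \equiv a \meet b$, and dually $a \equiv a \join b$, so $a \meet b, a \join b \in C$. Being a finite sublattice, $C$ contains $m \eqdef \bigMeet C$ and $M \eqdef \bigJoin C$, and convexity forces $C = [m,M]$; thus the classes are intervals, with $\projDown$ and $\projUp$ returning $m$ and $M$. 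For order-preservation of $\projDown$ (the argument for $\projUp$ is dual), take $x \le y$; meet-compatibility of $\projDown(x) \equiv x$ and $\projDown(y) \equiv y$ yields $\projDown(x) \meet \projDown(y) \equiv x \meet y = x$, so $\projDown(x) \meet \projDown(y)$ lies in the class of~$x$ and is $\le \projDown(x)$; minimality of $\projDown(x)$ in that class then forces $\projDown(x) = \projDown(x) \meet \projDown(y) \le \projDown(y)$.

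For the converse, I would assume the two bullet conditions and first record that $\projUp$ is order-preserving, idempotent (since $\projUp(x)$ is the top of its own class), and satisfies $x \equiv \projUp(x)$. The heart of the argument is the identity $\projUp(x \join y) = \projUp(\projUp(x) \join y)$. The inequality $\le$ is immediate from $x \le \projUp(x)$ and monotonicity. For $\ge$, set $z \eqdef \projUp(x \join y)$; from $x \join y \le z$ and the idempotence $\projUp(z) = z$ we obtain $\projUp(x) \le \projUp(z) = z$ and $y \le z$, hence $\projUp(x) \join y \le z$ and then $\projUp(\projUp(x) \join y) \le \projUp(z) = z$. With this identity in hand, $x \equiv x'$ means $\projUp(x) = \projUp(x')$, so $\projUp(x \join y) = \projUp(\projUp(x) \join y) = \projUp(\projUp(x') \join y) = \projUp(x' \join y)$, that is $x \join y \equiv x' \join y$; applying the identity once more in the second coordinate gives full join-compatibility, and the dual argument with $\projDown$ gives meet-compatibility, so $\equiv$ is a congruence.

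I expect the only genuinely delicate step to be the $\ge$ direction of the projection identity, as it is the unique place where both hypotheses are used together: convexity of the classes (through idempotence of $\projUp$) interacting with monotonicity of $\projUp$. The remaining steps are routine manipulations of the lattice operations. I would also keep in mind the standing finiteness of~$L$, which is what guarantees the existence of $\bigMeet C$ and $\bigJoin C$ and hence that each class is a genuine interval rather than merely a convex sublattice.
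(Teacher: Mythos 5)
Your proof is correct, and it is the standard argument for this classical fact: the paper itself offers no proof to compare against, since it declares all results of \cref{sec:preliminaries} ``well-known or immediate'' and omits them. Both directions are sound as written --- the forward direction (classes are convex sublattices, hence intervals by finiteness, and minimality/maximality gives monotonicity of $\projDown$ and $\projUp$) and the converse via the projection identity $\projUp(x \join y) = \projUp(\projUp(x) \join y)$ and its dual. Your explicit standing finiteness assumption is the right call: without it the forward implication can fail (congruence classes of an infinite lattice need not have minima or maxima), and it is harmless here since the paper only ever applies the proposition to the finite weak order on~$\f{S}_n$. One small quibble with your closing commentary: the idempotence of $\projUp$ used in the $\ge$ direction does not rest on convexity of the classes, only on the fact that $\projUp(x)$ lies in the class of~$x$ and is its maximum; convexity enters the converse only through the existence of these maxima and minima.
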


In view of \cref{prop:characterizationLatticeCongruence}, we define the following properties of equivalence relations on permutations.
Note that the first two properties are independent, and are both implied by (but do not imply) the third one.

\begin{definition}
\label{def:intervalLatticeProperty}
We say that an equivalent relation~$\equiv$ on~$\f{S}_n$ has
\begin{itemize}
\item the \defn{ interval property} if its classes are intervals of the weak order,
\item the \defn{lattice property} if the quotient of the weak order by~$\equiv$ is a lattice on~$\f{S}_n/{\equiv}$,
\item the \defn{congruence property} if it is a lattice congruence (see \cref{def:latticeCongruence,prop:characterizationLatticeCongruence}).
\end{itemize}
By extension, we say that a deformed permutahedron~$\polytope{P}$ has these properties when the corresponding equivalence relation~$\equiv_\polytope{P}$ of \cref{def:equivalenceRelationDeformedPermutahedron} does.
In particular, $\polytope{P}$ has the lattice property when the rotation poset~$\le_\polytope{P}$ is a lattice.
\end{definition}

To illustrate these notions, we characterize in the next statements the graphs whose zonotope has the interval, the lattice, or the congruence property.
We will need the following definitions, see \cite{BarnardMcConville, Pilaud-acyclicReorientationLattices}.

\begin{definition}
\label{def:filledEdges}
An integer graph~$G$ is 
\begin{itemize}
\item \defn{filled} if ${(i,k) \in E(G)}$ implies~$(i,j) \in E(G)$ and~$(j,k) \in E(G)$ for all~$i < j < k$,
\item \defn{half-filled} if ${(i,k) \in E(G)}$ implies~$(i,j) \in E(G)$ or~$(j,k) \in E(G)$ for all~$i < j < k$,
\item \defn{vertebrate} if the transitive reduction of any induced subgraph of~$G$ is a forest.
\end{itemize}
\end{definition}

\begin{proposition}
\label{prop:latticePropertiesGraphicalZonotopes}
The graphical zonotope~$\Zono$ has the interval (resp.~lattice, resp.~congruence) property if and only if~$G$ is half-filled (resp.~vertebrate, resp.~filled).
\end{proposition}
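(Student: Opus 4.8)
The plan is to transport the statement into the combinatorics of permutations and then prove the three equivalences separately. By \cref{prop:acyclicOrientationPoset}, the graph of~$\Zono$ oriented in the direction~$\b{\omega}$ is the acyclic orientation poset of~$G$, so the lattice property of \cref{def:intervalLatticeProperty} is the assertion that this poset is a lattice. By \cref{def:graphicalRelation}, the relation~$\equiv_G$ on~$\f{S}_n$ is the transitive closure of the rewriting~$UabV \equiv_G UbaV$ for non-edges~$\{a,b\}$, so the $\equiv_G$-class of~$\sigma$ is exactly the set of permutations agreeing with~$\sigma$ on the relative order of the two endpoints of every edge of~$G$; equivalently it is the fiber of the map~$\sigma \mapsto \omega_\sigma$ sending~$\sigma$ to the acyclic orientation it induces on~$G$, and this fiber is the set of linear extensions of the transitive closure of~$\omega_\sigma$. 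Since a filled graph is in particular half-filled and vertebrate, and since the congruence property implies the other two, the three claims are mutually consistent, and I would treat them in increasing order of difficulty.

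\textbf{Interval~$\iff$~half-filled.} For the contrapositive of ``only if'', suppose~$G$ is not half-filled, so there is a triple~$i<j<k$ with~$(i,k) \in E(G)$ but~$(i,j),(j,k) \notin E(G)$. I would then produce two permutations in a common $\equiv_G$-class together with a weak-order intermediate lying in a different class: the edge~$(i,k)$ fixes the order of the pair~$i,k$ inside the class, while~$j$ slides freely past both, and at the crossing point the order of~$i,k$ is forced to flip, so the class is not weak-order convex (the minimal instance~$G=\{(1,3)\}$ on~$[3]$, whose fiber $\{123,132,213\}$ has weak-order join~$321$ outside it, already exhibits this). For ``if'', when~$G$ is half-filled I would show that each fiber of~$\sigma \mapsto \omega_\sigma$ has a weak-order minimum and maximum and is closed under the transpositions permitted by \cref{def:graphicalRelation}, hence equals the full weak-order interval between those extremes; half-filledness is precisely what rules out the routing obstruction above.

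\textbf{Congruence~$\iff$~filled.} Here I would invoke the criterion of \cref{prop:characterizationLatticeCongruence}: $\equiv_G$ is a lattice congruence if and only if its classes are intervals and the projections~$\projDown,\projUp$ onto the minimum and maximum of each class are order preserving. Since filled implies half-filled, the interval half is already handled, and it remains to show that monotonicity of the projections is equivalent to~$G$ being filled. A failure of filledness, namely an edge~$(i,k)$ with~$(j,k) \notin E(G)$ (or~$(i,j) \notin E(G)$) for some~$i<j<k$, lets one push~$k$ and~$j$ past each other inside the class of a permutation while the neighbouring class cannot, breaking the monotonicity of~$\projUp$ (or~$\projDown$); conversely, filledness forces interval-closed neighbourhoods, which makes both projections monotone. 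Equivalently and perhaps more cleanly, I would use Reading's description of weak-order congruences by sets of contracted covers closed under forcing \cite{Reading-latticeCongruences, BarnardMcConville}: the covers contracted by~$\equiv_G$ are indexed by the non-edges of~$G$, and the forcing relation between two of them translates exactly into the implication defining filledness.

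\textbf{Lattice~$\iff$~vertebrate.} This is the main obstacle, the lattice property being global: one must either construct all joins and meets or exhibit a missing one. For ``if'', when every induced subgraph of~$G$ has a forest as transitive reduction, I would form the join of two acyclic orientations by taking the union of their inversion sets (the backward edges for the natural orientation) and closing it up, using the forest structure of the relevant reductions to certify that the closure stays acyclic and yields a unique least upper bound, with the meet obtained dually. For ``only if'', I would localize a failure of the forest condition to a minimal bad induced subgraph and exhibit there two acyclic orientations with two incomparable minimal upper bounds, whence no join exists, and then lift this obstruction to~$G$ via \cref{prop:facesGraphicalZonotope}. Pinning down exactly which induced configurations destroy a join, and verifying that ``vertebrate'' is the precise dividing line, is the delicate point, and I expect to rely here on the structural analysis of acyclic reorientation lattices in \cite{Pilaud-acyclicReorientationLattices}.
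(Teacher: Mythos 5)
First, a structural remark: the paper does not prove the lattice and congruence characterizations at all --- it cites them from \cite{Pilaud-acyclicReorientationLattices} --- and only proves the interval characterization, which had not appeared in the literature. Your plan for those two parts ultimately defers to the same reference and to Reading's forcing theory \cite{Reading-latticeCongruences, BarnardMcConville}, so there is no real divergence there; note only that your proposed join construction for the ``if'' direction of the lattice part (transitive closure of the union of inversion sets) is precisely the step that breaks down on non-vertebrate graphs, so it could not be carried out naively in any case. The substantive comparison is therefore on the interval equivalence, the one part of the proposition for which an actual proof is required, and there your proposal has a genuine gap.

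In your ``if'' direction you claim that each class of~$\equiv_G$ ``has a weak-order minimum and maximum and is closed under the transpositions permitted by \cref{def:graphicalRelation}, hence equals the full weak-order interval between those extremes''. The ``hence'' is a non-sequitur: every equivalence class is trivially closed under its own defining rewritings, so that hypothesis does no work. (The conclusion is in fact salvageable, but for a different reason: a class is the set of linear extensions of the transitive closure of~$\omega_\sigma$, and linear-extension sets are sandwich-closed --- if $\inv(m) \subseteq \inv(y) \subseteq \inv(M)$ with $m$ and $M$ linear extensions, then $y$ is one too --- so existence of a minimum and maximum does imply intervalness.) More seriously, you never argue that a minimum and maximum \emph{exist} when~$G$ is half-filled; ``half-filledness is precisely what rules out the routing obstruction above'' is an assertion, not an argument, and it is exactly where all the work lies. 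The paper does this work via the Bj\"orner--Wachs criterion of \cref{prop:characterizationWOIP}: given $i<j<k$ with $i \preccurlyeq_O k$, it takes a directed path from~$i$ to~$k$, locates the arc~$(j_{q-1}, j_q)$ with $j_{q-1} < j \le j_q$, and uses half-filledness (an edge $(j_{q-1},j)$ or $(j,j_q)$ must exist) to conclude $i \preccurlyeq_O j$ or $j \preccurlyeq_O k$. No substitute for this path argument, nor any appeal to \cref{prop:characterizationWOIP}, appears in your plan, so the one direction the paper actually proves remains unproven in it. Your ``only if'' direction, including the $\f{S}_3$ example $\{123,132,213\}$ with join~$321$, is correct and matches the paper's (equally brief) construction.
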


The characterizations of the lattice and congruence properties in \cref{prop:latticePropertiesGraphicalZonotopes} were already proved in~\cite{Pilaud-acyclicReorientationLattices}.
We just prove here the characterization of the interval property as it did not appear in the literature.
For this, we need the classical characterization of the weak order intervals~\cite{BjornerWachs}.

\begin{proposition}
\label{prop:characterizationWOIP}
A poset~$\less$ on~$[n]$ defines an interval of the weak order if and only if~$i \less k$ implies~$i \less j$ or~$j \less k$, and~$i \more k$ implies~$i \more j$ or~$j \more k$, for every~${1 \le i < j < k \le n}$.
\end{proposition}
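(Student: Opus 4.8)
The plan is to work throughout with inversion sets. Identify each permutation $\rho \in \f{S}_n$ with its inversion set $\mathrm{Inv}(\rho) \subseteq \set{(a,b)}{1 \le a < b \le n}$ as in \cref{def:weakOrder}, so that by definition of the weak order a family of permutations is an interval $[\sigma,\tau]$ exactly when it equals $\set{\rho}{\mathrm{Inv}(\sigma) \subseteq \mathrm{Inv}(\rho) \subseteq \mathrm{Inv}(\tau)}$. Recall also the classical fact (underlying \cite{BjornerWachs}) that a set $A$ of pairs $(a,b)$ with $a<b$ is the inversion set of some permutation if and only if, for all $a<b<c$, one has $(a,b),(b,c) \in A \implies (a,c) \in A$ and $(a,c) \in A \implies (a,b) \in A$ or $(b,c) \in A$; call such an $A$ \emph{valid}. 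I read ``$\less$ defines an interval'' as ``the set $\mathcal{L}$ of linear extensions of $\less$ (permutations $\rho$ with $i \less j \implies i$ before $j$) is a weak order interval''. The first step is to encode $\less$ by its \emph{forced} inversions $F \eqdef \set{(a,b)}{a<b,\ b \less a}$ and \emph{forbidden} inversions $N \eqdef \set{(a,b)}{a<b,\ a \less b}$, which are disjoint by antisymmetry of $\less$. Since a comparable pair is respected by $\rho$ iff the corresponding inversion is forced or forbidden accordingly, one gets $\mathcal{L} = \set{\rho}{F \subseteq \mathrm{Inv}(\rho) \subseteq \overline{N}}$, where $\overline{N}$ is the complement of $N$ inside all pairs $(a,b)$ with $a<b$.

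With this dictionary the statement reduces to: $\mathcal{L}$ is a weak order interval if and only if both $F$ and $\overline{N}$ are valid. For the ``if'' direction I would pick permutations $\sigma,\tau$ with $\mathrm{Inv}(\sigma)=F$ and $\mathrm{Inv}(\tau)=\overline{N}$ (they exist by validity), observe $\sigma \le \tau$ because $F \subseteq \overline{N}$, and then $\mathcal{L} = \set{\rho}{\mathrm{Inv}(\sigma) \subseteq \mathrm{Inv}(\rho) \subseteq \mathrm{Inv}(\tau)} = [\sigma,\tau]$. For the ``only if'' direction the key realizability observation is that $\bigcap_{\rho \in \mathcal{L}} \mathrm{Inv}(\rho) = F$ and $\bigcup_{\rho \in \mathcal{L}} \mathrm{Inv}(\rho) = \overline{N}$ always hold: if $a<b$ and $b \not\less a$ then adjoining ``$a$ before $b$'' to $\less$ creates no cycle (a cycle would force $b \less a$), so some extension leaves $(a,b)$ uninverted, giving $\bigcap \mathrm{Inv} \subseteq F$; symmetrically $a \not\less b$ yields an extension inverting $(a,b)$, giving $\overline{N} \subseteq \bigcup \mathrm{Inv}$, and the reverse inclusions are immediate. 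Hence if $\mathcal{L}=[\sigma,\tau]$, its minimum $\sigma$ and maximum $\tau$ satisfy $\mathrm{Inv}(\sigma)=F$ and $\mathrm{Inv}(\tau)=\overline{N}$, so $F$ and $\overline{N}$ are valid, being inversion sets.

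It then remains to unwind validity into the two stated conditions. Writing out validity of $\overline{N}$ and complementing (so $(a,b) \in \overline{N} \iff a \not\less b$), its two clauses become $a \less b$ and $b \less c \implies a \less c$, which is just transitivity of $\less$ and hence automatic, and $a \less c \implies a \less b$ or $b \less c$, which for $i=a<j=b<k=c$ is exactly the first condition $i \less k \implies i \less j$ or $j \less k$. Similarly, validity of $F$ has the automatic transitivity clause $c \less b$ and $b \less a \implies c \less a$ together with $c \less a \implies b \less a$ or $c \less b$, i.e. $i \more k \implies i \more j$ or $j \more k$, the second condition. I expect the genuine mathematical content to be confined to the acyclicity argument giving $\bigcap \mathrm{Inv} = F$ and $\bigcup \mathrm{Inv} = \overline{N}$; the main practical obstacle is bookkeeping, namely fixing once and for all the convention tying $\less$, its linear extensions, and the orientation of inversions, so that the two validity clauses land on conditions (a) and (b) rather than on their mirror images.
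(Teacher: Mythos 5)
Your proof is correct; the main thing to note is that the paper itself offers no proof of this proposition --- it is stated as the classical characterization of weak order intervals and attributed to \cite{BjornerWachs}, the preliminaries section explicitly omitting proofs of well-known results --- so there is no internal argument to compare against, and yours essentially reconstructs the standard one. The skeleton is complete and the load-bearing steps are handled: identifying the linear extensions of $\less$ with the permutations $\rho$ satisfying $F \subseteq \mathrm{Inv}(\rho) \subseteq \overline{N}$ for the forced set $F$ and forbidden set $N$; proving $\bigcap_{\rho} \mathrm{Inv}(\rho) = F$ and $\bigcup_{\rho} \mathrm{Inv}(\rho) = \overline{N}$ via the acyclicity argument (adjoining a single non-forced relation to $\less$ and extending linearly), which is exactly what makes the ``only if'' direction work once the minimum and maximum of the interval are matched with $F$ and $\overline{N}$; and unwinding the two closure clauses for each of $F$ and $\overline{N}$, where transitivity of $\less$ absorbs one clause of each pair and the two surviving clauses are precisely the two conditions of the statement (which clause lands where is immaterial). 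The only ingredient you import without proof --- that a set $A$ of pairs is an inversion set if and only if both $A$ and its complement satisfy the transitive-closure condition --- is genuinely classical and more elementary than the proposition; if you want full self-containment, it has a short proof: the relation placing $a$ before $b$ whenever $(a,b)\notin A$ with $a<b$, or $(b,a)\in A$ with $b<a$, is a tournament whose transitivity is exactly the two closure conditions, hence a total order, and the permutation listing $[n]$ in that order has inversion set $A$.
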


\begin{proof}[Proof of \cref{prop:latticePropertiesGraphicalZonotopes}]
As just explained, we only prove here the characterization of the interval property.
Assume first that~$G$ is half-filled.
Consider a poset~$\preccurlyeq_O$ corresponding to an acyclic orientation~$O$ of~$G$ and let~${1 \le i < j < k \le n}$ be such that $i \preccurlyeq_O k$.
By definition, there is a sequence~$i = j_0, j_1, \dots, j_p = k$ such that~$(j_{q-1}, j_q)$ is an oriented arc of~$O$ for all~$q \in [p]$.
Moreover, since~${1 \le i < j < k \le n}$, there is~$q \in [p]$ such that~${j_{q-1} < j \le j_q}$.
If~$j = j_q$, then we obtain that~$i \preccurlyeq_O j$ and~$j \preccurlyeq k$.
Otherwise, since~${(j_{q-1}, j_q) \in E(G)}$ and~$G$ is half-filled, we also have~$(j_{q-1}, j) \in E(G)$ or~$(j, j_q) \in E(G)$.
Assume for instance that~$(j, j_q) \in E(G)$ (the other case is symmetric).
If the edge~$(j, j_q)$ is oriented from~$j$ to~$j_q$ in~$O$, then we obtain that~$j \preccurlyeq_O j_q \preccurlyeq_O k$, so that~$j \preccurlyeq_O k$.
Otherwise, we have~$i \preccurlyeq_O j_q \preccurlyeq j$ so that~$i \preccurlyeq j$.
Therefore, $i \preccurlyeq_O k$ implies $i \preccurlyeq_O j$ or~$j \preccurlyeq_O k$.
By symmetry, we conclude from~\cref{prop:characterizationWOIP} that $\Zono$ has the interval property.
Conversely, if~$G$ is not half-filled, it is immediate to construct an acyclic orientation~$O$ of~$G$ whose corresponding poset~$\preccurlyeq_O$ fails to satisfy the conditions of \cref{prop:characterizationWOIP}.
\end{proof}

\begin{corollary}
\label{coro:latticePropertiesBipartiteGraphicalZonotope}
The graphical zonotope~$\polytope{Z}_{m,n} \eqdef \Zono[K_{m,n}]$ has the interval (resp.~lattice, resp.~congruence) property if and only if~$m,n \ge 1$ (resp.~$m = 1$ or $n = 1$, resp.~$m = n = 1$).
\end{corollary}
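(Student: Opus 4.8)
My plan is to deduce the corollary directly from \cref{prop:latticePropertiesGraphicalZonotopes}: it suffices to decide, for each pair~$(m,n)$, whether the complete bipartite graph~$K_{m,n}$ is half-filled, vertebrate, or filled in the sense of \cref{def:filledEdges}. Throughout I would use the labelling of \cref{def:completeMultipartiteGraphicalZonotope}, so that~$V_1 = [m]$, $V_2 = \{m+1, \dots, m+n\}$, and the edges of~$K_{m,n}$ are exactly the pairs~$(i,j)$ with~$i \le m < j$. The three properties then become three short checks on this edge structure, two routine and one carrying the actual content.

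For the \emph{interval property} I would verify that~$K_{m,n}$ is always half-filled. If~$(i,k) \in E(K_{m,n})$ then, since~$i < k$, necessarily~$i \in V_1$ and~$k \in V_2$; now for any~$i < j < k$ either~$j \le m$, in which case~$(j,k)$ is an edge, or~$j > m$, in which case~$(i,j)$ is an edge. Thus the half-filled condition holds for every~$m, n \ge 1$, and~$\polytope{Z}_{m,n}$ always has the interval property. For the \emph{congruence property} I would test the filled condition. When~$m = n = 1$ the graph~$K_{1,1}$ is the single edge~$(1,2)$ with no vertex strictly between its endpoints, so filledness holds vacuously. Conversely, if~$m \ge 2$ then the edge~$(1, m+1)$ together with~$j = 2$ breaks it, because~$(1,2)$ joins two vertices of~$V_1$ and is not an edge; symmetrically, if~$n \ge 2$ the edge~$(m, m+n)$ together with~$j = m+1$ fails. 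Hence~$\polytope{Z}_{m,n}$ has the congruence property if and only if~$m = n = 1$.

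The \emph{lattice property} is where I expect the real work to be, since it requires producing a genuine obstruction rather than a vacuous or universal verification. If~$m = 1$ or~$n = 1$, then~$K_{m,n}$ is a star, hence a tree, and every induced subgraph is a forest equal to its own transitive reduction, so~$K_{m,n}$ is vertebrate and~$\polytope{Z}_{m,n}$ has the lattice property. If instead~$m, n \ge 2$, I would single out the four vertices~$\{1, 2, m+1, m+2\}$, which induce a copy of~$K_{2,2}$. Orienting each of its edges from its smaller to its larger endpoint yields the arcs~$1 \to m+1$, $1 \to m+2$, $2 \to m+1$, $2 \to m+2$, and the point to confirm is that none of these is transitively implied, because there is no directed path of length two between any pair of its endpoints (the two sources~$1,2$ and the two sinks~$m+1,m+2$ are pairwise non-adjacent within~$V_1$ and within~$V_2$).

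Consequently the transitive reduction of this oriented~$K_{2,2}$ retains all four arcs and its underlying undirected graph is the four-cycle~$1 - (m+1) - 2 - (m+2) - 1$, which is not a forest. This shows that~$K_{m,n}$ is not vertebrate as soon as~$m, n \ge 2$, so~$\polytope{Z}_{m,n}$ fails the lattice property, completing the characterization. The only delicate step is precisely this last verification that the oriented square has no redundant arc, so that its transitive reduction genuinely contains a cycle; everything else reduces to the elementary incidence analysis of~$K_{m,n}$ carried out above.
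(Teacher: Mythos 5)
Your proposal is correct and follows exactly the paper's route: the paper also deduces the corollary from \cref{prop:latticePropertiesGraphicalZonotopes} by observing that $K_{m,n}$ is always half-filled, vertebrate precisely when $m=1$ or $n=1$, and filled precisely when $m=n=1$. The only difference is that the paper states these three verifications as immediate from \cref{def:filledEdges}, whereas you spell them out (including the induced $K_{2,2}$ whose transitive reduction is a $4$-cycle), which is a faithful expansion of the same argument.
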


\begin{proof}
It follows immediately from \cref{def:filledEdges} that the complete bipartite graph~$K_{m,n}$ is always half-filled, vertebrate only when~$m = 1$ or $n = 1$, and filled only when~$m = n = 1$.
\end{proof}

We finally want to underline which of the properties of \cref{def:intervalLatticeProperty} are preserved by the Cartesian product and the Minkowski sum of \cref{def:CartesianProductMinkowskiSum}.
The proofs are immediate for the Cartesian product, and rely on the fact that the congruence~$\equiv_{\polytope{P} + \polytope{Q}}$ is the intersection of the congruences~$\equiv_\polytope{P}$ and~$\equiv_\polytope{Q}$ for the Minkowski sum.

\begin{proposition}
\label{prop:CartesianProductMinkowskiSumLatticeProperties}
The Cartesian product preserves the interval, lattice, and congruence properties.
The Minkowski sum preserve the interval and congruence properties, but not the lattice property.
\end{proposition}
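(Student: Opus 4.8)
The plan is to treat the three properties one at a time for each operation, using the descriptions of $\equiv_{\polytope{P}\times\polytope{Q}}$, $\equiv_{\polytope{P}+\polytope{Q}}$ and of the rotation posets from \cref{prop:CartesianProductMinkowskiSumEquivalenceRelations,prop:CartesianProductMinkowskiSumRotationPosets}, together with the characterization of lattice congruences in \cref{prop:characterizationLatticeCongruence}. Throughout the Cartesian-product cases, the basic tool is the pair of order-preserving restriction maps $r_1,r_2\colon\f{S}_{m+n}\to\f{S}_m,\f{S}_n$ sending $\sigma$ to its subword of values in $[m]$, respectively to its shifted subword of values $>m$; both are order preserving since deleting values only deletes inversions. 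Writing $\Phi=(r_1,r_2)$, the key bookkeeping fact is that $\mathrm{Inv}(\sigma)$ splits into the small--small part $\mathrm{Inv}(r_1(\sigma))$, the large--large part $\mathrm{Inv}(r_2(\sigma))$, and the \emph{cross} inversions (a large value preceding a smaller small value). By \cref{prop:CartesianProductMinkowskiSumEquivalenceRelations}, each class of $\equiv_{\polytope{P}\times\polytope{Q}}$ is $\Phi^{-1}(A\times B)$ for a class $A$ of $\equiv_\polytope{P}$ and a class $B$ of $\equiv_\polytope{Q}$.

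For the Cartesian product I would argue as follows. \emph{Lattice:} by \cref{prop:CartesianProductMinkowskiSumRotationPosets} the map $(\b{v},\b{w})\mapsto{\preccurlyeq_{\b{v}}\sqcup\;\preccurlyeq_{\b{w}}^{+m}}$ identifies $\le_{\polytope{P}\times\polytope{Q}}$ with the product order $\le_\polytope{P}\times\le_\polytope{Q}$, and a product of two lattices is a lattice. \emph{Interval:} if $A=[\alpha_-,\alpha_+]$ and $B=[\beta_-,\beta_+]$ are intervals, I claim $\Phi^{-1}(A\times B)=[\sigma_-,\sigma_+]$, where $\sigma_-$ is the shuffle of $\alpha_-$ and $\beta_-$ placing all small values first (no cross inversion) and $\sigma_+$ the shuffle of $\alpha_+,\beta_+$ placing all large values first (all $mn$ cross inversions). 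Indeed, from the inversion splitting, $\mathrm{Inv}(\sigma_-)\subseteq\mathrm{Inv}(\sigma)\subseteq\mathrm{Inv}(\sigma_+)$ is equivalent to $\alpha_-\le r_1(\sigma)\le\alpha_+$ and $\beta_-\le r_2(\sigma)\le\beta_+$, the cross part being unconstrained. \emph{Congruence:} assuming $\polytope{P},\polytope{Q}$ have the congruence property, they have the interval property by \cref{prop:characterizationLatticeCongruence}, so the classes of $\equiv_{\polytope{P}\times\polytope{Q}}$ are intervals by the previous point; it then remains to check that $\projDown$ and $\projUp$ are order preserving. From the interval description, $\projDown(\sigma)$ is the smalls-first shuffle of $\projDown(r_1(\sigma))$ and $\projDown(r_2(\sigma))$; if $\sigma\le\tau$ then $r_1(\sigma)\le r_1(\tau)$ and $r_2(\sigma)\le r_2(\tau)$, so by monotonicity of the projections of $\equiv_\polytope{P},\equiv_\polytope{Q}$ the two smalls-first shuffles have nested inversion sets, giving $\projDown(\sigma)\le\projDown(\tau)$; the argument for $\projUp$ is symmetric.

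For the Minkowski sum (with $m=n$), \cref{prop:CartesianProductMinkowskiSumEquivalenceRelations} gives ${\equiv_{\polytope{P}+\polytope{Q}}}={\equiv_\polytope{P}}\cap{\equiv_\polytope{Q}}$, so each of its classes is the intersection of a $\equiv_\polytope{P}$-class and a $\equiv_\polytope{Q}$-class. If these are intervals $[a,b]$ and $[c,d]$ of the weak order, their intersection is $[a\join c,\,b\meet d]$, again an interval (nonempty since the classes partition $\f{S}_n$); this gives the interval property. For the congruence property I would simply invoke that congruences of a lattice are closed under intersection, so ${\equiv_\polytope{P}}\cap{\equiv_\polytope{Q}}$ is again a lattice congruence of the weak order.

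The lattice property, however, is not preserved, and I would exhibit a counterexample in $\DefoPerms[4]$. Let $G$ and $H$ be the matchings $\{(1,3),(2,4)\}$ and $\{(1,4),(2,3)\}$ on $[4]$. Both are forests, hence vertebrate (\cref{def:filledEdges}), so $\Zono[G]$ and $\Zono[H]$ have the lattice property by \cref{prop:latticePropertiesGraphicalZonotopes}. Their edge sets are disjoint and $G\oplus H=K_{2,2}$, so \cref{prop:operationsGraphsZonotopes} gives $\Zono[G]+\Zono[H]=\Zono[G\oplus H]=\polytope{Z}_{2,2}$, which fails the lattice property by \cref{coro:latticePropertiesBipartiteGraphicalZonotope}. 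The only genuinely delicate point of the whole proof is the congruence property of the Cartesian product: one must check that $\projDown,\projUp$ respect the weak order even though the weak order on $\f{S}_{m+n}$ is \emph{not} the product of the weak orders on $\f{S}_m$ and $\f{S}_n$, which is precisely why the clean behaviour of inversion sets under the restriction maps $r_1,r_2$ is needed.
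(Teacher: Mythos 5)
Your proof is correct, and it follows the same route the paper has in mind: the paper's entire justification is the remark that the Cartesian-product cases are immediate and that the Minkowski-sum cases rely on ${\equiv_{\polytope{P}+\polytope{Q}}} = {\equiv_\polytope{P}} \cap {\equiv_\polytope{Q}}$ (\cref{prop:CartesianProductMinkowskiSumEquivalenceRelations}), which is exactly how you handle the sum (intersections of intervals are intervals, intersections of lattice congruences are congruences). For the product, your use of \cref{prop:CartesianProductMinkowskiSumRotationPosets} to get the product order, and of the decomposition of inversion sets into small--small, large--large and cross pairs to identify each class with the interval between the smalls-first and larges-first shuffles and to check monotonicity of $\projDown$ and $\projUp$, is a correct fleshing-out of what the paper leaves unwritten. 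The one place you genuinely add content is the counterexample for non-preservation of the lattice property by the Minkowski sum: the paper asserts this without exhibiting one, and its nearby example of lattice failure (the $(3,3)$-biassociahedron of \cref{rem:noBiTamari}) does not serve, since as a Minkowski sum it has a summand, namely $\polytope{Z}_{3,3}$, that already lacks the lattice property. Your example is clean and fully verifiable from earlier results with no circularity: the matchings $G = \{(1,3),(2,4)\}$ and $H = \{(1,4),(2,3)\}$ are vertebrate, so $\Zono[G]$ and $\Zono[H]$ have the lattice property by \cref{prop:latticePropertiesGraphicalZonotopes}; their edge sets are disjoint, so $\Zono[G] + \Zono[H] = \Zono[G \oplus H] = \polytope{Z}_{2,2}$ by \cref{prop:operationsGraphsZonotopes}; and $\polytope{Z}_{2,2}$ fails the lattice property by \cref{coro:latticePropertiesBipartiteGraphicalZonotope}.
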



\section{Shuffles of deformed permutahedra}
\label{sec:shuffles}

In this section, we introduce the shuffle operation on deformed permutahedra (\cref{subsec:shuffle}), provide a combinatorial description of the resulting polytopes (\cref{subsec:combinatorialDescription}), and discuss the shuffle with a point (\cref{subsec:shufflePoint}) and the shuffle of graphical zonotopes (\cref{subsec:shuffleGraphicalZonotopes}).


\subsection{Shuffle operation}
\label{subsec:shuffle}

This paper focuses on the following operation on the deformed permutahedra of \cref{subsec:deformedPermutahedra}.

\begin{definition}
\label{def:shuffleDeformedPermutahedra}
The \defn{shuffle} of two deformed permutahedra~$\polytope{P} \in \DefoPerms[m]$ and~$\polytope{Q} \in \DefoPerms[n]$ is
\[
\polytope{P} \shuffleDP \polytope{Q} \eqdef (\polytope{P} \times \polytope{Q}) + \polytope{Z}_{m,n} = (\polytope{P} \times \polytope{Q}) + \sum_{\substack{i \in [m] \\ j \in [n]}} [\b{e}_i, \b{e}_{m+j}],
\]
where~$\times$ denotes the Cartesian product, and~$+$ and~$\sum$ the Minkowski sum (see \cref{def:CartesianProductMinkowskiSum}).
\end{definition}

For instance, we have~$\Perm[m] \shuffleDP \Perm[n] = \Perm[m+n]$.
We will study in more details certain particular shuffles: the shuffle with a point in \cref{subsec:shufflePoint}, shuffles of graphical zonotopes in \cref{subsec:shuffleGraphicalZonotopes}, and shuffles of permutahedra and associahedra in \cref{sec:multiplihedra,sec:constrainahedra,sec:biassociahedra}.
At the moment, we observe that the shuffle operation~$\shuffleDP$ preserves the family of deformed permutahedra, which directly follows from \cref{def:shuffleDeformedPermutahedra,prop:CartesianProductMinkowskiSumDeformedPermutahedra}.

\begin{proposition}
\label{prop:shuffleDeformedPermutahedra}
For all deformed permutahedra~$\polytope{P} \in \DefoPerms[m]$ and~$\polytope{Q} \in \DefoPerms[n]$, the shuffle~$\polytope{P} \shuffleDP \polytope{Q}$ is a deformed permutahedron in~$\DefoPerms[m+n]$.
\end{proposition}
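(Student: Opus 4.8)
The plan is to read the shuffle directly off its definition and invoke the closure properties already established for the two component operations. By \cref{def:shuffleDeformedPermutahedra}, the shuffle is
\[
\polytope{P} \shuffleDP \polytope{Q} = (\polytope{P} \times \polytope{Q}) + \polytope{Z}_{m,n},
\]
which is built from exactly two ingredients: a single Cartesian product, followed by a single Minkowski sum. The strategy is therefore to track the ambient dimensions and apply \cref{prop:CartesianProductMinkowskiSumDeformedPermutahedra} twice.

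First I would handle the Cartesian factor. Since $\polytope{P} \in \DefoPerms[m]$ and $\polytope{Q} \in \DefoPerms[n]$, the first bullet of \cref{prop:CartesianProductMinkowskiSumDeformedPermutahedra} gives that $\polytope{P} \times \polytope{Q}$ is a deformed permutahedron in $\DefoPerms[m+n]$. Next I would check that the second summand lives in the same ambient space and is itself a deformed permutahedron: the segments $[\b{e}_i, \b{e}_{m+j}]$ for $i \in [m]$ and $j \in [n]$ are precisely the edge-segments of the complete bipartite graph $K_{m,n}$ on the vertex set $[m+n]$, so $\polytope{Z}_{m,n} = \Zono[K_{m,n}]$ is a graphical zonotope in $\R^{m+n}$. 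As recalled in \cref{subsec:graphicalZonotopes} (and listed among the examples following \cref{def:generalizedPermutahedron}), every graphical zonotope is a deformed permutahedron, so $\polytope{Z}_{m,n} \in \DefoPerms[m+n]$.

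With both summands now confirmed to be deformed permutahedra in the common space $\R^{m+n}$, the hypotheses of the second bullet of \cref{prop:CartesianProductMinkowskiSumDeformedPermutahedra} (the case $m=n$ there, applied with the shared dimension $m+n$) are met, and I would conclude that the Minkowski sum $(\polytope{P} \times \polytope{Q}) + \polytope{Z}_{m,n}$ is again a deformed permutahedron in $\DefoPerms[m+n]$, which is the claim. There is no real obstacle here: the only thing worth spelling out carefully is the bookkeeping that places $\polytope{P} \times \polytope{Q}$ and $\polytope{Z}_{m,n}$ in the same dimension $m+n$ so that the Minkowski-sum step is legitimate. In terms of normal fans, the statement says that coarsenings of the braid fan are closed under direct sum (from \cref{prop:CartesianProductMinkowskiSum}, the fan of $\polytope{P}\times\polytope{Q}$ is $\c{N}(\polytope{P})\oplus\c{N}(\polytope{Q})$, which coarsens the braid fan of $\R^{m+n}$) and then under common refinement with another coarsening of the same braid fan, both of which are routine once the two operations are known to preserve the class — exactly the content invoked above.
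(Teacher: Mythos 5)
Your proof is correct and follows exactly the paper's route: the paper states that the result ``directly follows from \cref{def:shuffleDeformedPermutahedra,prop:CartesianProductMinkowskiSumDeformedPermutahedra}'', i.e.\ closure under Cartesian product, the fact that $\polytope{Z}_{m,n} = \Zono[K_{m,n}]$ is a graphical zonotope (hence a deformed permutahedron), and closure under Minkowski sum. Your write-up simply spells out the dimension bookkeeping that the paper leaves implicit.
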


We now gather in~\cref{rem:shuffleAssociative,rem:simpleShuffle,rem:latticeShuffle} some elementary observations on the shuffle operation~$\shuffleDP$.

\begin{remark}
\label{rem:shuffleAssociative}
The shuffle is an associative operation on deformed permutahedra.
Indeed, for any $k$ deformed permutahedra~$\polytope{P}_1 \in \DefoPerms[n_1], \dots, \polytope{P}_k \in \DefoPerms[n_k]$, we have
\[
\polytope{P}_1 \shuffleDP \cdots \shuffleDP \polytope{P}_k = (\polytope{P}_1 \times \cdots \times \polytope{P}_k) + \polytope{Z}_{(n_1, ..., n_k)}.
\]
The shuffle is also commutative up to permutation of coordinates.
Indeed, for any deformed permutahedra~$\polytope{P} \in \DefoPerms[m]$ and~$\polytope{Q} \in \DefoPerms[n]$, we have~$\polytope{P} \shuffleDP \polytope{Q} = s(\polytope{Q} \shuffleDP \polytope{P})$ where~$s : \R^{n+m} \to \R^{m+n}$ denotes the swap~$s(x,y) = (y, x)$.
\end{remark}

\begin{remark}
\label{rem:simpleShuffle}
The shuffle operation~$\shuffleDP$ does not preserve simple polytopes.
For instance, while the permutahedron~$\Perm$ of \cref{subsec:permutahedra} and the associahedron~$\Asso$ of \cref{subsec:associahedra} are simple, the multiplihedron~$\Multiplihedron \eqdef \Perm[m] \shuffleDP \Asso[n]$ of \cref{sec:multiplihedra}, the constrainahedron ${\Constrainahedron \eqdef \Asso[m] \shuffleDP \Asso[n]}$ of \cref{sec:constrainahedra}, and the biassociahedron $\Biassociahedron \eqdef \Asso[m] \shuffleDP \Asso[n]$ of \cref{sec:biassociahedra} are not simple in general (see \cref{rem:simpleMultiplihedron,rem:simpleConstrainahedron,rem:simpleBiassociahedron}).
\end{remark}


\subsection{Combinatorial description}
\label{subsec:combinatorialDescription}

We now aim at describing the behavior of the shuffle operation~$\shuffleDP$ of \cref{def:shuffleDeformedPermutahedra} in terms of the face preposets of \cref{def:facePreposet}.
Such a description immediately follows from \cref{prop:CartesianProductMinkowskiSumFacePreposets,prop:facesGraphicalZonotope}.
A more convenient description arises by combining as well with the description of the face preposets of~$\polytope{Z}_{m,n}$ provided in \cref{prop:completeMultipartiteGraphicalZonotopeFaces}.
Recall that for an ordered partition~$\mu$ on~$[m+n]$, we denote by~$\preccurlyeq_\mu^{m,n}$ the preposet obtained from~$\preccurlyeq_\mu$ by deleting all relations inside each part of~$\mu$ completely contained in~$[m]$ or in~$[n]^{+m}$.

\begin{proposition}
\label{prop:shuffleFacePreposets}
Consider two deformed permutahedra~$\polytope{P} \in \DefoPerms[m]$ and~$\polytope{Q} \in \DefoPerms[n]$, two faces~$\polytope{F}$ of~$\polytope{P}$ and~$\polytope{G}$ of~$\polytope{Q}$, and an ordered partition~$\mu$ of~$[m+n]$ such that 
\begin{itemize}
\item $\preccurlyeq_\mu$ extends both $\preccurlyeq_\polytope{F}$ and  ${\preccurlyeq_\polytope{G}}^{+m}$,
\item no two consecutive parts of~$\mu$ are both contained in $[m]$ or both contained in~$[n]^{+m}$,
\item if~$\mu_k \cap [m] \ne \varnothing \ne \mu_k \cap [n]^{+m}$, then any two elements of~$\mu_k \cap [m]$ are equal or incomparable in~$\preccurlyeq_\polytope{F}$ and any two elements of~$\mu_k \cap [n]^{+m}$ are equal or incomparable~in~${\preccurlyeq_\polytope{G}}^{+m}$.
\end{itemize}
Then the preposet~$\preccurlyeq_{\polytope{F}, \polytope{G}, \mu} \eqdef (\preccurlyeq_\polytope{F} \sqcup \; {\preccurlyeq_\polytope{G}}^{+m}) \; \cup \preccurlyeq_\mu^{m,n}$ is a face preposet of~$\polytope{P} \shuffleDP \polytope{Q}$, 
and any face preposet of~$\polytope{P} \shuffleDP \polytope{Q}$ is uniquely obtained this way.
\end{proposition}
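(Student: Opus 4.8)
The plan is to read $\polytope{P} \shuffleDP \polytope{Q}$ as the Minkowski sum $(\polytope{P} \times \polytope{Q}) + \polytope{Z}_{m,n}$ of two deformed permutahedra in $\R^{m+n}$ (\cref{def:shuffleDeformedPermutahedra}) and to apply the Minkowski-sum part of \cref{prop:CartesianProductMinkowskiSumFacePreposets} to this sum, using the known face preposets of its two summands. Those of $\polytope{P} \times \polytope{Q}$ are the $\preccurlyeq_\polytope{F} \sqcup {\preccurlyeq_\polytope{G}}^{+m}$ (first part of \cref{prop:CartesianProductMinkowskiSumFacePreposets}), and those of $\polytope{Z}_{m,n}$ are the $\preccurlyeq_\mu^{m,n}$ for the ordered partitions $\mu$ of $[m+n]$ having no two consecutive parts contained in the same block $[m]$ or $[n]^{+m}$ (\cref{prop:completeMultipartiteGraphicalZonotopeFaces}, which is exactly the second bullet). \cref{prop:CartesianProductMinkowskiSumFacePreposets} then identifies the face preposets of the sum with the transitive closures of the unions $(\preccurlyeq_\polytope{F} \sqcup {\preccurlyeq_\polytope{G}}^{+m}) \cup \preccurlyeq_\mu^{m,n}$ ranging over the pairs (face of the product, face of the zonotope) satisfying its two compatibility hypotheses, each face preposet arising from a unique such pair. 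Since faces of the product biject with pairs $(\polytope{F},\polytope{G})$ and faces of the zonotope biject with admissible $\mu$, the asserted bijectivity (\textquotedblleft uniquely obtained this way\textquotedblright) will be automatic once the two compatibility hypotheses are matched with the first and third bullets; this matching is the entire content of the statement.

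The matching is carried out by a case analysis on an unordered pair $\{i,j\}$, sorted according to whether $i,j$ lie in $[m]$, in $[n]^{+m}$, or on opposite sides, and whether they lie in the same part of $\mu$ or not. For the \textquotedblleft three imply the fourth\textquotedblright{} hypothesis, applied to the four relations $i \preccurlyeq j$ and $j \preccurlyeq i$ for the product face and for $\preccurlyeq_\mu^{m,n}$, I expect it to be vacuous in every case except when $i,j$ lie on the same side inside a single \emph{mixed} part $\mu_k$: there both relations coming from $\preccurlyeq_\mu^{m,n}$ hold (the elements of a mixed part stay equivalent), so the condition reduces to $i \preccurlyeq_\polytope{F} j \iff j \preccurlyeq_\polytope{F} i$ on the $[m]$ side, and symmetrically on the $[n]^{+m}$ side, i.e. to the third bullet. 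For the \textquotedblleft common extension\textquotedblright{} hypothesis, I would show it is equivalent to the first bullet that $\preccurlyeq_\mu$ extends both $\preccurlyeq_\polytope{F}$ and ${\preccurlyeq_\polytope{G}}^{+m}$: if some strict relation $i \prec_\polytope{F} j$ were reversed by the part order of $\mu$, then $\preccurlyeq_\mu^{m,n}$ would contain the opposite strict relation $j \prec i$, and any common extension would be forced to identify $i$ and $j$, which is impossible; conversely, granting the first and third bullets, one refines $\mu$ inside each part contained in a single block to produce a genuine common extension, the third bullet guaranteeing that no obstruction arises inside mixed parts.

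It then remains to check that under these three conditions the union $\preccurlyeq_{\polytope{F},\polytope{G},\mu}$ is already transitively closed, so that it coincides with the transitive closure produced by \cref{prop:CartesianProductMinkowskiSumFacePreposets}. Here I would compose a relation of $\preccurlyeq_\polytope{F} \sqcup {\preccurlyeq_\polytope{G}}^{+m}$ with a relation of $\preccurlyeq_\mu^{m,n}$ and track the parts of $\mu$: since the first bullet makes $\preccurlyeq_\polytope{F}$ respect the part order of $\mu$, a composite either lands in distinct parts, so the relation is already present in $\preccurlyeq_\mu^{m,n}$, or inside a common part, in which case that part must be mixed and the two endpoints are already equivalent in $\preccurlyeq_\mu^{m,n}$. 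Composites internal to one block follow from transitivity of $\preccurlyeq_\polytope{F}$ and $\preccurlyeq_\polytope{G}$, and composites internal to $\preccurlyeq_\mu^{m,n}$ from its being a genuine preposet. Assembling the case analysis with the bijections recalled in the first paragraph then yields the statement.

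The main obstacle is the \textquotedblleft common extension\textquotedblright{} clause: the first bullet is strictly stronger than the bare existence of a common total preorder, and the two only become equivalent once the third bullet is imposed. Concretely, a $\mu$ that collapses a strict $\preccurlyeq_\polytope{F}$-relation inside a mixed part produces a non-canonical triple that the naive reading of \textquotedblleft common extension\textquotedblright{} would wrongly accept; it is exactly the third bullet, equivalently the failure of \textquotedblleft three imply the fourth\textquotedblright, that discards it. Thus the two compatibility hypotheses of \cref{prop:CartesianProductMinkowskiSumFacePreposets} cannot be translated in isolation, and the care lies in matching them jointly with the first and third bullets; the remaining bookkeeping (the case analysis and the transitivity verification) I expect to be routine.
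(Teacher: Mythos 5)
Your proposal follows essentially the same route as the paper, whose entire proof reads ``Combine \cref{prop:CartesianProductMinkowskiSumFacePreposets,prop:completeMultipartiteGraphicalZonotopeFaces}'': write $\polytope{P} \shuffleDP \polytope{Q} = (\polytope{P} \times \polytope{Q}) + \polytope{Z}_{m,n}$, describe the faces of the two summands by those two propositions, and match the compatibility hypotheses of the Minkowski-sum statement with the three bullets. Your elaboration of that matching --- the case analysis showing the ``three imply the fourth'' condition only bites for two elements of the same block inside a mixed part (which is the third bullet), the identification of the common-extension hypothesis with the first bullet, and the verification that the union $(\preccurlyeq_\polytope{F} \sqcup \, {\preccurlyeq_\polytope{G}}^{+m}) \cup \preccurlyeq_\mu^{m,n}$ is already transitively closed --- is correct and simply spells out the routine bookkeeping the paper leaves implicit.
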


\begin{proof}
Combine  \cref{prop:CartesianProductMinkowskiSumFacePreposets,prop:completeMultipartiteGraphicalZonotopeFaces}.
\end{proof}

\begin{remark}
\label{rem:shuffleFaces}
The deformed permutahedron~$\polytope{P}$ (resp.~$\polytope{Q}$) itself appear as a face of~$\polytope{P} \shuffleDP \polytope{Q}$.
The corresponding face preposets are given by~$\preccurlyeq_{\polytope{P}, \b{w}, \mu}$ (resp.~$\preccurlyeq_{\b{v}, \polytope{Q}, \mu}$) where~$\b{w}$ (resp.~$\b{v}$) is an arbitrary vertex of~$\polytope{Q}$ (resp.~of~$\polytope{P}$) and $\mu$ is one of the two ordered partitions with parts~$[m]$ and~$[n]^{+m}$.
\end{remark}

\begin{remark}
\label{rem:biPreposets}
The face preposet~$\preccurlyeq_{\polytope{F}, \polytope{G}, \mu}$ of \cref{prop:shuffleFacePreposets} can be represented visually by drawing the Hasse diagrams of the face preposets~$\preccurlyeq_\polytope{F}$ and~${\preccurlyeq_\polytope{G}}^{+m}$ side by side, with their vertices separated in blocks organized from bottom to top according to~$\mu$. Then $i \preccurlyeq_{\polytope{F}, \polytope{G}, \mu} j$ if
\begin{itemize}
\item either there is an oriented path from~$i$ to~$j$ in~$\preccurlyeq_\polytope{F}$ or in~${\preccurlyeq_\polytope{G}}^{+m}$,
\item or~$i$ is in a block lower than~$j$, 
\item or~$i$ and~$j$ belong to the same block which is not contained in~$[m]$ or in~$[n]^{+m}$.
\end{itemize}
We call such pictures \defn{$(\polytope{P}, \polytope{Q})$-bipreposets}.
Examples of bipreposets where the preposets are trees are illustrated in \cref{fig:cotrees,fig:bitrees}.
\end{remark}

\begin{remark}
\label{rem:biPosets}
The preposet~$\preccurlyeq_{\polytope{F}, \polytope{G}, \mu}$ is a poset if and only if $\polytope{F}$ and~$\polytope{G}$ are vertices, and the parts of~$\mu$ are alternatively contained in~$[m]$ and~$[n]^{+m}$.
In other words, the vertex posets of~$\polytope{P} \shuffleDP \polytope{Q}$ are obtained by interspersing the vertex posets of~$\polytope{P}$ with the vertex posets of~$\polytope{Q}$ as explained in \cref{rem:biPreposets}.
We call such pictures \defn{$(\polytope{P}, \polytope{Q})$-biposets}.
\end{remark}

\cref{rem:biPosets} yields the following statement.

\begin{definition}
\label{def:partitionedPoset}
A \defn{partitioned poset} is a pair~$(\trianglelefteq, \mu)$ where~$\trianglelefteq$ is a poset on~$[n]$ and~$\mu$ is an ordered partition of~$[n]$ such that~$i \trianglelefteq j$ implies~$i \preccurlyeq_\mu j$.
\end{definition}

\begin{corollary}
\label{coro:numberVertices}
The number of vertices of~$\polytope{P} \shuffleDP \polytope{Q}$ is given by the summation formula
\[
\sum_\ell \pp{\ell}{\polytope{P}} \, \big( \pp{\ell-1}{\polytope{Q}} + 2 \, \pp{\ell}{\polytope{Q}} + \pp{\ell+1}{\polytope{Q}} \big),
\]
where~$\pp{\ell}{\polytope{P}}$ denotes the number of partitioned posets~$(\trianglelefteq, \mu)$ where~$\trianglelefteq$ is a vertex poset of~$\polytope{P}$ and~$\mu$ has~$\ell$ parts.
In particular, it only depends on the repartition of partitioned vertex posets of~$\polytope{P}$ and~$\polytope{Q}$.
\end{corollary}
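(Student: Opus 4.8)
The plan is to count vertices directly from the description of vertex posets in \cref{rem:biPosets}. By that remark, the vertices of $\polytope{P} \shuffleDP \polytope{Q}$ are in bijection with triples $(\trianglelefteq_P, \trianglelefteq_Q, \mu)$, where $\trianglelefteq_P$ is a vertex poset of $\polytope{P}$, $\trianglelefteq_Q$ is a vertex poset of $\polytope{Q}$, and $\mu$ is an ordered partition of $[m+n]$ whose parts lie \emph{alternately} in $[m]$ and in $[n]^{+m}$, subject to $\preccurlyeq_\mu$ extending both $\trianglelefteq_P$ and $\trianglelefteq_Q^{+m}$. The central idea is to decompose such a $\mu$ into its two one-sided ``shadows'' together with the combinatorial word recording the alternation.

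Concretely, I would first argue that the data of $\mu$ is equivalent to the data of: (i) the ordered partition $\mu^P$ of $[m]$ obtained by listing, in order, the parts of $\mu$ contained in $[m]$; (ii) the ordered partition $\mu^Q$ of $[n]$ obtained likewise from the (unshifted) parts contained in $[n]^{+m}$; and (iii) the alternating word on $\{P,Q\}$ recording the side of each successive part of $\mu$. Since the interleaving preserves the relative order of the $[m]$-parts, the restriction of $\preccurlyeq_\mu$ to $[m]$ is exactly $\preccurlyeq_{\mu^P}$, so $\preccurlyeq_\mu$ extends $\trianglelefteq_P$ if and only if $\mu^P$ extends $\trianglelefteq_P$, that is, $(\trianglelefteq_P, \mu^P)$ is a partitioned poset in the sense of \cref{def:partitionedPoset}; symmetrically for $Q$. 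In this way the vertices are enumerated by pairs of partitioned posets, one for $\polytope{P}$ and one for $\polytope{Q}$, glued along an admissible alternating word.

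The only genuine computation is counting these words. If $\mu^P$ has $a$ parts and $\mu^Q$ has $b$ parts, a strictly alternating word on $\{P,Q\}$ with $a$ letters $P$ and $b$ letters $Q$ exists precisely when $|a-b| \le 1$, and the number of such words is $2$ when $a=b$ (it may begin with either letter) and $1$ when $|a-b|=1$ (the more numerous letter must occupy both ends). Summing over all choices and grouping by the number $\ell = a$ of parts on the $\polytope{P}$-side then gives
\[
\#V(\polytope{P} \shuffleDP \polytope{Q}) = \sum_{a,b} \pp{a}{\polytope{P}} \, \pp{b}{\polytope{Q}} \, c(a,b) = \sum_\ell \pp{\ell}{\polytope{P}} \big( \pp{\ell-1}{\polytope{Q}} + 2\,\pp{\ell}{\polytope{Q}} + \pp{\ell+1}{\polytope{Q}} \big),
\]
where $c(a,b) = 2\,[a=b] + [\,|a-b|=1\,]$, which is exactly the claimed formula. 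As a sanity check, $\Perm[1] \shuffleDP \Perm[2] = \Perm[3]$ should be the hexagon: here the only nonzero terms are $\pp{1}{\Perm[1]}=1$, $\pp{1}{\Perm[2]}=\pp{2}{\Perm[2]}=2$, and the formula returns $2\cdot 2 + 2 = 6$ vertices.

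I expect the main obstacle to be bookkeeping rather than conceptual: verifying that the correspondence $\mu \leftrightarrow (\mu^P, \mu^Q, \text{word})$ is a genuine bijection onto the admissible data, and in particular that the extension condition on $\preccurlyeq_\mu$ factors cleanly into the two independent extension conditions on $\mu^P$ and $\mu^Q$. This factorization holds because, for a vertex, no part of $\mu$ meets both $[m]$ and $[n]^{+m}$, so there is no interaction between the two sides. Once this and the boundary cases $|a-b|=1$ of the word count are checked, the summation formula is immediate, and the closing assertion of the statement --- that the count depends only on the distribution of partitioned vertex posets by number of parts --- is a direct reading of the right-hand side.
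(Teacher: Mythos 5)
Your proof is correct and follows exactly the route the paper intends: the paper derives this corollary directly from \cref{rem:biPosets}, and your decomposition of the interleaving partition $\mu$ into the two one-sided partitioned posets plus an alternating word on $\{P,Q\}$, with the word count $c(a,b)=2\,[a=b]+[\,|a-b|=1\,]$ producing the coefficients $1,2,1$, is precisely the argument the paper leaves implicit. The factorization of the extension condition (valid because no part of $\mu$ meets both $[m]$ and $[n]^{+m}$ at a vertex) and the sanity check on $\Perm[1] \shuffleDP \Perm[2] = \Perm[3]$ are both sound.
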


\begin{remark}
\label{rem:noSymmetry}
\cref{coro:numberVertices} implies for instance that the constrainahedron~$\Constrainahedron \eqdef \Asso[m] \shuffleDP \Asso[n]$ of \cref{sec:constrainahedra} and the biassociahedron~$\Biassociahedron \eqdef \Ossa[m] \shuffleDP \Asso[n]$ of \cref{sec:biassociahedra} have the same number of vertices for any~$m,n \ge 1$.
This symmetry property is lost beyond vertices: for instance, $\Constrainahedron[3,3]$ has $1550$ edges, while $\Biassociahedron[3,3]$ has $1549$ edges.
\cref{coro:numberVertices} also implies that ${\Perm[m] \shuffleDP \Para[n]}$ and~${\Perm[m+1] \shuffleDP \Point[n-1]}$ have the same number of vertices while their number of facets differ for~$n \ge 4$, see \cref{rem:numberVerticesGraphicalZonotopes}.
\end{remark}

We now describe the behavior of the shuffle operation~$\shuffleDP$ of \cref{def:shuffleDeformedPermutahedra} at the level of the equivalence relations on ordered partitions and permutations of~\cref{def:equivalenceRelationDeformedPermutahedron}.
It immediately follows from \cref{def:graphicalRelation,prop:CartesianProductMinkowskiSumEquivalenceRelations}.

\begin{proposition}
\label{prop:shuffleEquivalenceRelations}
For all deformed permutahedra~$\polytope{P} \in \DefoPerms[m]$ and~$\polytope{Q} \in \DefoPerms[n]$, the equivalence relation~$\equiv_{\polytope{P} \shuffleDP \polytope{Q}}$ on ordered partitions is given by $\mu \equiv_{\polytope{P} \shuffleDP \polytope{Q}} \nu$ if and only if~$\mu_{[m]} \equiv_{\polytope{P}} \nu_{[m]}$ while ${\mu^{-m}}_{[n]} \equiv_{\polytope{Q}} {\nu^{-m}}_{[n]}$ and~$i \preccurlyeq_\mu m+j \iff i \preccurlyeq_\nu m+j$ for all~$i \in [m]$ and~$j \in [n]$.
\end{proposition}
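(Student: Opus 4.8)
The plan is to unfold the shuffle as a Minkowski sum and then read off the three conditions by combining the two bullets of \cref{prop:CartesianProductMinkowskiSumEquivalenceRelations} with the combinatorial description of the graphical relation in \cref{def:graphicalRelation}. Recall from \cref{def:shuffleDeformedPermutahedra} that $\polytope{P} \shuffleDP \polytope{Q} = (\polytope{P} \times \polytope{Q}) + \polytope{Z}_{m,n}$, where both summands lie in $\DefoPerms[m+n]$: the Cartesian product by \cref{prop:CartesianProductMinkowskiSumDeformedPermutahedra}, and the graphical zonotope $\polytope{Z}_{m,n} = \Zono[K_{m,n}]$ by \cref{def:generalizedPermutahedron}. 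I would therefore apply the Minkowski-sum bullet of \cref{prop:CartesianProductMinkowskiSumEquivalenceRelations} to these two polytopes (both of ambient dimension $m+n$, so the equal-dimension hypothesis holds), obtaining that $\mu \equiv_{\polytope{P} \shuffleDP \polytope{Q}} \nu$ if and only if simultaneously $\mu \equiv_{\polytope{P} \times \polytope{Q}} \nu$ and $\mu \equiv_{\polytope{Z}_{m,n}} \nu$.

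Next I would expand each conjunct. For the first, the Cartesian-product bullet of \cref{prop:CartesianProductMinkowskiSumEquivalenceRelations} gives directly $\mu \equiv_{\polytope{P} \times \polytope{Q}} \nu$ if and only if $\mu_{[m]} \equiv_{\polytope{P}} \nu_{[m]}$ and ${\mu^{-m}}_{[n]} \equiv_{\polytope{Q}} {\nu^{-m}}_{[n]}$, which yields the first two conditions of the statement. For the second conjunct I would identify $\equiv_{\polytope{Z}_{m,n}}$ with the graphical relation $\equiv_{K_{m,n}}$: by \cref{prop:graphicalFan} the normal fan of $\polytope{Z}_{m,n} = \Zono[K_{m,n}]$ is the graphical fan $\Fan[K_{m,n}]$, so the equivalence relation attached to $\polytope{Z}_{m,n}$ in \cref{def:equivalenceRelationDeformedPermutahedron} is exactly the one of \cref{def:graphicalRelation}. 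It then remains to observe, via \cref{def:completeMultipartiteGraphicalZonotope} with $\b{n} = (m,n)$, $V_1 = [m]$, and $V_2 = [n]^{+m}$, that the edges of $K_{m,n}$ are precisely the pairs $\{i, m+j\}$ with $i \in [m]$ and $j \in [n]$; the edge condition of \cref{def:graphicalRelation} then becomes the third condition of the statement.

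The step requiring the most care is this last translation. The graphical relation demands that $\polytope{C}(\mu)$ and $\polytope{C}(\nu)$ fall into the same cone of $\Fan[K_{m,n}]$, that is, across every bipartite edge $\{i, m+j\}$ the two braid cones sit on the same side of, or on, the wall $\{x_i = x_{m+j}\}$; concretely, $\mu$ and $\nu$ must induce the same relative order of $i$ and $m+j$, including the degenerate case where $i$ and $m+j$ lie in a common part. Thus the condition is really the pair of equivalences $i \preccurlyeq_\mu m+j \iff i \preccurlyeq_\nu m+j$ and $m+j \preccurlyeq_\mu i \iff m+j \preccurlyeq_\nu i$, and I would take care to record both directions (a one-sided reading would already fail for $m=n=1$, where $K_{1,1}=K_2$ forces $\equiv_{\polytope{Z}_{1,1}}$ to be trivial). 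Once this bookkeeping is settled, the three conditions combine to give exactly the stated characterization, and the proof is complete.
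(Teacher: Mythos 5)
Your proposal is correct and follows essentially the same route as the paper, whose entire proof is the one-line combination of \cref{def:graphicalRelation} with \cref{prop:CartesianProductMinkowskiSumEquivalenceRelations} --- exactly your decomposition $\polytope{P} \shuffleDP \polytope{Q} = (\polytope{P} \times \polytope{Q}) + \polytope{Z}_{m,n}$, followed by the Cartesian-product and Minkowski-sum bullets, and the identification of $\equiv_{\polytope{Z}_{m,n}}$ with the graphical relation $\equiv_{K_{m,n}}$. Your closing caveat about recording both directions of the edge condition is a sound clarification (read literally one-sidedly, the condition would identify the ordered partitions $1|2$ and $12$ already for $m = n = 1$), but it refines the phrasing of the statement and of \cref{def:graphicalRelation} rather than altering the proof.
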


Finally, we describe the behavior of the shuffle operation~$\shuffleDP$ of \cref{def:shuffleDeformedPermutahedra} on rotation posets of \cref{def:rotationPoset}.
It immediately follows from \cref{prop:completeMultipartiteGraphicalZonotopeRotationPosets,prop:CartesianProductMinkowskiSumRotationPosets}.

\begin{proposition}
\label{prop:shuffleRotationPosets}
For all deformed permutahedra~$\polytope{P} \in \DefoPerms[m]$ and~$\polytope{Q} \in \DefoPerms[n]$, the rotation poset~$\le_{\polytope{P} \shuffleDP \polytope{Q}}$ on the vertex posets of~$\polytope{P} \shuffleDP \polytope{Q}$ is given by~${\preccurlyeq_{\b{v}, \b{w}, \mu}} \le_{\polytope{P} \shuffleDP \polytope{Q}} {\preccurlyeq_{\b{v}', \b{w}', \mu'}}$ if and only if~${\preccurlyeq_{\b{v}}} \le_{\polytope{P}} {\preccurlyeq_{\b{v}'}}$ and ${\preccurlyeq_{\b{w}}} \le_{\polytope{Q}} {\preccurlyeq_{\b{w}'}}$ and~$p \preccurlyeq_\mu q$ implies~$p \preccurlyeq_{\mu'} q$ for all~$p \in [m]$ and~$q \in [n]^{+m}$.
\end{proposition}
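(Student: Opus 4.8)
The plan is to read the shuffle through its defining Minkowski-sum decomposition and then chain the three rotation-poset descriptions already available. By \cref{def:shuffleDeformedPermutahedra}, $\polytope{P} \shuffleDP \polytope{Q}$ is the Minkowski sum $(\polytope{P} \times \polytope{Q}) + \polytope{Z}_{m,n}$ of the two deformed permutahedra $\polytope{P} \times \polytope{Q}$ and $\polytope{Z}_{m,n}$ of $\DefoPerms[m+n]$ (by \cref{prop:CartesianProductMinkowskiSumDeformedPermutahedra}, together with the fact that $\polytope{Z}_{m,n}$ is a graphical zonotope). The first observation I would record is that, by \cref{prop:shuffleFacePreposets} together with \cref{rem:biPosets}, the vertex poset $\preccurlyeq_{\b{v}, \b{w}, \mu}$ is precisely the transitive closure of $(\preccurlyeq_\b{v} \sqcup \; {\preccurlyeq_\b{w}}^{+m}) \cup \preccurlyeq_\mu^{m,n}$, in which $\preccurlyeq_\b{v} \sqcup \; {\preccurlyeq_\b{w}}^{+m}$ is the vertex poset of the vertex $(\b{v},\b{w})$ of $\polytope{P} \times \polytope{Q}$ (by \cref{prop:CartesianProductMinkowskiSumFacePreposets}) and $\preccurlyeq_\mu^{m,n}$ is the vertex poset of the vertex of $\polytope{Z}_{m,n}$ indexed by $\mu$ (by \cref{prop:completeMultipartiteGraphicalZonotopeFaces}).

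With this identification in hand, I would apply the Minkowski-sum clause of \cref{prop:CartesianProductMinkowskiSumRotationPosets} to the summands $\polytope{P} \times \polytope{Q}$ and $\polytope{Z}_{m,n}$, both of which live in $\DefoPerms[m+n]$ so that the equal-ambient-dimension condition is met. Since $\preccurlyeq_{\b{v},\b{w},\mu}$ and $\preccurlyeq_{\b{v}',\b{w}',\mu'}$ are genuine vertex posets of the sum (again by \cref{rem:biPosets}), the hypothesis of that clause holds, and the relation $\preccurlyeq_{\b{v},\b{w},\mu} \le_{\polytope{P} \shuffleDP \polytope{Q}} \preccurlyeq_{\b{v}',\b{w}',\mu'}$ becomes equivalent to the conjunction of $(\preccurlyeq_\b{v} \sqcup \; {\preccurlyeq_\b{w}}^{+m}) \le_{\polytope{P} \times \polytope{Q}} (\preccurlyeq_{\b{v}'} \sqcup \; {\preccurlyeq_{\b{w}'}}^{+m})$ and $\preccurlyeq_\mu^{m,n} \le_{\polytope{Z}_{m,n}} \preccurlyeq_{\mu'}^{m,n}$. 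I would then feed the first condition into the Cartesian-product clause of the same proposition, splitting it into ${\preccurlyeq_\b{v}} \le_\polytope{P} {\preccurlyeq_{\b{v}'}}$ and ${\preccurlyeq_\b{w}} \le_\polytope{Q} {\preccurlyeq_{\b{w}'}}$; and I would rewrite the second condition using \cref{prop:completeMultipartiteGraphicalZonotopeRotationPosets} applied to $\polytope{Z}_{m,n}$, that is $\polytope{Z}_{\b{n}}$ for $\b{n} = (m,n)$, where $k = 2$, $V_1 = [m]$ and $V_2 = [n]^{+m}$, so that the single relevant pair $1 \le i < j \le k$ is $(1,2)$ and the condition reads ``$p \preccurlyeq_\mu q$ implies $p \preccurlyeq_{\mu'} q$ for all $p \in [m]$ and $q \in [n]^{+m}$''. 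Conjoining the three resulting conditions produces exactly the claimed characterization.

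The only place where care is needed --- and the step I expect to be the crux --- is the bookkeeping that matches the abstract summand posets appearing in \cref{prop:CartesianProductMinkowskiSumRotationPosets} with the concrete ingredients $\preccurlyeq_\b{v}$, $\preccurlyeq_\b{w}$ and $\mu$ of the biposet notation, and that verifies the hypothesis of the Minkowski-sum clause, namely that the transitive closure of the union of the two summand vertex posets is again a vertex poset of the sum. Both points are supplied directly by \cref{prop:shuffleFacePreposets} and \cref{rem:biPosets}, so no genuine obstacle arises: once this dictionary is in place, the statement follows by simply chaining the three cited results, which is why it is announced as an immediate consequence.
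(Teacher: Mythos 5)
Your proposal is correct and takes essentially the same approach as the paper: the paper's entire proof is the remark that the statement ``immediately follows from \cref{prop:completeMultipartiteGraphicalZonotopeRotationPosets,prop:CartesianProductMinkowskiSumRotationPosets}'', and your argument is exactly the chaining of those two results through the decomposition $\polytope{P} \shuffleDP \polytope{Q} = (\polytope{P} \times \polytope{Q}) + \polytope{Z}_{m,n}$ that this remark leaves implicit. The bookkeeping you flag as the crux (identifying $\preccurlyeq_{\b{v}, \b{w}, \mu}$ as the transitive closure of the union of the summands' vertex posets, via \cref{prop:shuffleFacePreposets} and \cref{rem:biPosets}, so that the Minkowski-sum clause applies) is indeed the only content of the proof, and you have supplied it correctly.
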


\begin{remark}
\label{rem:latticeShuffle}
It follows from \cref{coro:latticePropertiesBipartiteGraphicalZonotope,prop:CartesianProductMinkowskiSumLatticeProperties} that the shuffle operation~$\shuffleDP$ preserves the interval property.
In contrast, \cref{rem:noCoTamari,rem:noBiTamari} show that neither the \mbox{$(3,3)$-constrainahe}\-dron ${\Constrainahedron[3][3] \eqdef \Asso[3] \shuffleDP \Asso[3]}$ nor the $(3,3)$-biassociahedron ${\Biassociahedron[3][3] \eqdef \Ossa[3] \shuffleDP \Asso[3]}$ have the lattice and congruence properties, while $\Ossa[3]$ and~$\Asso[3]$ both do.
However, we will see in \cref{coro:latticeShufflePermutahedron} that the shuffle with a permutahedron~$\Perm$ preserves the lattice property (but not the congruence property).
\end{remark}


\subsection{Shuffle with a point}
\label{subsec:shufflePoint}

We mark a little pause to specialize the observations of \cref{subsec:combinatorialDescription} to the case where~$\polytope{Q}$ is reduced to a point~$\b{0}$.
The bipreposets (and biposets) where the second poset is a singleton can then be encoded as painted preposets (and posets) defined below.
We first define antichains, upper sets and lower sets in preposets, generalizing the classical notions for posets.

\begin{definition}
\label{def:antichainsLowerUpperSetsPreposets}
Consider a preposet~$\preccurlyeq$ on~$[n]$.
An \defn{antichain} of~$\preccurlyeq$ is a subset~$A$ of~$[n]$ such that $i \in A \iff j \preccurlyeq i$ for any $i \preccurlyeq j$ with~$j \in A$.
An \defn{upper} (resp.~\defn{lower}) \defn{set} of~$\preccurlyeq$ is a subset~$U$ (resp.~$L$) of~$[n]$ such that~$i \in U$ implies $j \in U$ (resp.~$j \in L$ implies~$i \in L$) for any~$i \preccurlyeq j$.
In other words, an antichain (resp.~an upper set, resp.~a lower set) of a preposet~$\preccurlyeq$ is the union of the classes of an antichain (resp.~an upper set, resp.~a lower set) in the quotient poset~$\preccurlyeq/{\equiv}$ on the classes of the equivalence relation~$\equiv$ defined by~$i \equiv j \iff i \preccurlyeq j$ and~$j \preccurlyeq i$.
\end{definition}

\begin{definition}
\label{def:paintedPreposet}
A \defn{painted preposet} is a preposet~$\preccurlyeq$ on~$[n]$ together with a partition~$[n] = L \sqcup A \sqcup U$ where~$L$ is a lower set, $A$ is an antichain, and $U$ is an upper set (all possibly empty) of~$\preccurlyeq$.
\end{definition}

\begin{proposition}
\label{prop:paintedPreposets}
For any deformed permutahedron~$\polytope{P} \in \DefoPerms[n]$, the faces of the shuffle~$\polytope{P} \shuffleDP \b{0}$ are in bijection with the painted $\polytope{P}$-preposets.
\end{proposition}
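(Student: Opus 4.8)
The plan is to specialize the general face description of a shuffle, \cref{prop:shuffleFacePreposets}, to the case where the second factor is the point~$\b{0}$, regarded as the unique point of~$\DefoPerms[1]$, so that $\polytope{P} \shuffleDP \b{0} \in \DefoPerms[n+1]$. The point~$\b{0}$ has a single face, namely itself, whose preposet on the singleton is trivial and whose shift lives on~$\{n+1\}$, contributing no relations. Hence \cref{prop:shuffleFacePreposets} (read with $[m] = [n]$ and $[n]^{+m} = \{n+1\}$) describes the faces of~$\polytope{P} \shuffleDP \b{0}$ as the pairs~$(\polytope{F}, \mu)$, where $\polytope{F}$ is a face of~$\polytope{P}$ and $\mu$ is an ordered partition of~$[n+1]$ subject to the three listed conditions, with the correspondence being a bijection.

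First I would pin down the shape of~$\mu$. Since $\{n+1\}$ is a singleton, the element~$n+1$ lies in exactly one part~$S$ of~$\mu$, and every other part is contained in~$[n]$. The condition that no two consecutive parts be both contained in~$[n]$ then forces at most one part before~$S$ and at most one part after~$S$; writing $L$ for the part preceding~$S$ (if present), $U$ for the part following~$S$ (if present), and $A \eqdef S \cap [n]$, we get $\mu = L \mid (A \cup \{n+1\}) \mid U$ together with a set partition $[n] = L \sqcup A \sqcup U$ (where $L$ and $U$ may be empty). So a face is encoded by the face preposet~$\preccurlyeq_\polytope{F}$ together with the triple~$(L, A, U)$.

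Next I would translate the two remaining conditions of \cref{prop:shuffleFacePreposets} into the painting conditions of \cref{def:paintedPreposet}. The requirement that $\preccurlyeq_\mu$ extend~$\preccurlyeq_\polytope{F}$ says that no $\preccurlyeq_\polytope{F}$-relation decreases the block in the order $L < A \cup \{n+1\} < U$; a short case analysis shows this is equivalent to $L$ being a lower set and $U$ an upper set of~$\preccurlyeq_\polytope{F}$, and moreover (applying the condition to both inclusions for $\preccurlyeq_\polytope{F}$-equivalent elements) that $L$, $A$, $U$ are unions of equivalence classes of~$\preccurlyeq_\polytope{F}$. The cross condition, which asks that the elements of~$A = S \cap [n]$ be pairwise equal or incomparable in~$\preccurlyeq_\polytope{F}$, then says exactly that $A$ is an antichain in the sense of \cref{def:antichainsLowerUpperSetsPreposets}. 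Thus $(\preccurlyeq_\polytope{F}, L, A, U)$ is a painted $\polytope{P}$-preposet, and conversely every painted $\polytope{P}$-preposet determines a unique admissible pair~$(\polytope{F}, \mu)$; invoking the uniqueness clause of \cref{prop:shuffleFacePreposets} then yields the claimed bijection.

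The only delicate point is this last translation: one must verify with care that the abstract ``extends'' and ``cross'' conditions match the lower set / antichain / upper set conditions in the preposet sense, in particular the observation that the extension condition forces the painting to be constant on the equivalence classes of~$\preccurlyeq_\polytope{F}$, which is precisely what makes the class-based notions of \cref{def:antichainsLowerUpperSetsPreposets} apply verbatim. Everything else is a direct specialization of the machinery already in place.
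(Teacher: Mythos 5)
Your proposal is correct and follows essentially the same route as the paper's proof: both specialize \cref{prop:shuffleFacePreposets} to the case~$\polytope{Q} = \b{0}$ and identify a face~$(\polytope{F}, \b{0}, \mu)$ with the painted preposet~$(\preccurlyeq_\polytope{F}, L \sqcup A \sqcup U)$, where~$L$, $A$, $U$ collect the elements of~$[n]$ lying before, in, and after the part of~$\mu$ containing~$n+1$. The paper states this correspondence in one sentence without further verification; your additional checks (that the non-consecutivity condition forces~$\mu = L \mid A \cup \{n+1\} \mid U$, and that the extension and cross conditions translate exactly into the lower set, antichain, and upper set conditions on classes of~$\preccurlyeq_\polytope{F}$) are the details the paper leaves implicit, and they are carried out correctly.
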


\begin{proof}
Each face preposet~$\preccurlyeq_{\polytope{F}, \b{0}, \mu}$ of \cref{prop:shuffleFacePreposets} corresponds to a painted poset~$(\preccurlyeq_\polytope{F}, L \sqcup A \sqcup U)$ where~$L$ (resp.~$A$, resp.~$U$) is the subset of elements of~$[n]$ that appear in a part of~$\mu$ before (resp.~equal to, resp.~after) the part of~$\mu$ containing~$n+1$.
\end{proof}

\begin{definition}
\label{def:paintedPoset}
A \defn{painted poset} is a poset~$\preccurlyeq$ together with a partition~$[n] = L \sqcup U$ where~$L$ is a lower set and~$U$ is an upper set (both possibly empty) of~$\preccurlyeq$.
Two painted posets~$(\preccurlyeq, L \sqcup U)$ and~$(\preccurlyeq', L' \sqcup U')$ are connected by a \defn{right rotation} if
\begin{itemize}
\item either~$\preccurlyeq$ and~$\preccurlyeq$ are related by a right flip, while~$L = L'$ and~$U = U'$,
\item or~${\preccurlyeq} = {\preccurlyeq}$ and~$L = L' \cup \{i\}$ and~$U = U' \ssm \{i\}$ for some~$i \in [n]$.
\end{itemize}
\end{definition}

\begin{proposition}
\label{prop:paintedPoset}
The rotation graph of the shuffle~$\polytope{P} \shuffleDP \b{0}$ is isomorphic to the rotation graph on painted $\polytope{P}$-posets.
For any two painted $\polytope{P}$-posets~$\poset \eqdef (\preccurlyeq, L \sqcup U)$ and~$\poset' \eqdef (\preccurlyeq', L' \sqcup U')$, there is a path from~$\poset$ to~$\poset'$ in this graph if and only if~${\preccurlyeq} \le_\polytope{P} {\preccurlyeq'}$ and~$L \subseteq L'$.
\end{proposition}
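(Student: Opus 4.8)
The plan is to specialize the general shuffle statements of \cref{subsec:combinatorialDescription} to the case ${\polytope{Q} = \b{0} \in \DefoPerms[1]}$, so that ${\polytope{P} \shuffleDP \b{0} \in \DefoPerms[m+1]}$ and the extra coordinate~$m+1$ is the painted element. First I would pin down the vertices. By \cref{prop:paintedPreposets} the faces of~$\polytope{P} \shuffleDP \b{0}$ are the painted $\polytope{P}$-preposets~$(\preccurlyeq_{\polytope{F}}, L \sqcup A \sqcup U)$, and by \cref{def:facePreposet} such a face is a vertex exactly when its preposet is a poset. As recalled in the proof of \cref{prop:paintedPreposets}, the block~$A$ consists of the elements lying in the same part of~$\mu$ as~$m+1$, which are therefore all made equivalent to~$m+1$ in~$\preccurlyeq_{\polytope{F}, \b{0}, \mu}$; hence the preposet is a poset if and only if~${A = \varnothing}$ and~$\preccurlyeq_{\polytope{F}}$ is a vertex poset of~$\polytope{P}$. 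This identifies the vertices of~$\polytope{P} \shuffleDP \b{0}$ with the painted $\polytope{P}$-posets~$(\preccurlyeq, L \sqcup U)$ of \cref{def:paintedPoset}.

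The path characterization I would then read off directly from \cref{prop:shuffleRotationPosets} applied with~$\polytope{Q} = \b{0}$. The factor condition~${\preccurlyeq_{\b{w}}} \le_{\polytope{Q}} {\preccurlyeq_{\b{w}'}}$ is vacuous since the point has a single vertex, while the cross condition ``$p \preccurlyeq_\mu m+1$ implies $p \preccurlyeq_{\mu'} m+1$ for all~$p \in [m]$'' is, under the dictionary~$L = \set{p \in [m]}{p \preccurlyeq_\mu m+1}$ of the previous step, precisely~${L \subseteq L'}$. Thus~${\poset} \le_{\polytope{P} \shuffleDP \b{0}} {\poset'}$ if and only if~${\preccurlyeq} \le_{\polytope{P}} {\preccurlyeq'}$ and~${L \subseteq L'}$. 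Since the rotation poset is by definition the transitive closure of the rotation graph, and two vertices joined by an edge are comparable in it, this is exactly the asserted path criterion.

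It remains to match the edges of the rotation graph, that is the $1$-dimensional faces of~$\polytope{P} \shuffleDP \b{0}$, with the two families of right rotations of \cref{def:paintedPoset}. Here I would use the standard fact that the dimension of a face of a deformed permutahedron in~$\R^N$ equals~$N$ minus the number of classes of the equivalence~${i \equiv j \iff i \preccurlyeq j \preccurlyeq i}$ of its face preposet. Computing these classes for~$\preccurlyeq_{\polytope{F}, \b{0}, \mu}$, the elements of~$L \sqcup U$ keep the classes they have in~$\preccurlyeq_{\polytope{F}}$, while~$A$ together with~$m+1$ collapses into a single class; a short count gives that the face has dimension~$\dim \polytope{F} + s$, where~$s$ is the number of classes of~$\preccurlyeq_{\polytope{F}}$ contained in~$A$. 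Hence the face is an edge exactly when either~${\dim \polytope{F} = 1}$ and~${A = \varnothing}$, or~$\polytope{F}$ is a vertex and~${A = \{i\}}$ is a single element. In the first case the two vertices of the edge keep the painting~$L \sqcup U$ fixed and resolve the edge of~$\polytope{P}$ into its two endpoints, which is a right flip of~$\preccurlyeq$; in the second case they send~$i$ either into~$L$ or into~$U$. These are precisely the two kinds of right rotation of \cref{def:paintedPoset}, and their orientation is forced by the path criterion of the previous paragraph: a right flip increases~$\le_{\polytope{P}}$ while leaving~$L$ unchanged, and moving~$i$ from~$U$ into~$L$ enlarges~$L$, so in both cases the $\b{\omega}$-oriented edge runs in the direction prescribed for a right rotation. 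This gives the isomorphism of rotation graphs.

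The delicate point is the edge classification of the third paragraph: proving the dimension formula~$\dim \polytope{F} + s$ and deducing from it the exhaustive list of edge types, while checking that in each case both candidate refinements are genuine painted $\polytope{P}$-posets --- so that an isolated antichain element~${A = \{i\}}$ really admits exactly the two resolutions~${i \in L}$ and~${i \in U}$. Everything else is a straightforward specialization of \cref{prop:paintedPreposets} and \cref{prop:shuffleRotationPosets}.
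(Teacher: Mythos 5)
Your proposal is correct and follows essentially the same route as the paper: the paper's entire proof is the one-line remark that the statement is a specialization of \cref{prop:shuffleRotationPosets} to~$\polytope{P} \shuffleDP \b{0}$. Your write-up just makes explicit what the paper leaves implicit --- the vertex identification via \cref{prop:paintedPreposets} and the classification of the edges via the dimension count $\dim\polytope{F}+s$ --- and these details check out.
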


\begin{proof}
This is a specialization of \cref{prop:shuffleRotationPosets} to~$\polytope{P} \shuffleDP \b{0}$.
\end{proof}

This description of the rotation graph enables us to show the following statement.

\begin{proposition}
\label{prop:latticeShufflePoint}
A deformed permutahedron~$\polytope{P}$ has the lattice property if and only if the shuffle~${\polytope{P} \shuffleDP \b{0}}$ has the lattice property.
\end{proposition}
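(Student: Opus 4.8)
The plan is to work entirely with the explicit combinatorial model of the rotation poset. By \cref{prop:paintedPoset}, the rotation poset $\le_{\polytope{P} \shuffleDP \b{0}}$ is isomorphic to the set $\hat L$ of painted $\polytope{P}$-posets $(\preccurlyeq, L)$, where $\preccurlyeq$ is a vertex poset of $\polytope{P}$ and $L$ is a lower set of $\preccurlyeq$ (with $U = [n] \ssm L$), ordered by $(\preccurlyeq, L) \le (\preccurlyeq', L')$ if and only if ${\preccurlyeq} \le_\polytope{P} {\preccurlyeq'}$ and $L \subseteq L'$. This poset has a global minimum $(\hat 0, \varnothing)$ and a global maximum $(\hat 1, [n])$, where $\hat 0$ and $\hat 1$ are the extreme vertex posets of $\polytope{P}$ (unique since $\b{\omega}$ is generic for the braid fan, hence for any coarsening), and the projection $\pi \colon (\preccurlyeq, L) \mapsto {\preccurlyeq}$ is an order-preserving surjection onto $\le_\polytope{P}$. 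I want to show $\hat L$ is a lattice if and only if $\le_\polytope{P}$ is.

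For the backward implication, assume $\hat L$ is a lattice and consider the map $\rho \colon \hat L \to \hat L$ sending $(\preccurlyeq, L)$ to $(\preccurlyeq, \varnothing)$. Since $\varnothing$ is a lower set of every poset, $\rho$ is well defined, order-preserving, idempotent, and satisfies $\rho \le \mathrm{id}$, so it is a kernel (coclosure) operator. The image of a kernel operator on a finite lattice is again a lattice (joins are inherited, and the meet of $a,b$ is $\rho(a \meet b)$), and $\pi$ restricts to an isomorphism from $\rho(\hat L) = \{(\preccurlyeq, \varnothing)\}$ onto $\le_\polytope{P}$. Hence $\le_\polytope{P}$ is a lattice. (Dually, one could use the closure operator $(\preccurlyeq, L) \mapsto (\preccurlyeq, [n])$.)

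For the forward implication, assume $\le_\polytope{P}$ is a lattice. As $\hat L$ is finite with a minimum, it suffices to produce a join for every pair $(\preccurlyeq_1, L_1)$ and $(\preccurlyeq_2, L_2)$. I would set ${\preccurlyeq_*} \eqdef {\preccurlyeq_1} \join {\preccurlyeq_2}$ in $\le_\polytope{P}$ and let $L_*$ be the smallest lower set of $\preccurlyeq_*$ containing $L_1 \cup L_2$. Then $(\preccurlyeq_*, L_*)$ is a common upper bound, and one checks that any join must equal $(\preccurlyeq_*, L_*)$: every upper bound has base $\ge \preccurlyeq_*$, the pair $(\preccurlyeq_*, L_*)$ is itself an upper bound, and its $L$-part is then forced by minimality. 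The entire statement thus reduces to checking that $(\preccurlyeq_*, L_*)$ lies below every upper bound $(\preccurlyeq', L')$; since $L'$ is a lower set of $\preccurlyeq'$ containing $L_1 \cup L_2$, this reduces to the containment of down-closures
\[
\downarrow_{\preccurlyeq_*}(L_i) \subseteq \downarrow_{\preccurlyeq'}(L_i) \qquad (i = 1, 2),
\]
for ${\preccurlyeq_i} \le {\preccurlyeq_*} \le {\preccurlyeq'}$, where $\downarrow_\preccurlyeq(X)$ denotes the smallest lower set of $\preccurlyeq$ containing $X$.

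The hard part is exactly this monotonicity of down-closures along the rotation order, which I would isolate as a key lemma: if ${\preccurlyeq} \le {\preccurlyeq'} \le {\preccurlyeq''}$ in $\le_\polytope{P}$ and $L$ is a lower set of $\preccurlyeq$, then $\downarrow_{\preccurlyeq'}(L) \subseteq \downarrow_{\preccurlyeq''}(L)$. The subtlety, and the reason this is not automatic, is that moving up one rotation neither preserves nor reverses the order relations of the vertex poset (a rotation simultaneously creates and destroys cover relations), so raw down-closures are genuinely not monotone; the hypothesis that $L$ is inherited as a lower set from some ${\preccurlyeq} \le {\preccurlyeq'}$ is precisely what excludes the shrinking cases (on small examples such as $\Asso[3]$ one verifies that the potentially shrinking down-closures never arise from such coherent lower sets). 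To prove the lemma I would induct along a saturated chain, reducing to a single cover ${\tilde{\preccurlyeq}} \lessdot {\preccurlyeq''}$; the intermediate set $\downarrow_{\tilde{\preccurlyeq}}(L)$ is again a lower set inherited from below, so the induction closes, and one is left to analyze a single rotation locally, either through the edge preposet $\preccurlyeq_\polytope{F}$ of \cref{def:facePreposet} that is refined by both $\tilde{\preccurlyeq}$ and $\preccurlyeq''$, or by lifting to the weak order on $\f{S}_{n+1}$ via \cref{prop:shuffleEquivalenceRelations} and tracking $L$ as a prefix of a linear extension. Granting the lemma, $(\preccurlyeq_*, L_*)$ is the join, $\hat L$ is a lattice, and the proposition follows.
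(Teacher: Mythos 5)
Your backward direction is correct, and it is the paper's argument in different packaging: the paper observes that the painted $\polytope{P}$-posets with empty painted part form an interval of $\le_{\polytope{P} \shuffleDP \b{0}}$ isomorphic to $\le_{\polytope{P}}$ and uses that intervals of lattices are lattices, while you phrase the same fact via a kernel operator. Your forward direction also follows the paper's route exactly: same candidate join $(\preccurlyeq_\join, L_\join)$, and a correct reduction of its minimality to the containment $\downarrow_{\preccurlyeq_\join}(L_i) \subseteq \downarrow_{\preccurlyeq'}(L_i)$. Where you genuinely depart from the paper is in refusing to call this step immediate (the paper's proof dismisses it with ``it is immediate from \cref{prop:paintedPoset}''). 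That diagnosis is the best part of your write-up: the step is not immediate, and \cref{prop:paintedPoset} alone cannot give it, since it says nothing about how the posets $\preccurlyeq_\join$ and $\preccurlyeq'$ interact.

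However, your proof stops exactly there: the key lemma is only sketched (``I would induct\dots'', ``one is left to analyze a single rotation locally\dots'', ``Granting the lemma\dots''), and this gap cannot be closed, because the lemma is false. Take $\polytope{P} = \conv\{\b{e}_1, \b{e}_2, \b{e}_3\} \in \DefoPerms[3]$, the standard simplex (a deformed permutahedron: each maximal cone of its normal fan is the union of two braid cones). Its vertex posets $\preccurlyeq_{\b{e}_k}$ have relations $i \prec k$ for $i \ne k$, and its rotation poset is the chain $\preccurlyeq_{\b{e}_3} < \preccurlyeq_{\b{e}_2} < \preccurlyeq_{\b{e}_1}$, a lattice. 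Now $L = \{2\}$ is a lower set of $\preccurlyeq_{\b{e}_3}$ (there $2$ is minimal), yet
\[
\downarrow_{\preccurlyeq_{\b{e}_2}}(\{2\}) = \{1,2,3\} \not\subseteq \{2\} = \downarrow_{\preccurlyeq_{\b{e}_1}}(\{2\}),
\]
since $1, 3 \prec_{\b{e}_2} 2$ while $2$ is minimal in $\preccurlyeq_{\b{e}_1}$. So monotonicity fails along $\preccurlyeq_{\b{e}_3} \le \preccurlyeq_{\b{e}_2} \le \preccurlyeq_{\b{e}_1}$ even for a ``coherent'' lower set, and your induction cannot even start: the single edge $\preccurlyeq_{\b{e}_2} \to \preccurlyeq_{\b{e}_1}$ (wall pair $(1,2)$) already violates it, precisely in the subcase where the wall pair has $2 \in L$, $1 \notin L$, and $1,2$ incomparable in $\preccurlyeq_{\b{e}_3}$, so that coherence gives no leverage. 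Your supporting evidence from $\Asso[3]$ is real but misleading: associahedra (like permutahedra) have the special property that comparabilities vary monotonically along the rotation order (inversions of $\preccurlyeq_u$ persist in $\preccurlyeq_v$ and non-inversion relations of $\preccurlyeq_v$ persist in $\preccurlyeq_u$ whenever $u \le v$), and that property is exactly what makes your two cases close; the simplex does not have it.

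Worse, this is not merely a failure of the proof strategy: the statement itself fails in this generality, for you and for the paper alike. The rotation poset of $\conv\{\b{e}_1,\b{e}_2,\b{e}_3\} \shuffleDP \b{0}$ is the componentwise order on the fifteen painted posets $(\preccurlyeq_{\b{e}_k}, L)$, and the two elements $(\preccurlyeq_{\b{e}_3}, \{2\})$ and $(\preccurlyeq_{\b{e}_2}, \varnothing)$ have no join: their common upper bounds are the $(\preccurlyeq_{\b{e}_k}, L')$ with $k \in \{1,2\}$ and $L' \ni 2$ a lower set of $\preccurlyeq_{\b{e}_k}$, namely $(\preccurlyeq_{\b{e}_2}, \{1,2,3\})$, $(\preccurlyeq_{\b{e}_1}, \{2\})$, $(\preccurlyeq_{\b{e}_1}, \{2,3\})$ and $(\preccurlyeq_{\b{e}_1}, \{1,2,3\})$, among which $(\preccurlyeq_{\b{e}_2}, \{1,2,3\})$ and $(\preccurlyeq_{\b{e}_1}, \{2\})$ are minimal and incomparable. (Running the computation with the opposite inclusion convention for the painted part produces the incomparable minimal upper bounds $(\preccurlyeq_{\b{e}_2},\{1\})$ and $(\preccurlyeq_{\b{e}_1},\{2\})$ of $(\preccurlyeq_{\b{e}_3},\{1,2\})$ and $(\preccurlyeq_{\b{e}_2},\{1,2,3\})$, so this is not a convention artifact.) In short: you correctly located the gap that the paper papers over with ``immediate'', but the gap is not fillable as stated; the proposition needs an additional hypothesis on $\polytope{P}$ (such as the monotonicity of comparabilities above, which holds for $\Asso[n]$ and thus still yields the lattice property of the multiplihedra in \cref{prop:graphMultiplihedron}), and under such a hypothesis your case analysis — not the paper's one-line claim — is the proof that actually works.
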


\begin{proof}
Observe first that the rotation poset~$\le_\polytope{P}$ is isomorphic to the interval of the rotation poset~$\le_{\polytope{P} \shuffleDP \b{0}}$ given by the painted $\polytope{P}$-posets~$(\preccurlyeq, L \sqcup U)$ where~$L = \varnothing$.
This proves that~$\le_{\polytope{P} \shuffleDP \b{0}}$ is a lattice implies that~$\le_\polytope{P}$ is a lattice, since any interval of a lattice is a lattice.

Conversely, assume that~$\le_\polytope{P}$ is a lattice.
Consider $k$ painted $\polytope{P}$-posets~$\poset_1 \eqdef (\preccurlyeq_1, L_1 \sqcup U_1)$, \dots, $\poset_k \eqdef (\preccurlyeq_k, L_k \sqcup U_k)$.
Then it is immediate from \cref{prop:paintedPoset} that the join of~$\poset_1, \dots, \poset_k$ in~$\le_{\polytope{P} \shuffleDP \b{0}}$ is the painted $\polytope{P}$-poset~$\poset_\join \eqdef (\preccurlyeq_\join, L_\join \sqcup U_\join)$, where~$\preccurlyeq_\join$ is the join of the $\polytope{P}$-posets~$\preccurlyeq_1, \dots, \preccurlyeq_k$ in~$\le_\polytope{P}$, and~$L_\join$ is the lower set of~$\preccurlyeq_\join$ generated by the union~$L_1 \cup \dots \cup L_k$.
A symmetric expression obviously holds for the meet using~$U$ instead of~$L$.
\end{proof}

\begin{corollary}
\label{coro:latticeShufflePermutahedron}
If a deformed permutahedron~$\polytope{P}$ has the lattice property, then the shuffle~$\polytope{P} \shuffleDP \Perm$ has the lattice property for any integer~$n \ge 1$.
\end{corollary}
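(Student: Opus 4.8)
The plan is to reduce the claim to $n$ successive applications of \cref{prop:latticeShufflePoint} by decomposing the permutahedron~$\Perm$ into an iterated shuffle of points. Indeed, the point~$\Perm[1]$ is nothing but the point~$\b{0}$ of \cref{subsec:shufflePoint}, and the identity~$\Perm[i] \shuffleDP \Perm[j] = \Perm[i+j]$ recorded just after \cref{def:shuffleDeformedPermutahedra} gives, by an immediate induction,
\[
\Perm = \underbrace{\b{0} \shuffleDP \cdots \shuffleDP \b{0}}_{n}.
\]
By associativity of the shuffle (\cref{rem:shuffleAssociative}), the polytope of interest therefore rewrites as an $n$-fold shuffle by a point:
\[
\polytope{P} \shuffleDP \Perm = \bigl( \cdots \bigl( (\polytope{P} \shuffleDP \b{0}) \shuffleDP \b{0} \bigr) \cdots \bigr) \shuffleDP \b{0}.
\]

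I would then conclude by induction on~$n$. \cref{prop:latticeShufflePoint} asserts that a deformed permutahedron has the lattice property if and only if its shuffle with a point does. Starting from the hypothesis that~$\polytope{P}$ has the lattice property, the forward direction of this equivalence shows that~$\polytope{P} \shuffleDP \b{0}$ has the lattice property; since the latter is again a deformed permutahedron with the lattice property, a second application shows that~$(\polytope{P} \shuffleDP \b{0}) \shuffleDP \b{0}$ has it; iterating~$n$ times yields the lattice property for~$\polytope{P} \shuffleDP \Perm$.

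There is essentially no obstacle here beyond bookkeeping: all the genuine work is carried by \cref{prop:latticeShufflePoint}, whose proof supplies the explicit join and meet of painted $\polytope{P}$-posets. The only point worth checking is that associativity lets us bracket~$\Perm$ as a repeated shuffle by a \emph{single} point, so that every intermediate polytope is again of the form ``a deformed permutahedron shuffled by a point'', to which \cref{prop:latticeShufflePoint} applies verbatim.
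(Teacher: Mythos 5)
Your proof is correct and is essentially the paper's own argument: the paper states the corollary immediately after \cref{prop:latticeShufflePoint} with no separate proof, the intended reasoning being precisely your decomposition of~$\Perm$ as an $n$-fold shuffle of points combined with associativity (\cref{rem:shuffleAssociative}) and $n$ applications of the proposition. The only cosmetic point is that~$\Perm[1]$ is the point~$(1)$ rather than~$\b{0}$, so your decomposition holds up to a translation, which is immaterial since the lattice property depends only on the oriented graph (equivalently, on the normal fan), and these are translation-invariant.
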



\subsection{Shuffle of graphical zonotopes}
\label{subsec:shuffleGraphicalZonotopes}

As it turns out, the family of graphical zonotopes is stable by the shuffle operation~$\shuffleDP$ on deformed permutahedra.
The corresponding operation on graphs is well-known in graph theory.

\begin{definition}
\label{def:joinGraphs}
The \defn{join} of two graphs~$G$ and~$H$ with disjoint vertex sets is the graph~$G \joinGraph H$ obtained by taking the disjoint union of~$G$ and~$H$ and connecting all vertices of~$G$ to all vertices of~$H$.
In other words, $V(G \joinGraph H) = V(G) \sqcup V(H)$ and $E(G \joinGraph H) = E(G) \sqcup E(H) \sqcup \big( V(G) \times V(H) \big)$.
If~$V(G) = [m]$ and~$V(H) = [n]$, we write~$G \joinGraph H$ for the graph~$G \joinGraph H^{+m} = (G \otimes H) \oplus K_{m,n}$.
\end{definition}

\begin{example}
\label{exm:joinGraphs}
The following families provide some relevant examples:
\begin{itemize}
\item the join of two empty graphs~$E_m$ and~$E_n$ is the complete bipartite graph~$K_{m,n}$ (more generally, the join of $k$ empty graphs~$E_{n_1}, \dots, E_{n_k}$ is the complete $k$-partite graph~$K_{n_1, \dots, n_k}$),
\item the join of a path~$P_m$ by an empty graph~$E_n$ is a fan graph~$F_{m,n}$,
\item the join of two complete graphs~$K_m$ and~$K_n$ is the complete graph~$K_{m+n}$.
\end{itemize}
\end{example}

The next statement immediately follows from \cref{def:shuffleDeformedPermutahedra,def:joinGraphs,prop:operationsGraphsZonotopes}.

\begin{proposition}
\label{prop:shuffleGraphicalZonotopes}
For all integer graphs~$G$ and~$H$, we have $\Zono[G] \shuffleDP \Zono[H] = \Zono[G \joinGraph H]$.
\end{proposition}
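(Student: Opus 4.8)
The plan is to unfold the shuffle into a Cartesian product followed by a Minkowski sum, and then translate each of these two operations into the corresponding graph operation via \cref{prop:operationsGraphsZonotopes}. Writing $m \eqdef |V(G)|$ and $n \eqdef |V(H)|$, I would start from the definition of the shuffle (\cref{def:shuffleDeformedPermutahedra}) to obtain
\[
\Zono[G] \shuffleDP \Zono[H] = (\Zono[G] \times \Zono[H]) + \polytope{Z}_{m,n},
\]
and recall from \cref{def:completeMultipartiteGraphicalZonotope} that $\polytope{Z}_{m,n} = \Zono[K_{m,n}]$, where $K_{m,n}$ is the complete bipartite graph on $[m] \sqcup [n]^{+m}$.

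Next I would apply the first bullet of \cref{prop:operationsGraphsZonotopes} to the Cartesian product, giving $\Zono[G] \times \Zono[H] = \Zono[G \otimes H]$, so that
\[
\Zono[G] \shuffleDP \Zono[H] = \Zono[G \otimes H] + \Zono[K_{m,n}].
\]
To collapse this Minkowski sum using the second bullet of \cref{prop:operationsGraphsZonotopes}, the two graphs must share the same vertex set and have disjoint edge sets. Both $G \otimes H = G \sqcup H^{+m}$ and $K_{m,n}$ have vertex set $[m+n] = [m] \sqcup [n]^{+m}$; moreover $E(G \otimes H)$ consists of edges internal to $[m]$ (coming from $G$) and internal to $[n]^{+m}$ (coming from $H^{+m}$), whereas every edge of $K_{m,n}$ joins $[m]$ to $[n]^{+m}$. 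Hence $E(G \otimes H) \cap E(K_{m,n}) = \varnothing$, and the hypotheses of the second bullet are satisfied.

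This yields $\Zono[G \otimes H] + \Zono[K_{m,n}] = \Zono[(G \otimes H) \oplus K_{m,n}]$, and by the definition of the join (\cref{def:joinGraphs}) the graph $(G \otimes H) \oplus K_{m,n}$ is exactly $G \joinGraph H$, which closes the chain of equalities. The entire argument is a mechanical substitution along the definitions; the only point needing genuine verification is the edge-disjointness of $G \otimes H$ and $K_{m,n}$, which I expect to be the sole (and essentially trivial) obstacle, immediate from the bipartite structure of $K_{m,n}$.
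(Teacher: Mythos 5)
Your proof is correct and is precisely the argument the paper intends: the paper states the result as an immediate consequence of \cref{def:shuffleDeformedPermutahedra,def:joinGraphs,prop:operationsGraphsZonotopes}, and your chain of equalities, including the identification $(G \otimes H) \oplus K_{m,n} = G \joinGraph H$ built into \cref{def:joinGraphs}, is exactly that unpacking. Your explicit check that $E(G \otimes H)$ and $E(K_{m,n})$ are disjoint is the one hypothesis the paper leaves tacit, and you verify it correctly.
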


\begin{example}
\label{exm:shufflePermutahedra}
For instance, the permutahedra are stable by~$\shuffleDP$ since
\[
\Perm[m] \shuffleDP \Perm[n] = \Zono[K_m] \shuffleDP \Zono[K_n] = \Zono[K_m \joinGraph K_n] = \Zono[K_{m+n}] = \Perm[m+n].
\]
\end{example}

In view of \cref{prop:shuffleGraphicalZonotopes}, it was tempting to call~$\polytope{P} \shuffleDP \polytope{Q}$ the join of the deformed permutahedra~$\polytope{P}$ and~$\polytope{Q}$.
Recall however that there is a classical join operation on polytopes with the property that the graph of a join of polytopes is the join of the graphs of the polytopes (see \cite[Ex.\,9.9,~p.\,323]{Ziegler-polytopes}).

The number of vertices and facets of the graphical zonotopes arising from shuffles of graphical zonotopes are interesting.
See \cref{table:verticesPermCube,table:facetsPermCube,table:verticesPermPoint,table:facetsPermPoint,table:verticesPointPoint,table:facetsPointPoint} in \cref{subsec:tablesZonotopes} for tables of particularly relevant families.
We just mention here some relevant facts.

\pagebreak

\begin{proposition}
\label{prop:numberVerticesShuffleGraphicalZonotopes}
For all graphs~$G$ and~$H$, the number of vertices of~$\Zono[G] \shuffleDP \Zono[H]$ is the number of acyclic orientations of the join~$G \joinGraph H$.
In particular,
\begin{itemize}
\item $f_0 \big( \Perm[m] \shuffleDP \Para[n] \big) = (m+1)! \, (m+2)^{n-1}$,
\item $f_0 \big( \Perm[m] \shuffleDP \Point[n] \big) = m! \, (m+1)^n $,
\item $f_0 \big( \Point[m] \shuffleDP \Point[n] \big) = B(-m,n) \eqdef \sum_{\ell \ge 0} \surjections{m+1}{\ell+1} \, \surjections{n+1}{\ell+1} / (\ell+1)^2$, where~$\surjections{n}{k}$ denotes the number of surjections from~$[n]$ to~$[k]$ (see \OEIS{A019538} in~\cite{OEIS}),
\item $f_0 \big( \Point[n_1] \shuffleDP \cdots \shuffleDP \Point[n_k] \big) = \sum_{w \in W_k} \prod_{i \in [k]} \surjections{n_i}{|w|_i}$, where $W_k$ is the set of words on the alphabet~$[k]$ containing at least one copy of each letter and no consecutive identical letters, $|w|_i$ denotes the number of letters~$i$ in the word~$w$, and~$\surjections{n}{k}$ denotes the number of surjections from~$[n]$ to~$[k]$ (see \OEIS{A019538} in~\cite{OEIS}).
\end{itemize}
\end{proposition}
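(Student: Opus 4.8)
The plan is to reduce everything to counting acyclic orientations and then to evaluate those counts on the relevant join graphs. The first assertion is immediate: by \cref{prop:shuffleGraphicalZonotopes} we have $\Zono[G] \shuffleDP \Zono[H] = \Zono[G \joinGraph H]$, and by \cref{prop:facesGraphicalZonotope} the vertices of any graphical zonotope $\Zono[K]$ are in bijection with the acyclic orientations of $K$. Hence $f_0\big(\Zono[G] \shuffleDP \Zono[H]\big)$ equals the number of acyclic orientations of $G \joinGraph H$, and all particular cases become concrete enumeration problems once we write $\Perm[m] = \Zono[K_m]$, $\Para[n] = \Zono[P_n]$ and $\Point[n] = \Zono[E_n]$.

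The combinatorial heart is the following elementary lemma, which I would isolate first: if $v$ is a vertex of a graph $\Gamma$ whose neighbourhood is exactly a clique $C$ of size $c$, then $\Gamma$ has $(c+1)$ times as many acyclic orientations as $\Gamma \ssm v$. To see this, fix an acyclic orientation of $\Gamma \ssm v$; it restricts to a linear order $c_1 \to \dots \to c_c$ on $C$, and the reachability relation it induces among the $c_i$ is exactly this order. An orientation of the $c$ edges incident to $v$ keeps the whole orientation acyclic if and only if the in-neighbours of $v$ in $C$ form a down-set $\{c_1, \dots, c_t\}$ and the out-neighbours an up-set $\{c_{t+1}, \dots, c_c\}$, leaving precisely the $c+1$ choices $t \in \{0, \dots, c\}$, independently of the orientation chosen on $\Gamma \ssm v$. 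Equivalently, $v$ is a simplicial vertex, so $\chi_\Gamma(q) = (q-c)\,\chi_{\Gamma \ssm v}(q)$, and one concludes with the classical fact that a graph $\Gamma$ has $(-1)^{|V(\Gamma)|}\chi_\Gamma(-1)$ acyclic orientations.

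With this lemma the two formulas involving a permutahedron follow by building the join graph one vertex at a time. For $\Perm[m] \shuffleDP \Point[n] = \Zono[K_m \joinGraph E_n]$, start from $K_m$ (with its $m!$ acyclic orientations) and add the $n$ vertices of $E_n$ one by one; each is adjacent precisely to the clique $K_m$, so the lemma multiplies the count by $m+1$ each time, giving $m!\,(m+1)^n$. For $\Perm[m] \shuffleDP \Para[n] = \Zono[K_m \joinGraph P_n]$, add the path vertices $b_1, \dots, b_n$ in this order: $b_1$ is adjacent to the clique $K_m$ (factor $m+1$), and for $i \ge 2$ the vertex $b_i$ is adjacent exactly to $K_m \cup \{b_{i-1}\}$, which is a clique of size $m+1$ (factor $m+2$). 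This yields $m!\,(m+1)\,(m+2)^{n-1} = (m+1)!\,(m+2)^{n-1}$.

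The remaining formulas need no new work: since the join of empty graphs is the complete multipartite graph (\cref{exm:joinGraphs}) and the shuffle is associative (\cref{rem:shuffleAssociative}), we have $\Point[n_1] \shuffleDP \cdots \shuffleDP \Point[n_k] = \Zono[K_{(n_1,\dots,n_k)}] = \polytope{Z}_{(n_1,\dots,n_k)}$, so the last two summation formulas (the general multipartite one, and the poly-Bernoulli number $B(-m,n)$ in the case $k = 2$) are exactly the content of \cref{prop:completeMultipartiteGraphicalZonotopeNumberVertices}. The only genuinely non-formal step is the clique-vertex lemma, and I expect the main care to go into verifying that global acyclicity really does reduce to the local down-set/up-set condition, that is, that no cycle through $v$ can be created via a path leaving $C$ and returning; this is precisely where one uses that the reachability order on a clique coincides with its internal linear order.
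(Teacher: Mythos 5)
Your proposal is correct and takes essentially the same route as the paper: the opening reduction via \cref{prop:shuffleGraphicalZonotopes,prop:facesGraphicalZonotope} and the final delegation of the two multipartite formulas to \cref{prop:completeMultipartiteGraphicalZonotopeNumberVertices} are identical, and your simplicial-vertex lemma (each added vertex of $E_n$ or $P_n$ has a clique neighbourhood of size $m$ or $m+1$, multiplying the count of acyclic orientations by $m+1$ or $m+2$) is precisely the induction that the paper's proof summarizes as ``easily computed by induction.'' You simply supply the details the paper omits, including the correct treatment of cycles leaving the clique, so nothing further is needed.
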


\begin{proof}
The first sentence of the statement is a direct consequence of \cref{prop:shuffleGraphicalZonotopes,prop:facesGraphicalZonotope}.
The numbers of vertices of~$\Perm[m] \shuffleDP \Para[n]$ and~$\Perm[m] \shuffleDP \Point[n]$ are easily computed by induction.
Finally, the numbers of vertices of~$\Point[m] \shuffleDP \Point[n]$ and~$\Point[n_1] \shuffleDP \cdots \shuffleDP \Point[n_k]$ follow from \cref{prop:completeMultipartiteGraphicalZonotopeNumberVertices}.
\end{proof}

\begin{proposition}
\label{prop:numberFacetsShuffleGraphicalZonotopes}
For all graphs~$G_1, \dots, G_k$ on~$[n_1], \dots, [n_k]$ respectively, such that $k > 2$ or at least one of $G_1$ and $G_2$ is connected, the number of facets of~$\Zono[G_1] \shuffleDP \cdots \shuffleDP \Zono[G_k]$ is given~by
\[
f_{n_1 + \dots + n_k - 2} \big( \Zono[G_1] \shuffleDP \cdots \shuffleDP \Zono[G_k] \big) = 2^{\sum_{i \in [k]} n_i} - 2 \sum_{i \in [k]} \nc{G_i} - 2,
\]
where~$\nc{G}$ denotes the number of disconnected subsets of~$G$.
For two disconnected graphs~$G_1$ and~$G_2$ on~$[n_1]$ and~$[n_2]$ respectively, the number of facets of~$\Zono[G_1] \shuffleDP \Zono[G_2]$ is
\[
{f_{n_1+n_2-2} \big( \Zono[G_1] \shuffleDP \Zono[G_2] \big) = 2^{n_1 + n_2} - 2 \, \nc{G_1} - 2 \, \nc{G_2}}.
\]
In particular,
\begin{itemize}
\item $f_{m+n-2} \big( \Perm[m] \shuffleDP \Para[n] \big) = 2^{m+n} - 2^{n+1} + n(n+1)$,
\item $f_{m+n-2} \big( \Perm[m] \shuffleDP \Point[n] \big) = 2^{m+n} - 2^{n+1} + 2n$,
\item $f_{m+n-2} \big( \Point[m] \shuffleDP \Point[n] \big) = 2^{m+n} - 2^{m+1} - 2^{n+1} + 2m + 2n + 4$,
\item $f_{m+n-2} \big( \Point[n_1] \shuffleDP \cdots \shuffleDP \Point[n_k] \big) = 2^{\sum_{i \in [k]} n_i} - 2 \sum_{i \in [k]} (2^{n_i} - n_i -2) - 2$ for~$k > 2$.
\end{itemize}
\end{proposition}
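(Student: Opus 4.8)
The plan is to turn the facet count into a count of biconnected subsets of an iterated graph join, and then run a short inclusion--exclusion. Since the shuffle is associative (\cref{rem:shuffleAssociative}) and the graph join is associative, iterating \cref{prop:shuffleGraphicalZonotopes} gives
\[
\Zono[G_1] \shuffleDP \cdots \shuffleDP \Zono[G_k] = \Zono[G], \qquad G \eqdef G_1 \joinGraph \cdots \joinGraph G_k,
\]
so by \cref{prop:facesGraphicalZonotope} the facets of this polytope correspond bijectively to the biconnected subsets of~$G$. Write $V = V_1 \sqcup \cdots \sqcup V_k$ with $V_i$ the shifted copy of~$[n_i]$, and set $N \eqdef n_1 + \dots + n_k$. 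The key structural fact, immediate from \cref{def:joinGraphs}, is that any two vertices in distinct parts are adjacent in~$G$; hence $G$ is connected for $k \ge 2$, and $G[U]$ is connected for every nonempty $U$ meeting at least two parts. Therefore $G[U]$ is \emph{disconnected} if and only if $U$ is contained in a single part~$V_i$ with $G_i[U]$ disconnected, and the number of nonempty disconnected subsets of~$G$ is exactly $\sum_{i \in [k]} \nc{G_i}$.

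With this at hand, I would count biconnected subsets among the $2^N - 2$ proper nonempty subsets~$U$ of~$V$ by inclusion--exclusion: such a~$U$ is biconnected exactly when neither $G[U]$ nor $G[\bar U]$ is disconnected. The subsets with $G[U]$ disconnected number $\sum_i \nc{G_i}$, and those with $G[\bar U]$ disconnected also number $\sum_i \nc{G_i}$ via the complementation bijection $U \mapsto \bar U$. This yields
\[
f_{N-2}\big( \Zono[G] \big) = 2^N - 2 - 2 \sum_{i \in [k]} \nc{G_i} + c,
\]
where $c$ is the number of~$U$ for which $G[U]$ and $G[\bar U]$ are both disconnected.

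The main obstacle is to evaluate the correction term~$c$, and this is exactly where the hypothesis enters. Both $G[U]$ and $G[\bar U]$ being disconnected forces $U \subseteq V_i$ and $\bar U \subseteq V_j$ for some~$i, j$, whence $V = U \cup \bar U \subseteq V_i \cup V_j$. For $k > 2$ this is impossible, so $c = 0$. For $k = 2$ it forces $\{U, \bar U\} = \{V_1, V_2\}$, which can occur only when both $G_1$ and $G_2$ are disconnected, in which case $c = 2$ (the two choices $U = V_1$ and $U = V_2$). Thus $c = 0$ precisely when $k > 2$ or at least one of $G_1, G_2$ is connected, giving the first formula, while $c = 2$ for two disconnected graphs gives the extra $+2$ recorded by the second formula.

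It remains to specialize. The values needed are $\nc{K_m} = 0$ (every subset of a complete graph induces a connected subgraph), $\nc{P_n} = 2^n - 1 - \binom{n+1}{2}$ for the path (its connected subsets are exactly the intervals of~$[n]$), and $\nc{E_n} = 2^n - 1 - n$ for the empty graph (its connected subsets are the singletons). Substituting these into the two formulas above and simplifying yields the stated expressions for $\Perm[m] \shuffleDP \Para[n]$, $\Perm[m] \shuffleDP \Point[n]$, $\Point[m] \shuffleDP \Point[n]$, and the $k$-fold shuffle of points; these final simplifications are purely routine bookkeeping of the additive constants.
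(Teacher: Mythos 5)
Your proposal is correct and follows essentially the same route as the paper: reduce to counting biconnected subsets of the join $G_1 \joinGraph \cdots \joinGraph G_k$ via \cref{prop:shuffleGraphicalZonotopes,prop:facesGraphicalZonotope}, observe that any subset meeting two parts induces a connected subgraph, and then count the non-biconnected subsets as disconnected subsets of the $G_i$ and their complements, with the same specializations $\nc{K_n} = 0$, $\nc{P_n} = 2^n - \binom{n+1}{2} - 1$, $\nc{E_n} = 2^n - n - 1$. Your explicit inclusion--exclusion correction term $c$ (shown to be $0$ under the hypothesis and $2$ when $k = 2$ with both graphs disconnected) is just a more formalized version of the paper's observation that ``there is no ambiguity between these sets'' and its stated correction in the doubly disconnected case.
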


\begin{proof}
By \cref{prop:shuffleGraphicalZonotopes,prop:facesGraphicalZonotope}, the number of facets of~$\Zono[G_1] \shuffleDP \cdots \shuffleDP \Zono[G_k]$ is the number of biconnected subsets of~$G \eqdef G_1 \joinGraph \cdots \joinGraph G_k$.
Consider a subset~$U$ of the vertex set of~$G$.
If~$U$ meets the vertex sets of~$\ell > 1$ of the graphs~$G_i$, then the subgraph of~$G$ induced by~$U$ contains a complete $\ell$-partite graph and is thus connected.
Therefore, the subsets of vertices of~$G$ that are not biconnected are precisely the disconnected subsets of the graphs~$G_i$ and their complements.
When~$k > 2$ or at least one of~$G_1$ and~$G_2$ is connected, there is no ambiguity between these sets.
It follows that the number of biconnected subsets of~$G$ is ${2^{\sum_{i \in [k]} n_i} - 2 - 2\sum_{i \in [k]} \nc{G_i}}$.
If~$k = 2$ and both~$G_1$ and~$G_2$ are disconnected, we are counting~$G_1$ (resp.~$G_2$) both as a disconnected subset of~$G_1$ (resp.~$G_2$) and as the complement of~$G_2$ (resp.~$G_1$), which yield the correction ${2^{n_1 + n_2} - 2 \, \nc{G_1} - 2 \, \nc{G_2}}$.
The specific formulas then follow from the immediate observation that $\nc{K_n} = 0$ for the complete graph~$K_n$, $\nc{P_n} = 2^n - \binom{n+1}{2} - 1$ for the path graph~$P_n$, and~$\nc{E_n} = 2^n - n - 1$ for the empty graph~$E_n$.
\end{proof}

\begin{remark}
\label{rem:numberVerticesGraphicalZonotopes}
Note that~$\Perm[m] \shuffleDP \Para[n]$ and~$\Perm[m+1] \shuffleDP \Point[n-1]$ have the same number of vertices by \cref{prop:numberVerticesShuffleGraphicalZonotopes}, but not the same number of facets when~$n \ge 4$ by \cref{prop:numberFacetsShuffleGraphicalZonotopes}.
The equality for the number of vertices can be seen from \cref{coro:numberVertices}.
\end{remark}

\begin{remark}
Note that the results of this section extend to all hypergraphic polytopes.
A \defn{hypergraphic polytope} is the Minkowski sum of the faces of the standard simplex corresponding to the hyperedges of an arbitrary hypergraph.
See for instance~\cite{AguiarArdila, BenedettiBergeronMachacek}.
Hypergraphic polytopes contain in particular graphical zonotopes (when the hypergraph is a graph) and nestohedra (when the hypergraph is a building set~\cite{Postnikov, FeichtnerSturmfels}).
It immediately follows from \cref{def:shuffleDeformedPermutahedra} that the shuffle of two hypergraphic polytopes is a hypergraphic polytope (and the hypergraph of the shuffle is the join of the hypergraphs of the factors in the sense of \cref{def:joinGraphs}).
\end{remark}


\section{Multiplihedra}
\label{sec:multiplihedra}

In this section, we study the family of $(m,n)$-multiplihedra, obtained as the shuffle of an $m$-permutahedron~$\Perm[m]$ with an $n$-associahedron~$\Asso[n]$.
It extends the classical multiplihedron studied in~\cite{Stasheff-HSpaces, SaneblidzeUmble-diagonals, Forcey-multiplihedra, ForceyLauveSottile, MauWoodward, ArdilaDoker}, which corresponds to the case~$m = 1$.
We generalize the classical model of painted trees to $(m,n)$-multiplihedron (\cref{subsec:paintedTrees}), describe the face lattice, fan and oriented skeleton of the $(m,n)$-multiplihedron in terms of these trees (\cref{subsec:multiplihedra}), provide explicit vertex, facet and Minkowski sum descriptions of the $(m,n)$-multiplihedron (\cref{subsec:vertexFacetMinkowskiDescriptionsMultiplihedra}), and present enumerative results on the number of vertices, faces and facets of the $(m,n)$-multiplihedron (\cref{subsec:numerologyMultiplihedra}).
One relevant byproduct of this section is a lattice structure on binary $m$-painted $n$-trees, containing simultaneously the weak order on permutations of~$\f{S}_m$ and the Tamari lattice on binary trees of~$\f{B}_n$.
We are not aware that this lattice structure was noticed in the literature, even for the classical painted trees (with~$m = 1$).


\subsection{Painted trees}
\label{subsec:paintedTrees}

We start by defining $m$-painted $n$-trees, see \cref{fig:paintedTrees}.
Intuitively, an \mbox{$m$-painted} $n$-tree is just a Schr\"oder $n$-tree with some disjoint cuts that can pass through vertices or through edges and are labeled by a partition of~$[m]$.
To make our definitions precise, it is convenient to introduce unary nodes when a cut passes through an edge.
Recall from \cref{def:tree,def:inorder,def:treeDeletion} our conventions for rooted plane trees, inorder labelings, and node deletions.

\begin{definition}
\label{def:stumpCut}
For a tree~$T$, we call
\begin{itemize}
\item \defn{cut} of~$T$ a subset~$C$ of nodes of~$T$ containing precisely one node along the path from the root to any leaf of~$T$,
\item \defn{stump} of~$T$ a subset~$S$ of nodes of~$T$ containing the root of~$T$ and such that the parent of a node in~$S$ also belongs to~$S$, and conversely either none or all children of a node in~$S$ also belong to~$S$.
\end{itemize}
Clearly, to a cut~$C$ of~$T$ corresponds the stump~$\stump{C}$ of all nodes located along a path from the root of~$T$ to a node of~$C$.
Conversely, to a stump~$S$ of~$T$ corresponds the cut~$\cut{S}$ of nodes of~$S$ with no child in~$S$.
\end{definition}

\begin{definition}
\label{def:paintedTree}
A \defn{$m$-painted $n$-tree}~$\PT \eqdef (T, C, \mu)$ consists of an $n$-tree~$T$, a sequence~$C \eqdef (C_1, \cdots, C_k)$ of $k \le m$ cuts of~$T$, and an ordered partition~$\mu$ of~$[m]$ into~$k$ parts, such~that 
\begin{itemize}
\item $C_{i+1} \subseteq \stump{C_i} \ssm C_i$ for all~$i \in [k-1]$, and
\item any unary node of~$T$ belongs to one of the cuts~$C_1, \dots, C_k$.
\end{itemize}
We denote by~$\f{PT}_{m,n}$ the set of $m$-painted $n$-trees.
\end{definition}

In other words, an $m$-painted $n$-tree is an $n$-tree with at most~$m$ cuts, where each cut is disjoint and below the previous one, the union of the cuts covers all unary nodes, and the cuts are labeled by an ordered partition of~$[m]$.
In the sequel, we write~$|C|$ for~$k$ and~$|\bigcup C|$ for~$|\bigcup_{i \in [k]} C_i|$.
To represent an $m$-painted $n$-tree~$\PT \eqdef (T, C, \mu)$, we draw the tree~$T$ in such a way that all nodes in the cut~$C_i$ belong to the same (red) horizontal line labeled by~$\mu_i$ (which is abbreviated as a word rather than a set).
Examples are illustrated in \cref{fig:paintedTrees}.
Note that when~$k = 1$, the $1$-painted $n$-trees are precisely the painted posets of \cref{def:paintedPreposet,def:paintedPoset} for the associahedron~$\Asso$, since it is equivalent to remember the cut and to remember which vertices are below, on, or above the cut.

\begin{figure}[b]
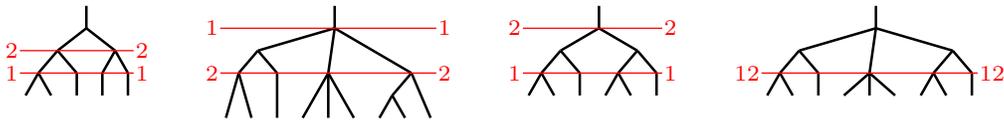

	\centerline{
		\paintedTree[1.2]{[[2[1[][]][1[]]][2[1[]][1[]]]]}{1,2}
		\hspace{-.5cm}
		\paintedTree[1.2]{[1[[2[][]][2[]]][2[][][]][2[[][]][]]]}{1,2}
		\hspace{-.5cm}
		\paintedTree[1.2]{[2[[1[][]][1[]]][[1[][]][1[]]]]}{1,2}
		\hspace{-.5cm}
		\paintedTree[1.2]{[[[12[][]][12[]]][12[][][]][[12[][]][12[]]]]}{12}
	}
	\caption{Some $2$-painted trees.}
	\label{fig:paintedTrees}
\end{figure}

We now define the painted tree deletion poset.
\cref{def:paintedTreeDeletion} provides a direct description in terms of painted trees, while \cref{def:paintedTreePreposet} provides an alternative simpler but indirect description in terms of preposets.
To illustrate the following definition, \cref{fig:paintedTreeDeletions} represents a sequence of deletions in $2$-painted $5$-trees.

\begin{figure}
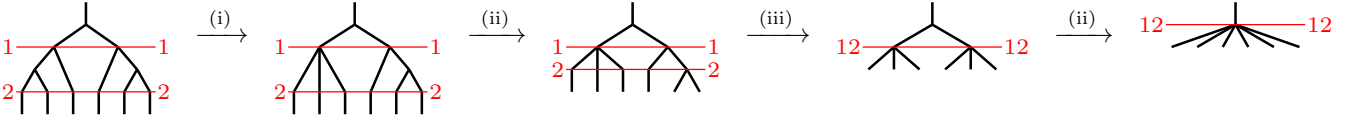

	\centerline{
		\paintedTree[1.2]{[[1[[2[]][2[]]][2[]]][1[2[]][[2[]][2[]]]]]}{1,2}
		\hspace{-.3cm}\raisebox{-.7cm}{$\xrightarrow{\text{ (i) }}$}\hspace{-.3cm}
		\paintedTree[1.2]{[[1[2[]][2[]][2[]]][1[2[]][[2[]][2[]]]]]}{1,2}
		\hspace{-.3cm}\raisebox{-.7cm}{$\xrightarrow{\text{ (ii) }}$}\hspace{-.3cm}
		\paintedTree[1.2]{[[1[2[]][2[]][2[]]][1[2[]][2[][]]]]}{1,2}
		\hspace{-.3cm}\raisebox{-.7cm}{$\xrightarrow{\text{ (iii) }}$}\hspace{-.3cm}
		\paintedTree[1.2]{[[12[][][]][12[][][]]]}{12}
		\hspace{-.3cm}\raisebox{-.7cm}{$\xrightarrow{\text{ (ii) }}$}\hspace{-.3cm}
		\paintedTree[1.2]{[12[][][][][][]]}{12}
	}
	\caption{Deletions in $2$-painted $5$-trees.}
	\label{fig:paintedTreeDeletions}
\end{figure}

\begin{definition}
\label{def:paintedTreeDeletion}
Let~$\PT \eqdef (T, C, \mu)$ and $\PT' \eqdef (T', C', \mu')$ be two $m$-painted $n$-trees.
We say that~$\PT'$ is obtained by a \defn{deletion} in~$\PT$ in either of the following three cases:
\begin{enumerate}[(i)]
\item \textbf{Free child:} A node~$\node{n}$ of~$T$ distinct from the root is in none of the cuts of~$C$, and $T'$ is obtained by deleting~$\node{n}$ in~$T$, while $C' = C$ and~$\mu' = \mu$.
\item \textbf{Free parent:} A node~$\node{p}$ is in none of the cuts of~$C$ while all its children~$\children(\node{p})$ belong to the same cut~$C_i$, and $T'$ is obtained by deleting all~$\children(\node{p})$ in~$T$, $C'$ is obtained from~$C$ by replacing~$C_i$ by~$C'_i \eqdef \big( C_i \ssm \children(\node{p}) \big) \cup \{\node{p}\}$, and~$\mu' = \mu$.
\item \textbf{Twin cuts:} There is~$i$ such that a node belongs to~$C_{i+1}$ if and only if its children belong to~$C_i$, and~$T'$ is obtained by deleting simultaneously all nodes in~$C_i$, $C'$ is obtained from~$C$ by deleting~$C_i$, and~$\mu'$ is obtained from~$\mu$ by merging~$\mu_i$ and~$\mu_{i+1}$.
\end{enumerate}
\end{definition}

\begin{proposition}
\label{prop:paintedTreeDeletion}
For all integers~$m, n \ge 0$, the set~$\f{PT}_{m,n}$ is stable by deletion, and the deletion graph is the Hasse diagram of a poset ranked by~$\rank(T, C, \mu) = m + n - |T| - |C| + |\bigcup C|$.
In particular an $m$-painted $n$-tree~$\PT \eqdef (T, C, \mu)$ has
\begin{itemize}
\item rank~$0$ if and only~$|C| = m$, and all nodes in $\bigcup C$ (resp.~not in~$\bigcup C$) have degree $1$ (resp.~$2$),
\item rank~$m+n-2$ if and only if either~$|C| = 1$ and all but one node are contained in~$C_1$, or~$|C| = 2$ and all nodes are contained in~$C_1 \cup C_2$,
\item rank~$m+n-1$ if and only if it has a single node.
\end{itemize}
\end{proposition}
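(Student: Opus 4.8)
The plan is to argue directly from \cref{def:paintedTree,def:paintedTreeDeletion}, the crucial preliminary being a simplification of the rank function. Write $N(\PT)$ for the number of nodes of $T$ lying in none of the cuts. Since $C_{i+1} \subseteq \stump{C_i} \ssm C_i$ forces the cuts to be pairwise disjoint, we have $|T| = |\bigcup C| + N(\PT)$, so that
\[
\rank(T,C,\mu) = m + n - |T| - |C| + |\bigcup C| = m + n - |C| - N(\PT).
\]
I would also record the ranges $|C| \le m$ (as $\mu$ is an ordered partition of $[m]$ into $|C|$ parts, so $|C| \ge 1$ as soon as $m \ge 1$) and $0 \le N(\PT) \le n$: indeed every non-cut node has degree at least $2$ because unary nodes must be covered, hence $N(\PT) \le \sum_{\node{n} \in T} (\deg(\node{n}) - 1) = n$.

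First I would verify \textbf{stability}, \ie that each of the three moves of \cref{def:paintedTreeDeletion} returns a genuine element of $\f{PT}_{m,n}$. Node deletions preserve the number of leaves, so $T'$ is again an $n$-tree, and $\mu'$ is an ordered partition of $[m]$ (with one fewer part in case (iii)). The two substantive points are that $C'$ is again a nested sequence of cuts and that every unary node of $T'$ still lies in some cut. In case (i) the cuts are untouched and the only node whose degree changes is the parent of the deleted node, whose degree increases, so no uncovered unary node is created. In cases (ii) and (iii) the only nodes whose degree can fall to $1$ are exactly the nodes that are moved into (resp.\ kept in) the modified cut, so they are automatically covered; and the nesting condition $C'_{i+1} \subseteq \stump{C'_i} \ssm C'_i$ is preserved because the modified cut only moves strictly towards the root along the paths it meets. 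I expect this checking of the cut and covering conditions to be the one genuinely fiddly part of the proof.

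Next I would compute the \textbf{rank increment} from the identity $\rank = m+n-|C|-N$. Move (i) deletes a non-cut node and keeps the cuts fixed, so $|C|$ is unchanged and $N$ drops by $1$; move (ii) deletes cut nodes but turns the free parent from a non-cut node into a cut node, so again $|C|$ is unchanged and $N$ drops by $1$; move (iii) removes one cut, decreasing $|C|$ by $1$, while it deletes only cut nodes and alters the cut-membership of no surviving node, so $N$ is unchanged. In every case $\rank$ increases by exactly $1$.

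It remains to assemble the \textbf{poset, Hasse diagram, and rank statements}. Since every deletion strictly increases $\rank$, the reflexive--transitive closure of the deletion relation is antisymmetric, hence a partial order graded by $\rank$; and since each deletion raises $\rank$ by exactly $1$, a deletion admits no intermediate element and a cover cannot factor through one, so the deletions are precisely the covering relations and the deletion graph is the Hasse diagram. The three extremal characterizations then follow from $\rank = m+n-|C|-N$ with the ranges above. Rank $0$ means $|C| + N = m + n$, forcing $|C| = m$ and $N = n$; and $N = n$ together with $n = \sum_{\node{n} \in T}(\deg(\node{n}) - 1)$ forces every cut node to be unary and every non-cut node binary. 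Rank $m+n-1$ means $|C| + N = 1$, and a single cut (or a single non-cut node) accounting for all of $T$ can only occur on a one-node tree. Rank $m+n-2$ means $|C| + N = 2$, \ie (for $m \ge 1$) either $|C| = 1$ with exactly one non-cut node, or $|C| = 2$ with no non-cut node, which are the two cases in the statement.
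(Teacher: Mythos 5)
Your proof is correct and follows essentially the same route as the paper's: a case-by-case verification that each of the three deletion moves raises the rank by exactly one, from which the poset, Hasse diagram, and extremal-rank statements follow. The only differences are cosmetic---your reformulation $\rank(T,C,\mu) = m+n-|C|-N(\PT)$ (with $N(\PT)$ the number of uncut nodes) streamlines the bookkeeping that the paper does directly with $|T|$, $|C|$, $|\bigcup C|$, and you spell out the stability check and the rank-$0$, rank-$(m+n-2)$ and rank-$(m+n-1)$ characterizations that the paper dismisses as ``clearly'' and ``immediately follows''.
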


\begin{proof}
Consider a deletion transforming~$\PT \eqdef (T, C, \mu)$ to~$\PT' \eqdef (T', C', \mu')$.
Then~$\PT'$ is clearly an $m$-painted $n$-tree since the cuts of~$C'$ are still disjoint and one below the other, and~$|C'| = |\mu'|$.
For the rank, we distinguish three cases corresponding to that of \cref{def:paintedTreeDeletion}:
\begin{enumerate}[(i)]
\item \textbf{Free child:} $|T'| = |T|-1$ while~$C' = C$ so that~$|C'| = |C|$ and~$|\bigcup C'| = |\bigcup C|$.
\item \textbf{Free parent:} $|T'| = |T| - |\children(\node{p})|$, $|C'| = |C|$ and~$|\bigcup C'| = |\bigcup C| - |\children(\node{p})| + 1$.
\item \textbf{Twin cuts:} $|T'| = |T| - |C_i|$, $|C'| = |C|-1$ and~$|\bigcup C'| = |\bigcup C| - |C_i|$.
\end{enumerate}
In all three situations, we get~$\rank(\PT') = \rank(\PT)+1$.
The end of the statement immediately follows.
\end{proof}

\begin{definition}
\label{def:paintedTreeDeletionPoset}
The \defn{$m$-painted $n$-tree deletion poset} is the poset on $\f{PT}_{m,n}$ where an $m$-painted $n$-tree is covered by all $m$-painted $n$-trees that can be obtained by a deletion.
\end{definition}

The $m$-painted $n$-tree deletion poset can alternatively be defined using preposets.

\begin{definition}
\label{def:paintedTreePreposet}
A $m$-painted $n$-tree~$\PT \eqdef (T, C, \mu)$ defines a preposet~$\preccurlyeq_{\PT}$ on~$[m+n]$ that can be read as follows.
Label each node~$\node{n}$ of~$T$ by the union of the part~$\mu_i$ if~$\node{n} \in C_i$ (empty set if~$\node{n} \notin \bigcup C$) and the inorder label of~$\node{n}$ in~$T$ shifted by~$m$ (empty set if~$\node{n}$ has degree~$1$).
Then merge together all nodes contained in each cut.
Then, for any~$i,j \in [m+n]$, we have~$i \preccurlyeq_{\PT} j$ if there is a (possibly empty) oriented path from the node containing~$i$ to the node containing~$j$ in the resulting oriented graph.
\end{definition}

\begin{proposition}
\label{prop:characterizationPaintedTreePreposets}
The preposets~$\preccurlyeq_{\PT}$ for~$\PT \in \f{PT}_{m,n}$ are precisely the preposets~$\preccurlyeq$ on~$[m+n]$ in which any~$1 \le i < k \le m+n$ are comparable (\ie~$i \preccurlyeq k$ or $i \succcurlyeq k$ or both) if and only if
\begin{itemize}
\item either~$i \le m$,
\item or~$m < i$ and at least one of the following holds:
	\begin{itemize}
	\item there exists no~$i < j < k$ such that $i \prec j \succ k$,
	\item there exists~$j \in [m]$ such that $i \preccurlyeq j \preccurlyeq k$ or $i \succcurlyeq j \succcurlyeq k$.
	\end{itemize}
\end{itemize}
\end{proposition}

\begin{proof}
Any preposet~$\preccurlyeq_{\PT}$ clearly satisfies these conditions.
Conversely, given a preposet~$\preccurlyeq$ on~${[m+n]}$ satisfying these conditions, consider the preposet~$\preccurlyeq'$ on~$[n]$ defined by~$i \preccurlyeq' k$ if and only if ${i + m \preccurlyeq k+m}$ and there is no~$i < j < k$ such that~$i + m \prec j + m \succ k + m$.
The preposet~$\preccurlyeq'$ is clearly the preposet~$\preccurlyeq_T$ of a Schr\"oder $n$-tree~$T$.
We then obtain the cuts~$C$ and the partition~$\mu$ by considering the relations~$i \preccurlyeq k$ with~$i \le m < k$.
Details are left to the reader.
\end{proof}

\begin{proposition}
\label{prop:paintedTreeDeletionPosetOnPreposets}
In the $m$-painted $n$-tree deletion poset, $\PT$ is smaller than~$\PT'$ if and only if~$\preccurlyeq_{\PT}$ refines~$\preccurlyeq_{\PT'}$.
\end{proposition}

\begin{proof}
An immediate case analysis shows that deletions in a painted tree~$\PT$ defined in \cref{def:paintedTreeDeletion} precisely translate all possible refinements in the corresponding preposet~$\preccurlyeq_{\PT}$.
\end{proof}

Finally, we define the rotations in painted trees, which correspond to rank~$1$ painted trees.
To illustrate the following definition, \cref{fig:paintedTreeRotations} represents a sequence of right rotations in binary $2$-painted $3$-trees.

\begin{figure}
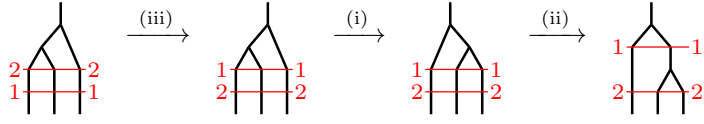

	\centerline{
		\paintedTree[1.2]{[[[2[1[]]][2[1[]]]][2[1[]]]]}{1,2}
		\hspace{-.3cm}\raisebox{-.7cm}{$\xrightarrow{\text{ (iii) }}$}\hspace{-.3cm}
		\paintedTree[1.2]{[[[1[2[]]][1[2[]]]][1[2[]]]]}{1,2}
		\hspace{-.3cm}\raisebox{-.7cm}{$\xrightarrow{\text{ (i) }}$}\hspace{-.3cm}
		\paintedTree[1.2]{[[1[2[]]][[1[2[]]][1[2[]]]]]}{1,2}
		\hspace{-.3cm}\raisebox{-.7cm}{$\xrightarrow{\text{ (ii) }}$}\hspace{-.3cm}
		\paintedTree[1.2]{[[1[2[]]][1[[2[]][2[]]]]]}{1,2}
	}
	\caption{Right rotations in binary $2$-painted $3$-trees.}
	\label{fig:paintedTreeRotations}
\end{figure}

\begin{definition}
\label{def:rotationsPaintedTrees}
We call \defn{binary $m$-painted $n$-trees} the rank~$0$ $m$-painted $n$-trees, \ie where all nodes in $\bigcup C$ have degree $1$ while all nodes not in~$\bigcup C$ have degree~$2$.
We say that two binary $m$-painted $n$-trees~$\PT \eqdef (T, C, \mu)$ and $\PT' \eqdef (T', C', \mu')$ are connected by a \defn{right rotation} if:
\begin{enumerate}[(i)]
\item \textbf{Edge rotation:} $T'$ is obtained from~$T$ by the right rotation of an edge connecting two binary nodes, $C' = C$ and~$\mu' = \mu$,
\item \textbf{Node--cut sweep:} $T'$ is obtained from~$T$ by replacing a binary node~$\node{n}_1$ with two unary children~$\node{n}_2, \node{n}_3$ by a unary node~$\node{n}'_1$ with a binary child~$\node{n}'_2$, $C'$ is obtained by replacing~$\node{n}_2$ and~$\node{n}_3$ by~$\node{n}'_1$, and~$\mu' = \mu$,
\item \textbf{Twin cuts:} There is~$i$ such that~$\mu_i < \mu_{i+1}$ and a node belongs to~$C_i$ if and only if its children belong to~$C_{i+1}$, and $T' = T$, $C' = C$ and~$\mu'$ is obtained from~$\mu$ by exchanging the values~$\mu_i$ and~$\mu_{i+1}$.
\end{enumerate}
\end{definition}

\begin{remark}
\label{rem:algebraicInterpretationMultiplihedra}
The binary $m$-painted $n$-trees can be interpreted algebraically as follows.
We consider a non-associative magma~$(X, \ast)$ and $m$ functions $f_1, \dots, f_m$ from $X$ to $X$ which are not magma homomorphisms. We then consider the terms than can be produced by starting from a sequence of~$n$ elements of~$X$ and iteratively applying either $\ast$ to two consecutive terms in the sequence or one function~$f_i$ (each one used exactly once) to all terms in the sequence.
For instance, the terms corresponding to the $4$ trees of \cref{fig:paintedTreeRotations} are
\begin{gather*}
\big( f_2 \circ f_1(x) \ast f_2 \circ f_1(y) \big) \ast f_2 \circ f_1(z), \\
\big( f_1 \circ f_2(x) \ast f_1 \circ f_2(y) \big) \ast f_1 \circ f_2(z), \\
f_1 \circ f_2(x) \ast \big( f_1 \circ f_2(y) \ast f_1 \circ f_2(z) \big), \\
f_1 \circ f_2(x) \ast f_1 \big( f_2(y) \ast f_2(z) \big).
\end{gather*}
\end{remark}


\subsection{Permutahedra $\shuffleDP$ Associahedra}
\label{subsec:multiplihedra}

We now consider shuffles of permutahedra with associahedra.

\begin{definition}
\label{def:multiplihedron}
The \defn{$(m,n)$-multiplihedron} is the polytope~$\Multiplihedron = \Perm[m] \shuffleDP \Asso[n]$.
\end{definition}

\begin{remark}
\label{rem:multipliheadra}
When~$n = 1$, we obtain the multiplihedron studied in~\cite{Stasheff-HSpaces, SaneblidzeUmble-diagonals, Forcey-multiplihedra, ForceyLauveSottile, MauWoodward, ArdilaDoker}.
Our geometric realization is different from that of~\cite{Forcey-multiplihedra}.
For instance, the two facets corresponding to associahedra are translated copies in our realizations of the $(1,n)$-multiplihedron, while they are dilated copies in the realization of~\cite{Forcey-multiplihedra}.
\end{remark}

This family of polytopes is illustrated in \cref{fig:multiplihedra2,fig:multiplihedra3,fig:multiplihedra4}.
We have labeled with $m$-painted $n$-trees all faces in \cref{fig:multiplihedra2}, and all vertices in \cref{fig:multiplihedra3} (see also \cref{fig:multiplihedron23}).
We let the reader complete the pictures in \cref{fig:multiplihedra3,fig:multiplihedra4}.

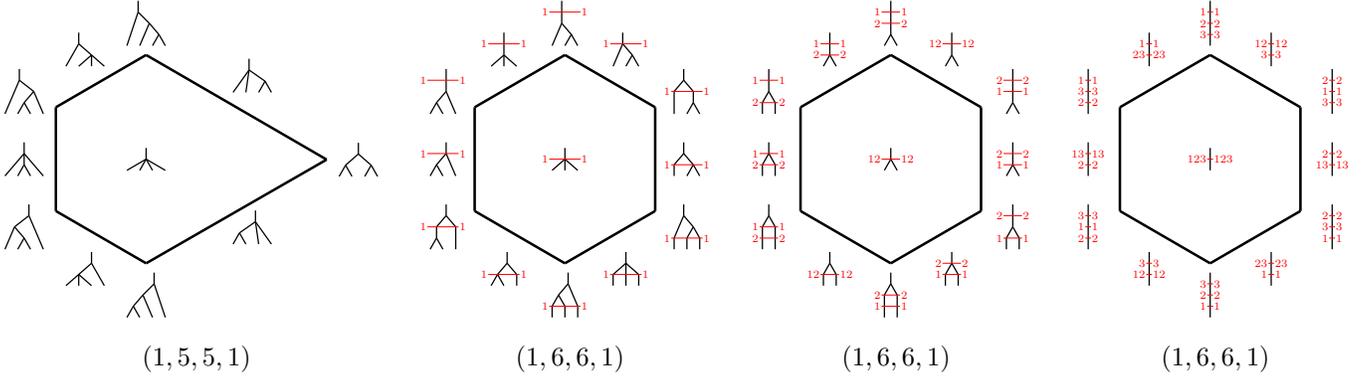
\begin{figure}
	\centerline{
	\begin{tabular}{c@{\hspace{-.7cm}}c@{\hspace{-.7cm}}c@{\hspace{-.7cm}}c}
		\scalebox{1.2}{\begin{tikzpicture}[align=center]

  \coordinate (v0) at (0, 0);
  \coordinate (v1) at (-1, .58);
  \coordinate (v2) at (2, 1.15);
  \coordinate (v3) at (-1, 1.73);
  \coordinate (v4) at (0, 2.31);

  \draw[thick] (v0) -- (v1) coordinate[pos=0.5] (e0);
  \draw[thick] (v0) -- (v2) coordinate[pos=0.5] (e1);
  \draw[thick] (v1) -- (v3) coordinate[pos=0.5] (e2);
  \draw[thick] (v2) -- (v4) coordinate[pos=0.5] (e3);
  \draw[thick] (v3) -- (v4) coordinate[pos=0.5] (e4);

  \node[xshift=0pt, yshift=-10pt] at (v0) {\paintedTree[.5]{[[[[][]][]][]]}{}};
  \node[xshift=-10pt, yshift=-5pt] at (v1) {\paintedTree[.5]{[[[][[][]]][]]}{}};
  \node[xshift=10pt, yshift=0pt] at (v2) {\paintedTree[.5]{[[[][]][[][]]]]}{}};
  \node[xshift=-10pt, yshift=5pt] at (v3) {\paintedTree[.5]{[[][[[][]][]]]}{}};
  \node[xshift=0pt, yshift=10pt] at (v4) {\paintedTree[.5]{[[][[][[][]]]]}{}};
  
  \node[xshift=-5pt, yshift=-10pt] at (e0) {\paintedTree[.5]{[[[][][]][]]]}{}};
  \node[xshift=5pt, yshift=-5pt] at (e1) {\paintedTree[.5]{[[[][]][][]]}{}};
  \node[xshift=-10pt, yshift=0pt] at (e2) {\paintedTree[.5]{[[][[][]][]]}{}};
  \node[xshift=5pt, yshift=10pt] at (e3) {\paintedTree[.5]{[[][][[][]]]}{}};
  \node[xshift=-5pt, yshift=10pt] at (e4) {\paintedTree[.5]{[[][[][][]]]}{}};
  
  \node[xshift=0pt, yshift=0pt] at ($0.5*(v0)+0.5*(v4)$) {\paintedTree[.5]{[[][][][]]}{}};

\end{tikzpicture}} &
		\scalebox{1.2}{\begin{tikzpicture}[align=center]

  \coordinate (v0) at (0, 0);
  \coordinate (v1) at (-1, .58);
  \coordinate (v2) at (1, .58);
  \coordinate (v3) at (-1, 1.73);
  \coordinate (v4) at (1, 1.73);
  \coordinate (v5) at (0, 2.31);

  \draw[thick] (v0) -- (v1) coordinate[pos=0.5] (e0);
  \draw[thick] (v0) -- (v2) coordinate[pos=0.5] (e1);
  \draw[thick] (v1) -- (v3) coordinate[pos=0.5] (e2);
  \draw[thick] (v2) -- (v4) coordinate[pos=0.5] (e3);
  \draw[thick] (v3) -- (v5) coordinate[pos=0.5] (e4);
  \draw[thick] (v4) -- (v5) coordinate[pos=0.5] (e5);

  \node[xshift=0pt, yshift=-10pt] at (v0) {\paintedTree[.5]{[[[1[]][1[]]][1[]]]}{1}};
  \node[xshift=-10pt, yshift=-5pt] at (v1) {\paintedTree[.5]{[[1[[][]]][1[]]]}{1}};
  \node[xshift=10pt, yshift=-5pt] at (v2) {\paintedTree[.5]{[[1[]][[1[]][1[]]]]}{1}};
  \node[xshift=-10pt, yshift=5pt] at (v3) {\paintedTree[.5]{[1[[[][]][]]]}{1}};
  \node[xshift=10pt, yshift=5pt] at (v4) {\paintedTree[.5]{[[1[]][1[[][]]]]}{1}};
  \node[xshift=0pt, yshift=10pt] at (v5) {\paintedTree[.5]{[1[[][[][]]]]}{1}};

  \node[xshift=-5pt, yshift=-10pt] at (e0) {\paintedTree[.5]{[[1[][]][1[]]]}{1}};
  \node[xshift=5pt, yshift=-10pt] at (e1) {\paintedTree[.5]{[[1[]][1[]][1[]]]}{1}};
  \node[xshift=-10pt, yshift=0pt] at (e2) {\paintedTree[.5]{[1[[][]][]]}{1}};
  \node[xshift=10pt, yshift=0pt] at (e3) {\paintedTree[.5]{[[1[]][1[][]]]}{1}};
  \node[xshift=-5pt, yshift=10pt] at (e4) {\paintedTree[.5]{[1[[][][]]]}{1}};
  \node[xshift=5pt, yshift=10pt] at (e5) {\paintedTree[.5]{[1[][[][]]]}{1}};
  
  \node[xshift=0pt, yshift=0pt] at ($0.5*(v0)+0.5*(v5)$) {\paintedTree[.5]{[1[][][]]}{1}};

\end{tikzpicture}} &
		\scalebox{1.2}{\begin{tikzpicture}[align=center]

  \coordinate (v0) at (0, 0);
  \coordinate (v1) at (-1, .58);
  \coordinate (v2) at (1, .58);
  \coordinate (v3) at (-1, 1.73);
  \coordinate (v4) at (1, 1.73);
  \coordinate (v5) at (0, 2.31);

  \draw[thick] (v0) -- (v1) coordinate[pos=0.5] (e0);
  \draw[thick] (v0) -- (v2) coordinate[pos=0.5] (e1);
  \draw[thick] (v1) -- (v3) coordinate[pos=0.5] (e2);
  \draw[thick] (v2) -- (v4) coordinate[pos=0.5] (e3);
  \draw[thick] (v3) -- (v5) coordinate[pos=0.5] (e4);
  \draw[thick] (v4) -- (v5) coordinate[pos=0.5] (e5);

  \node[xshift=0pt, yshift=-10pt] at (v0) {\paintedTree[.5]{[[2[1[]]][2[1[]]]]}{1,2}};
  \node[xshift=-10pt, yshift=-5pt] at (v1) {\paintedTree[.5]{[[1[2[]]][1[2[]]]]}{1,2}};
  \node[xshift=10pt, yshift=-5pt] at (v2) {\paintedTree[.5]{[2[[1[]][1[]]]]}{1,2}};
  \node[xshift=-10pt, yshift=5pt] at (v3) {\paintedTree[.5]{[1[[2[]][2[]]]]}{1,2}};
  \node[xshift=10pt, yshift=5pt] at (v4) {\paintedTree[.5]{[2[1[[][]]]]}{1,2}};
  \node[xshift=0pt, yshift=10pt] at (v5) {\paintedTree[.5]{[1[2[[][]]]]}{1,2}};

  \node[xshift=-5pt, yshift=-10pt] at (e0) {\paintedTree[.5]{[[12[]][12[]]]}{12}};
  \node[xshift=5pt, yshift=-10pt] at (e1) {\paintedTree[.5]{[2[1[]][1[]]]}{1,2}};
  \node[xshift=-10pt, yshift=0pt] at (e2) {\paintedTree[.5]{[1[2[]][2[]]]}{1,2}};
  \node[xshift=10pt, yshift=0pt] at (e3) {\paintedTree[.5]{[2[1[][]]]}{1,2}};
  \node[xshift=-5pt, yshift=10pt] at (e4) {\paintedTree[.5]{[1[2[][]]]}{1,2}};
  \node[xshift=5pt, yshift=10pt] at (e5) {\paintedTree[.5]{[12[[][]]]}{12}};
  
  \node[xshift=0pt, yshift=0pt] at ($0.5*(v0)+0.5*(v5)$) {\paintedTree[.5]{[12[][]]}{12}};

\end{tikzpicture}} &
		\scalebox{1.2}{\begin{tikzpicture}[align=center]

  \coordinate (v0) at (0, 0);
  \coordinate (v1) at (-1, .58);
  \coordinate (v2) at (1, .58);
  \coordinate (v3) at (-1, 1.73);
  \coordinate (v4) at (1, 1.73);
  \coordinate (v5) at (0, 2.31);

  \draw[thick] (v0) -- (v1) coordinate[pos=0.5] (e0);
  \draw[thick] (v0) -- (v2) coordinate[pos=0.5] (e1);
  \draw[thick] (v1) -- (v3) coordinate[pos=0.5] (e2);
  \draw[thick] (v2) -- (v4) coordinate[pos=0.5] (e3);
  \draw[thick] (v3) -- (v5) coordinate[pos=0.5] (e4);
  \draw[thick] (v4) -- (v5) coordinate[pos=0.5] (e5);

  \node[xshift=0pt, yshift=-10pt] at (v0) {\paintedTree[.5]{[3[2[1[]]]]}{1,2,3}};
  \node[xshift=-10pt, yshift=-5pt] at (v1) {\paintedTree[.5]{[3[1[2[]]]]}{1,2,3}};
  \node[xshift=10pt, yshift=-5pt] at (v2) {\paintedTree[.5]{[2[3[1[]]]]}{1,2,3}};
  \node[xshift=-10pt, yshift=5pt] at (v3) {\paintedTree[.5]{[1[3[2[]]]]}{1,2,3}};
  \node[xshift=10pt, yshift=5pt] at (v4) {\paintedTree[.5]{[2[1[3[]]]]}{1,2,3}};
  \node[xshift=0pt, yshift=10pt] at (v5) {\paintedTree[.5]{[1[2[3[]]]]}{1,2,3}};

  \node[xshift=-5pt, yshift=-10pt] at (e0) {\paintedTree[.5]{[3[12[]]]}{12,3}};
  \node[xshift=5pt, yshift=-10pt] at (e1) {\paintedTree[.5]{[23[1[]]]}{1,23}};
  \node[xshift=-10pt, yshift=0pt] at (e2) {\paintedTree[.5]{[13[2[]]]}{13,2}};
  \node[xshift=10pt, yshift=0pt] at (e3) {\paintedTree[.5]{[2[13[]]]}{13,2}};
  \node[xshift=-5pt, yshift=10pt] at (e4) {\paintedTree[.5]{[1[23[]]]}{1,23}};
  \node[xshift=5pt, yshift=10pt] at (e5) {\paintedTree[.5]{[12[3[]]]}{12,3}};
  
  \node[xshift=0pt, yshift=0pt] at ($0.5*(v0)+0.5*(v5)$) {\paintedTree[.5]{[123[]]}{123}};

\end{tikzpicture}} \\
		$(1, 5, 5, 1)$ &
		$(1, 6, 6,1)$ &
		$(1, 6, 6,1)$ &
		$(1, 6, 6,1)$
	\end{tabular}
	}
	\caption{The $(m,n)$-multiplihedra $\Multiplihedron$ and their $f$-vectors for~$(m,n) = (0,3)$, $(1,2)$, $(2,1)$ and~$(3,0)$. The leftmost is the $2$-dimensional associahedron~$\Asso[3]$ while the other three are all relabelings of the $2$-dimensional permutahedron~$\Perm[3]$.}
	\label{fig:multiplihedra2}
\end{figure}

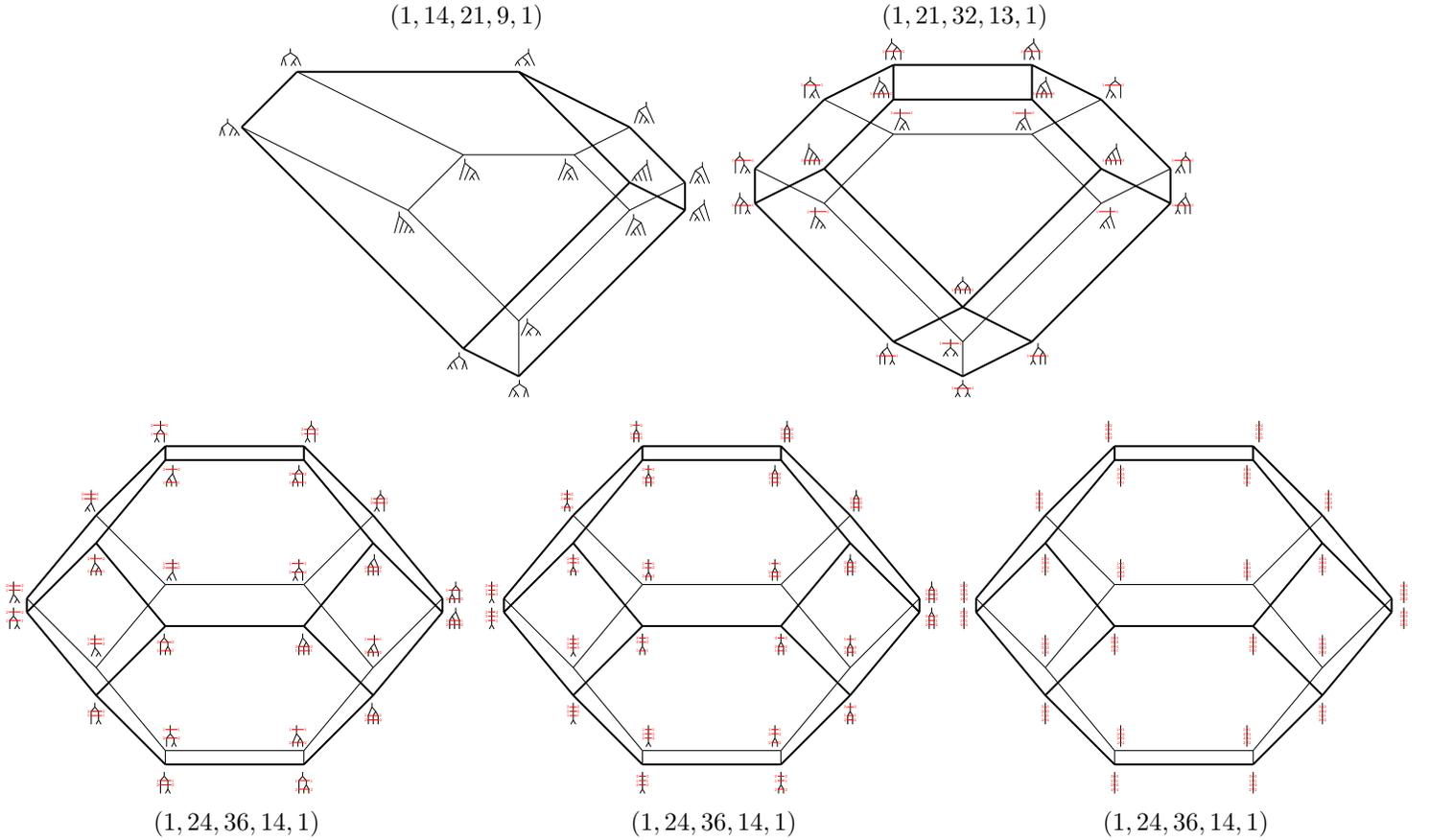
\begin{figure}
	\centerline{
	\begin{tabular}{c@{\hspace{-.2cm}}c}
		$(1, 14, 21, 9, 1)$ &
		$(1, 21, 32, 13, 1)$ \\
		\scalebox{.48}{\begin{tikzpicture}[x  = {(1cm,-.5cm)},
                    y  = {(-1cm,1cm)},
                    z  = {(1cm,1cm)},
                    scale = 1.6,
                    align=center]

  \coordinate (v66) at (0, -1, 1);
  \coordinate (v67) at (0, -1, -2);
  \coordinate (v68) at (0, 1, 1);
  \coordinate (v69) at (0, 3, -1);
  \coordinate (v70) at (0, 3, -2);
  \coordinate (v71) at (1, -1, 1);
  \coordinate (v72) at (1, -1, -2);
  \coordinate (v73) at (2, 0, 1);
  \coordinate (v74) at (2, 1, 1);
  \coordinate (v75) at (3, 1, 0);
  \coordinate (v76) at (3, 1, -2);
  \coordinate (v77) at (3, 2, 0);
  \coordinate (v78) at (3, 3, -1);
  \coordinate (v79) at (3, 3, -2);

  \tikzstyle{back} = [thick]
  \tikzstyle{front} = [ultra thick]

  \draw[back] (v75) -- (v73);
  \draw[back] (v76) -- (v72);
  \draw[back] (v76) -- (v75);
  \draw[back] (v77) -- (v74);
  \draw[back] (v77) -- (v75);
  \draw[back] (v78) -- (v69);
  \draw[back] (v78) -- (v77);
  \draw[back] (v79) -- (v70);
  \draw[back] (v79) -- (v76);
  \draw[back] (v79) -- (v78);

  \draw[front] (v67) -- (v66);
  \draw[front] (v68) -- (v66);
  \draw[front] (v69) -- (v68);
  \draw[front] (v70) -- (v67);
  \draw[front] (v70) -- (v69);
  \draw[front] (v71) -- (v66);
  \draw[front] (v72) -- (v67);
  \draw[front] (v72) -- (v71);
  \draw[front] (v73) -- (v71);
  \draw[front] (v74) -- (v68);
  \draw[front] (v74) -- (v73);

  \node[xshift=10pt, yshift=10pt] at (v66) {\paintedTree[.5]{[[[[[][]][]][]][]]}{}};
  \node[xshift=-5pt, yshift=-10pt] at (v67) {\paintedTree[.5]{[[[[][]][]][[][]]]}{}};
  \node[xshift=5pt, yshift=12pt] at (v68) {\paintedTree[.5]{[[[[][]][[][]]][]]}{}};
  \node[xshift=-5pt, yshift=12pt] at (v69) {\paintedTree[.5]{[[[][]][[[][]][]]]}{}};
  \node[xshift=-10pt, yshift=0pt] at (v70) {\paintedTree[.5]{[[[][]][[][[][]]]]}{}};
  \node[xshift=12pt, yshift=0pt] at (v71) {\paintedTree[.5]{[[[[][[][]]][]][]]}{}};
  \node[xshift=0pt, yshift=-10pt] at (v72) {\paintedTree[.5]{[[[][[][]]][[][]]]}{}};
  \node[xshift=12pt, yshift=8pt] at (v73) {\paintedTree[.5]{[[[][[[][]][]]][]]}{}};
  \node[xshift=12pt, yshift=12pt] at (v74) {\paintedTree[.5]{[[[][[][[][]]]][]]}{}};
  \node[xshift=5pt, yshift=-12pt] at (v75) {\paintedTree[.5]{[[][[[[][]][]][]]]}{}};
  \node[xshift=10pt, yshift=-5pt] at (v76) {\paintedTree[.5]{[[][[[][]][[][]]]]}{}};
  \node[xshift=-5pt, yshift=-12pt] at (v77) {\paintedTree[.5]{[[][[[][[][]]][]]]}{}};
  \node[xshift=5pt, yshift=-12pt] at (v78) {\paintedTree[.5]{[[][[][[[][]][]]]]}{}};
  \node[xshift=-3pt, yshift=-10pt] at (v79) {\paintedTree[.5]{[[][[][[][[][]]]]]}{}};

\end{tikzpicture}} &
		\scalebox{.48}{\begin{tikzpicture}[x  = {(1cm,-.5cm)},
                    y  = {(-1cm,1cm)},
                    z  = {(1cm,1cm)},
                    scale = 2,
                    align=center]

  \coordinate (v87) at (0, -1, 1);
  \coordinate (v88) at (0, -1, -1);
  \coordinate (v89) at (0, 0, 1);
  \coordinate (v90) at (0, 1, 0);
  \coordinate (v91) at (0, 1, -1);
  \coordinate (v92) at (1, -1, 1);
  \coordinate (v93) at (1, -1, -1);
  \coordinate (v94) at (1, 0, -2);
  \coordinate (v95) at (1, 1, 1);
  \coordinate (v96) at (1, 2, 0);
  \coordinate (v97) at (1, 2, -2);
  \coordinate (v98) at (2, 0, 1);
  \coordinate (v99) at (2, 0, -2);
  \coordinate (v100) at (2, 1, 1);
  \coordinate (v101) at (2, 3, -1);
  \coordinate (v102) at (2, 3, -2);
  \coordinate (v103) at (3, 1, 0);
  \coordinate (v104) at (3, 1, -2);
  \coordinate (v105) at (3, 2, 0);
  \coordinate (v106) at (3, 3, -1);
  \coordinate (v107) at (3, 3, -2);

  \tikzstyle{back} = [thick]
  \tikzstyle{front} = [ultra thick]

  \draw[back] (v103) -- (v98);
  \draw[back] (v104) -- (v99);
  \draw[back] (v104) -- (v103);
  \draw[back] (v105) -- (v100);
  \draw[back] (v105) -- (v103);
  \draw[back] (v106) -- (v101);
  \draw[back] (v106) -- (v105);
  \draw[back] (v107) -- (v102);
  \draw[back] (v107) -- (v104);
  \draw[back] (v107) -- (v106);

  \draw[front] (v88) -- (v87);
  \draw[front] (v89) -- (v87);
  \draw[front] (v90) -- (v89);
  \draw[front] (v91) -- (v88);
  \draw[front] (v91) -- (v90);
  \draw[front] (v92) -- (v87);
  \draw[front] (v93) -- (v88);
  \draw[front] (v93) -- (v92);
  \draw[front] (v94) -- (v88);
  \draw[front] (v95) -- (v89);
  \draw[front] (v96) -- (v90);
  \draw[front] (v96) -- (v95);
  \draw[front] (v97) -- (v91);
  \draw[front] (v97) -- (v94);
  \draw[front] (v98) -- (v92);
  \draw[front] (v99) -- (v93);
  \draw[front] (v99) -- (v94);
  \draw[front] (v100) -- (v95);
  \draw[front] (v100) -- (v98);
  \draw[front] (v101) -- (v96);
  \draw[front] (v102) -- (v97);
  \draw[front] (v102) -- (v101);

  \node[xshift=10pt, yshift=12pt] at (v87) {\paintedTree[.5]{[[[[1[]][1[]]][1[]]][1[]]]}{1}};
  \node[yshift=18pt] at (v88) {\paintedTree[.5]{[[[1[]][1[]]][[1[]][1[]]]]}{1}};
  \node[xshift=10pt, yshift=10pt] at (v89) {\paintedTree[.5]{[[[1[]][[1[]][1[]]]][1[]]]}{1}};
  \node[xshift=-10pt, yshift=10pt] at (v90) {\paintedTree[.5]{[[1[]][[[1[]][1[]]][1[]]]]}{1}};
  \node[xshift=-10pt, yshift=12pt] at (v91) {\paintedTree[.5]{[[1[]][[1[]][[1[]][1[]]]]]}{1}};
  \node[xshift=10pt] at (v92) {\paintedTree[.5]{[[[1[[][]]][1[]]][1[]]]}{1}};
  \node[xshift=5pt, yshift=-10pt] at (v93) {\paintedTree[.5]{[[1[[][]]][[1[]][1[]]]]}{1}};
  \node[xshift=-5pt, yshift=-10pt] at (v94) {\paintedTree[.5]{[[[1[]][1[]]][1[[][]]]]}{1}};
  \node[yshift=13pt] at (v95) {\paintedTree[.5]{[[[1[]][1[[][]]]][1[]]]}{1}};
  \node[yshift=13pt] at (v96) {\paintedTree[.5]{[[1[]][[1[[][]]][1[]]]]}{1}};
  \node[xshift=-10pt] at (v97) {\paintedTree[.5]{[[1[]][[1[]][1[[][]]]]]}{1}};
  \node[xshift=10pt, yshift=5pt] at (v98) {\paintedTree[.5]{[[1[[[][]][]]][1[]]]}{1}};
  \node[yshift=-10pt] at (v99) {\paintedTree[.5]{[[1[[][]]][1[[][]]]]}{1}};
  \node[xshift=10pt, yshift=10pt] at (v100) {\paintedTree[.5]{[[1[[][[][]]]][1[]]]}{1}};
  \node[xshift=-10pt, yshift=10pt] at (v101) {\paintedTree[.5]{[[1[]][1[[[][]][]]]]}{1}};
  \node[xshift=-10pt, yshift=5pt] at (v102) {\paintedTree[.5]{[[1[]][1[[][[][]]]]]}{1}};
  \node[xshift=5pt, yshift=-12pt] at (v103) {\paintedTree[.5]{[1[[[[][]][]][]]]}{1}};
  \node[xshift=-10pt, yshift=-5pt] at (v104) {\paintedTree[.5]{[1[[[][]][[][]]]]}{1}};
  \node[xshift=-7pt, yshift=12pt] at (v105) {\paintedTree[.5]{[1[[[][[][]]][]]]}{1}};
  \node[xshift=7pt, yshift=12pt] at (v106) {\paintedTree[.5]{[1[[][[[][]][]]]]}{1}};
  \node[xshift=-5pt, yshift=-12pt] at (v107) {\paintedTree[.5]{[1[[][[][[][]]]]]}{1}};

\end{tikzpicture}}
	\end{tabular}
	}
	\centerline{
	\begin{tabular}{c@{\hspace{-.2cm}}c@{\hspace{-.2cm}}c}
		\scalebox{.48}{\begin{tikzpicture}[x  = {(1cm,-1.2cm)},
                    y  = {(-1cm,1cm)},
                    z  = {(1cm,1cm)},
                    scale = 2,
                    align=center]

  \coordinate (v96) at (0, -1, 1);
  \coordinate (v97) at (0, -1, 0);
  \coordinate (v98) at (0, 0, 1);
  \coordinate (v99) at (0, 0, -1);
  \coordinate (v100) at (0, 1, 0);
  \coordinate (v101) at (0, 1, -1);
  \coordinate (v102) at (1, -1, 1);
  \coordinate (v103) at (1, -1, 0);
  \coordinate (v104) at (1, 1, 1);
  \coordinate (v105) at (1, 1, -2);
  \coordinate (v106) at (1, 2, 0);
  \coordinate (v107) at (1, 2, -2);
  \coordinate (v108) at (2, 0, 1);
  \coordinate (v109) at (2, 0, -1);
  \coordinate (v110) at (2, 1, 1);
  \coordinate (v111) at (2, 1, -2);
  \coordinate (v112) at (2, 3, -1);
  \coordinate (v113) at (2, 3, -2);
  \coordinate (v114) at (3, 1, 0);
  \coordinate (v115) at (3, 1, -1);
  \coordinate (v116) at (3, 2, 0);
  \coordinate (v117) at (3, 2, -2);
  \coordinate (v118) at (3, 3, -1);
  \coordinate (v119) at (3, 3, -2);

  \tikzstyle{back} = [thick]
  \tikzstyle{front} = [ultra thick]

  \draw[back] (v97) -- (v96);
  \draw[back] (v99) -- (v97);
  \draw[back] (v101) -- (v99);
  \draw[back] (v103) -- (v97);
  \draw[back] (v103) -- (v102);
  \draw[back] (v105) -- (v99);
  \draw[back] (v107) -- (v105);
  \draw[back] (v109) -- (v103);
  \draw[back] (v111) -- (v105);
  \draw[back] (v111) -- (v109);
  \draw[back] (v115) -- (v109);
  \draw[back] (v117) -- (v111);
  
  \draw[front] (v98) -- (v96);
  \draw[front] (v100) -- (v98);
  \draw[front] (v101) -- (v100);
  \draw[front] (v102) -- (v96);
  \draw[front] (v104) -- (v98);
  \draw[front] (v106) -- (v100);
  \draw[front] (v106) -- (v104);
  \draw[front] (v107) -- (v101);
  \draw[front] (v108) -- (v102);
  \draw[front] (v110) -- (v104);
  \draw[front] (v110) -- (v108);
  \draw[front] (v112) -- (v106);
  \draw[front] (v113) -- (v107);
  \draw[front] (v113) -- (v112);
  \draw[front] (v114) -- (v108);
  \draw[front] (v115) -- (v114);
  \draw[front] (v116) -- (v110);
  \draw[front] (v116) -- (v114);
  \draw[front] (v117) -- (v115);
  \draw[front] (v118) -- (v112);
  \draw[front] (v118) -- (v116);
  \draw[front] (v119) -- (v113);
  \draw[front] (v119) -- (v117);
  \draw[front] (v119) -- (v118);

  \node[xshift=5pt, yshift=12pt] at (v96) {\paintedTree[.5]{[[1[2[[][]]]][1[2[]]]]}{1,2}};
  \node[xshift=-5pt, yshift=12pt] at (v97) {\paintedTree[.5]{[1[[2[[][]]][2[]]]]]}{1,2}};
  \node[xshift=5pt, yshift=12pt] at (v98) {\paintedTree[.5]{[[2[1[[][]]]][2[1[]]]]}{1,2}};
  \node[xshift=5pt, yshift=12pt] at (v99) {\paintedTree[.5]{[1[2[[[][]][]]]]}{1,2}};
  \node[xshift=-5pt, yshift=12pt] at (v100) {\paintedTree[.5]{[2[[1[[][]]][1[]]]]}{1,2}};
  \node[xshift=-5pt, yshift=12pt] at (v101) {\paintedTree[.5]{[2[1[[[][]][]]]]}{1,2}};
  \node[xshift=10pt, yshift=5pt] at (v102) {\paintedTree[.5]{[[1[[2[]][2[]]]][1[2[]]]]}{1,2}};
  \node[xshift=0pt, yshift=18pt] at (v103) {\paintedTree[.5]{[1[[[2[]][2[]]][2[]]]]]}{1,2}};
  \node[xshift=-5pt, yshift=-13pt] at (v104) {\paintedTree[.5]{[[2[[1[]][1[]]]][2[1[]]]]}{1,2}};
  \node[xshift=0pt, yshift=18pt] at (v105) {\paintedTree[.5]{[1[2[[][[][]]]]]]}{1,2}};
  \node[xshift=5pt, yshift=-13pt] at (v106) {\paintedTree[.5]{[2[[[1[]][1[]]][1[]]]]}{1,2}};
  \node[xshift=-10pt, yshift=5pt] at (v107) {\paintedTree[.5]{[2[1[[][[][]]]]]}{1,2}};
  \node[xshift=10pt, yshift=-5pt] at (v108) {\paintedTree[.5]{[[[1[2[]]][1[2[]]]][1[2[]]]]}{1,2}};
  \node[xshift=-5pt, yshift=12pt] at (v109) {\paintedTree[.5]{[1[[2[]][[2[]][2[]]]]]]}{1,2}};
  \node[xshift=0pt, yshift=-18pt] at (v110) {\paintedTree[.5]{[[[2[1[]]][2[1[]]]][2[1[]]]]}{1,2}};
  \node[xshift=5pt, yshift=12pt] at (v111) {\paintedTree[.5]{[1[[2[]][2[[][]]]]]]}{1,2}};
  \node[xshift=0pt, yshift=-18pt] at (v112) {\paintedTree[.5]{[2[[1[]][[1[]][1[]]]]]}{1,2}};
  \node[xshift=-10pt, yshift=-5pt] at (v113) {\paintedTree[.5]{[2[[1[]][1[[][]]]]]}{1,2}};
  \node[xshift=0pt, yshift=-15pt] at (v114) {\paintedTree[.5]{[[1[2[]]][[1[2[]]][1[2[]]]]]}{1,2}};
  \node[xshift=0pt, yshift=-15pt] at (v115) {\paintedTree[.5]{[[1[2[]]][1[[2[]][2[]]]]]}{1,2}};
  \node[xshift=0pt, yshift=-15pt] at (v116) {\paintedTree[.5]{[[2[1[]]][[2[1[]]][2[1[]]]]]}{1,2}};
  \node[xshift=0pt, yshift=-15pt] at (v117) {\paintedTree[.5]{[[1[2[]]][1[2[[][]]]]]}{1,2}};
  \node[xshift=0pt, yshift=-15pt] at (v118) {\paintedTree[.5]{[[2[1[]]][2[[1[]][1[]]]]]}{1,2}};
  \node[xshift=0pt, yshift=-15pt] at (v119) {\paintedTree[.5]{[[2[1[]]][2[1[[][]]]]]}{1,2}};
  
\end{tikzpicture}} &
		\scalebox{.48}{\begin{tikzpicture}[x  = {(1cm,-1.2cm)},
                    y  = {(-1cm,1cm)},
                    z  = {(1cm,1cm)},
                    scale = 2,
                    align=center]

  \coordinate (v96) at (0, -1, 1);
  \coordinate (v97) at (0, -1, 0);
  \coordinate (v98) at (0, 0, 1);
  \coordinate (v99) at (0, 0, -1);
  \coordinate (v100) at (0, 1, 0);
  \coordinate (v101) at (0, 1, -1);
  \coordinate (v102) at (1, -1, 1);
  \coordinate (v103) at (1, -1, 0);
  \coordinate (v104) at (1, 1, 1);
  \coordinate (v105) at (1, 1, -2);
  \coordinate (v106) at (1, 2, 0);
  \coordinate (v107) at (1, 2, -2);
  \coordinate (v108) at (2, 0, 1);
  \coordinate (v109) at (2, 0, -1);
  \coordinate (v110) at (2, 1, 1);
  \coordinate (v111) at (2, 1, -2);
  \coordinate (v112) at (2, 3, -1);
  \coordinate (v113) at (2, 3, -2);
  \coordinate (v114) at (3, 1, 0);
  \coordinate (v115) at (3, 1, -1);
  \coordinate (v116) at (3, 2, 0);
  \coordinate (v117) at (3, 2, -2);
  \coordinate (v118) at (3, 3, -1);
  \coordinate (v119) at (3, 3, -2);

  \tikzstyle{back} = [thick]
  \tikzstyle{front} = [ultra thick]

  \draw[back] (v97) -- (v96);
  \draw[back] (v99) -- (v97);
  \draw[back] (v101) -- (v99);
  \draw[back] (v103) -- (v97);
  \draw[back] (v103) -- (v102);
  \draw[back] (v105) -- (v99);
  \draw[back] (v107) -- (v105);
  \draw[back] (v109) -- (v103);
  \draw[back] (v111) -- (v105);
  \draw[back] (v111) -- (v109);
  \draw[back] (v115) -- (v109);
  \draw[back] (v117) -- (v111);
  
  \draw[front] (v98) -- (v96);
  \draw[front] (v100) -- (v98);
  \draw[front] (v101) -- (v100);
  \draw[front] (v102) -- (v96);
  \draw[front] (v104) -- (v98);
  \draw[front] (v106) -- (v100);
  \draw[front] (v106) -- (v104);
  \draw[front] (v107) -- (v101);
  \draw[front] (v108) -- (v102);
  \draw[front] (v110) -- (v104);
  \draw[front] (v110) -- (v108);
  \draw[front] (v112) -- (v106);
  \draw[front] (v113) -- (v107);
  \draw[front] (v113) -- (v112);
  \draw[front] (v114) -- (v108);
  \draw[front] (v115) -- (v114);
  \draw[front] (v116) -- (v110);
  \draw[front] (v116) -- (v114);
  \draw[front] (v117) -- (v115);
  \draw[front] (v118) -- (v112);
  \draw[front] (v118) -- (v116);
  \draw[front] (v119) -- (v113);
  \draw[front] (v119) -- (v117);
  \draw[front] (v119) -- (v118);

  \node[xshift=5pt, yshift=12pt] at (v96) {\paintedTree[.5]{[[1[2[3[]]]][1[2[3[]]]]]}{1,2,3}};
  \node[xshift=-5pt, yshift=12pt] at (v97) {\paintedTree[.5]{[1[[2[3[]]][2[3[]]]]]]}{1,2,3}};
  \node[xshift=5pt, yshift=12pt] at (v98) {\paintedTree[.5]{[[2[1[3[]]]][2[1[3[]]]]]}{1,2,3}};
  \node[xshift=5pt, yshift=12pt] at (v99) {\paintedTree[.5]{[1[2[[3[]][3[]]]]]}{1,2,3}};
  \node[xshift=-5pt, yshift=12pt] at (v100) {\paintedTree[.5]{[2[[1[3[]]][1[3[]]]]]}{1,2,3}};
  \node[xshift=-5pt, yshift=12pt] at (v101) {\paintedTree[.5]{[2[1[[3[]][3[]]]]]}{1,2,3}};
  \node[xshift=10pt, yshift=5pt] at (v102) {\paintedTree[.5]{[[1[3[2[]]]][1[3[2[]]]]]}{1,2,3}};
  \node[xshift=0pt, yshift=18pt] at (v103) {\paintedTree[.5]{[1[[3[2[]]][3[2[]]]]]]}{1,2,3}};
  \node[xshift=-5pt, yshift=-13pt] at (v104) {\paintedTree[.5]{[[2[3[1[]]]][2[3[1[]]]]]}{1,2,3}};
  \node[xshift=0pt, yshift=18pt] at (v105) {\paintedTree[.5]{[1[2[3[[][]]]]]]}{1,2,3}};
  \node[xshift=5pt, yshift=-13pt] at (v106) {\paintedTree[.5]{[2[[3[1[]]][3[1[]]]]]}{1,2,3}};
  \node[xshift=-10pt, yshift=5pt] at (v107) {\paintedTree[.5]{[2[1[3[[][]]]]]}{1,2,3}};
  \node[xshift=10pt, yshift=-5pt] at (v108) {\paintedTree[.5]{[[3[1[2[]]]][3[1[2[]]]]]}{1,2,3}};
  \node[xshift=-5pt, yshift=12pt] at (v109) {\paintedTree[.5]{[1[3[[2[]][2[]]]]]}{1,2,3}};
  \node[xshift=0pt, yshift=-18pt] at (v110) {\paintedTree[.5]{[[3[2[1[]]]][3[2[1[]]]]]}{1,2,3}};
  \node[xshift=5pt, yshift=12pt] at (v111) {\paintedTree[.5]{[1[3[2[[][]]]]]]}{1,2,3}};
  \node[xshift=0pt, yshift=-18pt] at (v112) {\paintedTree[.5]{[2[3[[1[]][1[]]]]]}{1,2,3}};
  \node[xshift=-10pt, yshift=-5pt] at (v113) {\paintedTree[.5]{[2[3[1[[][]]]]]}{1,2,3}};
  \node[xshift=0pt, yshift=-15pt] at (v114) {\paintedTree[.5]{[3[[1[2[]]][1[2[]]]]]}{1,2,3}};
  \node[xshift=0pt, yshift=-15pt] at (v115) {\paintedTree[.5]{[3[1[[2[]][2[]]]]]}{1,2,3}};
  \node[xshift=0pt, yshift=-15pt] at (v116) {\paintedTree[.5]{[3[[2[1[]]][2[1[]]]]]}{1,2,3}};
  \node[xshift=0pt, yshift=-15pt] at (v117) {\paintedTree[.5]{[3[1[2[[][]]]]]}{1,2,3}};
  \node[xshift=0pt, yshift=-15pt] at (v118) {\paintedTree[.5]{[3[2[[1[]][1[]]]]]}{1,2,3}};
  \node[xshift=0pt, yshift=-15pt] at (v119) {\paintedTree[.5]{[3[2[1[[][]]]]]}{1,2,3}};
  
\end{tikzpicture}} &
		\scalebox{.48}{\begin{tikzpicture}[x  = {(1cm,-1.2cm)},
                    y  = {(-1cm,1cm)},
                    z  = {(1cm,1cm)},
                    scale = 2,
                    align=center]

  \coordinate (v96) at (0, -1, 1);
  \coordinate (v97) at (0, -1, 0);
  \coordinate (v98) at (0, 0, 1);
  \coordinate (v99) at (0, 0, -1);
  \coordinate (v100) at (0, 1, 0);
  \coordinate (v101) at (0, 1, -1);
  \coordinate (v102) at (1, -1, 1);
  \coordinate (v103) at (1, -1, 0);
  \coordinate (v104) at (1, 1, 1);
  \coordinate (v105) at (1, 1, -2);
  \coordinate (v106) at (1, 2, 0);
  \coordinate (v107) at (1, 2, -2);
  \coordinate (v108) at (2, 0, 1);
  \coordinate (v109) at (2, 0, -1);
  \coordinate (v110) at (2, 1, 1);
  \coordinate (v111) at (2, 1, -2);
  \coordinate (v112) at (2, 3, -1);
  \coordinate (v113) at (2, 3, -2);
  \coordinate (v114) at (3, 1, 0);
  \coordinate (v115) at (3, 1, -1);
  \coordinate (v116) at (3, 2, 0);
  \coordinate (v117) at (3, 2, -2);
  \coordinate (v118) at (3, 3, -1);
  \coordinate (v119) at (3, 3, -2);

  \tikzstyle{back} = [thick]
  \tikzstyle{front} = [ultra thick]

  \draw[back] (v97) -- (v96);
  \draw[back] (v99) -- (v97);
  \draw[back] (v101) -- (v99);
  \draw[back] (v103) -- (v97);
  \draw[back] (v103) -- (v102);
  \draw[back] (v105) -- (v99);
  \draw[back] (v107) -- (v105);
  \draw[back] (v109) -- (v103);
  \draw[back] (v111) -- (v105);
  \draw[back] (v111) -- (v109);
  \draw[back] (v115) -- (v109);
  \draw[back] (v117) -- (v111);
  
  \draw[front] (v98) -- (v96);
  \draw[front] (v100) -- (v98);
  \draw[front] (v101) -- (v100);
  \draw[front] (v102) -- (v96);
  \draw[front] (v104) -- (v98);
  \draw[front] (v106) -- (v100);
  \draw[front] (v106) -- (v104);
  \draw[front] (v107) -- (v101);
  \draw[front] (v108) -- (v102);
  \draw[front] (v110) -- (v104);
  \draw[front] (v110) -- (v108);
  \draw[front] (v112) -- (v106);
  \draw[front] (v113) -- (v107);
  \draw[front] (v113) -- (v112);
  \draw[front] (v114) -- (v108);
  \draw[front] (v115) -- (v114);
  \draw[front] (v116) -- (v110);
  \draw[front] (v116) -- (v114);
  \draw[front] (v117) -- (v115);
  \draw[front] (v118) -- (v112);
  \draw[front] (v118) -- (v116);
  \draw[front] (v119) -- (v113);
  \draw[front] (v119) -- (v117);
  \draw[front] (v119) -- (v118);

  \node[xshift=5pt, yshift=12pt] at (v96) {\paintedTree[.5]{[4[1[2[3[]]]]]}{1,2,3,4}};
  \node[xshift=-5pt, yshift=12pt] at (v97) {\paintedTree[.5]{[1[4[2[3[]]]]]]}{1,2,3,4}};
  \node[xshift=5pt, yshift=12pt] at (v98) {\paintedTree[.5]{[4[2[1[3[]]]]]}{1,2,3,4}};
  \node[xshift=5pt, yshift=12pt] at (v99) {\paintedTree[.5]{[1[2[4[3[]]]]]}{1,2,3,4}};
  \node[xshift=-5pt, yshift=12pt] at (v100) {\paintedTree[.5]{[2[4[1[3[]]]]]}{1,2,3,4}};
  \node[xshift=-5pt, yshift=12pt] at (v101) {\paintedTree[.5]{[2[1[4[3[]]]]]}{1,2,3,4}};
  \node[xshift=10pt, yshift=5pt] at (v102) {\paintedTree[.5]{[4[1[3[2[]]]]]}{1,2,3,4}};
  \node[xshift=0pt, yshift=18pt] at (v103) {\paintedTree[.5]{[1[4[3[2[]]]]]]}{1,2,3,4}};
  \node[xshift=-5pt, yshift=-13pt] at (v104) {\paintedTree[.5]{[4[2[3[1[]]]]]}{1,2,3,4}};
  \node[xshift=0pt, yshift=18pt] at (v105) {\paintedTree[.5]{[1[2[3[4[]]]]]]}{1,2,3,4}};
  \node[xshift=5pt, yshift=-13pt] at (v106) {\paintedTree[.5]{[2[4[3[1[]]]]]}{1,2,3,4}};
  \node[xshift=-10pt, yshift=5pt] at (v107) {\paintedTree[.5]{[2[1[3[4[]]]]]}{1,2,3,4}};
  \node[xshift=10pt, yshift=-5pt] at (v108) {\paintedTree[.5]{[4[3[1[2[]]]]]}{1,2,3,4}};
  \node[xshift=-5pt, yshift=12pt] at (v109) {\paintedTree[.5]{[1[3[4[2[]]]]]}{1,2,3,4}};
  \node[xshift=0pt, yshift=-18pt] at (v110) {\paintedTree[.5]{[4[3[2[1[]]]]]}{1,2,3,4}};
  \node[xshift=5pt, yshift=12pt] at (v111) {\paintedTree[.5]{[1[3[2[4[]]]]]]}{1,2,3,4}};
  \node[xshift=0pt, yshift=-18pt] at (v112) {\paintedTree[.5]{[2[3[4[1[]]]]]}{1,2,3,4}};
  \node[xshift=-10pt, yshift=-5pt] at (v113) {\paintedTree[.5]{[2[3[1[4[]]]]]}{1,2,3,4}};
  \node[xshift=0pt, yshift=-15pt] at (v114) {\paintedTree[.5]{[3[4[1[2[]]]]]}{1,2,3,4}};
  \node[xshift=0pt, yshift=-15pt] at (v115) {\paintedTree[.5]{[3[1[4[2[]]]]]}{1,2,3,4}};
  \node[xshift=0pt, yshift=-15pt] at (v116) {\paintedTree[.5]{[3[4[2[1[]]]]]}{1,2,3,4}};
  \node[xshift=0pt, yshift=-15pt] at (v117) {\paintedTree[.5]{[3[1[2[4[]]]]]}{1,2,3,4}};
  \node[xshift=0pt, yshift=-15pt] at (v118) {\paintedTree[.5]{[3[2[4[1[]]]]]}{1,2,3,4}};
  \node[xshift=0pt, yshift=-15pt] at (v119) {\paintedTree[.5]{[3[2[1[4[]]]]]}{1,2,3,4}};
  
\end{tikzpicture}} \\
		$(1, 24, 36, 14, 1)$ &
		$(1, 24, 36, 14, 1)$ &
		$(1, 24, 36, 14, 1)$
	\end{tabular}
	}
	\caption{The $(m,n)$-multiplihedra $\Multiplihedron$ and their $f$-vectors for~$(m,n) = (0,4)$, $(1,3)$, $(2,2)$, $(3,1)$ and~$(4,0)$. The top two are the $3$-dimensional associahedron~$\Asso[4]$ and multiplihedron, while the bottom three are all relabelings of the $3$-dimensional permutahedron~$\Perm[4]$.}
	\label{fig:multiplihedra3}
\end{figure}

\begin{figure}
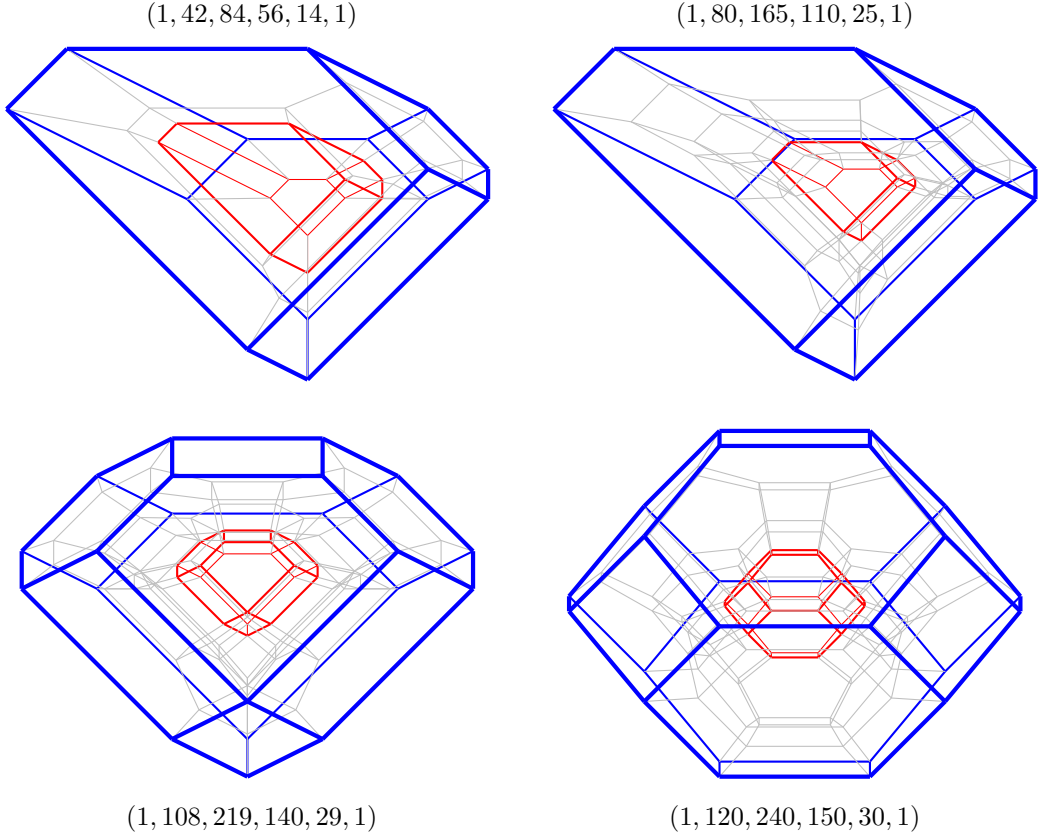

	\centerline{
		\begin{tabular}{c@{\qquad}c}
			$(1, 42, 84, 56, 14, 1)$ &
			$(1, 80, 165, 110, 25, 1)$ \\[.2cm]
			\begin{tikzpicture}[x  = {(1cm,-.5cm)},
                    y  = {(-1cm,1cm)},
                    z  = {(1cm,1cm)},
                    scale = .8,
                    color = {lightgray}]

  \coordinate (v0) at (1.03448, 0.37931, -0.0344828);
  \coordinate (v1) at (1.03448, 0.37931, -1.27586);
  \coordinate (v2) at (1.03448, 1.31034, -0.0344828);
  \coordinate (v3) at (1.03448, 2.24138, -0.965517);
  \coordinate (v4) at (1.03448, 2.24138, -1.27586);
  \coordinate (v5) at (1.65517, 0.37931, -0.0344828);
  \coordinate (v6) at (1.65517, 0.37931, -1.27586);
  \coordinate (v7) at (2.27586, 1, -0.0344828);
  \coordinate (v8) at (2.27586, 1.31034, -0.0344828);
  \coordinate (v9) at (2.89655, 1.62069, -0.655172);
  \coordinate (v10) at (2.89655, 1.62069, -1.27586);
  \coordinate (v11) at (2.89655, 1.93103, -0.655172);
  \coordinate (v12) at (2.89655, 2.24138, -0.965517);
  \coordinate (v13) at (2.89655, 2.24138, -1.27586);
  \coordinate (v14) at (0.65625, 0.0625, 0.15625);
  \coordinate (v15) at (0.65625, 0.0625, -1.34375);
  \coordinate (v16) at (0.65625, 1.1875, 0.15625);
  \coordinate (v17) at (0.65625, 2.3125, -0.96875);
  \coordinate (v18) at (0.65625, 2.3125, -1.34375);
  \coordinate (v19) at (0.552632, -0.421053, 0.447368);
  \coordinate (v20) at (0.552632, -0.421053, -1.44737);
  \coordinate (v21) at (1.02632, -0.421053, 0.447368);
  \coordinate (v22) at (1.02632, -0.421053, -1.44737);
  \coordinate (v23) at (0.375, -0.607143, 0.946429);
  \coordinate (v24) at (0.375, 0.678571, 0.946429);
  \coordinate (v25) at (1.01786, -0.607143, 0.946429);
  \coordinate (v26) at (1.66071, 0.0357143, 0.946429);
  \coordinate (v27) at (1.66071, 0.678571, 0.946429);
  \coordinate (v28) at (0, -1, 1);
  \coordinate (v29) at (0, -1, -2);
  \coordinate (v30) at (0, 1, 1);
  \coordinate (v31) at (0, 3, -1);
  \coordinate (v32) at (0, 3, -2);
  \coordinate (v33) at (1, -1, 1);
  \coordinate (v34) at (1, -1, -2);
  \coordinate (v35) at (2, 0, 1);
  \coordinate (v36) at (2, 1, 1);
  \coordinate (v37) at (3, 1, 0);
  \coordinate (v38) at (3, 1, -2);
  \coordinate (v39) at (3, 2, 0);
  \coordinate (v40) at (3, 3, -1);
  \coordinate (v41) at (3, 3, -2);

  \tikzstyle{frameBack} = [color=blue, thick]
  \tikzstyle{centerBack} = [color=red, thin]
  \tikzstyle{centerFront} = [color=red, thick]
  \tikzstyle{foam} = [color=lightgray, thin]
  \tikzstyle{frameFront} = [color=blue, ultra thick]

  \draw[frameBack] (v37) -- (v35);
  \draw[frameBack] (v38) -- (v34);
  \draw[frameBack] (v38) -- (v37);
  \draw[frameBack] (v39) -- (v36);
  \draw[frameBack] (v39) -- (v37);
  \draw[frameBack] (v40) -- (v31);
  \draw[frameBack] (v40) -- (v39);
  \draw[frameBack] (v41) -- (v32);
  \draw[frameBack] (v41) -- (v38);
  \draw[frameBack] (v41) -- (v40);

  \draw[centerBack] (v9) -- (v7);
  \draw[centerBack] (v10) -- (v6);
  \draw[centerBack] (v10) -- (v9);
  \draw[centerBack] (v11) -- (v8);
  \draw[centerBack] (v11) -- (v9);
  \draw[centerBack] (v12) -- (v3);
  \draw[centerBack] (v12) -- (v11);
  \draw[centerBack] (v13) -- (v4);
  \draw[centerBack] (v13) -- (v10);
  \draw[centerBack] (v13) -- (v12);

  \draw[centerFront] (v1) -- (v0);
  \draw[centerFront] (v2) -- (v0);
  \draw[centerFront] (v3) -- (v2);
  \draw[centerFront] (v4) -- (v1);
  \draw[centerFront] (v4) -- (v3);
  \draw[centerFront] (v5) -- (v0);
  \draw[centerFront] (v6) -- (v1);
  \draw[centerFront] (v6) -- (v5);
  \draw[centerFront] (v7) -- (v5);
  \draw[centerFront] (v8) -- (v2);
  \draw[centerFront] (v8) -- (v7);

  \draw[foam] (v14) -- (v0);
  \draw[foam] (v15) -- (v1);
  \draw[foam] (v15) -- (v14);
  \draw[foam] (v16) -- (v2);
  \draw[foam] (v16) -- (v14);
  \draw[foam] (v17) -- (v3);
  \draw[foam] (v17) -- (v16);
  \draw[foam] (v18) -- (v4);
  \draw[foam] (v18) -- (v15);
  \draw[foam] (v18) -- (v17);
  \draw[foam] (v19) -- (v14);
  \draw[foam] (v20) -- (v15);
  \draw[foam] (v20) -- (v19);
  \draw[foam] (v21) -- (v5);
  \draw[foam] (v21) -- (v19);
  \draw[foam] (v22) -- (v6);
  \draw[foam] (v22) -- (v20);
  \draw[foam] (v22) -- (v21);
  \draw[foam] (v23) -- (v19);
  \draw[foam] (v24) -- (v16);
  \draw[foam] (v24) -- (v23);
  \draw[foam] (v25) -- (v21);
  \draw[foam] (v25) -- (v23);
  \draw[foam] (v26) -- (v7);
  \draw[foam] (v26) -- (v25);
  \draw[foam] (v27) -- (v8);
  \draw[foam] (v27) -- (v24);
  \draw[foam] (v27) -- (v26);
  \draw[foam] (v28) -- (v23);
  \draw[foam] (v29) -- (v20);
  \draw[foam] (v30) -- (v24);
  \draw[foam] (v31) -- (v17);
  \draw[foam] (v32) -- (v18);
  \draw[foam] (v33) -- (v25);
  \draw[foam] (v34) -- (v22);
  \draw[foam] (v35) -- (v26);
  \draw[foam] (v36) -- (v27);
  \draw[foam] (v37) -- (v9);
  \draw[foam] (v38) -- (v10);
  \draw[foam] (v39) -- (v11);
  \draw[foam] (v40) -- (v12);
  \draw[foam] (v41) -- (v13);

  \draw[frameFront] (v29) -- (v28);
  \draw[frameFront] (v30) -- (v28);
  \draw[frameFront] (v31) -- (v30);
  \draw[frameFront] (v32) -- (v29);
  \draw[frameFront] (v32) -- (v31);
  \draw[frameFront] (v33) -- (v28);
  \draw[frameFront] (v34) -- (v29);
  \draw[frameFront] (v34) -- (v33);
  \draw[frameFront] (v35) -- (v33);
  \draw[frameFront] (v36) -- (v30);
  \draw[frameFront] (v36) -- (v35);

\end{tikzpicture} &
			\begin{tikzpicture}[x  = {(1cm,-.5cm)},
                    y  = {(-1cm,1cm)},
                    z  = {(1cm,1cm)},
                    scale = .8,
                    color = {lightgray}]

  \coordinate (v0) at (1.1069, 0.404069, 0.000797766);
  \coordinate (v1) at (1.1069, 0.404069, -0.893099);
  \coordinate (v2) at (1.1069, 1, 0.000797766);
  \coordinate (v3) at (1.1069, 1.59593, -0.595134);
  \coordinate (v4) at (1.1069, 1.59593, -0.893099);
  \coordinate (v5) at (1.40487, 0.404069, 0.000797766);
  \coordinate (v6) at (1.40487, 0.404069, -0.893099);
  \coordinate (v7) at (1.70283, 0.702034, 0.000797766);
  \coordinate (v8) at (1.70283, 1, 0.000797766);
  \coordinate (v9) at (2.0008, 1, -0.297168);
  \coordinate (v10) at (2.0008, 1, -0.893099);
  \coordinate (v11) at (2.0008, 1.29797, -0.297168);
  \coordinate (v12) at (2.0008, 1.59593, -0.595134);
  \coordinate (v13) at (2.0008, 1.59593, -0.893099);
  \coordinate (v14) at (0.735849, 0.0965167, 0.181422);
  \coordinate (v15) at (0.735849, 0.0965167, -0.902758);
  \coordinate (v16) at (0.735849, 0.819303, 0.181422);
  \coordinate (v17) at (0.735849, 1.54209, -0.541364);
  \coordinate (v18) at (0.735849, 1.54209, -0.902758);
  \coordinate (v19) at (1.09724, 0.45791, -1.26415);
  \coordinate (v20) at (1.09724, 1.1807, 0.181422);
  \coordinate (v21) at (1.09724, 1.90348, -0.541364);
  \coordinate (v22) at (1.09724, 1.90348, -1.26415);
  \coordinate (v23) at (1.45864, 0.0965167, 0.181422);
  \coordinate (v24) at (1.45864, 0.0965167, -0.902758);
  \coordinate (v25) at (1.45864, 0.45791, -1.26415);
  \coordinate (v26) at (1.82003, 0.45791, 0.181422);
  \coordinate (v27) at (1.82003, 1.1807, 0.181422);
  \coordinate (v28) at (2.18142, 0.819303, -0.179971);
  \coordinate (v29) at (2.18142, 0.819303, -0.902758);
  \coordinate (v30) at (2.18142, 1.1807, -1.26415);
  \coordinate (v31) at (2.18142, 1.54209, -0.179971);
  \coordinate (v32) at (2.18142, 1.90348, -0.541364);
  \coordinate (v33) at (2.18142, 1.90348, -1.26415);
  \coordinate (v34) at (0.623233, -0.377382, 0.459742);
  \coordinate (v35) at (0.623233, -0.377382, -0.91764);
  \coordinate (v36) at (0.623233, 0.0817455, -1.37677);
  \coordinate (v37) at (0.623233, 1, 0.459742);
  \coordinate (v38) at (0.623233, 1.91825, -0.458513);
  \coordinate (v39) at (0.623233, 1.91825, -1.37677);
  \coordinate (v40) at (1.08236, -0.377382, 0.459742);
  \coordinate (v41) at (1.08236, -0.377382, -0.91764);
  \coordinate (v42) at (1.08236, 2.37738, -0.91764);
  \coordinate (v43) at (1.08236, 2.37738, -1.37677);
  \coordinate (v44) at (1.54149, 0.0817455, -1.37677);
  \coordinate (v45) at (2.00061, 0.540873, 0.459742);
  \coordinate (v46) at (2.00061, 1, 0.459742);
  \coordinate (v47) at (2.45974, 1, 0.000614628);
  \coordinate (v48) at (2.45974, 1, -1.37677);
  \coordinate (v49) at (2.45974, 1.45913, 0.000614628);
  \coordinate (v50) at (2.45974, 2.37738, -0.91764);
  \coordinate (v51) at (2.45974, 2.37738, -1.37677);
  \coordinate (v52) at (0.427127, -0.573294, 0.944398);
  \coordinate (v53) at (0.427127, -0.573294, -1.57287);
  \coordinate (v54) at (0.427127, 0.685341, 0.944398);
  \coordinate (v55) at (0.427127, 2.57329, -0.943555);
  \coordinate (v56) at (0.427127, 2.57329, -1.57287);
  \coordinate (v57) at (1.05644, -0.573294, 0.944398);
  \coordinate (v58) at (1.05644, -0.573294, -1.57287);
  \coordinate (v59) at (1.68576, 0.0560236, 0.944398);
  \coordinate (v60) at (1.68576, 0.685341, 0.944398);
  \coordinate (v61) at (2.9444, 1.31466, -0.314238);
  \coordinate (v62) at (2.9444, 1.31466, -1.57287);
  \coordinate (v63) at (2.9444, 1.94398, -0.314238);
  \coordinate (v64) at (2.9444, 2.57329, -0.943555);
  \coordinate (v65) at (2.9444, 2.57329, -1.57287);
  \coordinate (v66) at (0, -1, 1);
  \coordinate (v67) at (0, -1, -2);
  \coordinate (v68) at (0, 1, 1);
  \coordinate (v69) at (0, 3, -1);
  \coordinate (v70) at (0, 3, -2);
  \coordinate (v71) at (1, -1, 1);
  \coordinate (v72) at (1, -1, -2);
  \coordinate (v73) at (2, 0, 1);
  \coordinate (v74) at (2, 1, 1);
  \coordinate (v75) at (3, 1, 0);
  \coordinate (v76) at (3, 1, -2);
  \coordinate (v77) at (3, 2, 0);
  \coordinate (v78) at (3, 3, -1);
  \coordinate (v79) at (3, 3, -2);

  \tikzstyle{frameBack} = [color=blue, thick]
  \tikzstyle{centerBack} = [color=red, thin]
  \tikzstyle{centerFront} = [color=red, thick]
  \tikzstyle{foam} = [color=lightgray, thin]
  \tikzstyle{frameFront} = [color=blue, ultra thick]

  \draw[frameBack] (v75) -- (v73);
  \draw[frameBack] (v76) -- (v72);
  \draw[frameBack] (v76) -- (v75);
  \draw[frameBack] (v77) -- (v74);
  \draw[frameBack] (v77) -- (v75);
  \draw[frameBack] (v78) -- (v69);
  \draw[frameBack] (v78) -- (v77);
  \draw[frameBack] (v79) -- (v70);
  \draw[frameBack] (v79) -- (v76);
  \draw[frameBack] (v79) -- (v78);

  \draw[centerBack] (v9) -- (v7);
  \draw[centerBack] (v10) -- (v6);
  \draw[centerBack] (v10) -- (v9);
  \draw[centerBack] (v11) -- (v8);
  \draw[centerBack] (v11) -- (v9);
  \draw[centerBack] (v12) -- (v3);
  \draw[centerBack] (v12) -- (v11);
  \draw[centerBack] (v13) -- (v4);
  \draw[centerBack] (v13) -- (v10);
  \draw[centerBack] (v13) -- (v12);

  \draw[centerFront] (v1) -- (v0);
  \draw[centerFront] (v2) -- (v0);
  \draw[centerFront] (v3) -- (v2);
  \draw[centerFront] (v4) -- (v1);
  \draw[centerFront] (v4) -- (v3);
  \draw[centerFront] (v5) -- (v0);
  \draw[centerFront] (v6) -- (v1);
  \draw[centerFront] (v6) -- (v5);
  \draw[centerFront] (v7) -- (v5);
  \draw[centerFront] (v8) -- (v2);
  \draw[centerFront] (v8) -- (v7);
  
  \draw[foam] (v14) -- (v0);
  \draw[foam] (v15) -- (v1);
  \draw[foam] (v15) -- (v14);
  \draw[foam] (v16) -- (v2);
  \draw[foam] (v16) -- (v14);
  \draw[foam] (v17) -- (v3);
  \draw[foam] (v17) -- (v16);
  \draw[foam] (v18) -- (v4);
  \draw[foam] (v18) -- (v15);
  \draw[foam] (v18) -- (v17);
  \draw[foam] (v19) -- (v1);
  \draw[foam] (v20) -- (v2);
  \draw[foam] (v21) -- (v3);
  \draw[foam] (v21) -- (v20);
  \draw[foam] (v22) -- (v4);
  \draw[foam] (v22) -- (v19);
  \draw[foam] (v23) -- (v5);
  \draw[foam] (v24) -- (v6);
  \draw[foam] (v24) -- (v23);
  \draw[foam] (v25) -- (v6);
  \draw[foam] (v25) -- (v19);
  \draw[foam] (v26) -- (v7);
  \draw[foam] (v26) -- (v23);
  \draw[foam] (v27) -- (v8);
  \draw[foam] (v27) -- (v20);
  \draw[foam] (v28) -- (v9);
  \draw[foam] (v28) -- (v26);
  \draw[foam] (v29) -- (v10);
  \draw[foam] (v29) -- (v24);
  \draw[foam] (v29) -- (v28);
  \draw[foam] (v30) -- (v10);
  \draw[foam] (v30) -- (v25);
  \draw[foam] (v31) -- (v11);
  \draw[foam] (v31) -- (v27);
  \draw[foam] (v32) -- (v12);
  \draw[foam] (v32) -- (v21);
  \draw[foam] (v32) -- (v31);
  \draw[foam] (v33) -- (v13);
  \draw[foam] (v33) -- (v22);
  \draw[foam] (v33) -- (v30);
  \draw[foam] (v34) -- (v14);
  \draw[foam] (v35) -- (v15);
  \draw[foam] (v35) -- (v34);
  \draw[foam] (v36) -- (v15);
  \draw[foam] (v36) -- (v19);
  \draw[foam] (v37) -- (v16);
  \draw[foam] (v37) -- (v20);
  \draw[foam] (v38) -- (v17);
  \draw[foam] (v38) -- (v21);
  \draw[foam] (v38) -- (v37);
  \draw[foam] (v39) -- (v18);
  \draw[foam] (v39) -- (v22);
  \draw[foam] (v39) -- (v36);
  \draw[foam] (v40) -- (v23);
  \draw[foam] (v40) -- (v34);
  \draw[foam] (v41) -- (v24);
  \draw[foam] (v41) -- (v35);
  \draw[foam] (v41) -- (v40);
  \draw[foam] (v42) -- (v21);
  \draw[foam] (v43) -- (v22);
  \draw[foam] (v43) -- (v42);
  \draw[foam] (v44) -- (v24);
  \draw[foam] (v44) -- (v25);
  \draw[foam] (v45) -- (v26);
  \draw[foam] (v46) -- (v27);
  \draw[foam] (v46) -- (v45);
  \draw[foam] (v47) -- (v28);
  \draw[foam] (v47) -- (v45);
  \draw[foam] (v48) -- (v29);
  \draw[foam] (v48) -- (v30);
  \draw[foam] (v48) -- (v44);
  \draw[foam] (v49) -- (v31);
  \draw[foam] (v49) -- (v46);
  \draw[foam] (v49) -- (v47);
  \draw[foam] (v50) -- (v32);
  \draw[foam] (v50) -- (v42);
  \draw[foam] (v51) -- (v33);
  \draw[foam] (v51) -- (v43);
  \draw[foam] (v51) -- (v50);
  \draw[foam] (v52) -- (v34);
  \draw[foam] (v53) -- (v35);
  \draw[foam] (v53) -- (v36);
  \draw[foam] (v54) -- (v37);
  \draw[foam] (v54) -- (v52);
  \draw[foam] (v55) -- (v38);
  \draw[foam] (v55) -- (v42);
  \draw[foam] (v56) -- (v39);
  \draw[foam] (v56) -- (v43);
  \draw[foam] (v56) -- (v55);
  \draw[foam] (v57) -- (v40);
  \draw[foam] (v57) -- (v52);
  \draw[foam] (v58) -- (v41);
  \draw[foam] (v58) -- (v44);
  \draw[foam] (v58) -- (v53);
  \draw[foam] (v59) -- (v45);
  \draw[foam] (v59) -- (v57);
  \draw[foam] (v60) -- (v46);
  \draw[foam] (v60) -- (v54);
  \draw[foam] (v60) -- (v59);
  \draw[foam] (v61) -- (v47);
  \draw[foam] (v62) -- (v48);
  \draw[foam] (v62) -- (v61);
  \draw[foam] (v63) -- (v49);
  \draw[foam] (v63) -- (v61);
  \draw[foam] (v64) -- (v50);
  \draw[foam] (v64) -- (v63);
  \draw[foam] (v65) -- (v51);
  \draw[foam] (v65) -- (v62);
  \draw[foam] (v65) -- (v64);
  \draw[foam] (v66) -- (v52);
  \draw[foam] (v67) -- (v53);
  \draw[foam] (v68) -- (v54);
  \draw[foam] (v69) -- (v55);
  \draw[foam] (v70) -- (v56);
  \draw[foam] (v71) -- (v57);
  \draw[foam] (v72) -- (v58);
  \draw[foam] (v73) -- (v59);
  \draw[foam] (v74) -- (v60);
  \draw[foam] (v75) -- (v61);
  \draw[foam] (v76) -- (v62);
  \draw[foam] (v77) -- (v63);
  \draw[foam] (v78) -- (v64);
  \draw[foam] (v79) -- (v65);

  \draw[frameFront] (v67) -- (v66);
  \draw[frameFront] (v68) -- (v66);
  \draw[frameFront] (v69) -- (v68);
  \draw[frameFront] (v70) -- (v67);
  \draw[frameFront] (v70) -- (v69);
  \draw[frameFront] (v71) -- (v66);
  \draw[frameFront] (v72) -- (v67);
  \draw[frameFront] (v72) -- (v71);
  \draw[frameFront] (v73) -- (v71);
  \draw[frameFront] (v74) -- (v68);
  \draw[frameFront] (v74) -- (v73);

\end{tikzpicture} \\[.5cm]
			\input{figures/multiplihedron23.tikz} &
			\input{figures/multiplihedron32.tikz} \\[.2cm]
			$(1, 108, 219, 140, 29, 1)$ &
			$(1, 120, 240, 150, 30, 1)$			
		\end{tabular}
	}
	\caption{Schlegel diagrams and $f$-vectors of the $(m,n)$-multiplihedra $\Multiplihedron$ for~$(m,n) = (0,5)$, $(1,4)$, $(2,3)$ and~$(3,2) \sim (4,1) \sim (5,0)$. The top left, top right, and bottom right polytopes are the $4$-dimensional associahedron~$\Asso[5]$, multiplihedron, and permutahedron~$\Perm[5]$. The bottom left polytope is the $(2,3)$-multiplihedron, labeled in \cref{fig:multiplihedron23}.}
	\label{fig:multiplihedra4}
\end{figure}

\begin{proposition}
\label{prop:faceLatticeMultiplihedron}
The face lattice of the $(m,n)$-multiplihedron~$\Multiplihedron$ is isomorphic to the $m$-painted $n$-tree deletion poset (augmented with a minimal element).
\end{proposition}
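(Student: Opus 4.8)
The plan is to combine the general description of the face preposets of a shuffle (\cref{prop:shuffleFacePreposets}) with the classical face structures of the two factors (\cref{prop:permutahedron,prop:associahedron}), and to recognize the resulting preposets as exactly the preposets $\preccurlyeq_{\PT}$ attached to $m$-painted $n$-trees in \cref{rem:paintedTreePreposets}. Since $\Multiplihedron = \Perm[m] \shuffleDP \Asso[n]$ is a deformed permutahedron by \cref{prop:shuffleDeformedPermutahedra}, \cref{def:facePreposet} tells us that its face lattice is the refinement lattice on its face preposets; so it suffices to produce an order isomorphism between these face preposets (ordered by refinement) and $\f{PT}_{m,n}$ (ordered by deletion, \cref{def:paintedTreeDeletionPoset}).

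First I would spell out the input data. By \cref{prop:braidFan}, a face~$\polytope{F}$ of~$\Perm[m]$ has face preposet~$\preccurlyeq_\alpha$ for an ordered partition~$\alpha$ of~$[m]$, and by \cref{prop:sylvesterFan} a face~$\polytope{G}$ of~$\Asso[n]$ has face preposet~$\preccurlyeq_S$ for a Schr\"oder $n$-tree~$S \in \f{T}_n$. Feeding these into \cref{prop:shuffleFacePreposets} shows that each face of~$\Multiplihedron$ is encoded uniquely by a triple~$(\alpha, S, \mu)$, where~$\mu$ is an ordered partition of~$[m+n]$ extending~${\preccurlyeq_\alpha \sqcup \; \preccurlyeq_S^{+m}}$ and subject to the two consecutivity and incomparability conditions, with face preposet~$\preccurlyeq_{\polytope{F}, \polytope{G}, \mu}$.

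Second, I would build the bijection with painted trees. The Schr\"oder tree~$S$ gives the $n$-tree shape (forgetting which internal nodes are binary versus higher-degree is exactly the associahedron datum), while the ordered partition~$\mu$, read against the parts lying in~$[m]$, in~$[n]^{+m}$, or straddling both, prescribes the cuts~$C = (C_1, \dots, C_k)$ and their labeling by the ordered partition~$\nu$ of~$[m]$ induced by~$\alpha$: a part of~$\mu$ meeting only~$[m]$ produces a cut through nodes, while a straddling part forces a cut to pass through an edge, which I record by inserting a unary node there (this is precisely why \cref{def:paintedTree} allows unary nodes, required by its condition that every unary node lie on a cut). I would then check that the three compatibility requirements of \cref{prop:shuffleFacePreposets} translate term by term into the nesting condition~$C_{i+1} \subseteq \stump{C_i} \ssm C_i$ and the covering of unary nodes, and that the reconstruction~$(T, C, \nu) \mapsto (\alpha, S, \mu)$ obtained by restricting~$\preccurlyeq_{\PT}$ to~$[m]$ and to~$[n]^{+m}$ inverts this map, yielding a bijection~$\f{PT}_{m,n} \to \{\text{faces of } \Multiplihedron\}$ with~$\preccurlyeq_{\PT} = \preccurlyeq_{\polytope{F}, \polytope{G}, \mu}$.

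Finally, for the order statement I would verify that the refinement order on face preposets matches the deletion order, most cleanly at the level of cover relations: each of the three deletion moves of \cref{def:paintedTreeDeletion} (free child, free parent, twin cuts) should correspond to an elementary coarsening of~$\preccurlyeq_{\PT}$, and the rank formula of \cref{prop:paintedTreeDeletion} should match the codimension of the corresponding face. The main obstacle I anticipate is the bookkeeping around the straddling parts of~$\mu$ and the attendant unary nodes: checking that the delicate third condition of \cref{prop:shuffleFacePreposets} (pairwise incomparability, on each side, inside a straddling part) is equivalent to the combinatorial legality of a cut as an antichain, and that the \emph{twin cuts} deletion---which has no counterpart on either factor alone and is what makes the shuffle more than a product---corresponds exactly to a cover in the refinement order. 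Matching these cover relations and confirming that no face is produced twice is where the real work lies.
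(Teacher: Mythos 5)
Your overall strategy coincides with the paper's: the paper also deduces the statement from \cref{prop:shuffleFacePreposets} (together with \cref{rem:biPreposets}) by writing down an explicit dictionary between $m$-painted $n$-trees and triples consisting of a face of~$\Perm[m]$, a face of~$\Asso[n]$, and an ordered partition of~$[m+n]$, and it, too, leaves the order-isomorphism verification ``to the reader''. The only structural difference is the direction of the dictionary (you reconstruct the painted tree from the triple, the paper builds the triple from the painted tree $(T,C,\mu)$), which is immaterial.

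However, your dictionary is inverted at its central point, and as written the verification would fail. In the preposet~$\preccurlyeq_{\PT}$ of \cref{rem:paintedTreePreposets}, a node lying on a cut~$C_i$ is labeled by~$\mu_i$ together with its shifted inorder label when it is \emph{non-unary}, and by~$\mu_i$ alone when it is unary. Consequently, a part of the ordered partition of~$[m+n]$ meeting both~$[m]$ and~$[n]^{+m}$ (a straddling part) corresponds to a cut passing through at least one \emph{non-unary} node of the tree: its intersection with~$[n]^{+m}$ consists exactly of the shifted inorder labels of those nodes, which the third condition of \cref{prop:shuffleFacePreposets} correctly forces to form an antichain of the Schr\"oder tree. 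A part contained in~$[m]$, on the contrary, corresponds to one or more consecutive cuts passing only through \emph{edges}, \ie consisting entirely of unary nodes (this is also where several parts of your~$\alpha$ can be swallowed by a single part of the big ordered partition, namely when consecutive edge-cuts have no tree node between them; compare the second bullet in the paper's construction of~$\lambda$). You assert exactly the opposite: that $[m]$-only parts give cuts through nodes while straddling parts give cuts through edges recorded by unary nodes. With that assignment, the antichain condition of \cref{prop:shuffleFacePreposets} gets matched against edge-cuts (where it says nothing) instead of node-cuts (where it is needed), and the subsequent checks you rightly identify as the real work --- covering of unary nodes, nesting~$C_{i+1} \subseteq \stump{C_i} \ssm C_i$, cover relations, ranks --- cannot go through. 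The error is a clean swap, and the repaired dictionary is precisely the one in the paper's proof, but as stated the construction is wrong.
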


\begin{proof}
This follows from \cref{prop:shuffleFacePreposets} (see also \cref{rem:biPreposets}).
Indeed, associate to an $m$-painted $n$-tree~$\PT \eqdef (T, C, \mu)$ the face preposet~$\preccurlyeq_{\polytope{F}, \polytope{G}, \lambda}$ where
\begin{itemize}
\item $\polytope{F}$ is the face of the permutahedron~$\Perm[m]$ corresponding to the partition~$\mu$,
\item $\polytope{G}$ is the face of the associahedron~$\Asso[n]$ corresponding to the Schr\"oder tree obtained by deleting all unary nodes in~$T$, and
\item $\lambda$ is the partition of~$[m+n]$ with
	\begin{itemize}
	\item a part formed by~$\mu_i$ and the inorder labels of the nodes of~$C_i$ for each cut~$C_i$ containing a non-unary node, 
	\item a part formed by~$\mu_i \cup \dots \cup \mu_j$ for each maximal sequence of cuts~$C_i, \dots, C_j$ containing only unary nodes and such that~$\stump{C_{k+1}} = \stump{C_k} \ssm C_k$ for all~$i < k \le j$, and 
	\item a part formed by the inorder labels of the nodes in between the cuts~$C_i$ and~$C_{i+1}$ (\ie the nodes of~$\stump{C_i} \ssm (C_i \cup \stump{C_{i+1}})$) for each~$i \in [|C|-1]$.
	\end{itemize}
\end{itemize}
We leave to the reader the immediate verification that this yields a poset isomorphism from the deletion poset on $m$-painted $n$-trees to the refinement poset on the face preposets of the $(m,n)$-multiplihedron~$\Multiplihedron = \Perm[m] \shuffleDP \Asso[n]$.
\end{proof}

\begin{remark}
\label{rem:simpleMultiplihedron}
In contrast to the permutahedron~$\Perm[m]$ and the associahedron~$\Asso[n]$, the multiplihedron~$\Multiplihedron$ is simple if and only if~$m = 0$ or~$n \le 2$.
\end{remark}

\begin{proposition}
\label{prop:fanMultiplihedron}
The normal fan of the $(m,n)$-multiplihedron~$\Multiplihedron$ is the fan containing one cone $\polytope{C}(\PT) \eqdef \set{\b{x} \in \R^{m+n}}{x_i \le x_j \text{ if } i \preccurlyeq_{\PT} j}$ for each~${\PT \in \f{PT}_{m,n}}$.
\end{proposition}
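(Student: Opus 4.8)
The plan is to read off the normal fan directly from the face lattice description of \cref{prop:faceLatticeMultiplihedron}. Since $\Multiplihedron = \Perm[m] \shuffleDP \Asso[n]$ is a deformed permutahedron by \cref{prop:shuffleDeformedPermutahedra}, \cref{def:facePreposet} tells us that its normal fan is exactly the collection of normal cones $\c{N}(\polytope{F}) = \set{\b{x} \in \R^{m+n}}{x_i \le x_j \text{ if } i \preccurlyeq_\polytope{F} j}$, one for each face $\polytope{F}$, where $\preccurlyeq_\polytope{F}$ is the face preposet of $\polytope{F}$. Thus the proposition amounts to showing that, as $\polytope{F}$ ranges over all faces of $\Multiplihedron$, the face preposets $\preccurlyeq_\polytope{F}$ range exactly over the preposets $\preccurlyeq_{\PT}$ of \cref{rem:paintedTreePreposets} for $\PT \in \f{PT}_{m,n}$, so that $\polytope{C}(\PT) = \c{N}(\polytope{F})$ for the corresponding face.

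First I would recall the bijection constructed in the proof of \cref{prop:faceLatticeMultiplihedron}, which assigns to each $m$-painted $n$-tree $\PT = (T, C, \mu)$ the face of $\Multiplihedron$ with face preposet $\preccurlyeq_{\polytope{F}, \polytope{G}, \lambda}$, where $\polytope{F}$ is the face of $\Perm[m]$ given by $\mu$, $\polytope{G}$ is the face of $\Asso[n]$ given by the Schr\"oder tree $S$ obtained by deleting the unary nodes of $T$, and $\lambda$ is the explicit ordered partition of $[m+n]$ described there. As this map is a bijection onto the faces of $\Multiplihedron$, it suffices to establish, for each $\PT$, the single identity $\preccurlyeq_{\polytope{F}, \polytope{G}, \lambda} \; = \; \preccurlyeq_{\PT}$.

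To prove this identity I would expand both sides. By \cref{prop:braidFan,prop:sylvesterFan} we have $\preccurlyeq_\polytope{F} = \preccurlyeq_\mu$ on $[m]$ and $\preccurlyeq_\polytope{G} = \preccurlyeq_S$ on $[n]$, so by \cref{prop:shuffleFacePreposets} the left-hand side is $(\preccurlyeq_\mu \sqcup \; {\preccurlyeq_S}^{+m}) \cup \preccurlyeq_\lambda^{m,n}$. The right-hand side $\preccurlyeq_{\PT}$ is the path-to-root relation on $[m+n]$ after labeling each node of $T$ by its cut label $\mu_i$ (if it lies in $C_i$) together with its shifted inorder label (if it is not unary). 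I would then match comparabilities by their type: relations internal to $[m]$ are controlled by $\mu$ on both sides; relations internal to $[n]^{+m}$ are controlled by the tree, since collapsing the unary nodes of $T$ recovers $S$ and the root-ward paths among non-unary nodes realize $\preccurlyeq_S$; and a mixed relation $i \preccurlyeq m+j$ with $i \in [m]$ and $m+j \in [n]^{+m}$ holds precisely when the cut carrying the color $i$ lies on the root-ward path through the node carrying $m+j$, which is exactly the content of $\preccurlyeq_\lambda^{m,n}$ by construction of $\lambda$.

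The main obstacle will be the bookkeeping in this last clause-by-clause matching, especially around the unary nodes of $T$. A unary node carries a cut label but no inorder label, hence contributes only to the coloring; the three-part recipe for $\lambda$ in the proof of \cref{prop:faceLatticeMultiplihedron} --- one part for each cut through a non-unary node, one merged part for each maximal run of cuts through unary nodes, and one part for each block of nodes sitting between consecutive cuts --- is arranged so that deleting the intra-part relations (the operation $\preccurlyeq_\lambda^{m,n}$) produces exactly the mixed comparabilities of $\preccurlyeq_{\PT}$ and no spurious ones. Once both inclusions are checked against this recipe, the two preposets coincide, and combining with \cref{def:facePreposet,def:normalFan} yields the claimed description of the normal fan.
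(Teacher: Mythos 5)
Your proposal is correct and takes essentially the same route as the paper: the paper's own proof is the one-liner ``Immediate from \cref{prop:faceLatticeMultiplihedron,rem:paintedTreePreposets}'', and your argument is precisely an unpacking of that implication, combining the face-lattice bijection with \cref{def:facePreposet} and checking the identity ${\preccurlyeq_{\polytope{F},\polytope{G},\lambda}} = {\preccurlyeq_{\PT}}$ that the paper (in the proof of \cref{prop:faceLatticeMultiplihedron}) leaves to the reader. The clause-by-clause matching you sketch, including the treatment of unary nodes via the three-part recipe for $\lambda$, is exactly the verification being implicitly invoked there.
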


\begin{proof}
Immediate from \cref{prop:faceLatticeMultiplihedron,def:paintedTreePreposet}.
\end{proof}

\begin{proposition}
\label{prop:graphMultiplihedron}
When oriented in the direction~${\b{\omega} \eqdef (n,\dots,1) - (1,\dots,n) = \sum_{i \in [n]} (n+1-2i) \, \b{e}_i}$, the graph of the $(m,n)$-multiplihedron~$\Multiplihedron$ is isomorphic to the right rotation graph on binary $m$-painted $n$-trees, and is the Hasse diagram of a lattice.
\end{proposition}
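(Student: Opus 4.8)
The plan is to prove the two halves of the statement in turn: that the $\b{\omega}$-oriented graph of $\Multiplihedron = \Perm[m] \shuffleDP \Asso[n]$ is isomorphic to the right rotation graph on binary $m$-painted $n$-trees, and that this graph is the Hasse diagram of a lattice.

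First I would set up the vertex and edge dictionary. By \cref{prop:faceLatticeMultiplihedron} the face lattice of $\Multiplihedron$ is the $m$-painted $n$-tree deletion poset, so by the rank formula of \cref{prop:paintedTreeDeletion} the vertices are exactly the rank-$0$ elements, namely the binary $m$-painted $n$-trees of \cref{def:rotationsPaintedTrees}, and the edges are the rank-$1$ elements. A short local analysis of the three deletion types shows that every rank-$1$ painted tree covers precisely two binary $m$-painted $n$-trees, and that the covering pair is obtained by exactly one of the three moves of \cref{def:rotationsPaintedTrees}: an edge rotation~(i), a node--cut sweep~(ii), or a twin-cut exchange~(iii). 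It then remains to match orientations: using \cref{prop:shuffleRotationPosets}, which describes $\le_{\Multiplihedron}$ as comparing $\preccurlyeq_{\b{v},\b{w},\mu}$ and $\preccurlyeq_{\b{v}',\b{w}',\mu'}$ coordinatewise in the weak order on~$\f{S}_m$, the Tamari lattice on~$\f{B}_n$, and the interleaving relations $p \preccurlyeq_\mu q$ with $p \in [m]$ and $q \in [n]^{+m}$, one checks that $\b{\omega}$ orients each polytope edge in the direction declared in \cref{def:rotationsPaintedTrees} --- move~(iii) creates an inversion in the weak-order factor, move~(i) is a Tamari right rotation, and move~(ii) is the corresponding elementary step in the interleaving factor. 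This sign verification is routine but is where the bookkeeping of the toward-the-root preposet conventions of \cref{rem:paintedTreePreposets} must be handled with care.

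Next I would upgrade the isomorphism to the assertion that the oriented graph is the Hasse diagram of the rotation poset $\le_{\Multiplihedron}$. Since each of the three moves is a single elementary step in exactly one of the three factors of \cref{prop:shuffleRotationPosets}, it admits no strictly intermediate element and is therefore a covering relation of $\le_{\Multiplihedron}$; hence every polytope edge is a cover. Conversely, $\le_{\Multiplihedron}$ is by \cref{def:rotationPoset} the transitive closure of the oriented graph, and in the transitive closure of a directed acyclic graph every covering relation is already an edge. Thus the polytope edges are exactly the covers, so the $\b{\omega}$-oriented graph is the Hasse diagram of $\le_{\Multiplihedron}$ and coincides with the directed right rotation graph on binary $m$-painted $n$-trees.

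Finally, for the lattice property I would combine \cref{coro:latticeShufflePermutahedron} with the fact that the Tamari lattice on $\Asso[n]$ has the lattice property (\cref{prop:TamariLattice}). The only delicate point is that \cref{coro:latticeShufflePermutahedron} shuffles the permutahedron on the right, whereas $\Multiplihedron$ has it on the left. I would resolve this by writing $\Perm[m] = \Point \shuffleDP \Perm[m-1]$, which holds by \cref{prop:shuffleGraphicalZonotopes} since $E_1 \joinGraph K_{m-1} = K_m$, and iterating with associativity (\cref{rem:shuffleAssociative}) to get $\Multiplihedron = \Point \shuffleDP \cdots \shuffleDP \Point \shuffleDP \Asso[n]$ with $m$ points on the left. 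The lattice property then follows by induction from the left-hand analogue of \cref{prop:latticeShufflePoint}, proved by the identical join/meet argument after interchanging the roles of the lower set and the upper set of a painted poset (reflecting that, by \cref{prop:shuffleRotationPosets}, a point shuffled on the left is painted on the opposite side from one shuffled on the right). I expect this reconciliation of the left placement with the right-placement convention of the earlier lemmas --- equivalently, pinning down the explicit join formula for the interleaving data --- to be the main obstacle; once the mirror join formula is established, the induction closes and simultaneously shows that $\le_{\Multiplihedron}$ is a lattice whose Hasse diagram is the right rotation graph.
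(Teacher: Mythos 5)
Your proposal is correct, and for the first half (vertices, edges, orientation, Hasse diagram) it is essentially the paper's argument with the details filled in: the paper gets the vertex dictionary from \cref{prop:faceLatticeMultiplihedron} and dismisses the edge--rotation correspondence as ``easy to check'', which is exactly the local analysis you carry out, including the useful observation that each of the three rotation types is an elementary step in exactly one of the three factors of \cref{prop:shuffleRotationPosets} (this is what makes every polytope edge a cover, a point the paper never addresses). The genuine divergence is in the lattice property. The paper simply cites \cref{coro:latticeShufflePermutahedron}, but as you observe, that corollary produces a lattice for $\polytope{P} \shuffleDP \Perm$ with the permutahedron on the \emph{right}, i.e.\ for $\Asso[n] \shuffleDP \Perm[m]$, whereas $\Multiplihedron = \Perm[m] \shuffleDP \Asso[n]$; since the lattice property is defined through the fixed direction $\b{\omega}$, and the block swap $s$ of \cref{rem:shuffleAssociative} does not preserve $\b{\omega}$ (under the natural identification of vertices it reverses the interleaving component of the order of \cref{prop:shuffleRotationPosets} while fixing the weak-order and Tamari components), the citation is not literally immediate. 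Your fix --- writing $\Perm[m] = \Point[1] \shuffleDP \cdots \shuffleDP \Point[1]$ via \cref{prop:shuffleGraphicalZonotopes} and proving the mirrored version of \cref{prop:latticeShufflePoint}, whose join and meet formulas are those of the paper with the roles of lower and upper sets interchanged --- is valid and self-contained. For comparison, there is a shorter repair: the coordinate reversal $x_i \mapsto x_{m+n+1-i}$ maps $\Perm[m] \shuffleDP \Asso[n]$ onto $\Asso[n] \shuffleDP \Perm[m]$ (both $\Perm[m]$ and Loday's $\Asso[n]$ are invariant under reversal of their own coordinates, the facet description in \cref{def:associahedron} depending only on interval lengths, and the reversal sends $\polytope{Z}_{m,n}$ to $\polytope{Z}_{n,m}$) while sending $\b{\omega}$ to $-\b{\omega}$, so the two rotation posets are dual to one another; since the dual of a lattice is a lattice, \cref{coro:latticeShufflePermutahedron} applied to $\polytope{P} = \Asso[n]$ then transfers directly. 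Either way, your instinct that the left/right placement requires an argument rather than being a notational triviality is sound: your route buys a self-contained induction at the cost of redoing the point lemma, while the reversal trick buys brevity at the cost of invoking a symmetry the paper never states.
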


\begin{proof}
It follows from \cref{prop:faceLatticeMultiplihedron} that the vertices of~$\Multiplihedron$ correspond to the binary $m$-painted $n$-trees.
It is easy to check that the edges of~$\Multiplihedron$ oriented by~$\b{\omega}$ correspond to right rotations on binary $m$-painted $n$-trees.
Finally, the lattice property is a special case of \cref{coro:latticeShufflePermutahedron}.
\end{proof}

\begin{remark}
\label{rem:notSemidistributive}
In contrast to the weak order on~$\f{S}_m$ and the Tamari lattice on~$\f{B}_n$, the lattice of \cref{prop:graphMultiplihedron} is not a lattice quotient of the weak order and is not even semidistributive when~$m \ge 1$ and~$n \ge 3$.
\end{remark}

\begin{remark}
\label{rem:graphMultiplihedra}
Similarly, the shuffle of an $m$-permutahedron with a graph associahedron is a generalization of the graph multiplihedron of~\cite{DevadossForcey}.
It follows from \cref{coro:latticeShufflePermutahedron} that the resulting rotation graph is a lattice as soon as the graph associahedron is a lattice (necessary and sufficient conditions for the latter are discussed in~\cite{BarnardMcConville}).
\end{remark}


\subsection{Vertex, facet, and Minkowski sum descriptions}
\label{subsec:vertexFacetMinkowskiDescriptionsMultiplihedra}

Our next three statements, illustrated in \cref{fig:coordinatesPaintedTrees,fig:inequalitiesPaintedTrees}, provide the vertex, facet, and Minkowski sum descriptions of the $(m,n)$-multi\-plihedron $\Multiplihedron$.
The proofs are elementary computations from \mbox{\cref{def:permutahedron,def:associahedron,def:graphicalZonotope,def:multiplihedron}}.

\begin{proposition}
\label{prop:verticesMultiplihedron}
For any~$i \in [m+n]$, the $i$-th coordinate of the vertex of the $(m,n)$-multiplihedron $\Multiplihedron$ corresponding to a binary $m$-painted $n$-tree is given by
\begin{itemize}
\item if~$i \le m$, the number of binary nodes and cuts weakly below the cut labeled by~$i$,
\item if~$i \ge m+1$, the number of cuts below~$\node{n}$ plus the product of the numbers of leaves in the left and right subtrees of~$\node{n}$, where~$\node{n}$ is the node labeled by~$i-m$ in inorder.
\end{itemize}
In particular, the sum of the coordinates is always~$\binom{m+1}{2} + \binom{n+1}{2} + mn = \binom{m+n+1}{2}$.
\end{proposition}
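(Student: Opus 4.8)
The plan is to exploit the decomposition $\Multiplihedron = (\Perm[m] \times \Asso[n]) + \polytope{Z}_{m,n}$ coming from \cref{def:multiplihedron,def:shuffleDeformedPermutahedra}, together with the standard fact that a vertex of a Minkowski sum maximizing a generic linear functional is the sum of the vertices of the summands maximizing that same functional. I fix a binary $m$-painted $n$-tree $\PT = (T,C,\mu)$ and choose $\b{c}$ in the interior of its normal cone $\polytope{C}(\PT) = \set{\b{x}}{x_i \le x_j \text{ if } i \preccurlyeq_\PT j}$ from \cref{prop:fanMultiplihedron}, so that larger coordinates of $\b{c}$ correspond to nodes closer to the root. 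Using the bijection of \cref{prop:faceLatticeMultiplihedron}, the three summand vertices selected by $\b{c}$ are: the vertex $\b{p}$ of $\Perm[m]$ attached to the permutation $\mu$ (a binary painted tree has $|C| = m$ singleton-labelled cuts), the vertex $\b{q}$ of $\Asso[n]$ attached to the binary tree $T'$ obtained by deleting all unary nodes of $T$, and the vertex $\b{z}$ of $\polytope{Z}_{m,n}$ obtained by choosing, for each segment $[\b{e}_i,\b{e}_{m+j}]$, the endpoint with larger $\b{c}$-value. Summing coordinatewise gives $(\b{v}_\PT)_i = \b{p}_i + \b{z}_i$ for $i \le m$ and $(\b{v}_\PT)_i = \b{q}_{i-m} + \b{z}_i$ for $i \ge m+1$.

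Next I would compute the three pieces. For the permutahedron, $\b{p}_i$ is the rank of $c_i$ among $c_1, \dots, c_m$, which equals the number of cuts whose $\b{c}$-value is at most $c_i$, that is, the number of cuts weakly below the cut labelled by $i$ (recall from \cref{def:paintedTree} that $C_1, \dots, C_m$ run from the leaves towards the root). For the associahedron, $\b{q}_{i-m}$ is the Loday coordinate $\ell(T',i-m)\,r(T',i-m)$ of \cref{def:associahedron}; since deleting a unary node contracts an edge and leaves all leaf counts unchanged, this equals the product of the numbers of leaves in the left and right subtrees of the binary node of $T$ labelled $i-m$ in inorder. For the zonotope, the endpoint of $[\b{e}_i,\b{e}_{m+j}]$ chosen by $\b{c}$ is $\b{e}_i$ exactly when the cut labelled $i$ has larger $\b{c}$-value than the node labelled $j = i-m$, that is, when that node lies below that cut; hence $\b{z}_i$ (for $i \le m$) counts the binary nodes below the cut labelled $i$, while $\b{z}_{m+j}$ counts the cuts below the node labelled $j$.

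The one point that needs a genuine argument --- the main obstacle --- is that each of these $\b{c}$-comparisons between a color $i \in [m]$ and a node $j \in [n]$ is determined by $\PT$ alone, and not by the auxiliary choice of $\b{c}$, so that $\b{z}$ is well defined. For this I would observe that, by definition of a cut, $C_\ell$ meets every root-to-leaf path of $T$ exactly once; consequently any binary node is either entirely above or entirely below each cut, so the corresponding color and node are always comparable in $\preccurlyeq_\PT$ and the sign of $c_i - c_{m+j}$ is constant on the chamber $\polytope{C}(\PT)$ (equivalently, $\polytope{C}(\PT)$ lies on one side of each zonotope wall $\set{\b{x}}{x_i = x_{m+j}}$). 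Assembling the three contributions then yields exactly the stated coordinates: for $i \le m$ the number of cuts plus the number of binary nodes weakly below the cut labelled $i$ (a node is never level with a cut, so ``below'' and ``weakly below'' coincide for nodes), and for $i \ge m+1$ the number of cuts below the node plus the product of its left and right leaf counts. Finally, summing all coordinates gives $\sum_i \b{p}_i + \sum_j \b{q}_j + \sum \b{z} = \binom{m+1}{2} + \binom{n+1}{2} + mn$, since the permutahedron ranks sum to $1 + \dots + m$, the Loday coordinates sum to $\binom{n+1}{2}$ (the associahedron lies in the hyperplane $\sum_\ell x_\ell = \binom{n+1}{2}$), and each of the $mn$ segments of $\polytope{Z}_{m,n}$ contributes $1$ to a single coordinate.
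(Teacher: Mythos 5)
Your proof is correct and is essentially the paper's own argument: the paper dismisses \cref{prop:verticesMultiplihedron} as ``elementary computations'' from \cref{def:permutahedron,def:associahedron,def:graphicalZonotope,def:multiplihedron}, and what you have written is exactly that computation carried out, summing the vertices of $\Perm[m]$, $\Asso[n]$ and $\polytope{Z}_{m,n}$ selected by a functional in the interior of the normal cone $\polytope{C}(\PT)$. You also correctly isolate and settle the only point requiring care, namely that every color of $[m]$ is comparable in $\preccurlyeq_{\PT}$ to every node label of $[n]^{+m}$ (since a cut meets each root-to-leaf path exactly once), so the endpoint choices in the segments $[\b{e}_i,\b{e}_{m+j}]$ depend only on $\PT$ and not on the chosen functional.
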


\pagebreak

\begin{proposition}
\label{prop:facetsMultiplihedron}
Let~$\PT \eqdef (T, C, \mu)$ be an $m$-painted $n$-tree of rank~$m+n-2$.
Let~$A$ be the elements of~$[m]$ which label a cut not containing the root of~$T$ ($A = \varnothing$ if~$C$ has only one cut, which contains the root of~$T$), and~$B \eqdef B_1 \cup \dots \cup B_k$ where~$B_1, \dots, B_k$ are the inorder labels shifted by~$m$ of the non-unary nodes of~$T$ distinct from the root of~$T$.
Then the facet of the $(m,n)$-multiplihedron~$\Multiplihedron$ corresponding to~$\PT$ is defined by the inequality
\[
\dotprod{\b{x}}{\one_{A \cup B}} \ge \binom{|A|+1}{2} + \binom{|B_1|+1}{2} + \dots + \binom{|B_k|+1}{2} + |A| \cdot |B|.
\]
Moreover, this inequality is a facet defining inequality of the permutahedron~$\Perm[m+n]$ if and only if~$k \le 1$, that is, if~$T$ has at most two non-unary nodes.
\end{proposition}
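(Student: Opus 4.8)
The plan is to exploit that $\Multiplihedron$ is a deformed permutahedron, so that each facet has an outer normal of the form $-\one_S$ for some $\varnothing \ne S \subsetneq [m+n]$ and a facet inequality $\dotprod{\b{x}}{\one_S} \ge z(S)$ whose right-hand side is the support value $z(S) \eqdef \min_{\b{x} \in \Multiplihedron} \dotprod{\b{x}}{\one_S}$. By \cref{prop:faceLatticeMultiplihedron} the facets correspond to the rank $m+n-2$ painted trees $\PT \eqdef (T,C,\mu)$ of \cref{prop:paintedTreeDeletion}, and the lower block of the face preposet $\preccurlyeq_{\PT}$ of \cref{rem:paintedTreePreposets} reads off $S$. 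The first step is therefore to check that $S = A \cup B$, where $A \subseteq [m]$ collects the labels of the cuts lying strictly below the root and $B$ collects the (shifted) inorder labels of the non-root non-unary nodes.

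The engine of the computation is the additivity of support functions under Minkowski sums, applied to the decomposition $\Multiplihedron = (\Perm[m] \times \Asso[n]) + \polytope{Z}_{m,n}$ of \cref{def:multiplihedron,def:shuffleDeformedPermutahedra}. This gives $z(S) = z_{\Perm[m] \times \Asso[n]}(S) + z_{\polytope{Z}_{m,n}}(S)$, and the Cartesian product splits the first term as $z_{\Perm[m]}(A) + z_{\Asso[n]}(B)$ since $S = A \sqcup B$. Each piece is then evaluated directly: $z_{\Perm[m]}(A) = \binom{|A|+1}{2}$ from \cref{def:permutahedron}; segment by segment, $z_{\polytope{Z}_{m,n}}(S) = \sum_{i \in [m],\, j \in [n]} \min([i \in S],[m+j \in S]) = |A| \cdot |B|$ from \cref{def:graphicalZonotope}; and, writing $B = B_1 \sqcup \dots \sqcup B_k$ as its decomposition into maximal runs of consecutive integers, $z_{\Asso[n]}(B) = \sum_t \binom{|B_t|+1}{2}$ from the interval facet inequalities of \cref{def:associahedron} (equivalently, from the Minkowski-sum-of-simplices description, this value counts the subintervals contained in $B$, the separated maximal runs allowing the $k$ inequalities to be simultaneously tight). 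Summing the three contributions reproduces the stated inequality, once one checks that the maximal runs $B_t$ of $B$ are precisely the inorder labels of the non-root non-unary nodes.

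For the second assertion I would compare $z(S)$ with the permutahedron value $\binom{|S|+1}{2} = \binom{|A|+|B|+1}{2}$. Since the associahedron is a removahedron, $\Asso[n] \supseteq \Perm[n]$, whence $\Multiplihedron \supseteq \Perm[m] \shuffleDP \Perm[n] = \Perm[m+n]$ by \cref{exm:shufflePermutahedra}, so $z(S) \le \binom{|S|+1}{2}$ with equality exactly when the inequality is one of the defining inequalities of $\Perm[m+n]$. The elementary binomial identity $\binom{|A|+|B|+1}{2} - \big(\binom{|A|+1}{2} + |A|\cdot|B| + \sum_t \binom{|B_t|+1}{2}\big) = \sum_{s < t} |B_s| \cdot |B_t|$ shows this gap vanishes if and only if at most one block $B_t$ is nonempty, that is, if and only if $k \le 1$.

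The main obstacle is the combinatorial bookkeeping of the first paragraph: pinning down $S = A \cup B$ from $\preccurlyeq_{\PT}$ and verifying that the maximal runs of $B$ coincide with the label sets $B_1, \dots, B_k$ (so that the associahedron term is genuinely additive). A secondary subtlety is the parenthetical reformulation of ``$k \le 1$'' as ``$T$ has at most two non-unary nodes'': the only potential discrepancy is a facet with $k = 2$ and a unary root, which I would exclude using \cref{prop:paintedTreeDeletion} — a unary root must lie on a cut and, being on every root-to-leaf path, forces the top cut to equal $\{\text{root}\}$, which at rank $m+n-2$ leaves at most one non-root non-unary node.
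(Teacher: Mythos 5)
Your proposal is correct and takes essentially the route the paper intends: the paper omits the proof of \cref{prop:facetsMultiplihedron}, declaring it an ``elementary computation'' from \cref{def:permutahedron,def:associahedron,def:graphicalZonotope,def:multiplihedron}, and your support-function additivity over the decomposition $\Multiplihedron = (\Perm[m] \times \Asso[n]) + \polytope{Z}_{m,n}$ is exactly that computation carried out. The three contributions $\binom{|A|+1}{2}$, $|A|\cdot|B|$ and $\sum_t \binom{|B_t|+1}{2}$ (the runs $B_t$ are indeed separated intervals, since rank~$m+n-2$ forces the tree to have depth at most two), the binomial identity $\binom{|A|+|B|+1}{2} - \bigl(\binom{|A|+1}{2} + |A|\cdot|B| + \sum_t \binom{|B_t|+1}{2}\bigr) = \sum_{s<t}|B_s|\cdot|B_t|$, and the unary-root edge case all check out.
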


\begin{proposition}
\label{prop:MinkowskiMultiplihedron}
The $(m,n)$-multiplihedron~$\Multiplihedron$ is the Minkowski sum of the faces \linebreak ${\simplex_I \eqdef \conv\set{\b{e}_i}{i \in I}}$ of the standard simplex~$\simplex_{[m+n]}$ corresponding to all subsets~$I \subseteq [m+n]$ such that~$|I| \le 2$ and~$|I \cap [n]^{+m}| \le 1$, or~$I$ is a subinterval of~$[n]^{+m}$.
\end{proposition}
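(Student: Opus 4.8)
The plan is to unfold every definition and reduce the statement to a direct bookkeeping of Minkowski summands. By \cref{def:multiplihedron,def:shuffleDeformedPermutahedra} we have $\Multiplihedron = (\Perm[m] \times \Asso[n]) + \polytope{Z}_{m,n}$ with $\polytope{Z}_{m,n} = \sum_{i \in [m], \, j \in [n]} [\b{e}_i, \b{e}_{m+j}]$, so it suffices to express each of the three factors as a Minkowski sum of faces $\simplex_I$ of $\simplex_{[m+n]}$ and then merge the three lists. The first tool I would record is the elementary identity $\polytope{P} \times \polytope{Q} = (\polytope{P} \times \{\b{0}\}) + (\{\b{0}\} \times \polytope{Q})$ for $\polytope{P} \subseteq \R^m$ and $\polytope{Q} \subseteq \R^n$ seen inside $\R^{m+n}$, together with the fact that the coordinate embeddings $\R^m \hookrightarrow \R^{m+n}$ (first $m$ coordinates) and $\R^n \hookrightarrow \R^{m+n}$ (last $n$ coordinates, $\b{e}_i \mapsto \b{e}_{m+i}$) are linear and hence commute with Minkowski summation.

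With this in hand I would substitute the simplex descriptions from \cref{def:permutahedron,def:associahedron}. The permutahedron $\Perm[m]$ is a translate of $\sum_{1 \le i < j \le m} [\b{e}_i, \b{e}_j] = \sum_{1 \le i < j \le m} \simplex_{\{i,j\}}$, living on the first $m$ coordinates, so it contributes exactly the $\simplex_I$ with $I \subseteq [m]$ and $|I| = 2$. The associahedron $\Asso[n]$ is a translate of $\sum_{1 \le i \le j \le n} \simplex_{[i,j]}$, and the shift into the last $n$ coordinates turns $\simplex_{[i,j]}$ into $\simplex_{[m+i,\,m+j]}$; these contribute precisely the $\simplex_I$ with $I$ a subinterval of $[n]^{+m}$. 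Finally $\polytope{Z}_{m,n} = \sum_{i \in [m],\, j \in [n]} \simplex_{\{i,\,m+j\}}$ contributes the mixed pairs $I = \{i, m+j\}$ meeting both $[m]$ and $[n]^{+m}$. Merging, the nontrivial summands are the subintervals of $[n]^{+m}$ together with all pairs that meet $[m]$, which I would check are exactly the $\simplex_I$ listed in the statement; the delicate point here, and the one to state carefully, is that inside $[n]^{+m}$ only the contiguous intervals $\simplex_{[m+i,m+j]}$ occur, whereas every pair touching $[m]$ does appear.

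The step I expect to be the main obstacle is upgrading this equality of shapes to an equality on the nose, since \cref{def:permutahedron,def:associahedron} only fix $\Perm[m]$ and $\Asso[n]$ up to translation. I would absorb the translations by also inserting the singleton simplices $\simplex_{\{i\}}$, which are points and so merely shift the sum, and then pin the single remaining global translation using \cref{prop:verticesMultiplihedron}: both sides are deformed permutahedra by \cref{prop:shuffleDeformedPermutahedra}, hence have the same normal fan and coincide as soon as a single reference vertex is matched (the common coordinate sum $\binom{m+1}{2} + \binom{n+1}{2} + mn$ from \cref{prop:verticesMultiplihedron} giving a quick consistency check). Granting the translation bookkeeping, the rest is the routine verification—via \cref{prop:CartesianProductMinkowskiSum}—that the collected sum of simplices equals the asserted right-hand side.
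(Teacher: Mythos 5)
Your route is exactly the paper's: the authors give no written proof beyond saying that the three descriptions are ``elementary computations'' from \cref{def:permutahedron,def:associahedron,def:graphicalZonotope,def:multiplihedron}, and your unfolding $\Multiplihedron = (\Perm[m] \times \Asso[n]) + \polytope{Z}_{m,n} = (\Perm[m] \times \{\b{0}\}) + (\{\b{0}\} \times \Asso[n]) + \sum_{i,j} \simplex_{\{i, m+j\}}$, followed by substitution of the three Minkowski descriptions, is precisely that computation. Your resulting summand list is also the correct one: up to translation, the nontrivial summands are the $\simplex_I$ with $I$ a pair \emph{meeting} $[m]$, together with the $\simplex_I$ with $I$ a subinterval of $[n]^{+m}$.

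The problem is your final step, ``which I would check are exactly the $\simplex_I$ listed in the statement'': this check fails under a literal reading of the statement, and it is more than a point to ``state carefully''. The condition ``$|I| \le 2$ or $I$ is a subinterval of $[n]^{+m}$'' also admits the non-contiguous pairs $I = \{m+i, m+j\}$ with $j \ge i+2$, which your computation (correctly) never produces. These extra segments are not harmless: by cancellation of Minkowski sums, $\polytope{P} + \simplex_{\{m+i,m+j\}}$ is never a translate of $\polytope{P}$, and in fact adding such a segment strictly refines the normal fan, because the wall $x_{m+i} = x_{m+j}$ crosses the interior of a maximal cone of the fan of \cref{prop:fanMultiplihedron} (take any binary $m$-painted $n$-tree in which the nodes labelled $m+i$ and $m+j$ lie in the two different subtrees of a node labelled $m+k$, $i < k < j$, so that they are incomparable in $\preccurlyeq_{\PT}$). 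Concretely, for $(m,n)=(1,3)$ the literal list consists of all six pairs in $[4]$ plus $\{2,3,4\}$, whose Minkowski sum is normally equivalent to $\Perm[4]$ ($14$ facets), not to the multiplihedron ($13$ facets). So your computation actually proves the proposition only with the pairs restricted to those meeting $[m]$ --- it establishes a corrected form of the statement rather than the literal one --- and your write-up should say this explicitly instead of asserting that the two lists agree. Separately, a small repair to your translation step: the two sides have the same normal fan not because ``both are deformed permutahedra'' (that only says both fans coarsen the braid fan), but because they are Minkowski sums of the same summands up to translation and hence are translates of one another; after that, pinning the translation by matching one vertex via \cref{prop:verticesMultiplihedron}, or by the coordinate sums, finishes as you propose.
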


\begin{example}
\cref{fig:coordinatesPaintedTrees} illustrates some vertex coordinates of $\Multiplihedron[3][3]$ computed by \cref{prop:verticesMultiplihedron} and \cref{fig:inequalitiesPaintedTrees} illustrates some facet inequalities of $\Multiplihedron[3][3]$ computed by \cref{prop:facetsMultiplihedron}. Note that all vertices of $\Multiplihedron[3][3]$ have coordinate sum~$21$. Note that for any pair $(i, j) \in \{(1, 1), (1, 2), (2, 2), (2, 3), (3, 2), (3, 3), (4, 4), (5, 4), (5, 5)\}$, we have~$\PT_i$ smaller than~$\PT[S]_j$ in deletion order, so that the vertex corresponding to~$\PT_i$ is contained in the facet corresponding to~$\PT[S]_j$.

\begin{figure}
	\centerline{
	\begin{tabular}{c@{}c@{}c@{}c@{}c}
		$\PT_1$
		&
		$\PT_2$
		&
		$\PT_3$
		&
		$\PT_4$
		&
		$\PT_5$
		\\[-.3cm]
		\paintedTree[1.2]{[3[[1[2[[[][]][]]]][1[2[]]]]]}{1,2,3}
		&
		\paintedTree[1.2]{[[3[[2[1[]]][2[[1[]][1[]]]]]][3[2[1[]]]]]}{1,2,3}
		&
		\paintedTree[1.2]{[[3[[1[[2[]][2[]]]][1[2[]]]]][3[1[2[]]]]]}{1,2,3}
		&
		\paintedTree[1.2]{[3[[1[[2[]][2[]]]][1[[2[]][2[]]]]]]}{1,2,3}
		&
		\paintedTree[1.2]{[[3[1[[2[]][2[]]]]][3[1[[2[]][2[]]]]]]}{1,2,3}
		\\[.1cm]
		$(4, 3, 6, 1, 2, 5)$
		&
		$(1, 3, 5, 4, 2, 6)$
		&
		$(3, 1, 5, 2, 4, 6)$
		&
		$(4, 1, 6, 2, 6, 2)$
		&
		$(4, 1, 5, 2, 7, 2)$
	\end{tabular}
	}
	\caption{Vertices of $\Multiplihedron[3][3]$ corresponding to five binary $3$-painted $3$-trees.}
	\label{fig:coordinatesPaintedTrees}
\end{figure}

\begin{figure}
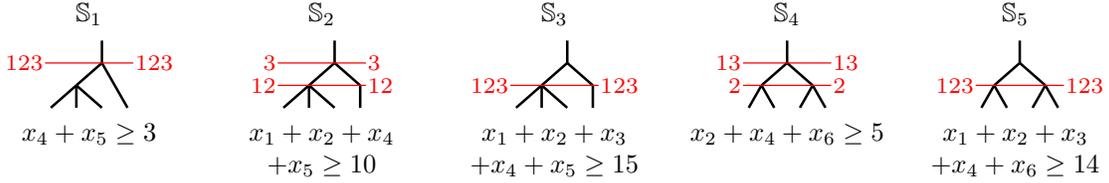

	\centerline{
	\begin{tabular}{c@{}c@{}c@{}c@{}c}
		$\PT[S]_1$
		&
		$\PT[S]_2$
		&
		$\PT[S]_3$
		&
		$\PT[S]_4$
		&
		$\PT[S]_5$
		\\[-.3cm]
		\paintedTree[1.2]{[123[[][][]][]]}{123}
		&
		\paintedTree[1.2]{[3[12[][][]][12[]]]}{12,3}
		&
		\paintedTree[1.2]{[[123[][][]][123[]]]}{123}
		&
		\paintedTree[1.2]{[13[2[][]][2[][]]]}{13,2}
		&
		\paintedTree[1.2]{[[123[][]][123[][]]]}{123}
		\\[.1cm]
		$x_4 + x_5 \ge 3$
		&
		$x_1 + x_2 + x_4$
		&
		$x_1 + x_2 + x_3$
		&
		$x_2 + x_4 + x_6 \ge 5$
		&
		$x_1 + x_2 + x_3$
		\\
		&
		$+ x_5 \ge 10$
		&
		$+ x_4 + x_5 \ge 15$
		&
		&
		$+ x_4 + x_6 \ge 14$
	\end{tabular}
	}
	\caption{Facet defining inequalities of $\Multiplihedron[3][3]$ corresponding to five rank $4$ $3$-painted $3$-trees.}
	\label{fig:inequalitiesPaintedTrees}
\end{figure}
\end{example}


\subsection{Numerology}
\label{subsec:numerologyMultiplihedra}

We now present enumerative results on the number of vertices, faces and facets of the $(m,n)$-multiplihedron~$\Multiplihedron$, using standard techniques from generating functionology~\cite{FlajoletSedgewick}.
The first few values of these numbers are collected in \cref{table:verticesMultiplihedra,table:facetsMultiplihedra,table:facesMultiplihedra} in \cref{subsec:tablesMultiplihedra}.
We start with vertices, which appear as \OEIS{A158825} in~\cite{OEIS} up to a factorial factor, generalizing the formula of~\cite[Thm.~3.1]{Forcey-multiplihedra}.
See~\cref{table:verticesMultiplihedra}.

\begin{proposition}
\label{prop:numberVerticesMultiplihedra}
The number of vertices of the $(m,n)$-multiplihedron~$\Multiplihedron$ (equivalently, of binary $m$-painted $n$-trees) is
\[
m! \, [y^{n+1}] \, \CGF^{(m+1)}(y),
\]
where~$[y^{n+1}]$ selects the coefficient of~$y^{n+1}$, and $\CGF^{(i)}(y)$ is defined for~$i \ge 1$ by
\[
\CGF^{(1)}(y) \eqdef \CGF(y)
\qquad\text{and}\qquad
\CGF^{(i+1)}(y) \eqdef \CGF \big( \CGF^{(i)}(y) \big),
\]
where
\[
\CGF(y) = \frac{1-\sqrt{1-4y}}{2}
\]
is the Catalan generating function (see \cref{prop:CatalanSchroderGF}).
\end{proposition}

\begin{proof}
According to \cref{prop:paintedTreeDeletion,prop:faceLatticeMultiplihedron}, we need to count the binary $m$-painted $n$-trees.
We construct a binary $m$-painted tree by
\begin{itemize}
\item choosing a binary tree~$T$ above the topmost cut (thus the apparition of~$\CGF$),
\item grafting at each leaf of~$T$ a binary tree with $m-1$ cuts (thus the substitution of the~$y$ variable in~$\CGF$),
\item choosing the permutation of~$[m]$ that will label the $m$ cuts (thus the factor $m!$).
\qedhere
\end{itemize}
\end{proof}

We now consider the number of facets of the $(m,n)$-multiplihedron~$\Multiplihedron$, generalizing the formula of~\cite[Thm.~2.4]{Forcey-multiplihedra}.
See~\cref{table:facetsMultiplihedra}.

\begin{proposition}
\label{prop:numberFacetsMultiplihedra}
The number of facets of the $(m,n)$-multiplihedron~$\Multiplihedron$ is
\[
 \binom{n+1}{2} - 1 + 2^{m+n} - 2^n.
\]
\end{proposition}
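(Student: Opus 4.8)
The plan is to count the facets of $\Multiplihedron$ directly as the codimension-one faces of the $m$-painted $n$-tree deletion poset. By \cref{prop:faceLatticeMultiplihedron} the faces of $\Multiplihedron$ are exactly the $m$-painted $n$-trees, a face being a facet precisely when it has rank $m+n-2$. \cref{prop:paintedTreeDeletion} already sorts these into two families: those with a single cut $C_1$ covering all but one node of $T$ (call it type (a)), and those with two cuts $C_1,C_2$ together covering every node (type (b)). I would enumerate each family separately, the whole point being to watch the three summands $\binom{n+1}{2}-1$, $2^n-1$ and $2^n(2^m-2)$ assemble into the claimed total.

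For type (a) the ordered partition is forced to be the single block $[m]$, so only the underlying tree varies, and the lone node outside $C_1$ lies either strictly above or strictly below the cut. In the ``above'' case that node must be the root (the only node admitting nothing above it), whence $C_1$ is the list of children of the root and only leaves sit below; such trees are encoded by compositions of $n+1$ into at least two parts, giving $2^n-1$. In the ``below'' case the cut reduces to the root, and $T$ consists of the root together with one non-unary node $\node{n}_0$ capturing a contiguous block of leaves, so these are encoded by sub-blocks of size at least $2$ among the $n+1$ leaves, giving $\binom{n+1}{2}$. The delicate point is that, since the root now lies \emph{on} the cut, it is allowed to be unary: the extreme block consisting of all $n+1$ leaves (a corolla under a unary root) is a genuine facet of $\Multiplihedron$, even though the corresponding inequality $\sum_{i>m} x_i \ge \binom{n+1}{2}$ is only the defining equality of the bare associahedron $\Asso[n]$. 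It is exactly this configuration that promotes the associahedral count $\binom{n+1}{2}-1$ to $\binom{n+1}{2}$, and overlooking it is the easiest way to be off by one.

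For type (b), requiring every node to lie in $C_1\cup C_2$ forces the upper cut $C_2$ to be the single root and the lower cut $C_1$ to be the full list of children of the root, with only leaves below. The tree is thus a composition of $n+1$ into at least one part, contributing $2^n$ choices, while the ordered partition $\mu$ is an ordered set partition of $[m]$ into exactly two nonempty blocks, contributing $2^m-2$ choices; together this family has $2^n(2^m-2)$ elements. Since the three families are disjoint (distinguished by $|C|$ and by whether the root lies on a cut) and within each family the encodings are bijections, summing gives
\[
(2^n-1) + \binom{n+1}{2} + 2^n(2^m-2) = \binom{n+1}{2} - 1 + 2^{m+n} - 2^n,
\]
as claimed. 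As a consistency check one can match these against \cref{prop:facetsMultiplihedron}: the ``below'' trees are the facets with $A=\varnothing$, the ``above'' trees those with $A=[m]$, and the type-(b) trees those with $A$ a proper nonempty subset of $[m]$, so the trichotomy of $A$ mirrors the three summands. The only real obstacle is bookkeeping: pinning down the position of the cuts relative to the root and, above all, handling the degenerate unary-root and corolla configurations correctly, since those boundary cases are precisely where the constant term of the formula is settled.
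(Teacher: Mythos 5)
Your proof is correct and follows the same strategy as the paper: identify facets with the rank-$(m+n-2)$ painted trees via \cref{prop:faceLatticeMultiplihedron,prop:paintedTreeDeletion}, then enumerate them according to the position of the cuts relative to the root. The interesting point is that your bookkeeping is actually sharper than the paper's. The paper's proof has two bullets, counting the trees whose bottommost cut contains the root as $\binom{n+1}{2}-1$ and those whose bottommost cut contains all children of the root as $(2^m-1)2^n$; the sum is right, but each bullet is individually off by one, in compensating directions, precisely because of the boundary configuration you single out: a unary root lying on the cut above a single corolla child carrying all $n+1$ leaves. That painted tree is a genuine facet of the first kind (for $m=1$, $n=2$ it is one of the six edges of the hexagon~$\Multiplihedron[1][2]$, the one drawn as a unary painted root over a $3$-leaf corolla), so the first family has $\binom{n+1}{2}$ elements; dually, in the second family the choice of the full label set $[m]$ for the bottom cut together with an empty inorder label at the root is illegal (it would force a unary node on no cut), so that family has $(2^m-1)2^n-1$ elements. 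Your three-family split, with counts $2^n-1$, $\binom{n+1}{2}$ and $2^n(2^m-2)$, together with the cross-check against the sets $A$ of \cref{prop:facetsMultiplihedron} (namely $A=[m]$, $A=\varnothing$, and $A$ a proper nonempty subset, respectively), resolves exactly the point where the paper's sketch is loose, and the final summation is the same.
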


\begin{proof}
According to \cref{prop:paintedTreeDeletion,prop:faceLatticeMultiplihedron}, there are two types of $m$-painted $n$-trees corresponding to facets of the $(m,n)$-multiplihedron~$\Multiplihedron$:
\begin{itemize}
\item those where the bottommost cut contains the root: this amounts to choose a corolla with $n+1$ leaves, thus $\binom{n+1}{2}-1$ choices,
\item those where the bottommost cut contains all children of the root: this amounts to choose a non-empty subset of~$[m]$ to label this bottommost cut (the complement, if non-empty, will label the topmost cut containing the root), and a subset of~$[n]$ for the inorder label of the root, thus $(2^m-1) 2^n$ choices.
\qedhere
\end{itemize}
\end{proof}

Finally, adapting the approach of \cref{prop:numberVerticesMultiplihedra}, we can count all faces of the $(m,n)$-multipli\-hedron~$\Multiplihedron$ according to their dimension.

\begin{proposition}
\label{prop:numberFacesMultiplihedra}
Let~$PT(m,n,p)$ denote the number of $p$-dimensional faces of the $(m,n)$-multipli\-hedron~$\Multiplihedron$, or equivalently the number of $m$-painted $n$-trees of rank~$p$.
Then the generating function~$\PTGF(x,y,z) \eqdef \sum_{m,n,p} PT(m,n,p) \, x^m \, y^n \, z^p$ is given by
\[
\PTGF(x,y,z) = \sum_m x^m \sum_{k = 0}^m \SGF \big( \tSGF^{(k)}(y,z), z \big) \, \surjections{m}{k} \, z^{m-k},
\]
where~$\surjections{m}{k}$ is the number of surjections from~$[m]$ to~$[k]$,
\[
\SGF(y,z) = \frac{1+yz-\sqrt{1-4y-2yz+y^2z^2}}{2(z+1)}
\]
is the Schr\"oder generating function (see \cref{prop:CatalanSchroderGF}), and~$\tSGF^{(i)}(y,z)$ is defined for~$i \ge 0$~by
\[
\tSGF^{(0)}(y,z) \eqdef y,
\quad
\tSGF^{(1)}(y,z) \eqdef (1+z) \, \SGF(y,z) - yz
\quad\text{and}\quad
\tSGF^{(i+1)}(y,z) \eqdef \tSGF^{(i)} \big( \tSGF^{(1)}(y,z), z \big).
\]
\end{proposition}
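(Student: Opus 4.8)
The plan is to count $m$-painted $n$-trees directly, weighted by $x^m y^n z^{\rank}$, through a layered decomposition into which the substitution structure of the claimed formula is visibly built. By \cref{prop:faceLatticeMultiplihedron,prop:paintedTreeDeletion}, the $p$-dimensional faces of $\Multiplihedron$ are exactly the $m$-painted $n$-trees $\PT \eqdef (T,C,\mu)$ of rank $p$, so the first step is to rewrite $\rank(\PT) = m+n-|T|-|C|+|\bigcup C|$ in terms of the underlying Schr\"oder tree $T_0$ obtained by deleting all unary nodes of $T$. Writing $u$ for the number of unary nodes (all of which lie on cuts), $v$ for the number of non-unary cut nodes (cuts passing through a vertex rather than an edge), and $k \eqdef |C|$, we have $|T| = |T_0| + u$ and $|\bigcup C| = u + v$, so the $u$'s cancel and $\rank(\PT) = (m-k) + \big((n-|T_0|) + v\big)$. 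This isolates a \emph{labeling contribution} $m-k$ from a \emph{tree contribution} $(n-|T_0|)+v$, the latter being the Schr\"oder rank of $T_0$ (namely $n-|T_0|$, see \cref{prop:SchroderTreeDeletion}) augmented by one for each cut through a vertex.

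Next I would dispatch the labeling. Since $\mu$ is an ordered partition of $[m]$ into $k$ nonempty parts, the number of admissible $\mu$ for a fixed tree carrying $k$ cuts is $\surjections{m}{k}$, and $m-k = \sum_i (|\mu_i|-1)$ is independent of the part sizes; hence the choice of $\mu$ contributes exactly the factor $\surjections{m}{k}\,z^{m-k}$, which factors out of the sum over trees. It then remains to show that, for each fixed $k$, the generating function of $n$-trees carrying $k$ nested cuts, with $y$ marking $n$ and $z$ marking the tree contribution $(n-|T_0|)+v$, is $\SGF\big(\tSGF^{(k)}(y,z),z\big)$.

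For this I would slice the tree along its $k$ cross-sections. The nesting condition $C_{i+1} \subseteq \stump{C_i}\ssm C_i$ presents the tree as a bottom root region $\stump{C_k}\ssm C_k$, which is a Schr\"oder tree whose leaves are the nodes of $C_k$, capped by $k$ stacked layers; each layer is rooted at a single cut node and extends upward to the next cut (or to the actual leaves at the top), its top attachment points being the nodes of the next cut. Substituting the upper stack into each leaf of the bottom Schr\"oder tree is exactly the operation $Y \mapsto \SGF(Y,z)$, so the whole object is $\SGF\big(\tSGF^{(k)}(y,z),z\big)$ once $\tSGF^{(k)}$ is identified as the generating function of a stack of $k$ layers: the recurrence $\tSGF^{(i+1)}(y,z) = \tSGF^{(i)}\big(\tSGF^{(1)}(y,z),z\big)$ is precisely the grafting of one further layer at the top, and substituting the same $\tSGF^{(k)}$ at every leaf automatically forces every root-to-leaf path to cross all $k$ cuts in order, recovering genuine cross-sections rather than paths of varying depth.

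The crux is the single-layer generating function $\tSGF^{(1)}(y,z) = (1+z)\SGF(y,z) - yz$. A single layer is rooted at one cut node, which is either unary (the cut passes through an edge), contributing nothing to $v$ and carrying a single Schr\"oder subtree of generating function $\SGF(y,z)$; or non-unary of degree $d \ge 2$ (the cut passes through a vertex), contributing its Schr\"oder weight $z^{d-2}$ together with one extra $z$ from $v$, and carrying $d$ Schr\"oder subtrees, so that summing over $d$ gives $z\sum_{d\ge 2} z^{d-2}\SGF^d = z\,\big(\SGF(y,z)-y\big)$, using the relation $\SGF = y + \SGF^2/(1-z\SGF)$ of \cref{prop:CatalanSchroderGF}. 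Adding the two cases yields $\SGF + z(\SGF-y) = (1+z)\SGF - yz = \tSGF^{(1)}$. I expect the main obstacle to be making this layer decomposition a genuine bijection and nailing the $z$-bookkeeping, in particular the extra $+1$ for cuts through vertices and the cancellation of the $u$ unary nodes from the rank; once the single-layer weight is pinned down, assembling $\SGF\big(\tSGF^{(k)}(y,z),z\big)$ by the symbolic method and summing $\sum_m x^m \sum_{k=0}^m \surjections{m}{k}\,z^{m-k}\,\SGF\big(\tSGF^{(k)}(y,z),z\big)$ is routine, with the degenerate term $k=0$ (forcing $m=0$) correctly reducing to $\SGF(y,z)$, the face generating function of the associahedron $\Asso$.
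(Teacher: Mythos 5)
Your proof is correct and takes essentially the same route as the paper's: you count painted trees by the number $k$ of cuts, factor out the labeling as $\surjections{m}{k}\,z^{m-k}$, and obtain the tree part as the composition $\SGF\big(\tSGF^{(k)}(y,z),z\big)$ by slicing along the cuts, with the single-layer series $\tSGF^{(1)}(y,z) = (1+z)\,\SGF(y,z) - yz$ splitting into the cut-through-edge case ($\SGF$) and the cut-through-vertex case ($z(\SGF - y)$), exactly as in the paper. Your explicit rank rewriting $\rank(\PT) = (m-k) + (n-|T_0|) + v$ and the degree summation $z\sum_{d\ge 2} z^{d-2}\SGF^d = z(\SGF - y)$ only spell out bookkeeping that the paper's proof leaves implicit.
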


\begin{proof}
According to \cref{prop:paintedTreeDeletion,prop:faceLatticeMultiplihedron}, we need to count the $m$-painted $n$-trees of rank~$p$.
We count them according to their number~$k$ of cuts.
For~$k = 0$, we just obtain the Schr\"oder generating function~$\SGF(y,z)$ multiplied by~$z^m$ to take the rank shift into account.
For~$k \ge 1$, we construct an $m$-painted tree with $k$ cuts by
\begin{itemize}
\item choosing a Schr\"oder tree~$S$ above the topmost cut (thus the apparition of~$\SGF$), 
\item grafting at each leaf of~$S$ a Schr\"oder tree with~$k-1$ cuts (thus the substitution of the $y$ variable in~$\SGF$), whose root may or may not lie on the topmost cut (explaining the twist from~$\SGF(y,z)$ to~$(1+z) \, \SGF(y,z) - yz$),
\item choosing the ordered partition of~$[m]$ that will label the $k$ cuts (thus the factor $\surjections{m}{k}$).
\end{itemize}
Finally, since an $m$-painted $n$-tree~$(T,C,\mu)$ yields a monomial~$y^n z^{n - |T| + |\bigcup C|}$ in the generating function~$\SGF \big( \tSGF^{(k)}(y,z), z \big)$, we multiply by the factor~$z^{m-k}$ to take into account~$m - |C|$ in the definition of the rank~$\rank(T,C) \eqdef m + n - |T| - |C| + |\bigcup C|$.
\end{proof}

We derive from \cref{prop:numberFacesMultiplihedra} the total number of faces of the $(m,n)$-multiplihedron~$\Multiplihedron$.
See~\cref{table:facesMultiplihedra}.

\begin{proposition}
\label{prop:numberFacesMultiplihedra2}
The number of faces of the $(m,n)$-multiplihedron~$\Multiplihedron$ (equivalently, of $m$-painted $n$-trees) is
\[
\sum_{k = 0}^m \surjections{m}{k} \, [y^{n+1}] \SGF \big( \tSGF^{(k)}(y) \big)
\]
where~$\surjections{m}{k}$ is the number of surjections from~$[m]$ to~$[k]$,
\[
\SGF(y) = \frac{1+y-\sqrt{1-6y+y^2}}{4}
\]
is the Schr\"oder generating function (see \cref{prop:CatalanSchroderGF}), and~$\tSGF^{(i)}(y)$ is defined for~$i \ge 0$~by
\[
\tSGF^{(0)}(y) \eqdef y,
\quad
\tSGF^{(1)}(y) \eqdef 2 \, \SGF(y) - y
\quad\text{and}\quad
\tSGF^{(i+1)}(y) \eqdef \tSGF^{(i)} \big( \tSGF^{(1)}(y) \big).
\]
\end{proposition}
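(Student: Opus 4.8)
The plan is to obtain \cref{prop:numberFacesMultiplihedra2} as the specialization $z = 1$ of the three-variable formula in \cref{prop:numberFacesMultiplihedra}. Since the variable $z$ records the rank (equivalently the dimension) of a face, setting $z = 1$ collapses the rank grading and sums $PT(m,n,p)$ over all $p$, so that the total number of faces of $\Multiplihedron$ is read off as a coefficient of $\PTGF(x,y,1)$. The entire argument therefore reduces to checking that each ingredient of $\PTGF(x,y,z)$ degenerates at $z = 1$ to the corresponding single-variable object in the statement, and then extracting the right coefficient.

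First I would verify the base cases. Evaluating the Schr\"oder generating function of \cref{prop:CatalanSchroderGF} at $z = 1$ gives $\SGF(y,1) = \frac{1 + y - \sqrt{1 - 6y + y^2}}{4}$, which is precisely the function $\SGF(y)$ appearing in the statement (the substitution being legitimate as formal power series, since $\SGF$ has no constant term and $1 - \SGF$ is invertible). Likewise $\tSGF^{(0)}(y,1) = y$ and $\tSGF^{(1)}(y,1) = (1+1)\,\SGF(y,1) - y = 2\,\SGF(y) - y$, matching the single-variable definitions. An easy induction on $i$ then propagates the recursion: assuming $\tSGF^{(i)}(\,\cdot\,,1)$ coincides with the single-variable $\tSGF^{(i)}$, the relation $\tSGF^{(i+1)}(y,z) = \tSGF^{(i)}\big(\tSGF^{(1)}(y,z),z\big)$ specializes to $\tSGF^{(i+1)}(y,1) = \tSGF^{(i)}\big(\tSGF^{(1)}(y)\big)$, which is exactly the single-variable recursion. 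Consequently $\SGF\big(\tSGF^{(k)}(y,1),1\big) = \SGF\big(\tSGF^{(k)}(y)\big)$ for every $k$.

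Next I would plug $z = 1$ into the full expression, noting that the bookkeeping factor $z^{m-k}$ becomes $1$ and the surjection coefficients $\surjections{m}{k}$ are independent of $z$. This yields
\[
\PTGF(x,y,1) = \sum_m x^m \sum_{k=0}^m \surjections{m}{k}\,\SGF\big(\tSGF^{(k)}(y)\big),
\]
and extracting the coefficient of $x^m y^{n+1}$ gives the claimed formula $\sum_{k=0}^m \surjections{m}{k}\,[y^{n+1}]\,\SGF\big(\tSGF^{(k)}(y)\big)$.

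The computation is essentially routine; the only point demanding care is the normalization in $y$. One must confirm that summing over all faces is genuinely captured by $z = 1$, and that the leaf convention of \cref{prop:CatalanSchroderGF} forces the coefficient of $y^{n+1}$ (not $y^n$) to enumerate the faces of $\Multiplihedron$ — the same shift already present in \cref{prop:numberVerticesMultiplihedra}, where an $n$-tree carries $n+1$ leaves. Once this indexing is pinned down, no genuine obstacle remains, since \cref{prop:numberFacesMultiplihedra} already performs all the structural work of enumerating $m$-painted $n$-trees by rank.
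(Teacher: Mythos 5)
Your proposal is correct and coincides with the paper's own (implicit) derivation: \cref{prop:numberFacesMultiplihedra2} is obtained exactly by specializing the generating function of \cref{prop:numberFacesMultiplihedra} at $z = 1$, checking that $\SGF(y,1)$, $\tSGF^{(0)}(y,1)$, $\tSGF^{(1)}(y,1)$ and the composition recursion degenerate to the stated single-variable objects, and extracting the coefficient of $x^m y^{n+1}$ in accordance with the leaf-indexing convention. Your explicit verification of these specializations, and of the $y^{n+1}$ shift, simply spells out what the paper leaves unsaid.
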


For instance, the $f$-vectors of all multiplihedra~$\Multiplihedron$ with~$m + n \le 5$ are displayed in \cref{fig:multiplihedra2,fig:multiplihedra3,fig:multiplihedra4}.
The $f$-vector of the $(3,3)$-multiplihedron~$\Multiplihedron[3][3]$ is
\[
f(\Multiplihedron[3][3]) = (1, 660, 1668, 1467, 518, 61, 1).
\]


\pagebreak

\section{Constrainahedra}
\label{sec:constrainahedra}

In this section, we study the family of $(m,n)$-constrainahedra, obtained as the shuffle of an $m$-associahedron~$\Asso[m]$ with an $n$-associahedron~$\Asso[n]$.
A first description of the constrainahedra appeared in~\cite{Tierney}, based on a private communication from N.~Bottman.
The rigorous description in terms of good rectangular preorders was worked out in~\cite{Poliakova, BottmanPoliakova}, where the constrainahedra are also realized as convex polytopes.
We provide the alternative combinatorial model of cotrees (\cref{subsec:cotrees}), describe the face lattice, fan and oriented skeleton of the $(m,n)$-constrainahedron in terms of these cotrees (\cref{subsec:constrainahedra}), provide explicit vertex, facet and Minkowski sum descriptions of the $(m,n)$-constrainahedron (\cref{subsec:vertexFacetDescriptionsConstrainahedra}), and present enumerative results on the number of vertices, faces and facets of the $(m,n)$-constrainahedron (\cref{subsec:numerologyConstrainahedra}).


\subsection{Cotrees}
\label{subsec:cotrees}

We start by defining cotrees, illustrated in \cref{fig:cotrees}.
Intuitively, a cotree is a pair of Schr\"oder trees both growing in the same direction (say down), drawn side to side, together with the information of the relative positions of their nodes.
Examples are illustrated in \cref{fig:cotrees}.

\begin{definition}
\label{def:cotree}
A \defn{$(m,n)$-cotree} is a triple~$\BT \eqdef (L, R, \mu)$, where $L$ is a Schr\"oder $m$-tree, $R$ is a Schr\"oder $n$-tree, and $\mu$ is an ordered partition of the nodes of~$L$ and~$R$~such~that
\begin{itemize}
\item the part of~$\mu$ containing a node~$\node{n}$ of~$L$ (resp.~$R$) distinct from the root is before or equal to the part of~$\mu$ containing the parent of~$\node{n}$,
\item no two consecutive parts of~$\mu$ are both contained in~$L$ or both contained in~$R$,
\item there is no oriented path in~$L$ (resp.~in~$R$) joining two nodes in a part of~$\mu$ which meets both~$L$ and~$R$.
\end{itemize}
We say that a part of~$\mu$ is of type~$\ell$, $r$ or~$b$ when it contains nodes from~$L$, $R$ or both~$L$ and~$R$, and we call \defn{type} of the cotree the word given by the sequence of types of the parts of~$\mu$.
We denote by~$\f{CT}_{m,n}$ the set of $(m,n)$-cotrees.
\end{definition}

To represent a $(m,n)$-cotree~$\CT \eqdef (L, R, \mu)$, we draw the two trees~$L$ and~$R$ side by side, and we mark the separations between the parts of~$\mu$ by (red) horizontal lines.
Note that~$\mu$ is read from bottom to top.
Examples are illustrated in \cref{fig:cotrees}.

\begin{figure}[b]
	\centerline{
		\cotree[1.2]{[[1[[2[3[[[4[[5[6[]]]]][4[5[6[]]]][4[5[6[]]]]][[4[5[[6[]][6[]]]]][4[5[6[]]]]]]]][2[[3[4[5[6[]]]]][3[4[5[[6[]][6[]]]]]][3[4[5[6[[][]]]]]]]]]]]}{[[1[[2[[3[[[4[[5[[6[]]]][5[6[]]]]]]]][3[4[[5[6[]]][5[6[]]][5[6[[][]]]]]]]]]]][1[2[[3[4[5[6[]]]]][3[4[5[6[]]]]]]]]]}{1,2,3,4,5,6}
		\hspace*{-.5cm}
		\cotree[1.2]{[[[1[[[2[[3[[4[5[[]]]]]][3[4[[5[]][5[]]]]]]]]]][1[2[3[4[5[]]]]]]][1[2[[[3[4[[5[]][5[]]]]][3[4[5[]]]]][[3[4[5[]]]][3[4[5[]]]]]]]]]}{[[[1[[2[[[3[[4[5[]]][4[[5[]]]]]]]]][[2[3[[4[5[[][]]]][4[5[]]]]]][2[3[4[5[[][]]]]]]]]]]]}{1,2,3,4,5}
	}
	\caption{A $(10,7)$-cotree of type~$r\ell r\ell b\ell r$ (left), a binary $(8,6)$-cotree of type~$r\ell r\ell r\ell$ (right).}
	\label{fig:cotrees}
\end{figure}

We now define the cotree deletion poset.
\cref{def:cotreeDeletion} provides a direct description in terms of cotrees, while \cref{def:cotreePreposet} provides an alternative simpler but indirect description in terms of preposets.
To illustrate the following definition, \cref{fig:cotreeDeletions} represents a sequence of deletions in $(7,5)$-cotrees.

\begin{figure}
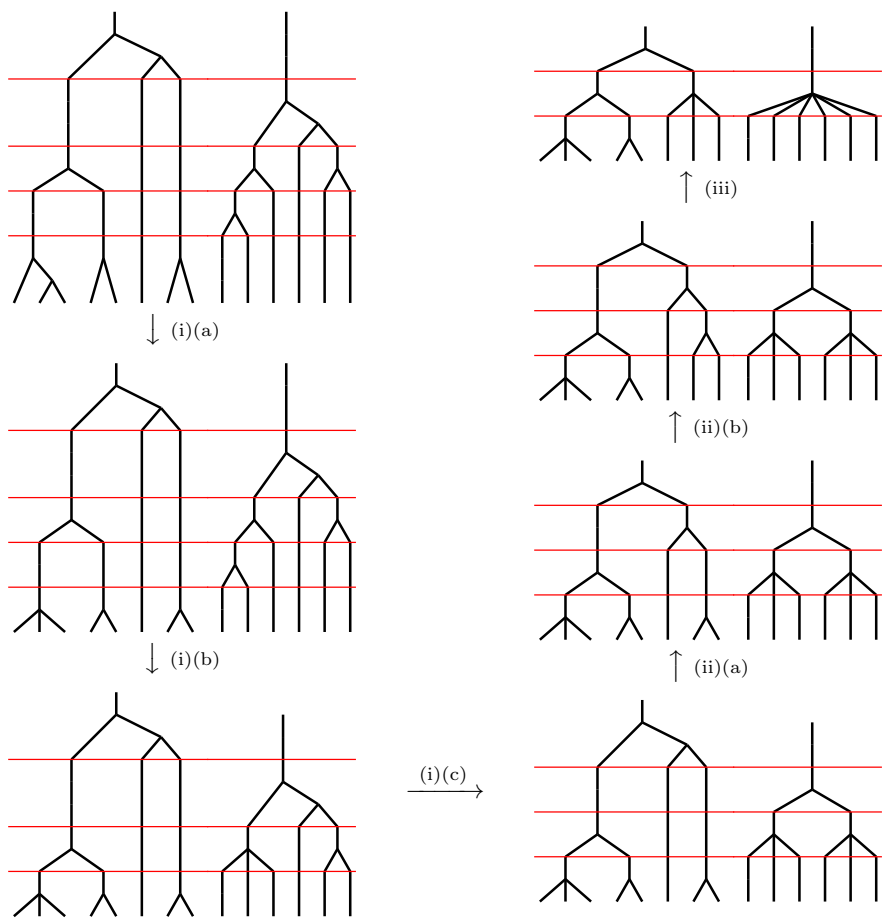

	\centerline{
		\begin{tabular}{c}
		\cotree[1.2]{[[1[[[2[[3[[4[[][[][]]]]]][3[4[[][]]]]]]]]][[1[2[3[4[]]]]][1[2[3[4[[][]]]]]]]]}{[[[1[[2[[3[[4[[[]]]][4[]]]][3[4[]]]]][[2[3[4[]]]][2[[3[4[]]][3[4[]]]]]]]]]]}{1,2,3,4} \\
		$\big\downarrow$ {\scriptsize (i)(a)} \\[-.2cm]
		\cotree[1.2]{[[1[[[2[[3[[4[[][][]]]]][3[4[[][]]]]]]]]][[1[2[3[4[]]]]][1[2[3[4[[][]]]]]]]]}{[[[1[[2[[3[[4[[]]][4[]]]][3[4[]]]]][[2[3[4[]]]][2[[3[4[]]][3[4[]]]]]]]]]]}{1,2,3,4} \\
		$\big\downarrow$ {\scriptsize (i)(b)} \\[-.2cm]
		\cotree[1.2]{[[1[[[2[[3[[][][]]][3[[][]]]]]]]][[1[2[3[]]]][1[2[3[[][]]]]]]]}{[[1[[2[[3[[]]][3[]][3[]]]][[2[3[]]][2[[3[]][3[]]]]]]]]}{1,2,3} \\
		\end{tabular}
		\hspace{-.3cm}\raisebox{-4.5cm}{$\xrightarrow{\text{ (i)(c) }}$}\hspace{-.3cm}
		\begin{tabular}{c}
		\cotree[1.2]{[[1[[2[[][][]]][2[[][]]]]][1[[2[]][2[]][2[]]]]]}{[[1[[2[[]]][2[]][2[]][2[]][2[]][2[]]]]]]}{1,2} \\
		$\big\uparrow$ {\scriptsize (iii)} \\[-.2cm]
		\cotree[1.2]{[[1[[2[[3[[][][]]][3[[][]]]]]]][1[[2[3[]]][2[[3[]][3[]]]]]]]}{[[1[[2[[3[[]]][3[]][3[]]]][2[[3[]][3[]][3[]]]]]]]]}{1,2,3} \\
		$\big\uparrow$ {\scriptsize (ii)(b)} \\[-.2cm]
		\cotree[1.2]{[[1[[2[[3[[][][]]][3[[][]]]]]]][1[[2[3[]]][2[3[[][]]]]]]]}{[[1[[2[[3[[]]][3[]][3[]]]][2[[3[]][3[]][3[]]]]]]]]}{1,2,3} \\
		$\big\uparrow$ {\scriptsize (ii)(a)} \\[-.2cm]
		\cotree[1.2]{[[1[[2[[3[[][][]]][3[[][]]]]]]][[1[2[3[]]]][1[2[3[[][]]]]]]]}{[[1[[2[[3[[]]][3[]][3[]]]][2[[3[]][3[]][3[]]]]]]]]}{1,2,3} \\
		\end{tabular}
	}
	\caption{Deletions in $(7,5)$-cotrees.}
	\label{fig:cotreeDeletions}
\end{figure}

\begin{definition}
\label{def:cotreeDeletion}
Let~$\CT \eqdef (L, R, \mu)$ and $\CT' \eqdef (L', R', \mu')$ be two $(m,n)$-cotrees.
We say that~$\CT'$ is obtained by a \defn{deletion} in~$\CT$ in either of the following three cases:
\begin{enumerate}[(i)]
\item \textbf{Node deletion:} $L'$ (resp.~$R'$) is obtained by deleting a node~$\node{n}$ with parent~$\node{p}$ in~$L$ (resp.~$R$) in the following situations:
	\begin{enumerate}[(a)]
	\item $\node{n}$ and $\node{p}$ belong to the same part of~$\mu$, then~$\mu'$ is obtained by deleting~$\node{n}$ from~$\mu$,
	\item  the part of~$\node{n}$ is of type~$\ell$ (resp.~$r$), the part of~$\node{p}$ is of type~$b$, and the parts of~$\node{n}$ and~$\node{p}$ are consecutive, then~$\mu'$ is obtained by deleting~$\node{n}$ from~$\mu$,
	\item the part of~$\node{n}$ is of type~$b$, the part of~$\node{p}$ is of type~$\ell$ (resp.~$r$), and the parts of~$\node{n}$ and~$\node{p}$ are consecutive, then~$\mu'$ is obtained from~$\mu$ by moving~$\node{p}$ to the part of~$\node{n}$ and deleting~$\node{n}$.
	\end{enumerate}
\item \textbf{Nodes move:} $L' = L$, $R' = R$, and~$\mu'$ is obtained from~$\mu$ by 
	\begin{enumerate}[(a)]
	\item either creating, in between two consecutive parts~$\mu_i$ of type~$\ell$ (resp.~$r$) and~$\mu_{i+1}$ of type~$r$ (resp.~$\ell$), a new part containing a node of~$\mu_i$ whose children are not in~$\mu_i$ and a node of~$\mu_{i+1}$ whose parent is not in~$\mu_{i+1}$ (and removing these nodes from their original parts~in~$\mu$),
	\item or moving a node~$\node{n}$ from its part~$\mu_i$ to the previous (or next) part~$\mu_{i \pm 1}$ in~$\mu$, provided that the part $\mu_i$ is not of type~$b$, that the part~$\mu_{i \pm 1}$ is of type~$b$, and that the parent (or children) of~$\node{n}$ does not belong to~$\mu_i \cup \mu_{i \pm 1}$,
	\end{enumerate}
\item \textbf{Twin parts merge:} $\mu'$ is obtained by merging two consecutive parts of~$\mu$ of type~$b$, and~$L'$ (resp.~$R'$) is obtained by deleting any node~$\node{n}$ in~$L$ (resp.~$R$) such that both~$\node{n}$ and its parent belong to these parts.
\end{enumerate}
\end{definition}

\pagebreak

\begin{proposition}
\label{prop:cotreeDeletion}
For all integers~$m, n \ge 0$, the set~$\f{CT}_{m,n}$ is stable by deletion, and the deletion graph is the Hasse diagram of a poset ranked by~$\rank(L, R, \mu) = m + n - |L| - |R| + \beta(\mu)$, where~$\beta(\mu)$ is the sum of~$|\mu_i|-1$ over all parts~$\mu_i$ of~$\mu$ with~$\mu_i \cap L \ne \varnothing \ne \mu_i \cap R$.
In particular a $(m,n)$-cotree~$\BT \eqdef (L, R, \mu)$ has
\begin{itemize}
\item rank~$0$ if and only if both~$L$ and~$R$ are binary trees, and no part of~$\mu$ meets both~$L$ and~$R$,
\item rank~$m+n-2$ if and only if~$\mu$ has two parts, and each part of~$\mu$ either meets both~$L$ and~$R$ or contains a single node,
\item rank~$m+n-1$ if and only if~$\mu$ has a single part (hence, both~$L$ and~$R$ have a single node).
\end{itemize}
\end{proposition}

\begin{proof}
Consider a deletion transforming~$\CT \eqdef (L, R, \mu)$ to~$\CT' \eqdef (L', R', \mu')$.
Then~$\BT'$ is clearly a $(m,n)$-cotree since~$L'$ and~$R'$ are still Schr\"oder trees, and the partition~$\mu'$ fulfills the conditions of \cref{def:cotree}.
For the rank, we distinguish three cases corresponding to that of \cref{def:cotreeDeletion}:
\begin{enumerate}[(i)]
\item \textbf{Node deletion:} $|L'| + |R'| = |L| + |R| - 1$ while~$\beta(\mu') = \beta(\mu')$.
\item \textbf{Nodes move:} $|L'| = |L|$, $|R'| = |R|$, while~$\beta(\mu') = \beta(\mu) + 1$.
\item \textbf{Twin parts merge:} if~$\delta$ denotes the number of nodes~$\node{n}$ of~$L$ and $R$ such that both~$\node{n}$ and its parent belong to the merged parts of~$\mu$, then $|L'| + |R'| = |L| + |R| - \delta$ and~$\beta(\mu') = \beta(\mu) - \delta + 1$.
\end{enumerate}
In all three situations, we get~$\rank(\CT') = \rank(\CT)+1$.
The end of the statement immediately follows.
\end{proof}

\begin{definition}
\label{def:cotreeDeletionPoset}
The \defn{$(m,n)$-cotree deletion poset} is the poset on $\f{CT}_{m,n}$ where a $(m,n)$-cotree is covered by all $(m,n)$-cotrees that can be obtained by a deletion.
\end{definition}

The $(m,n)$-cotree deletion poset can alternatively be defined using preposets.

\begin{definition}
\label{def:cotreePreposet}
A $(m,n)$-cotree~$\CT \eqdef (L, R, \mu)$ defines a preposet~$\preccurlyeq_{\CT}$ on~$[m+n]$ that can be read as follows.
Label~$L$ by~$[m]$ in inorder and $R$ by~$[n]^{+m}$ in inorder (shifted by~$m$).
Then, for any~$i,j \in [m+n]$, we have~$i \preccurlyeq_{\CT} j$ if the part of~$\mu$ containing~$i$ is before the part of~$\mu$ containing~$j$, or if there is a (possibly empty) path from the node containing~$i$ to the node containing~$j$ in the tree~$L$ or in the tree~$D$ oriented towards their roots.
\end{definition}

\begin{proposition}
\label{prop:characterizationCotreePreposets}
The preposets~$\preccurlyeq_{\CT}$ for~$\CT \in \f{CT}_{m,n}$ are precisely the preposets~$\preccurlyeq$ on~$[m+n]$ in which any~$1 \le i < k \le m+n$ are comparable (\ie~$i \preccurlyeq k$ or $i \succcurlyeq k$ or both) if and only if
\begin{itemize}
\item either~$i \le m < k$,
\item or~$m < i$ (resp.~$k \le m$) and at least one of the following holds:
	\begin{itemize}
	\item there exists no~$i < j < k$ such that $i \prec j \succ k$,
	\item there exists~$j \in [m]$ (resp.~$j \in [n]^{+m}$) such that $i \preccurlyeq j \preccurlyeq k$ or $i \succcurlyeq j \succcurlyeq k$.
	\end{itemize}
\end{itemize}
\end{proposition}

\begin{proof}
Any preposet~$\preccurlyeq_{\CT}$ clearly satisfies these conditions.
Conversely, given a preposet~$\preccurlyeq$ on~${[m+n]}$ satisfying these conditions, consider 
\begin{itemize}
\item the preposet~$\preccurlyeq_\ell$ on~$[m]$ defined by~$i \preccurlyeq_\ell k$ if and only if $i \preccurlyeq k$ and there is no~$i < j < k$ such that~$i \prec j \succ k$,
\item the preposet~$\preccurlyeq_r$ on~$[n]$ defined by~$i \preccurlyeq_r k$ if and only if ${i + m \preccurlyeq k+m}$ and there is no~${i < j < k}$ such that~$i + m \prec j + m \succ k + m$.
\end{itemize}
The preposet~$\preccurlyeq_\ell$ (resp.~$\preccurlyeq_r$) is clearly the preposet~$\preccurlyeq_L$ (resp.~$\preccurlyeq_R$) of a Schr\"oder $m$-tree~$L$ (resp.~a Schr\"oder $n$-tree~$R$).
We then obtain the partition~$\mu$ by considering the relations~$i \preccurlyeq k$ with~${i \le m < k}$.
Details are left to the reader.
\end{proof}

\begin{proposition}
\label{prop:cotreeDeletionPosetOnPreposets}
In the cotree deletion poset, $\CT$ is smaller than~$\CT'$ if and only if~$\preccurlyeq_{\CT}$ refines~$\preccurlyeq_{\CT'}$.
\end{proposition}

\begin{proof}
An immediate case analysis shows that deletions in a cotree~$\CT$ defined in \cref{def:cotreeDeletion} precisely translate all possible refinements in the corresponding preposet~$\preccurlyeq_{\CT}$.
\end{proof}

\enlargethispage{.3cm}
Finally, we define the rotations in cotrees, which correspond to rank~$1$ cotrees.
To illustrate the following definition, \cref{fig:cotreeRotations} represents a sequence of right rotations in binary $(3,2)$-cotrees.

\begin{figure}
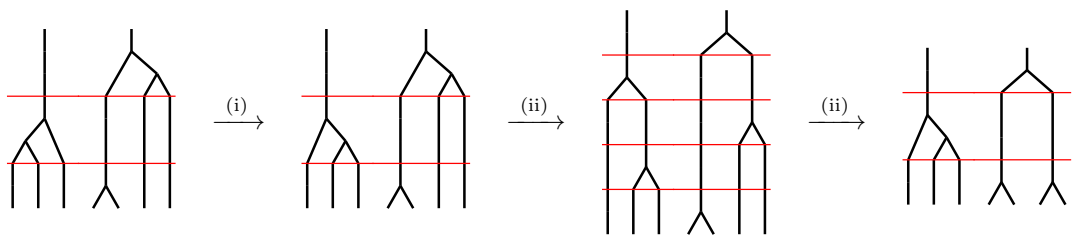

	\centerline{
		\cotree[1.2]{[[[1[[[2[[]]][2[]]][2[]]]]]]}{[[1[[[2[[][]]]]]][[1[2[]]][1[2[]]]]]}{1,2}
		\hspace{-.3cm}\raisebox{-1.5cm}{$\xrightarrow{\text{ (i) }}$}\hspace{-.3cm}
		\cotree[1.2]{[[[1[[2[[]]][[2[]][2[]]]]]]]}{[[1[[[2[[][]]]]]][[1[2[]]][1[2[]]]]]}{1,2}
		\hspace{-.3cm}\raisebox{-1.5cm}{$\xrightarrow{\text{ (ii) }}$}\hspace{-.3cm}
		\raisebox{.25cm}{\cotree[1.2]{[[1[[2[[3[4[[]]]]]][2[3[[4[]][4[]]]]]]]]}{[[1[[2[[3[[4[[][]]]]]]]]][1[2[[3[4[]]][3[4[]]]]]]]}{1,2,3,4}}
		\hspace{-.3cm}\raisebox{-1.5cm}{$\xrightarrow{\text{ (ii) }}$}\hspace{-.3cm}
		\raisebox{-.25cm}{\cotree[1.2]{[[1[[2[[]]][[2[]][2[]]]]]]}{[[1[[[2[[][]]]]]][1[2[[][]]]]]}{1,2}}
	}
	\caption{Right rotations in binary $(3,2)$-cotrees.}
	\label{fig:cotreeRotations}
\end{figure}

\begin{definition}
\label{def:rotationsCotrees}
We call \defn{binary $(m,n)$-cotrees} the rank~$0$ $(m,n)$-cotrees, \ie where both~$L$ and~$R$ are binary trees, and no part of~$\mu$ meets both~$L$ and~$R$.
We say that two binary $(m,n)$-cotrees~$\CT \eqdef (L, R, \mu)$ and $\CT' \eqdef (L', R', \mu')$ are connected by a \defn{right rotation} if:
\begin{enumerate}[(i)]
\item \textbf{Edge rotation:} $L'$ (resp.~$R'$) is obtained from~$L$ (resp.~$R$) by the right rotation of an edge whose endpoints belong to the same part of~$\mu$,
\item \textbf{Twin parts:} $L' = L$, $R' = R$, and $\mu'$ is obtained from~$\mu$ by creating, in between two consecutive parts~$\mu_i$ of type~$\ell$ and~$\mu_{i+1}$ of type~$r$, first a new part containing a node of~$\mu_{i+1}$ whose children are not in~$\mu_{i+1}$, and second a new part containing a node of~$\mu_i$ whose children are not in~$\mu_i$ (and removing these nodes from their original parts in~$\mu$, and merging consecutive parts of the same type~$\ell$ or~$r$ if any).
\end{enumerate}
\end{definition}

\begin{remark}
\label{rem:algebraicInterpretationConstrainahedra}
The $(m,n)$-cotrees are algebraically motivated by the $(m,n)$-constrainahedron defined in~\cite{Tierney,Poliakova,BottmanPoliakova} as a constrained version of the $2$-associahedra of~\cite{Bottman}.
This structure was already studied in details in particular in~\cite[Sect.~5]{Poliakova}, where
\begin{itemize}
\item the preposets of \cref{prop:characterizationCotreePreposets} are already described under the name ``good rectangular preorders'' in~\cite[Sect.~5.1.3]{Poliakova} and \cite[Sect.~2.1]{BottmanPoliakova},
\item an alternative combinatorial model is given by ``rectangular bracketings'' in~\cite[Sect.~5.1.3]{Poliakova} and \cite[Sect.~2.3]{BottmanPoliakova} (see \cref{fig:rectangularTubingsRotations} which illustrates the immediate bijection between binary $(m,n)$-cotrees and maximal $(m,n)$-bracketings),
\item the contraction poset on good rectangular preorders is proved to be a lattice in~\cite[Sect.~5.2]{Poliakova} and \cite[Sect.~3]{BottmanPoliakova}  (here, this property is a direct consequence of \cref{prop:faceLatticeConstrainahedron}),
\item a polytopal realization of this contraction poset is constructed in~\cite[Sect.~5.3]{Poliakova} and \cite[Sect.~4]{BottmanPoliakova} (which differs from our construction of \cref{subsec:constrainahedra} as discussed in \cite[Sect.~5]{BottmanPoliakova}).
\end{itemize}
Note that these realizations extend to higher dimension: bracketings of a $n_1 \times n_2 \times \dots \times n_d$ grid are naturally encoded by the shuffle of associahedra~$\Asso[n_1] \shuffleDP \Asso[n_2] \shuffleDP \dots \shuffleDP \Asso[n_d]$.

\begin{figure}[h]
	\centerline{
		\includegraphics[scale=1.1]{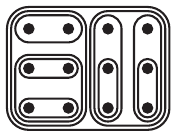}
		\raisebox{1cm}{$\longrightarrow$}
		\includegraphics[scale=1.1]{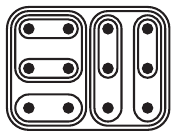}
		\raisebox{1cm}{$\longrightarrow$}
		\includegraphics[scale=1.1]{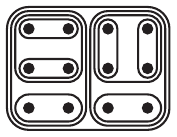}
		\raisebox{1cm}{$\longrightarrow$}
		\includegraphics[scale=1.1]{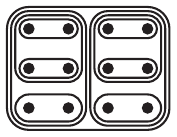}
	}
	\caption{$(3,2)$-rectangular bracketings, corresponding to the binary $(3,2)$-cotrees of \cref{fig:cotreeRotations}.}
	\label{fig:rectangularTubingsRotations}
\end{figure}
\end{remark}

\begin{remark}
\label{rem:algebraicInterpretationConstrainahedraBis}
A simple-minded algebraic interpretation of the binary $(m,n)$-cotrees involves two magmatic products $\bullet$ and $\circ$ on a set $X$.
The nodes in the left part of a cotree are associated with the product~$\bullet$, those in the right part with the product~$\circ$.
One starts at the bottom with a $(m+1) \times (n+1)$-matrix of elements of $X$ (with $m+1$ columns and $n+1$ rows).
Intermediate steps will go through rectangular $p \times q$-matrices of elements of $X$ with decreasing~$1 \le p \le m+1$ and~$1 \le q \le n+1$, until one reaches a $1 \times 1$-matrix of elements of $X$ at the top.
Going up through a node in the left part of the cotree means applying $\bullet$ to corresponding elements in two consecutive columns of the matrix, replacing these two columns by a single column and decreasing $p$ by $1$.
Similarly, going up through a node in the right part of the cotree means applying $\circ$ to corresponding elements in two consecutive rows of the matrix, replacing these two rows by a single row and decreasing $q$ by $1$.
In short, a left node stands for $\bullet$ merging two consecutive columns, and a right node for $\circ$ merging two consecutive rows.
\end{remark}


\subsection{Associahedra $\shuffleDP$ Associahedra}
\label{subsec:constrainahedra}

We now consider shuffles of associahedra with associahedra.

\begin{definition}
\label{def:constrainahedron}
The \defn{$(m,n)$-constrainahedron} is the polytope~$\Constrainahedron = \Asso[m] \shuffleDP \Asso[n]$.
\end{definition}

Note that since~$\Perm[1] = \Asso[1]$ and $\Perm[2] = \Asso[2]$, the first $(m,n)$-constrainahedron which is neither an associahedron, nor a $(m,n)$-multiplihedron, is the $(3,3)$-constrainahedron \linebreak $\Constrainahedron[3][3]$, which is a $5$-dimensional polytope.
There is thus no reasonable example to be drawn in this section.

\begin{proposition}
\label{prop:faceLatticeConstrainahedron}
The face lattice of the $(m,n)$-constrainahedron~$\Constrainahedron$ is isomorphic to the $(m,n)$-cotree deletion poset (augmented with a minimal element).
\end{proposition}

\begin{proof}
This follows from \cref{prop:shuffleFacePreposets} (see also \cref{rem:biPreposets}), since~$(m,n)$-cotrees are just a specialization of bipreposets.
\end{proof}

\begin{remark}
\label{rem:simpleConstrainahedron}
In contrast to the associahedron~$\Asso$, the constrainahedron~$\Constrainahedron$ is simple if and only if~$m = 0$, or~$n = 0$, or $\max(m,n) \le 2$.
\end{remark}

\begin{proposition}
\label{prop:fanConstrainahedron}
The normal fan of the $(m,n)$-constrainahedron~$\Constrainahedron$ is the fan containing one cone $\polytope{C}(\BT) \eqdef \set{\b{x} \in \R^{m+n}}{x_i \le x_j \text{ if } i \preccurlyeq_{\BT} j}$ for each~${\CT \in \f{CT}_{m,n}}$.
\end{proposition}

\begin{proof}
Immediate from \cref{prop:faceLatticeConstrainahedron,def:cotreePreposet}.
\end{proof}

\begin{proposition}
\label{prop:graphConstrainahedron}
When oriented in the direction~${\b{\omega} \eqdef (n,\dots,1) - (1,\dots,n) = \sum_{i \in [n]} (n+1-2i) \, \b{e}_i}$, the graph of the $(m,n)$-constrainahedron~$\Constrainahedron$ is isomorphic to the right rotation graph on binary $(m,n)$-cotrees.
\end{proposition}

\begin{proof}
It follows from \cref{prop:faceLatticeConstrainahedron} that the vertices of~$\Constrainahedron$ correspond to the binary $(m,n)$-cotrees.
It is easy to check that the edges of~$\Constrainahedron$ oriented by~$\b{\omega}$ correspond to right rotations on binary $(m,n)$-cotrees.
\end{proof}

\begin{remark}
\label{rem:noCoTamari}
In contrast to \cref{prop:graphMultiplihedron}, note that the right rotation graph on binary $(m,n)$-cotrees is not the Hasse diagram of a lattice when~$m \ge 3$ and~$n \ge 3$.
See \cref{fig:constrainahedronNotLattice} for examples of a pair of binary $(3,3)$-cotrees with no join and a pair of binary $(3,3)$-cotrees with no meet.
\begin{figure}[h]
	\centerline{
	\begin{tikzpicture}
		\node[label=west:{$\CT_1 =$}] (a) at (-4,4.3) {\cotree[1.2]{[[1[[2[3[[4[[5[[]]][5[]]]]]]]]][1[2[[3[4[5[]]]][3[4[5[]]]]]]]]}{[[1[[2[[3[4[[5[[][]]]]]]]][2[3[[4[5[[]]]][4[5[]]]]]]]]]}{1,2,3,4,5}};
		\node[label=east:{$= \CT_2$}] (b) at (4,4.3) {\cotree[1.2]{[[1[2[[3[4[5[]]]][3[4[5[]]]]]]][1[[2[3[[4[[5[[]]][5[]]]]]]]]]]}{[[1[[2[3[[4[5[[]]]][4[5[]]]]]][2[[3[4[[5[[][]]]]]]]]]]]}{1,2,3,4,5}};
		\node (c) at (-6,0) {\cotree[1.2]{[[1[[[2[[3[[]]][3[]]]]]]][1[2[[3[]][3[]]]]]]}{[[1[[2[3[[][]]]][[2[[3[]]]][2[3[]]]]]]]]]}{1,2,3}};
		\node (d) at (2,0) {\cotree[1.2]{[[1[2[[3[4[]]][3[4[]]]]]][1[[2[3[[4[[][]]]]]]]]]}{[[1[[2[[3[[4[[]]][4[]]]]]][2[3[[4[]][4[]]]]]]]]}{1,2,3,4}};
		\node (e) at (-2,0) {\cotree[1.2]{[[1[[2[3[[4[[][]]]]]]]][1[2[[3[4[]]][3[4[]]]]]]]}{[[1[[2[[3[[4[[]]][4[]]]]]][2[3[[4[]][4[]]]]]]]]}{1,2,3,4}};
		\node (f) at (6,0) {\cotree[1.2]{[[1[[[2[[3[[]]][3[]]]]]]][1[2[[3[]][3[]]]]]]}{[[1[[[2[[3[]]]][2[3[]]]][2[3[[][]]]]]]]]]}{1,2,3}};
		\node[label=west:{$\CT[S]_1 =$}] (g) at (-4,-4.6) {\cotree[1.2]{[[1[[[2[[3[[4[]]]][3[4[]]]]]]]][1[2[3[4[[][]]]]]]]}{[[1[[2[3[[4[[]]][4[]]]]][[2[[3[4[]]]]][2[3[4[]]]]]]]]]]}{1,2,3,4}};
		\node[label=east:{$= \CT[S]_2$}] (h) at (4,-4.6) {\cotree[1.2]{[[1[2[3[4[[][]]]]]][1[[[2[[3[[4[]]]][3[4[]]]]]]]]]}{[[1[[[2[[3[4[]]]]][2[3[4[]]]]][2[3[[4[[]]][4[]]]]]]]]]]}{1,2,3,4}};
		\draw[thick,gray] (a.south) -- (c.north);
		\draw[thick,gray] (a.south) -- (e.north);
		\draw[thick,gray] (b.south) -- (d.north);
		\draw[thick,gray] (b.south) -- (f.north);
		\draw[thick,gray] (c.south) -- (g.north);
		\draw[thick,gray] (d.south) -- (g.north);
		\draw[thick,gray] (e.south) -- (h.north);
		\draw[thick,gray] (f.south) -- (h.north);
	\end{tikzpicture}
	}
	\caption{Rotations on all $(3,3)$-cotrees larger than~$\CT[S]_1$ or~$\CT[S]_2$ and smaller than~$\CT_1$ or~$\CT_2$. This shows that~$\CT[S]_1$ and~$\CT[S]_2$ have no join, and~$\CT_1$ and~$\CT_2$ have no meet, so that the rotation graph on binary $(3,3)$-cotrees does not define a lattice.}
	\label{fig:constrainahedronNotLattice}
\end{figure}
\end{remark}


\subsection{Vertex, facet, and Minkowski sum descriptions}
\label{subsec:vertexFacetDescriptionsConstrainahedra}

Our next three statements, illustrated in \cref{fig:coordinatesCotrees,fig:inequalitiesCotrees}, provide the vertex, facet, and Minkowski sum descriptions of the $(m,n)$-constrai\-nahedron~$\Constrainahedron$.
The proofs are elementary computations from \mbox{\cref{def:associahedron,def:graphicalZonotope,def:constrainahedron}}.

\begin{proposition}
\label{prop:verticesConstrainahedron}
For any~$i \in [m+n]$, the $i$-th coordinate of the vertex of the $(m,n)$-constrainahedron $\Constrainahedron$ corresponding to a binary $(m,n)$-cotree~$(L, R, \mu)$ is given by
\begin{itemize}
\item if~$i \le m$, the product of the numbers of leaves in the left and right subtrees of~$\node{n}$, plus the number of nodes of~$R$ below~$\node{n}$, where~$\node{n}$ is the node of~$L$ labeled by~$i$ in inorder.
\item if~$i \ge m+1$, the product of the numbers of leaves in the left and right subtrees of~$\node{n}$, plus the number of nodes of~$L$ below~$\node{n}$, where~$\node{n}$ is the node of~$R$ labeled by~$i-m$ in inorder.
\end{itemize}
In particular, the sum of the coordinates is always~$\binom{m+1}{2} + \binom{n+1}{2} + mn = \binom{m+n+1}{2}$.
\end{proposition}

\begin{proposition}
\label{prop:facetsConstrainahedron}
Let~$\CT \eqdef (L, R, \mu)$ be a $(m,n)$-cotree of rank~$m+n-2$.
Let~${A \eqdef A_1 \cup \dots \cup A_k}$ where $A_1, \dots A_k$ are the inorder labels of the nodes of~$L$ located in the bottom part~$\mu_1$, and let ${B \eqdef B_1 \cup \dots \cup B_\ell}$ where~$B_1, \dots, B_\ell$ are the inorder labels shifted by~$m$ of the nodes of~$R$ located in the bottom part~$\mu_1$.
Then the facet of the $(m,n)$-constrainahedron~$\Constrainahedron$ corresponding to~$\CT$ is defined by the inequality
\[
\dotprod{\b{x}}{\one_{A \cup B}} \ge \sum_{i \in [k]} \binom{|A_i|+1}{2} + \sum_{j \in [\ell]} \binom{|B_j|+1}{2} + |A| \cdot |B|.
\]
Moreover, this inequality is a facet defining inequality of the permutahedron~$\Perm[m+n]$ if and only if~$k \le 1$ and~$\ell \le 1$, \ie if both $L$ and $R$ have at most two nodes.
\end{proposition}

\begin{proposition}
\label{prop:MinkowskiConstrainahedron}
The $(m,n)$-constrainahedron~$\Constrainahedron$ is the Minkowski sum of the faces \linebreak ${\simplex_I \eqdef \conv\set{\b{e}_i}{i \in I}}$ of the standard simplex~$\simplex_{[m+n]}$ corresponding to all subsets~$I \subseteq [m+n]$ such that~$|I \cap [m]| \le 1$ and $|I \cap [n]^{+m}| \le 1$,  or~$I$ is a subinterval of~$[m]$ or of~$[n]^{+m}$.
\end{proposition}

\begin{example}
\cref{fig:coordinatesCotrees} illustrates some vertex coordinates of $\Constrainahedron[3][3]$ computed by \cref{prop:verticesConstrainahedron} and \cref{fig:inequalitiesCotrees} illustrates some facet inequalities of $\Constrainahedron[3][3]$ computed by \cref{prop:facetsConstrainahedron}. Note that all vertices of $\Constrainahedron[3][3]$ have coordinate sum~$21$. Note that for any pair $(i, j) \in \{(1, 2), (1, 3), (2, 2), (2, 3), (2, 4), (3, 2), (3, 4), (4, 4)\}$, we have~$\CT_i$ smaller than~$\CT[S]_j$ in deletion order, so that the vertex corresponding to~$\CT_i$ is contained in the facet corresponding to~$\CT[S]_j$.

\begin{figure}[h]
	\centerline{
	\begin{tabular}{c@{}c@{}c@{}c}
		$\CT_1$
		&
		$\CT_2$
		&
		$\CT_3$
		&
		$\CT_4$
		\\[-.3cm]
		\cotree[1.2]{[[[1[[[2[[][]]]]]][1[2[]]]][1[2[]]]]}{[[[1[[[2[[]]][2[]]][[2[]][2[]]]]]]]}{1,2}
		&
		\cotree[1.2]{[[1[[[2[[][]]]]]][[1[2[]]][1[2[]]]]]}{[[[1[[[2[[]]][2[]]][[2[]][2[]]]]]]]}{1,2}
		&
		\cotree[1.2]{[[1[[2[[3[[4[[][]]]]]]]]][1[2[[3[4[]]][3[4[]]]]]]]}{[[1[[2[[3[[4[[]]][4[]]]]]][2[3[[4[]][4[]]]]]]]]}{1,2,3,4}
		&
		\cotree[1.2]{[[1[[[2[[][]]]]]][1[2[[][]]]]]}{[[1[[[2[[]]][2[]]][[2[]][2[]]]]]]}{1,2}
		\\[.1cm]
		$(1, 5, 6, 2, 5, 2)$
		&
		$(1, 7, 4, 2, 5, 2)$
		&
		$(1, 7, 3, 2, 6, 2)$
		&
		$(1, 7, 1, 3, 6, 3)$
	\end{tabular}
	}
	\caption{Vertices of $\Constrainahedron[3][3]$ corresponding to four binary $(3,3)$-cotrees.}
	\label{fig:coordinatesCotrees}
\end{figure}
\begin{figure}[h]
	\centerline{
	\begin{tabular}{c@{}c@{}c@{}c}
		$\CT[S]_1$
		&
		$\CT[S]_2$
		&
		$\CT[S]_3$
		&
		$\CT[S]_4$
		\\[-.3cm]
		\cotree[1.2]{[[1[[]]][1[]][1[]][1[]]]}{[[1[[][][][]]]]}{1}
		&
		\cotree[1.2]{[[1[[][]]][1[]][1[]]]}{[[1[[][]]][1[[][]]]]}{1}
		&
		\cotree[1.2]{[[1[[][]]][1[]][1[]]]}{[[1[[][][][]]]]}{1}
		&
		\cotree[1.2]{[[1[[][]]][1[[][]]]]}{[[1[[][][][]]]]}{1}
		\\[.1cm]
		$x_3 + x_4 + x_5 \ge 6$
		&
		$x_1 + x_4 + x_6 \ge 5$
		&
		$x_1 + x_4 + x_5$
		&
		$x_1 + x_3 + x_4$
		\\
		&
		&
		$+ x_6 \ge 10$
		&
		$+ x_5 + x_6 \ge 14$
	\end{tabular}
	}
	\caption{Facet defining inequalities of $\Constrainahedron[3][3]$ corresponding to four rank $4$ $(3,3)$-cotrees.}
	\label{fig:inequalitiesCotrees}
\end{figure}
\end{example}


\subsection{Numerology}
\label{subsec:numerologyConstrainahedra}

We now present enumerative results on the number of vertices, faces and facets of the $(m,n)$-constrainahedron~$\Constrainahedron$.
The first few values of these numbers are collected in \cref{table:verticesConstrainahedraBiassociahedra,table:facetsConstrainahedraBiassociahedra,table:facesConstrainahedra} in \cref{subsec:tablesConstrainahedraBiassociahedra}.
We start with vertices.
See~\cref{table:verticesConstrainahedraBiassociahedra}.

\begin{proposition}
\label{prop:numberVerticesConstrainahedra}
The number of vertices of the $(m,n)$-constrainahedron~$\Constrainahedron$ (equivalently of binary $(m,n)$-cotrees) is given by
\[
[x^{m+1} \, y^{n+1}] \, \sum_{i = 0}^{\min(m,n)} 2 \, \tCGF^{(i)}(x) \, \tCGF^{(i)}(y) + \tCGF^{(i)}(x) \, \tCGF^{(i+1)}(y) + \tCGF^{(i+1)}(x) \, \tCGF^{(i)}(y),
\]
where $\tCGF^{(i)}(x)$ is defined for~$i \ge 0$ by
\[
\tCGF^{(0)}(x) = x
\qquad\text{and}\qquad
\tCGF^{(i)}(x) = \tCGF^{(i-1)}(\CGF(x)) - \tCGF^{(i-1)}(x),
\]
where
\[
\CGF(x) = \frac{1-\sqrt{1-4x}}{2}
\]
is the Catalan generating function (see \cref{prop:CatalanSchroderGF}).
\end{proposition}

\begin{proof}
According to \cref{prop:cotreeDeletion,prop:faceLatticeConstrainahedron}, we need to count the binary $(m,n)$-cotrees.
We group them according to their type, which can be of the form~$(\ell r)^i$, $(r\ell)^i$, $(\ell r)^i\ell$ or~$(r\ell)^ir$.
We then need to construct the two binary trees~$L$ and~$R$ with compatible partitions of their nodes into~$i$ (or~$i+1$) parts.
We construct a partitioned binary tree with~$i+1$ parts by
\begin{itemize}
\item choosing a binary tree~$T$ for the first part (thus the apparition of~$\CGF$),
\item grafting at each leaf of~$T$ a partitioned binary tree with $i-1$ parts (thus the substitution of the~$y$ variable in~$\tCGF^{(i)}$), such that not all leaves of~$T$ are replaced by an empty binary tree (thus the subtraction of~$\tCGF^{(i-1)}$ in the definition of~$\tCGF^{(i)}$).
\qedhere
\end{itemize}
\end{proof}

We now consider the number of facets of the $(m,n)$-constrainahedron~$\Constrainahedron$.
See~\cref{table:facetsConstrainahedraBiassociahedra}.

\begin{proposition}
\label{prop:numberFacetsConstrainahedra}
The number of facets of the $(m,n)$-constrainahedron~$\Constrainahedron$ is
\[
 (2^m-1)(2^n-1) + \binom{m+1}{2} + \binom{n+1}{2} - 1.
\]
\end{proposition}

\begin{proof}
According to \cref{prop:cotreeDeletion,prop:faceLatticeConstrainahedron}, the possible types for the $(m,n)$-cotrees corresponding to facets of the $(m,n)$-constrainahedron~$\Constrainahedron$ are:
\begin{itemize}
\item type~$\ell r$ (resp.~type~$r\ell$): then both~$L$ and~$R$ have a single node, thus a single choice,
\item type~$b\ell$ (resp.~type~$rb$): then $L$ (resp.~$R$) is a non-trivial corolla while~$R$ (resp.~$L$) has a single node, thus~$\binom{m+1}{2}-1$ choices (resp.~$\binom{n+1}{2}-1$ choices),
\item type~$\ell b$ (resp.~type~$br$): then~$L$ (resp.~$R$) is any Schr\"oder tree of height~$2$ while~$R$ (resp.~$L$) has a single node, thus~$2^m-2$ choices (resp.~$2^n-2$ choices),
\item type~$bb$: then both~$L$ and~$R$ are Schr\"oder trees of height~$2$, thus~$(2^m-2)(2^n-2)$ choices.
\qedhere
\end{itemize}
\end{proof}

Finally, adapting the approach of \cref{prop:numberVerticesConstrainahedra}, we can count all faces of the $(m,n)$-constraina\-hedron~$\Constrainahedron$ according to their dimension.

\begin{proposition}
\label{prop:numberFacesConstrainahedra}
Let~$CT(m,n,p)$ denote the number of $p$-dimensional faces of the $(m,n)$-constraina\-hedron~$\Constrainahedron$, or equivalently the number of $(m,n)$-cotrees of rank~$p$.
Then the generating function~$\CTGF(x,y,z) \eqdef \sum_{m,n,p} CT(m,n,p) \, x^m \, y^n \, z^p$ is given by
\[
\CTGF(x,y,z) = \sum_w \SGF^w_u(x,z) \, \SGF^w_d(y,z)
\]
where
\begin{itemize}
\item $w$ runs over all words on the alphabet~$\{\ell,r,b\}$ with no two consecutive~$\ell$ nor two consecutive~$r$ and such that~$1 \le |w|_\ell + |w|_b \le m$ and~$1 \le |w|_r + |w|_b \le n$,
\item for a letter~$s \in \{\ell,r\}$, the generating function~$\SGF^w_s(y,z)$ is defined by~$\SGF^\varepsilon_s(y,z) \eqdef y$ and
\[
\SGF^{w}_s(y,z) \eqdef
\begin{cases}
\SGF^{w'}_s(\SGF(y,z), z) - \SGF^{w'}_s(y,z) & \text{if } w = sw', \\
\SGF^{w'}_s \big( \frac{y}{1-yz}, z \big) - \SGF^{w'}_s(y,z) & \text{if } w = bw', \\
\SGF^{w'}_s(y,z) & \text{if } w = tw' \text{ with } t \notin \{s,b\}, \\
\end{cases}
\]
where
\[
\SGF(y,z) = \frac{1+yz-\sqrt{1-4y-2yz+y^2z^2}}{2(z+1)}
\]
is the Schr\"oder generating function (see \cref{prop:CatalanSchroderGF}).
\end{itemize}
\end{proposition}

\begin{proof}
According to \cref{prop:cotreeDeletion,prop:faceLatticeConstrainahedron}, we need to count the $(m,n)$-cotrees of rank~$p$.
We group them according to their type, which can be any word~$w$ on the alphabet~$\{\ell,r,b\}$ with no two consecutive~$\ell$ nor two consecutive~$r$ and such that~$1 \le |w|_\ell + |w|_b \le m$ and~$1 \le |w|_r + |w|_b \le n$.
We then need to construct the two trees~$L$ and~$R$ with compatible partitions of their nodes.
If~$w = \varepsilon$ is the empty word, then both~$L$ and~$R$ are empty trees with a single leaf.
Otherwise, $w = tw'$ with~$t \in \{\ell,r,b\}$, and we construct~$L$ (resp.~$R$) by considering a tree~$L'$ (resp.~$R'$) for~$w'$~and
\begin{itemize}
\item if~$t = \ell$ (resp.~$t = r$), grafting at each leaf of~$L'$ (resp.~$R'$) a Schr\"oder tree (thus the substitution of the~$y$ variable in~$\SGF^{w'}_s$ by~$\SGF(y,z)$), such that not all leaves of~$L'$ (resp.~$R'$) are replaced by an empty trees (thus the subtraction of~$\SGF^{w'}_s$),
\item if~$t = b$, grafting at each leaf of~$L'$ (resp.~$R'$) either an empty tree or a tree with a single node (thus the substitution of the~$y$ variable in~$\SGF^{w'}_s$ by~$\smash{\frac{y}{1-yz}}$), such that not all leaves of~$L'$ (resp.~$R'$) are replaced by an empty tree (thus the subtraction of~$\SGF^{w'}_s$).
\qedhere
\end{itemize}
\end{proof}

For instance, the $f$-vectors of all constrainahedra~$\Constrainahedron$ with~$m + n \le 5$ are displayed in \cref{fig:multiplihedra2,fig:multiplihedra3,fig:multiplihedra4} (all these constrainahedra are multiplihedra since~${\Perm[1] = \Asso[1]}$ and ${\Perm[2] = \Asso[2]}$).
The $f$-vector of the $(3,3)$-constrainahedron~$\Constrainahedron[3][3]$ is 
\[
f(\Constrainahedron[3][3]) = (1, 606, 1550, 1384, 498, 60, 1).
\]


\section{Biassociahedra}
\label{sec:biassociahedra}

In this section, we study the family of $(m,n)$-biassociahedra, obtained as the shuffle of an $m$-anti-associahedron~$\Ossa[m]$ with an $n$-associahedron~$\Asso[n]$.
The combinatorics of the biassociahedron was already studied in~\cite{Markl, SaneblidzeUmble-matrads, MerkulovWillwacher}.
We recall the combinatorial model of bitrees (\cref{subsec:bitrees}), describe the face lattice, fan and oriented skeleton of the $(m,n)$-biassociahedron in terms of these bitrees (\cref{subsec:biassociahedra}), provide explicit vertex and facet descriptions of the $(m,n)$-biassociahedron (\cref{subsec:vertexFacetDescriptionsBiassociahedra}), and present enumerative results on the number of vertices, faces and facets of the $(m,n)$-biassociahedron (\cref{subsec:numerologyBiassociahedra}).


\subsection{Bitrees}
\label{subsec:bitrees}

We start by recalling the bitrees of~\cite{Markl}, illustrated in \cref{fig:bitrees}.
Intuitively, a bitree is a pair of Schr\"oder trees, the first growing up and the second growing down, drawn side to side, together with the information of the relative positions of their nodes.
Examples are illustrated in \cref{fig:bitrees}.

We say that a tree is (growing) \defn{up} (resp.~\defn{down}) when we see it as a poset oriented from its root to its leaves (resp.~from its leaves to its roots), and we draw it accordingly so that the orientation goes from bottom to top.

\begin{definition}[\cite{Markl}]
\label{def:bitree}
A \defn{$(m,n)$-bitree} is a triple~$\BT \eqdef (U, D, \mu)$, where $U$ is an up Schr\"oder $m$-tree, $D$ is a down Schr\"oder $n$-tree, and $\mu$ is an ordered partition of the nodes of~$U$ and~$D$~such~that
\begin{itemize}
\item the part of~$\mu$ containing a node~$\node{n}$ of~$U$ (resp.~$D$) distinct from the root is before (resp.~after) or equal to the part of~$\mu$ containing the parent of~$\node{n}$,
\item no two consecutive parts of~$\mu$ are both contained in~$U$ or both contained in~$D$,
\item there is no oriented path in~$U$ (resp.~in~$D$) joining two nodes in a part of~$\mu$ which meets both~$U$ and~$D$.
\end{itemize}
We say that a part of~$\mu$ is of type~$u$, $d$ or~$b$ when it contains nodes from~$U$, $D$ or both~$U$ and~$D$, and we call \defn{type} of the bitree the word given by the sequence of types of the parts of~$\mu$.
We denote by~$\f{BT}_{m,n}$ the set of $(m,n)$-bitrees.
\end{definition}

To represent a $(m,n)$-bitree~$\BT \eqdef (U, D, \mu)$, we draw the two trees~$U$ and~$D$ side by side in opposite directions ($U$ grows up while $D$ grows down), and we mark the separations between the parts of~$\mu$ by (red) horizontal lines.
Note that~$\mu$ is read from bottom to top.
Examples are illustrated in \cref{fig:bitrees}.

\begin{figure}
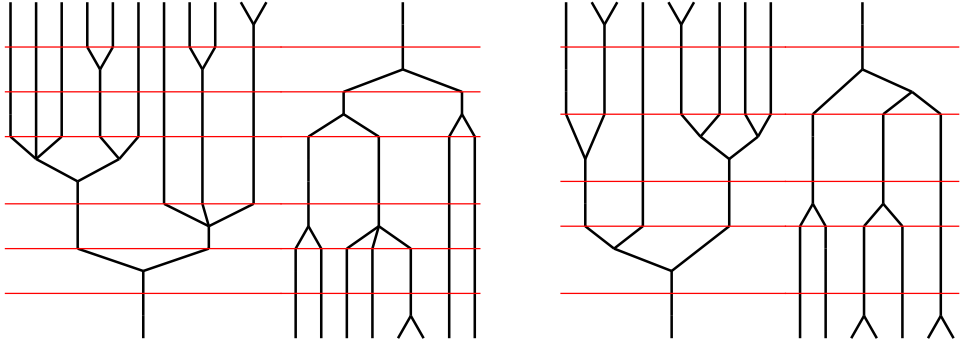

	\centerline{
		\bitree[1.2]{[[1[[2[3[[[4[[5[6[]]]]][4[5[6[]]]][4[5[6[]]]]][[4[5[[6[]][6[]]]]][4[5[6[]]]]]]]][2[[3[4[5[6[]]]]][3[4[5[[6[]][6[]]]]]][3[4[5[6[[][]]]]]]]]]]]}{[[6[[5[[4[[[3[[2[[1[]]]][2[1[]]]]]]]][4[3[[2[1[]]][2[1[]]][2[1[[][]]]]]]]]][5[[4[3[2[1[]]]]][4[3[2[1[]]]]]]]]]]}{1,2,3,4,5,6}
		\hspace*{-.5cm}
		\bitree[1.2]{[[1[[[2[[3[[4[[[5[]]]]][4[5[[][]]]]]]]][2[3[4[5[]]]]]][2[3[[[4[5[[][]]]][4[5[]]]][[4[5[]]][4[5[]]]]]]]]]]}{[[5[[4[[[3[[2[1[]]][2[[[1[]]]]]]]]]][[4[3[[2[1[[][]]]][2[1[]]]]]][4[3[2[1[[][]]]]]]]]]]}{1,2,3,4,5}
	}
	\caption{A $(10,7)$-bitree of type~$dubudbu$ (left), a binary $(8,6)$-bitree of type~$dududu$ (right).}
	\label{fig:bitrees}
\end{figure}

We now define the bitree deletion poset.
\cref{def:bitreeDeletion} provides a direct description in terms of bitrees, while \cref{def:bitreePreposet} provides an alternative simpler but indirect description in terms of preposets.
To illustrate the following definition, \cref{fig:bitreeDeletions} represents a sequence of deletions in $(6,5)$-bitrees.

\begin{figure}
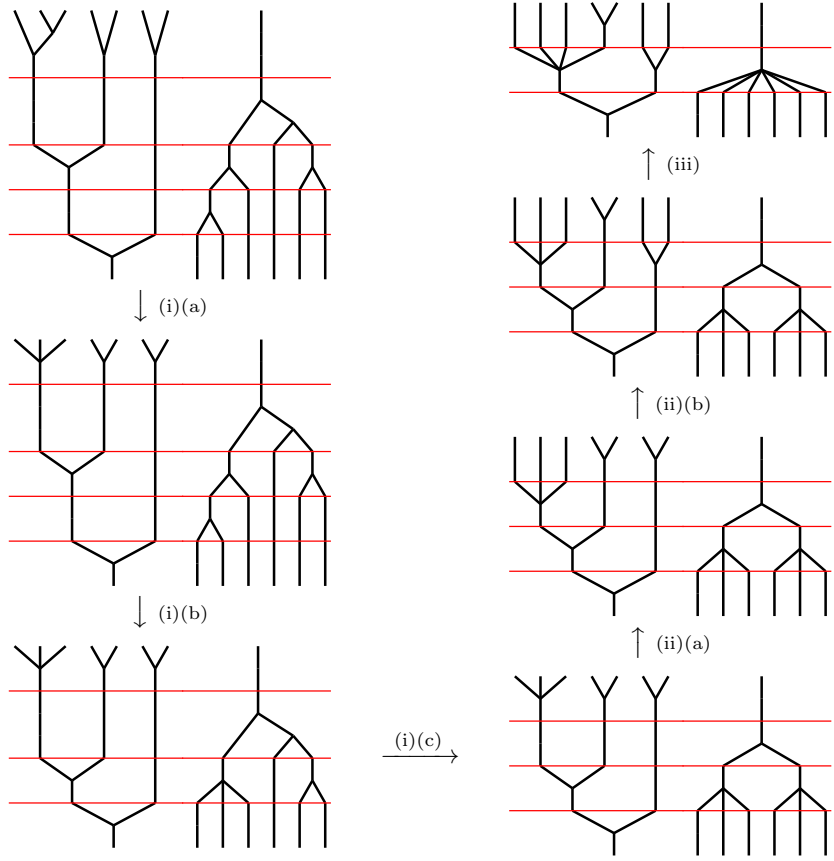

	\centerline{
		\begin{tabular}{c}
		\bitree[1.2]{[[1[[2[[3[[[4[[][[][]]]]]]][3[4[[][]]]]]]]][1[2[3[4[[][]]]]]]]}{[[[4[[3[[2[[1[[]]][1[]]]][2[1[]]]]][[3[2[1[]]]][3[[2[1[]]][2[1[]]]]]]]]]]}{1,2,3,4} \\
		$\big\downarrow$ {\scriptsize (i)(a)} \\[-.2cm]
		\bitree[1.2]{[[1[[2[[3[[[4[[][][]]]]]][3[4[[][]]]]]]]][1[2[3[4[[][]]]]]]]}{[[4[[3[[2[[1[[]]][1[]]]][2[1[]]]]][[3[2[1[]]]][3[[2[1[]]][2[1[]]]]]]]]]}{1,2,3,4} \\
		$\big\downarrow$ {\scriptsize (i)(b)} \\[-.2cm]
		\bitree[1.2]{[[1[[2[[[3[[][][]]]]]][2[3[[][]]]]]][1[2[3[[][]]]]]]}{[[3[[2[[1[[]]][1[]][1[]]]][[2[1[]]][2[[1[]][1[]]]]]]]]}{1,2,3}
		\end{tabular}
		\hspace{-.3cm}\raisebox{-4.5cm}{$\xrightarrow{\text{ (i)(c) }}$}\hspace{-.3cm}
		\begin{tabular}{c}
		\bitree[1.2]{[[1[[2[]][2[]][2[]][2[[][]]]]][1[[2[]][2[]]]]]}{[[2[[1[[]]][1[]][1[]][1[]][1[]][1[]]]]]}{1,2} \\
		$\big\uparrow$ {\scriptsize (iii)} \\[-.2cm]
		\bitree[1.2]{[[1[[2[[3[]][3[]][3[]]]][2[3[[][]]]]]][1[2[[3[]][3[]]]]]]}{[[3[[2[[1[[]]][1[]][1[]]]][2[[1[]][1[]][1[]]]]]]]}{1,2,3} \\		
		$\big\uparrow$ {\scriptsize (ii)(b)} \\[-.2cm]
		\bitree[1.2]{[[1[[2[[3[]][3[]][3[]]]][2[3[[][]]]]]][1[2[3[[][]]]]]]}{[[3[[2[[1[[]]][1[]][1[]]]][2[[1[]][1[]][1[]]]]]]]}{1,2,3} \\		
		$\big\uparrow$ {\scriptsize (ii)(a)} \\[-.2cm]
		\bitree[1.2]{[[1[[2[[3[[][][]]]]][2[3[[][]]]]]][1[2[3[[][]]]]]]}{[[3[[2[[1[[]]][1[]][1[]]]][2[[1[]][1[]][1[]]]]]]]}{1,2,3}
		\end{tabular}
	}
	\caption{Deletions in $(6,5)$-bitrees.}
	\label{fig:bitreeDeletions}
\end{figure}

\begin{definition}
\label{def:bitreeDeletion}
Let~$\BT \eqdef (U, D, \mu)$ and $\BT' \eqdef (U', D', \mu')$ be two $(m,n)$-bitrees.
We say that~$\BT'$ is obtained by a \defn{deletion} in~$\BT$ in either of the following three cases:
\begin{enumerate}[(i)]
\item \textbf{Node deletion:} $U'$ (resp.~$D'$) is obtained by deleting a node~$\node{n}$ with parent~$\node{p}$ in~$U$ (resp.~$D$) in the following situations:
	\begin{enumerate}[(a)]
	\item $\node{n}$ and $\node{p}$ belong to the same part of~$\mu$, then~$\mu'$ is obtained by deleting~$\node{n}$ from~$\mu$,
	\item  the part of~$\node{n}$ is of type~$u$ (resp.~$d$), the part of~$\node{p}$ is of type~$b$, and the parts of~$\node{n}$ and~$\node{p}$ are consecutive, then~$\mu'$ is obtained by deleting~$\node{n}$ from~$\mu$,
	\item the part of~$\node{n}$ is of type~$b$, the part of~$\node{p}$ is of type~$u$ (resp.~$d$), and the parts of~$\node{n}$ and~$\node{p}$ are consecutive, then~$\mu'$ is obtained from~$\mu$ by moving~$\node{p}$ to the part of~$\node{n}$ and deleting~$\node{n}$.
	\end{enumerate}
\item \textbf{Nodes move:} $U' = U$, $D' = D$, and~$\mu'$ is obtained from~$\mu$ by 
	\begin{enumerate}[(a)]
	\item either creating, in between two consecutive parts~$\mu_i$ of type~$u$ (resp.~$d$) and~$\mu_{i+1}$ of type~$d$ (resp.~$u$), a new part containing a node of~$\mu_i$ whose children (resp.~parent) are not in~$\mu_i$ and a node of~$\mu_{i+1}$ whose children (resp.~parent) are not in~$\mu_{i+1}$ (and removing these nodes from their original parts in~$\mu$),
	\item or moving a node~$\node{n}$ of~$U$ from its part~$\mu_i$ to the previous (or next) part~$\mu_{i \pm 1}$ in~$\mu$, provided that the part $\mu_i$ is of type~$u$, that the part~$\mu_{i \pm 1}$ is of type~$b$, and that the parent (or children) of~$\node{n}$ does not belong to~$\mu_i \cup \mu_{i \pm 1}$ (and same for~$D$ exchanging $u$/$d$, previous/next and parent/children),
	\end{enumerate}
\item \textbf{Twin parts merge:} $\mu'$ is obtained by merging two consecutive parts of~$\mu$ of type~$b$, and~$U'$ (resp.~$D'$) is obtained by deleting any node~$\node{n}$ in~$U$ (resp.~$D$) such that both~$\node{n}$ and its parent belong to these parts.
\end{enumerate}
\end{definition}

\begin{proposition}
\label{prop:bitreeDeletion}
For all integers~$m, n \ge 0$, the set~$\f{BT}_{m,n}$ is stable by deletion, and the deletion graph is the Hasse diagram of a poset ranked by~$\rank(U, D, \mu) = m + n - |U| - |D| + \beta(\mu)$, where~$\beta(\mu)$ is the sum of~$|\mu_i|-1$ over all parts~$\mu_i$ of~$\mu$ with~$\mu_i \cap U \ne \varnothing \ne \mu_i \cap D$.
In particular a $(m,n)$-bitree~$\BT \eqdef (U, D, \mu)$ has
\begin{itemize}
\item rank~$0$ if and only if both~$U$ and~$D$ are binary trees, and no part of~$\mu$ meets both~$U$ and~$D$,
\item rank~$m+n-2$ if and only if~$\mu$ has two parts, and each part of~$\mu$ either meets both~$U$ and~$D$ or contains a single node,
\item rank~$m+n-1$ if and only if~$\mu$ has a single part (hence, both~$U$ and~$D$ have a single node).
\end{itemize}
\end{proposition}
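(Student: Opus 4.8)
The plan is to mirror the proof of \cref{prop:paintedTreeDeletion} for the multiplihedron, since by \cref{def:biassociahedron} we have $\Biassociahedron = \Ossa[m] \shuffleDP \Asso[n]$ and both factors are deformed permutahedra; the combinatorial content is therefore governed by \cref{prop:shuffleFacePreposets}. First I would establish that the three deletion operations of \cref{def:bitreeDeletion} each produce a valid $(m,n)$-bitree, checking case by case that the three defining conditions of \cref{def:bitree} (orientation compatibility of $\mu$, no two consecutive parts on the same side, and no oriented path inside a mixed part) are preserved. This is the routine-verification half and I would dispatch it quickly by noting that each operation only locally modifies $U$, $D$, or $\mu$.

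Next I would verify the rank formula $\rank(U,D,\mu) = m+n-|U|-|D|+\beta(\mu)$ increases by exactly $1$ under each deletion, again splitting into the three cases (i), (ii), (iii) and their subcases. The key bookkeeping is tracking how each of the three summands $|U|$, $|D|$, and $\beta(\mu)$ changes. For a node deletion (i), one of $|U|,|D|$ drops by $1$ while $\beta(\mu)$ absorbs the compensating change in subcases (b),(c) where a mixed part is involved; for a nodes move (ii), both trees are unchanged and $\beta(\mu)$ must change by $1$ through the creation or extension of a mixed part; for a twin parts merge (iii), one node is deleted while two $b$-type parts combine, and I would check $\beta(\mu)$ adjusts accordingly. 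The goal in each line is to confirm a net $+1$, exactly as the analogous table in the proof of \cref{prop:paintedTreeDeletion}.

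The main obstacle I anticipate is not either of these local computations but rather showing that the deletion graph is \emph{exactly} the Hasse diagram of the induced poset, i.e.\ that the three listed operations generate precisely the cover relations of the refinement order on $\Biassociahedron$-preposets and that no cover relation is missed. For this I would argue that the faces of $\Biassociahedron$ are, via \cref{prop:shuffleFacePreposets} and the face lattices of the associahedron factors (\cref{prop:associahedron}), in bijection with $(m,n)$-bitrees, so that covering in the face lattice corresponds to a minimal coarsening of the associated preposet; I would then match each such minimal coarsening to one of the operations (i)--(iii). The delicate point is the interaction between the two Schr\"oder trees through the shuffling partition $\mu$, encoded by the subtle subcases of (i) and (ii): these are precisely the "fiber product" phenomena flagged in the introduction, so I would need to check carefully that coarsenings that simultaneously touch $U$, $D$, and the interleaving are captured and counted once.

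Finally I would read off the three extremal-rank characterizations from the rank formula. Rank $0$ forces $|U|$ and $|D|$ maximal (both binary) and $\beta(\mu)=0$ (no mixed part), which by \cref{prop:SchroderTreeDeletion} means both trees are binary; rank $m+n-1$ forces a single node in each tree and a single part; and the rank $m+n-2$ case I would obtain by identifying which single deletion from a maximal-rank bitree is available, matching the stated description of $\mu$ having two parts each either mixed or a singleton. These identifications follow directly once the rank formula and the deletion correspondence are in hand, so they close the argument without further difficulty.
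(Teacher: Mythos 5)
Your core plan---stability check, case-by-case bookkeeping showing each deletion raises the rank by exactly $1$, then reading off the extremal ranks---coincides with the paper's proof and is all that is needed. The problem is the step you single out as ``the main obstacle'': it is a phantom, and your proposed way of handling it would be circular. The poset in the statement is, by \cref{def:bitreeDeletionPoset}, simply the transitive closure of the deletion graph; no identification with the refinement order on face preposets of $\Biassociahedron$ is being claimed here (that identification is the later \cref{prop:faceLatticeBiassociahedron}). Once each deletion is shown to increase $\rank$ by exactly $1$, everything follows formally: the deletion graph is acyclic, so its transitive closure is a poset; no deletion edge can factor as a composition of two or more deletions (the rank would jump by at least $2$), so every edge is a cover relation; and the poset is graded by $\rank$. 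Your detour through \cref{prop:shuffleFacePreposets}, \cref{prop:associahedron}, and the face lattice of $\Ossa[m] \shuffleDP \Asso[n]$ is therefore unnecessary, and worse, it inverts the paper's logic: \cref{prop:faceLatticeBiassociahedron} presupposes that the bitree deletion poset is well defined, which is exactly what the present proposition establishes.

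Two of your predicted computations are also wrong as stated, and would derail the bookkeeping if carried out as described. In a node deletion (i), $\beta(\mu)$ does \emph{not} ``absorb a compensating change'' in subcases (b) and (c): it is unchanged in all three subcases---in (c) the mixed part loses $\node{n}$ but gains $\node{p}$, so its cardinality, hence its contribution to $\beta$, is constant---and the rank increment comes entirely from $|U| + |D|$ dropping by $1$; if $\beta$ changed as well, the rank would not increase by exactly $1$. In a twin parts merge (iii), it is not ``one node'' that is deleted: all $\delta \ge 0$ nodes whose parent also lies in the two merged parts disappear simultaneously, and the correct identities are $|U'| + |D'| = |U| + |D| - \delta$ together with $\beta(\mu') = \beta(\mu) + 1 - \delta$, which again nets $+1$. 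With these identities in place, and with the formal observation above about covers, the extremal-rank characterizations follow as you describe.
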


\begin{proof}
Consider a deletion transforming~$\BT \eqdef (U, D, \mu)$ to~$\BT' \eqdef (L', R', \mu')$.
Then~$\BT'$ is clearly a $(m,n)$-bitree since~$U'$ and~$D'$ are still Schr\"oder trees, and the partition~$\mu'$ fulfills the conditions of \cref{def:bitree}.
For the rank, we distinguish three cases corresponding to that of \cref{def:bitreeDeletion}:
\begin{enumerate}[(i)]
\item \textbf{Node deletion:} $|U'| + |D'| = |U| + |D| - 1$ while~$\beta(\mu') = \beta(\mu')$.
\item \textbf{Nodes move:} $|U'| = |U|$, $|D'| = |D|$, while~$\beta(\mu') = \beta(\mu) + 1$.
\item \textbf{Twin parts merge:} if~$\delta$ denotes the number of nodes~$\node{n}$ of~$U$ and $D$ such that both~$\node{n}$ and its parent belong to the merged parts of~$\mu$, then $|U'| + |D'| = |U| + |D| - \delta$ and~$\beta(\mu') = \beta(\mu) - \delta + 1$.
\end{enumerate}
In all three situations, we get~$\rank(\BT') = \rank(\BT)+1$.
The end of the statement immediately follows.
\end{proof}

\begin{definition}
\label{def:bitreeDeletionPoset}
The \defn{$(m,n)$-bitree deletion poset} is the poset on $\f{BT}_{m,n}$ where a $(m,n)$-bitree is covered by all $(m,n)$-bitrees that can be obtained by a deletion.
\end{definition}

The $(m,n)$-bitree deletion poset can alternatively be defined using preposets.

\begin{definition}
\label{def:bitreePreposet}
A $(m,n)$-bitree~$\BT \eqdef (U, D, \mu)$ defines a preposet~$\preccurlyeq_{\BT}$ on~$[m+n]$ that can be read as follows.
Label~$U$ by~$[m]$ in inorder and $D$ by~$[n]^{+m}$ in inorder (shifted by~$m$).
Then, for any~$i,j \in [m+n]$, we have~$i \preccurlyeq_{\BT} j$ if the part of~$\mu$ containing~$i$ is before the part of~$\mu$ containing~$j$, or if there is a (possibly empty) path from the node containing~$i$ to the node containing~$j$ in the tree~$U$ oriented towards its leaves or in the tree~$D$ oriented towards its root.
\end{definition}

\begin{proposition}
\label{prop:characterizationBitreePreposets}
The preposets~$\preccurlyeq_{\BT}$ for~$\BT \in \f{BT}_{m,n}$ are precisely the preposets~$\preccurlyeq$ on~$[m+n]$ in which any~$1 \le i < k \le m+n$ are comparable (\ie~$i \preccurlyeq k$ or $i \succcurlyeq k$ or both) if and only if
\begin{itemize}
\item either~$i \le m < k$,
\item or~$m < i$ (resp.~$k \le m$) and at least one of the following holds:
	\begin{itemize}
	\item there exists no~$i < j < k$ such that $i \prec j \succ k$ (resp.~$i \succ j \prec k$),
	\item there exists~$j \in [m]$ (resp.~$j \in [n]^{+m}$) such that $i \preccurlyeq j \preccurlyeq k$ or $i \succcurlyeq j \succcurlyeq k$.
	\end{itemize}
\end{itemize}
\end{proposition}

\begin{proof}
Any preposet~$\preccurlyeq_{\BT}$ clearly satisfies these conditions.
Conversely, given a preposet~$\preccurlyeq$ on~${[m+n]}$ satisfying these conditions, consider 
\begin{itemize}
\item the preposet~$\preccurlyeq_u$ on~$[m]$ defined by~$i \preccurlyeq_u k$ if and only if $i \preccurlyeq k$ and there is no~$i < j < k$ such that~$i \succ j \prec k$,
\item the preposet~$\preccurlyeq_d$ on~$[n]$ defined by~$i \preccurlyeq_d k$ if and only if ${i + m \preccurlyeq k+m}$ and there is no~${i < j < k}$ such that~$i + m \prec j + m \succ k + m$.
\end{itemize}
The preposet~$\preccurlyeq_u$ (resp.~$\preccurlyeq_d$) is clearly the preposet~$\preccurlyeq_U$ (resp.~$\preccurlyeq_D$) of an up Schr\"oder $m$-tree~$U$ (resp.~a down Schr\"oder $n$-tree~$D$).
We then obtain the partition~$\mu$ by considering the relations~$i \preccurlyeq k$ with~$i \le m < k$.
Details are left to the reader.
\end{proof}

\begin{proposition}
\label{prop:bitreeDeletionPosetOnPreposets}
In the bitree deletion poset, $\BT$ is smaller than~$\BT'$ if and only if~$\preccurlyeq_{\BT}$ refines~$\preccurlyeq_{\BT'}$.
\end{proposition}

\begin{proof}
An immediate case analysis shows that deletions in a bitree~$\BT$ defined in \cref{def:bitreeDeletion} precisely translate all possible refinements in the corresponding preposet~$\preccurlyeq_{\BT}$.
\end{proof}

Finally, we define the rotations in bitrees, which correspond to rank~$1$ bitrees.
To illustrate the following definition, \cref{fig:bitreeRotations} represents a sequence of right rotations in binary $(3,2)$-bitrees.

\begin{figure}
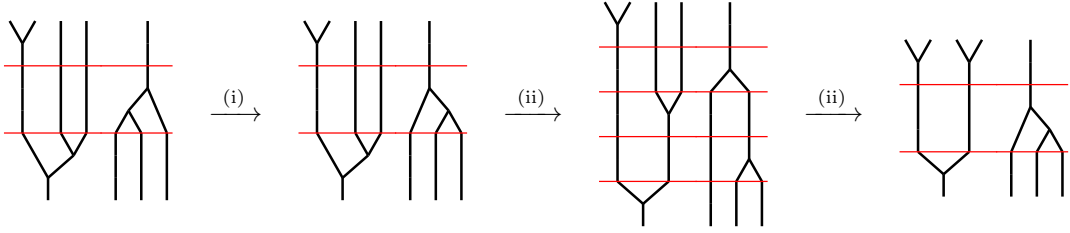

	\centerline{
		\bitree[1.2]{[[1[[[2[[][]]]]]][[1[2[]]][1[2[]]]]]}{[[2[[[1[[[]]]][1[]]][1[]]]]]}{1,2}
		\hspace{-.3cm}\raisebox{-1.5cm}{$\xrightarrow{\text{ (i) }}$}\hspace{-.3cm}
		\bitree[1.2]{[[1[[[2[[][]]]]]][[1[2[]]][1[2[]]]]]}{[[2[[1[[[]]]][[1[]][1[]]]]]]}{1,2}
		\hspace{-.3cm}\raisebox{-1.5cm}{$\xrightarrow{\text{ (ii) }}$}\hspace{-.3cm}
		\raisebox{.25cm}{\bitree[1.2]{[[1[[2[[3[[4[[][]]]]]]]]][1[2[[3[4[]]][3[4[]]]]]]]}{[[4[[3[[2[1[[]]]]]][3[2[[1[]][1[]]]]]]]]}{1,2,3,4}}
		\hspace{-.3cm}\raisebox{-1.5cm}{$\xrightarrow{\text{ (ii) }}$}\hspace{-.3cm}
		\raisebox{-.25cm}{\bitree[1.2]{[[1[[[2[[][]]]]]][1[2[[][]]]]]}{[[2[[1[[]]][[1[]][1[]]]]]]}{1,2}}
	}
	\caption{Right rotations in binary $(3,2)$-bitrees.}
	\label{fig:bitreeRotations}
\end{figure}

\begin{definition}
\label{def:rotationsBitrees}
We call \defn{binary $(m,n)$-bitrees} the rank~$0$ $(m,n)$-bitrees, \ie where both~$U$ and~$D$ are binary trees, and no part of~$\mu$ meets both~$U$ and~$D$.
We say that two binary $(m,n)$-bitrees~$\BT \eqdef (U, D, \mu)$ and $\BT' \eqdef (U', D', \mu')$ are connected by a \defn{right rotation} if:
\begin{enumerate}[(i)]
\item \textbf{Edge rotation:} $U'$ (resp.~$D'$) is obtained from~$U$ (resp.~$D$) by the right rotation of an edge whose endpoints belong to the same part of~$\mu$,
\item \textbf{Twin parts:} $U' = U$, $D' = D$, and $\mu'$ is obtained from~$\mu$ by creating, in between two consecutive parts~$\mu_i$ of type~$u$ and~$\mu_{i+1}$ of type~$d$, first a new part containing a node of~$\mu_{i+1}$ whose children are not in~$\mu_{i+1}$, and second a new part containing a node of~$\mu_i$ whose children are not in~$\mu_i$ (and removing these nodes from their original parts in~$\mu$, and merging consecutive parts of the same type~$u$ or~$d$ if any).
\end{enumerate}
\end{definition}

\begin{remark}
\label{rem:algebraicInterpretationBiassociahedra}
The algebraic interpretation of the binary $(m,n)$-bitrees involves both a magmatic product $\ast$ and a magmatic coproduct $\Delta$ on a set $X$.
The nodes in the left part of a bitree are associated with the coproduct $\Delta$, while the nodes in the right part are associated with the product~$\ast$.
One starts at the bottom with a $1 \times (n+1)$-matrix of elements of $X$ (with $1$ column and $n+1$ rows).
Intermediate steps will go through rectangular $p \times q$-matrices of elements of $X$ with increasing $1 \leq p \leq m+1$ and decreasing $1 \leq q \leq n+1$, until one reaches a $(m+1) \times 1$-matrix of elements of $X$ at the top.
Going up through a node in the left part of the bitree means applying $\Delta$ to each element in a column of the matrix, replacing this column by two columns and increasing $p$ by $1$.
Similarly, going up through a node in the right part of the bitree means applying $\ast$ to corresponding elements in two consecutive rows of the matrix, replacing these two rows by a single row and decreasing $q$ by $1$.
In short, a left node stands for $\Delta$ duplicating a column, and a right node for $\ast$ merging two consecutive rows.
\end{remark}


\subsection{Anti-associahedra $\shuffleDP$ Associahedra}
\label{subsec:biassociahedra}

We now consider shuffles of anti-associahedra with asso\-ciahedra.
We call \defn{anti-associahedron} the polytope~$\Ossa \eqdef (n+1) \, \one - \Asso$.
It has the same combinatorics (but a different embedding) as the associahedron~$\Asso$.

\begin{definition}
\label{def:biassociahedron}
The \defn{$(m,n)$-biassociahedron} is the polytope~$\Biassociahedron = \Ossa[m] \shuffleDP \Asso[n]$.
\end{definition}

Note that since~$\Perm[1] = \Asso[1]$ and $\Perm[2] = \Asso[2]$, the first $(m,n)$-biassociahedron which is neither an associahedron, nor a $(m,n)$-multiplihedron, is the $(3,3)$-biassociahedron \linebreak $\Biassociahedron[3][3]$, which is a $5$-dimensional polytope.
There is thus no reasonable example to be drawn in this section.

\begin{proposition}
\label{prop:faceLatticeBiassociahedron}
The face lattice of the $(m,n)$-biassociahedron~$\Biassociahedron$ is isomorphic to the $(m,n)$-bitree deletion poset (augmented with a minimal element).
\end{proposition}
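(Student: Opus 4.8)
The plan is to mirror exactly the strategy used for the multiplihedron in \cref{prop:faceLatticeMultiplihedron}, replacing the permutahedron factor~$\Perm[m]$ by the anti-associahedron~$\Ossa[m]$. Since~$\Biassociahedron = \Ossa[m] \shuffleDP \Asso[n]$, the general combinatorial description of the faces of a shuffle from \cref{prop:shuffleFacePreposets} applies: the face preposets of~$\Biassociahedron$ are exactly those of the form~$\preccurlyeq_{\polytope{F}, \polytope{G}, \mu}$, where~$\polytope{F}$ is a face of~$\Ossa[m]$, $\polytope{G}$ is a face of~$\Asso[n]$, and~$\mu$ is an ordered partition of~$[m+n]$ satisfying the three compatibility conditions of that proposition. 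First I would recall that the faces of~$\Ossa[m]$ (which has the same combinatorics as~$\Asso[m]$ by definition) are encoded by Schr\"oder $m$-trees via \cref{prop:associahedron}, and likewise the faces of~$\Asso[n]$ by Schr\"oder $n$-trees, with~$\Ossa[m]$ oriented ``up'' and~$\Asso[n]$ oriented ``down'' to match the bitree conventions of \cref{def:bitree}.

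The heart of the argument is to construct an explicit bijection between $(m,n)$-bitrees~$\BT \eqdef (U, D, \mu)$ and the admissible triples~$(\polytope{F}, \polytope{G}, \lambda)$ parametrizing the face preposets of~$\Biassociahedron$, and to check it is an isomorphism of posets from the $(m,n)$-bitree deletion poset onto the refinement poset on face preposets. Given a bitree~$(U, D, \mu)$, the face~$\polytope{F}$ of~$\Ossa[m]$ is the one corresponding to the up Schr\"oder tree~$U$, the face~$\polytope{G}$ of~$\Asso[n]$ is the one corresponding to the down Schr\"oder tree~$D$, and the ordered partition~$\lambda$ of~$[m+n]$ is read off from~$\mu$ using the inorder labelings of \cref{rem:bitreePreposets}: label the nodes of~$U$ by~$[m]$ and the nodes of~$D$ by~$[n]^{+m}$ in inorder, so that each part of~$\mu$ becomes a block of~$\lambda$. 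The three conditions in \cref{def:bitree}---that the part of a non-root node of~$U$ (resp.~$D$) is before (resp.~after) or equal to that of its parent, that no two consecutive parts lie entirely in~$U$ or entirely in~$D$, and that no oriented path within~$U$ or~$D$ joins two nodes of a single mixed part---match precisely the three hypotheses of \cref{prop:shuffleFacePreposets} (that~$\preccurlyeq_\mu$ extends~$\preccurlyeq_\polytope{F}$ and~$\preccurlyeq_\polytope{G}^{+m}$, that consecutive parts are not both in~$[m]$ or both in~$[n]^{+m}$, and the incomparability condition inside mixed parts). I would verify this correspondence condition by condition.

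Once the bijection on faces is established, the poset isomorphism follows by comparing the two cover relations. On one side, \cref{prop:bitreeDeletion} describes the three types of bitree deletions and shows each raises the rank by exactly one; on the other side, \cref{prop:shuffleFacePreposets} (via \cref{rem:biPreposets}) says the face lattice of~$\Biassociahedron$ is the refinement lattice on the preposets~$\preccurlyeq_{\polytope{F}, \polytope{G}, \lambda}$. I would check that the preposet~$\preccurlyeq_{\BT}$ attached to a bitree by \cref{rem:bitreePreposets} coincides with~$\preccurlyeq_{\polytope{F}, \polytope{G}, \lambda}$, and that~$\BT$ is covered by~$\BT'$ in the deletion poset if and only if~$\preccurlyeq_{\BT}$ is covered by~$\preccurlyeq_{\BT'}$ in refinement order---equivalently, that the three deletion moves (node deletion, nodes move, twin parts merge) are exactly the elementary coarsenings of the associated preposets. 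The main obstacle, as in the multiplihedron case, is bookkeeping: the correspondence between the ordered partition~$\mu$ on the abstract nodes and the ordered partition~$\lambda$ on~$[m+n]$ is delicate because unary nodes of the Schr\"oder trees and the ``mixed'' parts of type~$b$ interact with the Minkowski summand~$\polytope{Z}_{m,n}$, and one must confirm that the three seemingly baroque cases of \cref{def:bitreeDeletion} are precisely the covers needed and that no admissible face preposet is missed or double-counted. As in \cref{prop:faceLatticeMultiplihedron}, I expect that, after setting up the dictionary carefully, the verification that it is a poset isomorphism is routine and can reasonably be left to the reader.
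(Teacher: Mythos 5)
Your proposal takes essentially the same route as the paper: the paper's entire proof is one sentence invoking \cref{prop:shuffleFacePreposets} together with \cref{rem:biPreposets}, observing that $(m,n)$-bitrees are precisely the bipreposets specialized to the case where both factors' face preposets are Schr\"oder tree preposets (with~$\Ossa[m]$ reversed, hence ``up''). Your condition-by-condition dictionary and the cover-relation verification simply make explicit the bookkeeping that the paper declares immediate, so the approach is identical in substance, just more detailed.
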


\begin{proof}
This follows from \cref{prop:shuffleFacePreposets} (see also \cref{rem:biPreposets}), since~$(m,n)$-bitrees are just a specialization of bipreposets.
\end{proof}

\begin{remark}
\label{rem:simpleBiassociahedron}
In contrast to the associahedron~$\Asso$, the biassociahedron~$\Biassociahedron$ is simple if and only if~$m = 0$, or~$n = 0$, or $\max(m,n) \le 2$.
\end{remark}

\begin{proposition}
\label{prop:fanBiassociahedron}
The normal fan of the $(m,n)$-biassociahedron~$\Biassociahedron$ is the fan containing one cone $\polytope{C}(\BT) \eqdef \set{\b{x} \in \R^{m+n}}{x_i \le x_j \text{ if } i \preccurlyeq_{\BT} j}$ for each~${\BT \in \f{BT}_{m,n}}$.
\end{proposition}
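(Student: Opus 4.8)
The plan is to recognize that this statement is the exact analogue, for biassociahedra, of \cref{prop:fanMultiplihedron} for multiplihedra, and to deduce it by the same route: combining the face lattice description with the preposet bookkeeping already set up for shuffles. First I would recall that, by \cref{prop:shuffleDeformedPermutahedra}, the biassociahedron $\Biassociahedron = \Ossa[m] \shuffleDP \Asso[n]$ is a deformed permutahedron in $\DefoPerms[m+n]$ (here $\Ossa[m] = (m+1)\,\one - \Asso[m]$ is itself a deformed permutahedron, since the braid fan is invariant under central symmetry, negation merely reversing each ordered partition). Consequently \cref{def:facePreposet} applies verbatim: each face $\polytope{F}$ of $\Biassociahedron$ carries a face preposet $\preccurlyeq_\polytope{F}$ for which the cone $\set{\b{x} \in \R^{m+n}}{x_i \le x_j \text{ if } i \preccurlyeq_\polytope{F} j}$ is exactly the normal cone of $\polytope{F}$, and the normal fan is the collection of these cones over all faces.

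With this in hand, the proof reduces to a single identification. By \cref{prop:faceLatticeBiassociahedron}, the faces of $\Biassociahedron$ are in bijection with the $(m,n)$-bitrees $\BT \in \f{BT}_{m,n}$, the bijection being precisely the one furnished by \cref{prop:shuffleFacePreposets}, which realizes every face preposet of a shuffle as some $\preccurlyeq_{\polytope{F}, \polytope{G}, \mu}$. It therefore suffices to check that, for the bitree $\BT$ attached to a face $\polytope{F}$, the face preposet $\preccurlyeq_\polytope{F}$, namely $\preccurlyeq_{\polytope{F}, \polytope{G}, \mu}$, coincides with the combinatorially defined preposet $\preccurlyeq_{\BT}$ of \cref{rem:bitreePreposets}. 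Once this is established, the cones $\polytope{C}(\BT)$ listed in the statement are literally the normal cones $\c{N}(\polytope{F})$, and the claim follows. Both descriptions are read off the same bipreposet picture of \cref{rem:biPreposets}, so this amounts to matching the two recipes term by term.

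The actual verification, which I expect to be the only (mild) obstacle, is tracking the orientations and the reflection. In the bipreposet $\preccurlyeq_{\polytope{F}, \polytope{G}, \mu}$ the three sources of relations are: oriented paths in the factor preposets $\preccurlyeq_\polytope{F}$ and ${\preccurlyeq_\polytope{G}}^{+m}$, the ``lower block'' order coming from $\mu$, and the within-block relations for blocks meeting both $[m]$ and $[n]^{+m}$. I would match these against \cref{rem:bitreePreposets} as follows: the factor $\polytope{G}$ is a face of $\Asso[n]$, whose sylvester-type preposet (\cref{prop:sylvesterFan}) is the path relation in $D$ oriented towards the root and shifted by $m$; the factor $\polytope{F}$ is a face of $\Ossa[m]$, and because of the reflection $(m+1)\,\one - \Asso[m]$ its preposet is the reverse of the sylvester preposet, that is, the path relation in $U$ oriented towards the leaves; and the ordered partition $\mu$ contributes exactly the ``part before'' relations. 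The remaining point is to confirm that the three defining conditions on $(m,n)$-bitrees in \cref{def:bitree} coincide with the three admissibility conditions on $\mu$ in \cref{prop:shuffleFacePreposets} (no two consecutive parts inside a single factor, and no comparabilities forced inside a mixed block), which is precisely what makes bitrees a specialization of the admissible bipreposets. After this routine check the normal cones are identified one by one, completing the proof.
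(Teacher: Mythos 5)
Your proposal is correct and takes essentially the same route as the paper, whose proof of this proposition is simply to combine \cref{prop:faceLatticeBiassociahedron} with \cref{rem:bitreePreposets}, using the general fact from \cref{def:facePreposet} that the face preposets of a deformed permutahedron describe exactly its normal cones. The only difference is that you spell out the verification the paper leaves implicit, namely the orientation bookkeeping for the reflected factor~$\Ossa[m]$ (paths in $U$ oriented towards the leaves) and the matching of the bitree conditions of \cref{def:bitree} with the admissibility conditions of \cref{prop:shuffleFacePreposets}, which is precisely the content of the paper's proof of \cref{prop:faceLatticeBiassociahedron}.
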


\begin{proof}
Immediate from \cref{prop:faceLatticeBiassociahedron,def:bitreePreposet}.
\end{proof}

\begin{proposition}
\label{prop:graphBiassociahedron}
When oriented in the direction~${\b{\omega} \eqdef (n,\dots,1) - (1,\dots,n) = \sum_{i \in [n]} (n+1-2i) \, \b{e}_i}$, the graph of the $(m,n)$-biassociahedron~$\Biassociahedron$ is isomorphic to the right rotation graph on binary $(m,n)$-bitrees.
\end{proposition}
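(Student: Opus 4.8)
The plan is to mirror the argument for \cref{prop:graphMultiplihedron}, using the face lattice description of \cref{prop:faceLatticeBiassociahedron} and the rotation poset of a shuffle from \cref{prop:shuffleRotationPosets}. By \cref{prop:faceLatticeBiassociahedron} and the grading of \cref{prop:bitreeDeletion}, the vertices of $\Biassociahedron$ are the rank~$0$ elements of the $(m,n)$-bitree deletion poset, that is the binary $(m,n)$-bitrees, and the edges are the rank~$1$ elements, each incident to the two binary bitrees lying below it in the deletion order. By \cref{def:rotationPoset}, orienting the graph of $\Biassociahedron$ in the direction $\b{\omega}$ yields the rotation graph, so it remains to identify its arcs with the two families of right rotations of \cref{def:rotationsBitrees} and to check their orientation.

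First I would enumerate the rank~$1$ bitrees. By the rank formula $\rank(U,D,\mu) = m+n-|U|-|D|+\beta(\mu)$ of \cref{prop:bitreeDeletion}, a rank~$1$ bitree has exactly one of three shapes: either $U$ is binary except for a single degree~$3$ node while $\mu$ has no part of type~$b$, or symmetrically for $D$, or both $U$ and $D$ are binary and $\mu$ has a single part of type~$b$ of size~$2$. In the first two cases the unique degree~$3$ node can be resolved into two binary nodes in exactly two ways, giving the two endpoint binary bitrees related by an edge rotation in $U$ or $D$ whose endpoints lie in the same part of $\mu$, which is rotation~(i) of \cref{def:rotationsBitrees}. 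In the last case the size~$2$ part of type~$b$ splits into a type~$u$ part and a type~$d$ part in the two possible orders, giving the two endpoints related by the twin parts move, which is rotation~(ii). Conversely, each right rotation of \cref{def:rotationsBitrees} arises from exactly one such rank~$1$ bitree, so the arcs of the graph and the right rotations are in bijection.

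To fix the orientation I would feed the rotation posets of the two factors into \cref{prop:shuffleRotationPosets}. The factor $\Asso[n]$ carries the Tamari lattice as rotation poset (\cref{prop:TamariLattice}), while $\Ossa[m] = (m+1)\,\one - \Asso[m]$ carries its opposite: since $\dotprod{\b{\omega}}{\one} = 0$ we have $\dotprod{\b{\omega}}{(m+1)\,\one - \b{x}} = -\dotprod{\b{\omega}}{\b{x}}$, so the $\b{\omega}$-orientation of $\Ossa[m]$ reverses that of $\Asso[m]$, which is exactly why $U$ is drawn as an up tree in \cref{def:bitree}. Using the dictionary of \cref{rem:bitreePreposets} between a bitree $\BT$ and its biposet $\preccurlyeq_{\b{v},\b{w},\mu}$, \cref{prop:shuffleRotationPosets} then reads off, for each edge, which of its two endpoints is the source: the $U$- and $D$-edge rotations are governed by covers of $\le_{\Ossa[m]}$ and $\le_{\Asso[n]}$, and the twin parts move is governed by the minimal strengthening of the condition ``$p \preccurlyeq_\mu q$ implies $p \preccurlyeq_{\mu'} q$ for $p \in [m]$ and $q \in [n]^{+m}$''.

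The main obstacle I anticipate is the bookkeeping in this last step: confirming that the twin parts creation of \cref{def:rotationsBitrees}(ii) is precisely the minimal increase of the interspersing condition in \cref{prop:shuffleRotationPosets}, that the two orders in which a type~$b$ part splits are correctly separated by $\b{\omega}$, and that the edge rotations in the up tree $U$ point in the direction prescribed by $\le_{\Ossa[m]}$ rather than $\le_{\Asso[m]}$ (the sign reversal above). Each of these is a local check once the biposet of a bitree is written out explicitly via \cref{rem:bitreePreposets}, so the whole verification reduces to finitely many small cases, analogous to but slightly more delicate than the painted tree computation underlying \cref{prop:graphMultiplihedron}. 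Unlike that proposition, no lattice claim is made here, consistent with the failure of the lattice property for $\Biassociahedron[3][3]$ noted in \cref{rem:latticeShuffle}.
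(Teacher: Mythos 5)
Your proposal is correct and takes essentially the same route as the paper: the paper's own proof likewise extracts the vertex correspondence from \cref{prop:faceLatticeBiassociahedron} and then simply asserts that the correspondence between $\b{\omega}$-oriented edges and right rotations ``is easy to check.'' Your enumeration of the rank-one bitrees (degree-three node versus size-two type-$b$ part) and your orientation analysis via \cref{prop:shuffleRotationPosets}, including the observation that $\Ossa[m] = (m+1)\,\one - \Asso[m]$ carries the $\b{\omega}$-reversed (opposite Tamari) rotation poset because $\dotprod{\b{\omega}}{\one} = 0$, just spell out in detail the verification the paper leaves implicit.
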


\begin{proof}
It follows from \cref{prop:faceLatticeBiassociahedron} that the vertices of~$\Biassociahedron$ correspond to the binary $(m,n)$-bitrees.
It is easy to check that the edges of~$\Biassociahedron$ oriented by~$\b{\omega}$ correspond to right rotations on binary $(m,n)$-bitrees.
\end{proof}

\begin{remark}
\label{rem:noBiTamari}
In contrast to \cref{prop:graphMultiplihedron}, note that the right rotation graph on binary $(m,n)$-bitrees is not the Hasse diagram of a lattice when~$m \ge 3$ and~$n \ge 3$.
See \cref{fig:biassociahedronNotLattice} for examples of a pair of binary $(3,3)$-bitrees with no join and a pair of binary $(3,3)$-bitrees with no meet.
\begin{figure}
	\centerline{
	\begin{tikzpicture}
		\node[label=west:{$\BT_1 =$}] (a) at (-4,4.3) {\bitree[1.2]{[[1[[2[[3[[[]]]][3[]]]]]][[1[2[3[]]]][1[2[3[]]]]]]}{[[[3[[2[1[[[]]]]]]][3[2[1[]]]]][3[2[[1[]][1[]]]]]]}{1,2,3}};
		\node[label=east:{$= \BT_2$}] (b) at (4,4.3) {\bitree[1.2]{[[[1[2[3[]]]][1[2[3[]]]]][1[[2[[3[[[]]]][3[]]]]]]]}{[[3[2[[1[]][1[]]]]][[3[[2[1[[[]]]]]]][3[2[1[]]]]]]}{1,2,3}};
		\node (c) at (-6,0) {\bitree[1.2]{[[1[[2[[3[[]]][3[]]]]]][[1[2[3[]]]][1[2[3[]]]]]]}{[[3[[2[[1[[[]]]][1[]]]]]][3[2[[1[]][1[]]]]]]}{1,2,3}};
		\node (d) at (2,0) {\bitree[1.2]{[[1[[2[[3[[[]]]][3[]]]]]][1[2[[3[]][3[]]]]]]}{[[3[[2[[1[[]]][1[]]]]]][[3[2[1[]]]][3[2[1[]]]]]]}{1,2,3}};
		\node (e) at (-2,0) {\bitree[1.2]{[[1[[2[[3[[[]]]][3[]]]]]][1[2[[3[]][3[]]]]]]}{[[[3[2[1[]]]][3[2[1[]]]]][3[[2[[1[[]]][1[]]]]]]]}{1,2,3}};
		\node (f) at (6,0) {\bitree[1.2]{[[[1[2[3[]]]][1[2[3[]]]]][1[[2[[3[[]]][3[]]]]]]]}{[[3[[2[[1[[[]]]][1[]]]]]][3[2[[1[]][1[]]]]]]}{1,2,3}};
		\node[label=west:{$\BT[S]_1 =$}] (g) at (-4,-4.6) {\bitree[1.2]{[[1[[2[3[[4[[5[[]]][5[]]]]]]]]][1[2[[3[4[5[]]]][3[4[5[]]]]]]]]}{[[5[[4[3[[2[[1[]][1[]]]]]]]]][5[4[[3[2[1[[]]]]][3[2[1[]]]]]]]]}{1,2,3,4,5}};
		\node[label=east:{$= \BT[S]_2$}] (h) at (4,-4.6) {\bitree[1.2]{[[1[2[[3[4[5[]]]][3[4[5[]]]]]]][1[[2[3[[4[[5[[]]][5[]]]]]]]]]]}{[[5[4[[3[2[1[[]]]]][3[2[1[]]]]]]][5[[4[3[[2[[1[]][1[]]]]]]]]]]}{1,2,3,4,5}};
		\draw[thick,gray] (a.south) -- (c.north);
		\draw[thick,gray] (a.south) -- (e.north);
		\draw[thick,gray] (b.south) -- (d.north);
		\draw[thick,gray] (b.south) -- (f.north);
		\draw[thick,gray] (c.south) -- (g.north);
		\draw[thick,gray] (d.south) -- (g.north);
		\draw[thick,gray] (e.south) -- (h.north);
		\draw[thick,gray] (f.south) -- (h.north);
	\end{tikzpicture}
	}
	\caption{Rotations on all $(3,3)$-bitrees larger than~$\BT[S]_1$ or~$\BT[S]_2$ and smaller than~$\BT_1$ or~$\BT_2$. This shows that~$\BT[S]_1$ and~$\BT[S]_2$ have no join, and~$\BT_1$ and~$\BT_2$ have no meet, so that the rotation graph on binary $(3,3)$-bitrees does not define a lattice.}
	\label{fig:biassociahedronNotLattice}
\end{figure}
\end{remark}


\subsection{Vertex and facet descriptions}
\label{subsec:vertexFacetDescriptionsBiassociahedra}

Our next two statements, illustrated in \cref{fig:coordinatesBitrees,fig:inequalitiesBitrees}, provide the vertex and facet descriptions of the $(m,n)$-biasso\-ciahedron~$\Biassociahedron$.
The proofs are elementary computations from \mbox{\cref{def:associahedron,def:graphicalZonotope,def:biassociahedron}}.

\begin{proposition}
\label{prop:verticesBiassociahedron}
For any~$i \in [m+n]$, the $i$-th coordinate of the vertex of the $(m,n)$-biassociahedron $\Biassociahedron$ corresponding to a binary $(m,n)$-bitree~$(U, D, \mu)$ is given by
\begin{itemize}
\item if~$i \le m$, then $m+1$ minus the product of the numbers of leaves in the left and right subtrees of~$\node{n}$, plus the number of nodes of~$D$ below~$\node{n}$, where~$\node{n}$ is the node of~$U$ labeled by~$i$ in inorder.
\item if~$i \ge m+1$, the product of the numbers of leaves in the left and right subtrees of~$\node{n}$, plus the number of nodes of~$U$ below~$\node{n}$, where~$\node{n}$ is the node of~$D$ labeled by~$i-m$ in inorder.
\end{itemize}
In particular, the sum of the coordinates is always~$\binom{m+1}{2} + \binom{n+1}{2} + mn = \binom{m+n+1}{2}$.
\end{proposition}

\begin{proposition}
\label{prop:facetsBiassociahedron}
Let~$\BT \eqdef (U, D, \mu)$ be a $(m,n)$-bitree of rank~$m+n-2$.
Let~${A \eqdef A_1 \cup \dots \cup A_k}$ where $A_1, \dots A_k$ are the inorder labels of the nodes of~$U$ located in the top part~$\mu_2$, and let ${B \eqdef B_1 \cup \dots \cup B_\ell}$ where~$B_1, \dots, B_\ell$ are the inorder labels shifted by~$m$ of the nodes of~$D$ located in the bottom part~$\mu_1$.
Then the facet of the $(m,n)$-biassociahedron~$\Biassociahedron$ corresponding to~$\BT$ is defined by the inequality
\[
\dotprod{\b{x}}{\one_{([m] \ssm A) \cup B}} \ge \binom{m+1}{2} - |A| \cdot (m+1) + \sum_{i \in [k]} \binom{|A_i|+1}{2} + (m-|A|) \cdot |B| + \sum_{j \in [\ell]} \binom{|B_j|+1}{2}.
\]
Moreover, this inequality is a facet defining inequality of the permutahedron~$\Perm[m+n]$ if and only if~$k \le 1$ and~$\ell \le 1$, \ie if both $U$ and $D$ have at most two nodes.
\end{proposition}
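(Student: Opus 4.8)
The plan is to read the facet inequality straight off the Minkowski decomposition $\Biassociahedron = (\Ossa[m] \times \Asso[n]) + \polytope{Z}_{m,n}$ coming from \cref{def:shuffleDeformedPermutahedra,def:biassociahedron}. By \cref{prop:faceLatticeBiassociahedron,prop:fanBiassociahedron} the facet attached to a rank $m+n-2$ bitree $\BT$ is cut out by an inequality $\dotprod{\b x}{\one_S} \ge c$ whose normal $\one_S$ spans the (one-dimensional) normal cone of the facet. Reading the preposet $\preccurlyeq_\BT$ of \cref{rem:bitreePreposets} — and using that $\Ossa[m]$ reverses the associahedral order, so that the root of $U$ is a minimum while the root of $D$ is a maximum — one checks that $S$ is the set of elements in the minimal part of $\mu$, and that this set is exactly $S = ([m]\ssm A) \cup B$. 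The constant $c$ is the minimum of $\dotprod{\b x}{\one_S}$ over $\Biassociahedron$, and since this minimum is additive under Cartesian product and Minkowski sum I would split it as
$c = \min_{\Ossa[m]} \dotprod{\one_{[m]\ssm A}}{\cdot} + \min_{\Asso[n]} \dotprod{\one_{B^{-m}}}{\cdot} + \min_{\polytope{Z}_{m,n}} \dotprod{\one_S}{\cdot}$
and evaluate the three terms separately.

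For the three pieces I would use, respectively: the Minkowski-sum-of-simplices description of $\Asso[n]$ in \cref{def:associahedron}, which gives $\min_{\Asso[n]} \dotprod{\one_T}{\cdot} = \#\set{[a,b]}{[a,b] \subseteq T}$, the number of integer subintervals contained in $T$, equal to $\sum_R \binom{|R|+1}{2}$ over the maximal runs $R$ of $T$; the reflection $\Ossa[m] = (m+1)\one - \Asso[m]$ together with $\Asso[m] \subseteq \set{\b x}{\sum x_i = \binom{m+1}{2}}$, which after substituting $\b y = (m+1)\one - \b x$ yields $\min_{\Ossa[m]} \dotprod{\one_{[m]\ssm A}}{\cdot} = (m+1)(m-|A|) - \binom{m+1}{2} + \min_{\Asso[m]}\dotprod{\one_A}{\cdot}$; and the segment description of $\polytope{Z}_{m,n}$, for which the minimum over each $[\b e_i, \b e_{m+j}]$ is the product of the two indicators, giving $\min_{\polytope{Z}_{m,n}} \dotprod{\one_S}{\cdot} = (m-|A|)\,|B|$.

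The one non-formal step, and the main obstacle, is to identify the maximal runs of $A$ (and of $B$) with the node-label sets $A_i$ (and $B_j$): this is false for a general Schröder tree, since a node of degree $\ge 3$ may carry a non-interval label and two node labels may merge into a single run, so it must genuinely use the rank hypothesis. Here I would invoke the antichain condition of \cref{prop:shuffleFacePreposets} (third bullet): at rank $m+n-2$ the partition $\mu$ has two parts, every mixed part must have its $U$-elements pairwise incomparable in $\preccurlyeq_\polytope{F}$ (and its $D$-elements pairwise incomparable), and since the root is comparable to every node, this forces both $U$ and $D$ to have depth at most two — every non-root internal node is a child of the root and has only leaf children. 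Consequently each $A_i$ (resp. $B_j$) is an interval, and consecutive such intervals are separated by a root-label element lying in $[m]\ssm A$ (resp. $[n]^{+m}\ssm B$); hence the maximal runs of $A$ are exactly the $A_i$, giving $\min_{\Asso[m]}\dotprod{\one_A}{\cdot} = \sum_i \binom{|A_i|+1}{2}$, and likewise $\sum_j \binom{|B_j|+1}{2}$ for $B$. Substituting the three values and using $(m+1)m = 2\binom{m+1}{2}$, so that $(m+1)(m-|A|) - \binom{m+1}{2} = \binom{m+1}{2} - |A|(m+1)$, collapses the expression to the stated right-hand side.

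For the final claim I would recall from \cref{def:permutahedron} that the facets of $\Perm[m+n]$ are exactly the inequalities $\dotprod{\b x}{\one_I} \ge \binom{|I|+1}{2}$ over proper nonempty $I$, so the displayed inequality (with $I = ([m]\ssm A)\cup B$) is one of them precisely when $c = \binom{|I|+1}{2}$. Writing $a = |A|$ and $b = |B|$ and running the same identity once more, a short computation gives
$\binom{|I|+1}{2} - c = \big( \binom{a+1}{2} - \sum_i \binom{|A_i|+1}{2} \big) + \big( \binom{b+1}{2} - \sum_j \binom{|B_j|+1}{2} \big)$,
a sum of two nonnegative terms (each difference counts the pairs of $A$, resp. $B$, that straddle two distinct intervals). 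Each term vanishes exactly when the corresponding family of intervals is empty or reduces to a single interval, that is when $k \le 1$ and $\ell \le 1$, equivalently when $U$ and $D$ each have at most two nodes. This yields the asserted equivalence and completes the proof.
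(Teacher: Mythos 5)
Your proof follows exactly the route the paper intends --- its own ``proof'' is just the remark preceding the statement that these are elementary computations from \cref{def:associahedron,def:graphicalZonotope,def:biassociahedron} --- namely: decompose $\Biassociahedron = (\Ossa[m] \times \Asso[n]) + \polytope{Z}_{m,n}$, use additivity of the support function under Cartesian product and Minkowski sum, and evaluate the three summands. Your three evaluations are correct, your key lemma that rank $m+n-2$ forces both $U$ and $D$ to have height at most two is genuinely the needed combinatorial input (and your argument for it, via the incomparability condition on mixed parts, is sound), the identification of the maximal runs of $A$ and $B$ with the $A_i$ and $B_j$ then follows, and the closing binomial identity for the permutahedron criterion is fine.

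The genuine gap is the step you pass over with ``one checks'': the claim that the minimal part of $\mu$ always has label set $([m]\ssm A)\cup B$. This is false. Your candidate normal depends on $\BT$ only through the pair $(U,D)$, never through $\mu$, whereas distinct facets can share the same trees and differ only in $\mu$. Concretely, take $(m,n)=(2,1)$, so that $\Biassociahedron[2][1]=\Perm[3]$ is a hexagon: the two bitrees in which $U$ and $D$ are both single nodes, distinguished only by the order of the two parts of $\mu$, are distinct rank-$1$ bitrees, hence distinct facets (namely $x_1+x_2\ge 3$ and $x_3\ge 1$), yet both have $A=B=\varnothing$, so no formula in $A$ and $B$ alone can describe both. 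Chasing the preposets shows your identification of the lower class holds exactly for the types $ud$, $ub$, $bd$, $bb$ in the nomenclature of the proof of \cref{prop:numberFacetsBiassociahedra}, i.e.\ when the part containing the root of $U$ is the minimal one; for the mirrored types $du$, $bu$, $db$ the lower class is a different set (for the $du$ bitree it is $[n]^{+m}$, with facet inequality $\dotprod{\b x}{\one_{[n]^{+m}}}\ge\binom{n+1}{2}$), and your computation then produces a valid inequality that defines the facet of a \emph{different} bitree. In fairness, this defect is already present in the statement being proved: the support $\{2,3,4,5,6\}$ of $\BT[S]_3$ in \cref{fig:inequalitiesBitrees} contains all of $[n]^{+m}$, so it cannot be of the form $([m]\ssm A)\cup B$ with $B$ a set of non-root labels either. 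But since your proof claims to verify the identification for every facet bitree, the error sits inside your argument as well; a correct version must either restrict to the four unmirrored types or treat $U$ and $D$ symmetrically according to which side of the cut their roots lie.
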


Note that, in contrast to \cref{prop:MinkowskiMultiplihedron,prop:MinkowskiConstrainahedron}, we do not provide an expression of the $(m,n)$-biassociahedron~$\Biassociahedron$ as a signed Minkowski sum of faces of the standard simplex~$\simplex_{[m+n]}$. Such an expression is possible (since~$\Biassociahedron$ is a deformed permutahedron by \cref{prop:shuffleDeformedPermutahedra}), but combinatorially less attractive than that of~$\Multiplihedron$ or~$\Constrainahedron$ (as it requires to express the faces of the opposite standard simplex as signed Minkowski sums of faces of the standard simplex).
See \cite{Lange} for further discussion.

\begin{example}
\cref{fig:coordinatesBitrees} illustrates some vertex coordinates of $\Biassociahedron[3][3]$ computed by \cref{prop:verticesBiassociahedron} and \cref{fig:inequalitiesBitrees} illustrates some facet inequalities of $\Biassociahedron[3][3]$ computed by \cref{prop:facetsBiassociahedron}. Note that all vertices of $\Biassociahedron[3][3]$ have coordinate sum~$21$. Note that for any pair $(i, j) \in \{(1, 2), (1, 3), (2, 2), (2, 3), (2, 4), (3, 2), (3, 3), (3, 4), (4, 3), (4, 4)\}$, we have~$\BT_i$ smaller than~$\BT[S]_j$ in deletion order, so that the vertex corresponding to~$\BT_i$ is contained in the facet corresponding to~$\BT[S]_j$.

\begin{figure}
	\centerline{
	\begin{tabular}{c@{}c@{}c@{}c}
		$\BT_1$
		&
		$\BT_2$
		&
		$\BT_3$
		&
		$\BT_4$
		\\[-.3cm]
		\bitree[1.2]{[[[1[[[2[[][]]]]]][1[2[]]]][1[2[]]]]}{[[2[[[1[[[]]]][1[]]][[1[]][1[]]]]]]}{1,2}
		&
		\bitree[1.2]{[[1[[[2[[][]]]]]][[1[2[]]][1[2[]]]]]}{[[2[[[1[[[]]]][1[]]][[1[]][1[]]]]]]}{1,2}
		&
		\bitree[1.2]{[[1[[2[[3[[4[[][]]]]]]]]][1[2[[3[4[]]][3[4[]]]]]]]}{[[4[[3[[2[[1[[]]][1[]]]]]][3[2[[1[]][1[]]]]]]]]}{1,2,3,4}
		&
		\bitree[1.2]{[[1[[[2[[][]]]]]][1[2[[][]]]]]}{[[2[[[1[[]]][1[]]][[1[]][1[]]]]]]}{1,2}
		\\[.1cm]
		$(6, 2, 1, 3, 6, 3)$
		&
		$(6, 0, 3, 3, 6, 3)$
		&
		$(6, 0, 5, 2, 6, 2)$
		&
		$(6, 0, 6, 2, 5, 2)$
	\end{tabular}
	}
	\caption{Vertices of $\Biassociahedron[3][3]$ corresponding to four binary $(3,3)$-bitrees.}
	\label{fig:coordinatesBitrees}
\end{figure}

\begin{figure}
	\centerline{
	\begin{tabular}{c@{}c@{}c@{}c}
		$\BT[S]_1$
		&
		$\BT[S]_2$
		&
		$\BT[S]_3$
		&
		$\BT[S]_4$
		\\[-.3cm]
		\bitree[1.2]{[[1[[]]][1[]][1[]][1[]]]}{[[1[[]]][1[]][1[]][1[]]]}{1}
		&
		\bitree[1.2]{[[1[[][]]][1[]][1[]]]}{[[1[[][]]][1[[][]]]]}{1}
		&
		\bitree[1.2]{[[1[[][]]][1[]][1[]]]}{[[1[[][][][]]]]}{1}
		&
		\bitree[1.2]{[[1[[][]]][1[[][]]]]}{[[1[[]]][1[]][1[]][1[]]]}{1}
		\\[.1cm]
		$x_1 + x_2 + x_3 \ge 6$
		&
		$x_2 + x_3 + x_4$
		&
		$x_2 + x_3 + x_4$
		&
		$x_2 \ge 0$
		\\
		&
		$+ x_6 \ge 9$
		&
		$+ x_5 + x_6 \ge 15$
		&
	\end{tabular}
	}
	\caption{Facet defining inequalities of $\Biassociahedron[3][3]$ corresponding to four rank $4$ $(3,3)$-bitrees.}
	\label{fig:inequalitiesBitrees}
\end{figure}
\end{example}


\subsection{Numerology}
\label{subsec:numerologyBiassociahedra}

We now present enumerative results on the number of vertices, faces and facets of the $(m,n)$-biassociahedron~$\Biassociahedron$.
The first few values of these numbers are collected in \cref{table:verticesConstrainahedraBiassociahedra,table:facetsConstrainahedraBiassociahedra,table:facesBiassociahedra} in \cref{subsec:tablesConstrainahedraBiassociahedra}.
We start with vertices.
See~\cref{table:verticesConstrainahedraBiassociahedra}.

\begin{proposition}
\label{prop:numberVerticesBiassociahedra}
The number of vertices of the $(m,n)$-biassociahedron~$\Biassociahedron$ (equivalently of binary $(m,n)$-bitrees) is given by
\[
[x^{m+1} \, y^{n+1}] \, \sum_{i = 0}^{\min(m,n)} 2 \, \tCGF^{(i)}(x) \, \tCGF^{(i)}(y) + \tCGF^{(i)}(x) \, \tCGF^{(i+1)}(y) + \tCGF^{(i+1)}(x) \, \tCGF^{(i)}(y),
\]
where $\tCGF^{(i)}(x)$ is defined for~$i \ge 0$ by
\[
\tCGF^{(0)}(x) = x
\qquad\text{and}\qquad
\tCGF^{(i)}(x) = \tCGF^{(i-1)}(\CGF(x)) - \tCGF^{(i-1)}(x),
\]
where
\[
\CGF(x) = \frac{1-\sqrt{1-4x}}{2}
\]
is the Catalan generating function (see \cref{prop:CatalanSchroderGF}).
\end{proposition}

\begin{proof}
According to \cref{prop:bitreeDeletion,prop:faceLatticeBiassociahedron}, we need to count the binary $(m,n)$-bitrees.
We group them according to their type, which can be of the form~$(ud)^i$, $(du)^i$, $(ud)^iu$ or~$(du)^id$.
We then need to construct the two binary trees~$U$ and~$D$ with compatible partitions of their nodes into~$i$ (or~$i+1$) parts.
We construct a partitioned binary tree with~$i+1$ parts by
\begin{itemize}
\item choosing a binary tree~$T$ for the first part (thus the apparition of~$\CGF$),
\item grafting at each leaf of~$T$ a partitioned binary tree with $i-1$ parts (thus the substitution of the~$y$ variable in~$\tCGF^{(i)}$), such that not all leaves of~$T$ are replaced by an empty binary tree (thus the subtraction of~$\tCGF^{(i-1)}$ in the definition of~$\tCGF^{(i)}$).
\qedhere
\end{itemize}
\end{proof}

We now consider the number of facets of the $(m,n)$-biassociahedron~$\Biassociahedron$.
See~\cref{table:facetsConstrainahedraBiassociahedra}.

\begin{proposition}
\label{prop:numberFacetsBiassociahedra}
The number of facets of the $(m,n)$-biassociahedron~$\Biassociahedron$ is
\[
 (2^m-1)(2^n-1) + \binom{m+1}{2} + \binom{n+1}{2} - 1.
\]
\end{proposition}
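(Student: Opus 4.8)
The plan is to count the facets of $\Biassociahedron = \Ossa[m] \shuffleDP \Asso[n]$ by using \cref{prop:faceLatticeBiassociahedron}, which identifies the facets with the rank $m+n-2$ elements of the $(m,n)$-bitree deletion poset. By \cref{prop:bitreeDeletion}, these are exactly the $(m,n)$-bitrees $\BT \eqdef (U, D, \mu)$ where $\mu$ has two parts, and each part either meets both $U$ and $D$ or contains a single node. So the strategy is to partition the rank $m+n-2$ bitrees into types according to the two-part structure of $\mu$ and to enumerate each type separately, then sum.

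The key step is the case analysis on the two-part ordered partition $\mu = \mu_1 | \mu_2$. First I would handle the facets that are \emph{not} facets of the ambient permutahedron $\Perm[m+n]$. These are the bitrees where both parts are of type~$b$ (meeting both $U$ and $D$): here the constraint from \cref{prop:bitreeDeletion} forces $U$ to be a Schr\"oder $m$-tree with exactly one nonroot node and $D$ to be a Schr\"oder $n$-tree with exactly one nonroot node, each node carrying a nonempty proper interval of labels. By the computation of \cref{prop:facetsBiassociahedron} (with $k = \ell = 1$ but such that the inequality is genuinely nonpermutahedral), I expect this family to contribute the product $(2^m - 1)(2^n - 1)$, where each factor counts the choice of a nonempty proper subset realizing a binary split for each of the two trees. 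Then I would handle the remaining two cases, in which one part is a singleton: either $\mu_1$ is a single node (coming from $U$, growing up) while $\mu_2$ absorbs everything else, or symmetrically $\mu_2$ is a single node (coming from $D$). These are the bitrees where one of $U, D$ is a corolla and the other degenerates appropriately; counting the admissible single-node splits should give $\binom{m+1}{2} - 1$ from the $U$-side corolla configurations and $\binom{n+1}{2}$ from the $D$-side (or the reverse split of the two summands), exactly mirroring the count in \cref{prop:numberFacetsMultiplihedra} where the corolla contributed $\binom{n+1}{2} - 1$ and the cut choices contributed a $2^{m}-2^{0}$ type term.

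The main obstacle will be getting the case analysis complete and non-overlapping: the up/down asymmetry of $\Ossa[m]$ versus $\Asso[n]$ means the two ``singleton'' cases are genuinely different, and one must check carefully that the $-1$ appears on exactly one of the two binomial terms (reflecting that the full corolla with a single node is excluded as it does not give a proper facet, just as in \cref{prop:numberFacetsMultiplihedra}). I would verify the bookkeeping against the small case $\Biassociahedron[3][3]$, whose facet count the formula predicts to be $(2^3-1)^2 + \binom{4}{2} + \binom{4}{2} - 1 = 49 + 6 + 6 - 1 = 60$, and against the tables in \cref{subsec:tablesBiassociahedra}. Finally, I would confirm via \cref{prop:facetsBiassociahedron} that the type-$b$/type-$b$ facets are precisely those that are \emph{not} inherited from $\Perm[m+n]$, so the three contributions are disjoint and their sum
\[
(2^m-1)(2^n-1) + \binom{m+1}{2} + \binom{n+1}{2} - 1
\]
is the total, completing the proof.
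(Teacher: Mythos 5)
Your high-level plan coincides with the paper's proof: by \cref{prop:faceLatticeBiassociahedron,prop:bitreeDeletion}, facets of $\Biassociahedron$ correspond to $(m,n)$-bitrees $(U,D,\mu)$ in which $\mu$ has exactly two parts, each part either meeting both $U$ and $D$ (type~$b$) or being a singleton, and one counts these by cases according to the types of the two parts. However, the case analysis you describe is substantively wrong, and the agreement of your subtotals with the formula is an artifact of how you split the formula, not evidence that the cases are correct. For the type~$bb$ case, the constraints of \cref{def:bitree} do \emph{not} force $U$ and $D$ to have exactly one non-root node: they force the root of $U$ to sit alone in one part and all remaining nodes of $U$ to lie in the other part forming an antichain, i.e.\ every non-root node of $U$ is a child of the root. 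So $U$ ranges over Schr\"oder $m$-trees of height two with at least two nodes, of which there are $2^m-2$, and the type~$bb$ case contributes $(2^m-2)(2^n-2)$ --- not $(2^m-1)(2^n-1)$, and also not what your own description would yield, since two-node Schr\"oder $m$-trees are counted by intervals ($\binom{m+1}{2}-1$ of them), not by arbitrary subsets. For $m=n=3$ this case has $36$ elements, not $49$.

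Your singleton analysis is also incomplete: it covers only two of the remaining six types. The singleton part can be first or last, can come from $U$ or from $D$, and both parts can be singletons, giving the types $ud$, $du$, $ub$, $bu$, $db$, $bd$; working out the constraints as above, the two all-singleton types contribute $1$ each, and the four mixed types contribute $\binom{m+1}{2}-1$, $2^m-2$, $\binom{n+1}{2}-1$ and $2^n-2$ (in total $24$ for $m=n=3$, not your claimed $11$). Moreover, your dichotomy ``type~$bb$ $=$ non-permutahedral facets'' fails: by \cref{prop:facetsBiassociahedron}, a type~$bb$ bitree in which $U$ and $D$ both have exactly two nodes has $k=\ell=1$, hence defines a facet of $\Perm[m+n]$. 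The correct conclusion comes from summing all seven counts and using the regrouping $(2^m-2)(2^n-2)+(2^m-2)+(2^n-2)+1=(2^m-1)(2^n-1)$; in particular, the three terms of the stated formula do not align with any natural partition of the facets, which is precisely where your plan, executed literally, would break down.
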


\begin{proof}
According to \cref{prop:bitreeDeletion,prop:faceLatticeBiassociahedron}, the possible types for the $(m,n)$-bitrees corresponding to facets of the $(m,n)$-biassociahedron~$\Biassociahedron$ are:
\begin{itemize}
\item type~$ud$ (resp.~type~$du$): then both~$U$ and~$D$ have a single node, thus a single choice,
\item type~$bu$ (resp.~type~$db$): then $U$ (resp.~$D$) is a non-trivial corolla while~$D$ (resp.~$U$) has a single node, thus~$\binom{m+1}{2}-1$ choices (resp.~$\binom{n+1}{2}-1$ choices),
\item type~$ub$ (resp.~type~$bd$): then~$U$ (resp.~$D$) is any Schr\"oder tree of height~$2$ while~$D$ (resp.~$U$) has a single node, thus~$2^m-2$ choices (resp.~$2^n-2$ choices),
\item type~$bb$: then both~$U$ and~$D$ are Schr\"oder trees of height~$2$, thus~$(2^m-2)(2^n-2)$ choices.
\qedhere
\end{itemize}
\end{proof}

Finally, adapting the approach of \cref{prop:numberVerticesBiassociahedra}, we can count all faces of the $(m,n)$-biassocia\-hedron~$\Biassociahedron$ according to their dimension.

\begin{proposition}
\label{prop:numberFacesBiassociahedra}
Let~$BT(m,n,p)$ denote the number of $p$-dimensional faces of the $(m,n)$-biassocia\-hedron~$\Biassociahedron$, or equivalently the number of $(m,n)$-bitrees of rank~$p$.
Then the generating function~$\BTGF(x,y,z) \eqdef \sum_{m,n,p} BT(m,n,p) \, x^m \, y^n \, z^p$ is given by
\[
\BTGF(x,y,z) = \sum_w \SGF^{\rev(w)}_u(x,z) \, \SGF^w_d(y,z)
\]
where
\begin{itemize}
\item $w$ runs over all words on the alphabet~$\{u,d,b\}$ with no two consecutive~$u$ nor two consecutive~$d$ and such that~$1 \le |w|_u + |w|_b \le m$ and~$1 \le |w|_d + |w|_b \le n$,
\item $\rev(w) \eqdef w_k \dots w_1$ denotes the reverse of the word~$w = w_1 \dots w_k$, 
\item for a letter~$s \in \{u,d\}$, the generating function~$\SGF^w_s(y,z)$ is defined by~$\SGF^\varepsilon_s(y,z) \eqdef y$ and
\[
\SGF^{w}_s(y,z) \eqdef
\begin{cases}
\SGF^{w'}_s(\SGF(y,z), z) - \SGF^{w'}_s(y,z) & \text{if } w = sw', \\
\SGF^{w'}_s \big( \frac{y}{1-yz}, z \big) - \SGF^{w'}_s(y,z) & \text{if } w = bw', \\
\SGF^{w'}_s(y,z) & \text{if } w = tw' \text{ with } t \notin \{s,b\}, \\
\end{cases}
\]
where
\[
\SGF(y,z) = \frac{1+yz-\sqrt{1-4y-2yz+y^2z^2}}{2(z+1)}
\]
is the Schr\"oder generating function (see \cref{prop:CatalanSchroderGF}).
\end{itemize}
\end{proposition}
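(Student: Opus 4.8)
The plan is to follow the pattern of the proof of \cref{prop:numberFacesMultiplihedra}, organizing the count of $(m,n)$-bitrees (which are the faces of the biassociahedron by \cref{prop:faceLatticeBiassociahedron}, graded by rank via \cref{prop:bitreeDeletion}) according to their \emph{type} $w$, the word recording, from bottom to top, whether each part of $\mu$ meets only $U$, only $D$, or both. Since the type is an invariant of a bitree, this partitions $\f{BT}_{m,n}$, so that $\BTGF$ is the sum over admissible types $w$ of the generating function of the bitrees of type $w$. The summation is naturally restricted to words with no two consecutive $u$ and no two consecutive $d$, which is exactly condition~(2) of \cref{def:bitree}, and to words with $1 \le |w|_u + |w|_b \le m$ and $1 \le |w|_d + |w|_b \le n$, since $U$ (resp.~$D$) is a nonempty Schr\"odertree with at most $m$ (resp.~$n$) nodes.

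The first key step is to show that, once the type $w$ is fixed, the up-tree datum and the down-tree datum are independent, so that the contribution of type $w$ factors as a product. For this I would observe that conditions~(1) and~(3) of \cref{def:bitree} each split into a condition involving only $U$ and its distribution among the parts and a condition involving only $D$: condition~(1) constrains the $U$-nodes (resp.~$D$-nodes) to lie in parts weakly below (resp.~above) their parents, while condition~(3) forces the $U$-nodes (resp.~$D$-nodes) lying in any mixed part to form an antichain of $U$ (resp.~of $D$). Hence the type-$w$ contribution is $\SGF^w_u(x,z) \cdot \SGF^w_d(y,z)$, where $\SGF^w_u$ (resp.~$\SGF^w_d$) is the generating function of an up (resp.~down) Schr\"oder tree together with a distribution of its nodes into the $u$- and $b$-parts (resp.~$d$- and $b$-parts) of $w$ satisfying these two conditions.

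The heart of the argument is the recursion for the one-sided series $\SGF^w_r$, which I would establish by building the tree one slab at a time along $w$ and grafting the nodes of each successive part at the exposed leaves of the current partial tree, starting from the seed $\SGF^\varepsilon_r = y$ (a single leaf). A letter of the opposite type contributes nothing to this side, whence $\SGF^w_r = \SGF^{w'}_r$ when $w = w's$ with $s \notin \{r,b\}$; a letter $r$ opens a new pure slab, obtained by grafting Schr\"oder trees at the exposed leaves (substitution $y \mapsto \SGF(y,z)$) while requiring the new slab to be nonempty (subtraction of $\SGF^{w'}_r(y,z)$, which removes the all-trivial grafting); and a letter $b$ opens a mixed slab, whose nodes must form an antichain, producing the substitution $y \mapsto \frac{y}{1-yz}$, the geometric series encoding the admissible antichains together with their weight in $\beta(\mu)$, again with the empty case subtracted. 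Summing the products $\SGF^w_u(x,z)\,\SGF^w_d(y,z)$ over all admissible $w$ then yields the claimed formula.

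The main obstacle I anticipate is the $z$-grading, namely checking that the product of the two one-sided series records exactly $z^{\rank(\BT)}$ for $\rank = m+n-|U|-|D|+\beta(\mu)$ as in \cref{prop:bitreeDeletion}. The terms $m-|U|$ and $n-|D|$ are the Schr\"oder ranks already tracked by the variable $z$ in the base series $\SGF(y,z)$ of \cref{prop:CatalanSchroderGF}, but the mixed-part correction $\beta(\mu) = \sum_{\mu_i \text{ mixed}} (|\mu_i|-1)$ couples the two sides and must be apportioned correctly between them; verifying that the operation $y \mapsto \frac{y}{1-yz}$ together with the subtractions produces precisely the right power of $z$ (in tandem with the leaf-versus-node index shift already visible in the $[y^{n+1}]$-type extractions) is where off-by-one errors are most likely and where the careful bookkeeping must be carried out. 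Once the one-sided recursion and its $z$-grading are confirmed, the remaining assembly of the product over types is routine.
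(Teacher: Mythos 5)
Your proposal follows the paper's proof essentially verbatim in its architecture: faces are $(m,n)$-bitrees graded by rank (\cref{prop:faceLatticeBiassociahedron,prop:bitreeDeletion}), one groups them by type~$w$, the type-$w$ contribution factors as a product of an up-side and a down-side series, and each side is computed by a slab-by-slab recursion along~$w$ with the substitutions $y \mapsto \SGF(y,z)$ for a letter~$r$ and $y \mapsto \frac{y}{1-yz}$ for a letter~$b$, with subtractions enforcing non-emptiness. Up to that point there is nothing different from the paper, and nothing to object to.

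The genuine problem is the step you explicitly postpone (``the careful bookkeeping must be carried out''): it is not routine, and as set up it fails. In $\frac{y}{1-yz} = \sum_{k \ge 1} y^k z^{k-1}$, a grafted node of degree~$k$ carries~$z^{k-1}$, that is, its Schr\"oder-rank contribution~$k-2$ \emph{plus one}; so a mixed part~$\mu_i$ with $a$ nodes on the $U$-side and $b$ nodes on the $D$-side receives $z^{a+b}$ beyond the Schr\"oder ranks from the product $\SGF^w_u(x,z)\,\SGF^w_d(y,z)$, while $\beta(\mu)$ in \cref{prop:bitreeDeletion} only allots~$a+b-1$. The product therefore overshoots the rank by~$z^{|w|_b}$, and no per-node weighting can distribute $|\mu_i|-1$ units of~$z$ among $|\mu_i|$ nodes. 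Concretely, $\Biassociahedron[1][1]$ is a segment with $f$-polynomial~$2+z$, the admissible words are $ud$, $du$, $b$, and the proposed sum gives $1 + 1 + z^2$ in the relevant coefficient (which, incidentally, is $[x^{m+1}y^{n+1}]$ rather than $[x^m y^n]$ as stated). Inserting a factor $z^{-|w|_b}$ in each summand repairs this: for $(m,n) = (2,1)$ the corrected sum over the words $ud$, $du$, $b$, $ub$, $bu$, $udu$ gives $6 + 6z + z^2$, the $f$-polynomial of the hexagon, whereas the uncorrected sum gives $6 + 2z + 4z^2 + z^3$. So the place you flagged as the likely source of off-by-one errors is exactly where the proof needs real content --- and carrying that verification out shows that the displayed formula (asserted in the same way, without a grading check, in the paper's own proof) must be corrected by~$z^{-|w|_b}$; a proof that defers this point has its essential step missing.
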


\begin{proof}
According to \cref{prop:bitreeDeletion,prop:faceLatticeBiassociahedron}, we need to count the $(m,n)$-bitrees of rank~$p$.
We group them according to their type, which can be any word~$w$ on the alphabet~$\{u,d,b\}$ with no two consecutive~$u$ nor two consecutive~$d$ and such that~$1 \le |w|_u + |w|_b \le m$ and~$1 \le |w|_d + |w|_b \le n$.
We then need to construct the two trees~$U$ and~$D$ with compatible partitions of their nodes.
If~$w = \varepsilon$ is the empty word, then both~$U$ and~$D$ are empty trees with a single leaf.
Otherwise, $w = tw'$ with~$t \in \{u,d,b\}$, and we construct~$U$ (resp.~$D$) by considering a tree~$U'$ (resp.~$D'$) for~$w'$~and
\begin{itemize}
\item if~$t = u$ (resp.~$t = d$), grafting at each leaf of~$U'$ (resp.~$D'$) a Schr\"oder tree (thus the substitution of the~$y$ variable in~$\SGF^{w'}_s$ by~$\SGF(y,z)$), such that not all leaves of~$U'$ (resp.~$D'$) are replaced by an empty trees (thus the subtraction of~$\SGF^{w'}_s$),
\item if~$t = b$, grafting at each leaf of~$U'$ (resp.~$D'$) either an empty tree or a tree with a single node (thus the substitution of the~$y$ variable in~$\SGF^{w'}_s$ by~$\smash{\frac{y}{1-yz}}$), such that not all leaves of~$U'$ (resp.~$D'$) are replaced by an empty tree (thus the subtraction of~$\SGF^{w'}_s$).
\qedhere
\end{itemize}
\end{proof}

For instance, the $f$-vectors of all biassociahedra~$\Biassociahedron$ with~$m + n \le 5$ are displayed in \cref{fig:multiplihedra2,fig:multiplihedra3,fig:multiplihedra4} (all these biassociahedra are multiplihedra since~${\Perm[1] = \Asso[1]}$ and ${\Perm[2] = \Asso[2]}$).
The $f$-vector of the $(3,3)$-biassociahedron~$\Biassociahedron[3][3]$ is 
\[
f(\Biassociahedron[3][3]) = (1, 606, 1549, 1382, 497, 60, 1).
\]
Note that it slightly differs from the $f$-vector of the $(3,3)$-constrainahedron~$\Constrainahedron[3][3]$ which is 
\[
f(\Constrainahedron[3][3]) = (1, 606, 1550, 1384, 498, 60, 1),
\]
given in \cref{subsec:numerologyConstrainahedra}.


\addtocontents{toc}{ \vspace{.2cm} }
\bibliographystyle{alpha}
\bibliography{shuffleDeformedPermutahedra}
\label{sec:biblio}


\clearpage
\appendix

\section{Tables}
\label{sec:tables}

\enlargethispage{.5cm}
All references like~\OEIS{A000142} are entries of the Online Encyclopedia of Integer Sequences~\cite{OEIS}.


\subsection{Zonotopes} \;~
\label{subsec:tablesZonotopes}

\begin{table}[h]
	\centerline{\begin{tabular}{r|rrrrrrrrrr|l}
		$m \backslash n$ & 0 & 1 & 2 & 3 & 4 & 5 & 6 & 7 & 8 & 9 & \\
		\hline
		0 & . & 1 & 2 & 4 & 8 & 16 & 32 & 64 & 128 & 256 & \OEIS{A000079} \\
		1 & 1 & 2 & 6 & 18 & 54 & 162 & 486 & 1458 & 4374 & & \OEIS{A025192} \\
		2 & 2 & 6 & 24 & 96 & 384 & 1536 & 6144 & 24576 & & & \OEIS{A002023} \\
		3 & 6 & 24 & 120 & 600 & 3000 & 15000 & 75000 & & & & \OEIS{A235702} \\
		4 & 24 & 120 & 720 & 4320 & 25920 & 155520 & & & & & ? \\
		5 & 120 & 720 & 5040 & 35280 & 246960 & & & & & & \\
		6 & 720 & 5040 & 40320 & 322560 & & & & & & & \\
		7 & 5040 & 40320 & 362880 & & & & & & & & \\
		8 & 40320 & 362880 & & & & & & & & & \\
		9 & 362880 & & & & & & & & & & \\
		\hline
		& \OEIS{A000142} & \OEIS{A000142} & \OEIS{A000142} & \OEIS{A001563} & \OEIS{A002775} & \OEIS{A091363} & \OEIS{A091364} & ? & & & 
	\end{tabular}}
	\caption{Number of vertices of~$\Zono[K_m] \shuffleDP \Zono[P_n] = \Perm[m] \shuffleDP \Para[n]$.}
	\label{table:verticesPermCube}
\end{table}

\begin{table}[h]
	\centerline{\begin{tabular}{r|rrrrrrrrrr|l}
		$m \backslash n$ & 0 & 1 & 2 & 3 & 4 & 5 & 6 & 7 & 8 & 9 & \\
		\hline
		0 & . & 1 & 2 & 4 & 6 & 8 & 10 & 12 & 14 & 16 & \OEIS{A000027} \\
		1 & 1 & 2 & 6 & 12 & 20 & 30 & 42 & 56 & 72 & & \OEIS{A002378} \\
		2 & 2 & 6 & 14 & 28 & 52 & 94 & 170 & 312 & & & \OEIS{A290699} \\
		3 & 6 & 14 & 30 & 60 & 116 & 222 & 426 & & & & \OEIS{A308580} \\
		4 & 14 & 30 & 62 & 124 & 244 & 478 & & & & & ? \\
		5 & 30 & 62 & 126 & 252 & 500 & & & & & & \\
		6 & 62 & 126 & 254 & 508 & & & & & & & \\
		7 & 126 & 254 & 510 & & & & & & & & \\
		8 & 254 & 510 & & & & & & & & & \\
		9 & 510 & & & & & & & & & & \\
		\hline
		& \OEIS{A000918} & \OEIS{A000918} & \OEIS{A000918} & \OEIS{A028399} & \OEIS{A173034} & ? & & & & &
	\end{tabular}}
	\caption{Number of facets of~$\Zono[K_m] \shuffleDP \Zono[P_n] = \Perm[m] \shuffleDP \Para[n]$.}
	\label{table:facetsPermCube}
\end{table}

\begin{table}[h]
	\centerline{\begin{tabular}{r|rrrrrrrrrr|l}
		$m \backslash n$ & 0 & 1 & 2 & 3 & 4 & 5 & 6 & 7 & 8 & 9 & \\
		\hline
		0 & . & 1 & 1 & 1 & 1 & 1 & 1 & 1 & 1 & 1 & \OEIS{A000012} \\ 
		1 & 1 & 2 & 4 & 8 & 16 & 32 & 64 & 128 & 256 & & \OEIS{A000079} \\
		2 & 2 & 6 & 18 & 54 & 162 & 486 & 1458 & 4374 & & & \OEIS{A008776}, \OEIS{A025192} \\
		3 & 6 & 24 & 96 & 384 & 1536 & 6144 & 24576 & & & & \OEIS{A002023} \\
		4 & 24 & 120 & 600 & 3000 & 15000 & 75000 & & & & & \OEIS{A235702} \\
		5 & 120 & 720 & 4320 & 25920 & 155520 & & & & & & ? \\
		6 & 720 & 5040 & 35280 & 246960 & & & & & & & \\
		7 & 5040 & 40320 & 322560 & & & & & & & & \\
		8 & 40320 & 362880 & & & & & & & & \\
		9 & 362880 & & & & & & & & & & \\
		\hline
		& \OEIS{A000142} & \OEIS{A000142} & \OEIS{A001563} & \OEIS{A002775} & \OEIS{A091363} & \OEIS{A091364} & ? & & & & \\
	\end{tabular}}
	\caption{Number of vertices of~$\Zono[K_m] \shuffleDP \Zono[E_n] = \Perm[m] \shuffleDP \Point[n]$.}
	\label{table:verticesPermPoint}
\end{table}

\begin{table}[h]
	\centerline{\begin{tabular}{r|rrrrrrrrrr|l}
		$m \backslash n$ & 0 & 1 & 2 & 3 & 4 & 5 & 6 & 7 & 8 & 9 & \\
		\hline
		0 & . & 1 & 1 & 1 & 1 & 1 & 1 & 1 & 1 & 1 & \OEIS{A000012} \\
		1 & 1 & 2 & 4 & 6 & 8 & 10 & 12 & 14 & 16 & & \OEIS{A005843} \\
		2 & 2 & 6 & 12 & 22 & 40 & 74 & 140 & 270 & & & \OEIS{A131520} \\
		3 & 6 & 14 & 28 & 54 & 104 & 202 & 396 & & & & ? \\
		4 & 14 & 30 & 60 & 118 & 232 & 458 & & & & & \\
		5 & 30 & 62 & 124 & 246 & 488 & & & & & & \\
		6 & 62 & 126 & 252 & 502 & & & & & & & \\
		7 & 126 & 254 & 508 & & & & & & & & \\
		8 & 254 & 510 & & & & & & & & & \\
		9 & 510 & & & & & & & & & & \\
		\hline
		& \OEIS{A000918} & \OEIS{A000918} & \OEIS{A028399} & \OEIS{A246168} & ? & & & & & & \\
	\end{tabular}}
	\caption{Number of facets of~$\Zono[K_m] \shuffleDP \Zono[E_n] = \Perm[m] \shuffleDP \Point[n]$.}
	\label{table:facetsPermPoint}
\end{table}

\begin{table}[h]
	\centerline{\begin{tabular}{r|rrrrrrrrrr|l}
		$m \backslash n$ & 0 & 1 & 2 & 3 & 4 & 5 & 6 & 7 & 8 & 9 & \\
		\hline
		0 & . & 1 & 1 & 1 & 1 & 1 & 1 & 1 & 1 & 1 & \OEIS{A000012} \\
		1 & 1 & 2 & 4 & 8 & 16 & 32 & 64 & 128 & 256 & & \OEIS{A000079} \\
		2 & 1 & 4 & 14 & 46 & 146 & 454 & 1394 & 4246 & & & \OEIS{A027649} \\
		3 & 1 & 8 & 46 & 230 & 1066 & 4718 & 20266 & & & & \OEIS{A027650} \\
		4 & 1 & 16 & 146 & 1066 & 6902 & 41506 & & & & & \OEIS{A027651} \\
		5 & 1 & 32 & 454 & 4718 & 41506 & & & & & & \OEIS{A283811} \\
		6 & 1 & 64 & 1394 & 20266 & & & & & & & \OEIS{A283812} \\
		7 & 1 & 128 & 4246 & & & & & & & & \OEIS{A283813} \\
		8 & 1 & 256 & & & & & & & & & \OEIS{A284032} \\
		9 & 1 & & & & & & & & & & \OEIS{A284033} \\
		\hline
		& \OEIS{A000012} & \OEIS{A000079} & \OEIS{A027649} & \OEIS{A027650} & \OEIS{A027651} & \OEIS{A283811} & \OEIS{A283812} & \OEIS{A283813} & \OEIS{A284032} & \OEIS{A284033} & \\
	\end{tabular}}
	\caption{Number of vertices of~$\Zono[E_m] \shuffleDP \Zono[E_n] = \Point[m] \shuffleDP \Point[n]$.}
	\label{table:verticesPointPoint}
\end{table}

\begin{table}[h]
	\centerline{\begin{tabular}{r|rrrrrrrrrr|l}
		$m \backslash n$ & 0 & 1 & 2 & 3 & 4 & 5 & 6 & 7 & 8 & 9 & \\
		\hline
		0 & . & 1 & 1 & 1 & 1 & 1 & 1 & 1 & 1 & 1 & \OEIS{A000012} \\
		1 & 1 & 2 & 4 & 6 & 8 & 10 & 12 & 14 & 16 & & \OEIS{A005843} \\
		2 & 2 & 4 & 12 & 22 & 40 & 74 & 140 & 270 & & & \OEIS{A131520} \\
		3 & 4 & 6 & 22 & 48 & 98 & 196 & 390 & & & & ? \\
		4 & 6 & 8 & 40 & 98 & 212 & 438 & & & & & \\
		5 & 8 & 10 & 74 & 196 & 438 & & & & & & \\
		6 & 10 & 12 & 140 & 390 & & & & & & & \\
		7 & 12 & 14 & 270 & & & & & & & & \\
		8 & 14 & 16 & & & & & & & & & \\
		9 & 16 & & & & & & & & & & \\
		\hline
		& \OEIS{A005843} & \OEIS{A005843} & \OEIS{A131520} & ? & & & & & & & \\
	\end{tabular}}
	\caption{Number of facets of~$\Zono[E_m] \shuffleDP \Zono[E_n] = \Point[m] \shuffleDP \Point[n]$.}
	\label{table:facetsPointPoint}
\end{table}


\clearpage
\subsection{Multiplihedra} ~
\label{subsec:tablesMultiplihedra}

\begin{table}[h]
	\centerline{\begin{tabular}{r|rrrrrrrrrr|l}
		$m \backslash n$ & 0 & 1 & 2 & 3 & 4 & 5 & 6 & 7 & 8 & 9 & \\
		\hline
		0 & . & 1 & 2 & 5 & 14 & 42 & 132 & 429 & 1430 & 4862 & \OEIS{A000108} \\
		1 & 1 & 2 & 6 & 21 & 80 & 322 & 1348 & 5814 & 25674 & & \OEIS{A121988} \\
		2 & 2 & 6 & 24 & 108 & 520 & 2620 & 13648 & 72956 & & & $2 \cdot \OEIS{A158826}$ \\
		3 & 6 & 24 & 120 & 660 & 3840 & 23220 & 144504 & & & & ? \\
		4 & 24 & 120 & 720 & 4680 & 31920 & 225120 & & & & & \\
		5 & 120 & 720 & 5040 & 37800 & 295680 & & & & & & \\
		6 & 720 & 5040 & 40320 & 342720 & & & & & & & \\
		7 & 5040 & 40320 & 362880 & & & & & & & & \\
		8 & 40320 & 362880 & & & & & & & & & \\
		9 & 362880 & & & & & & & & & & \\
		\hline
		& \OEIS{A000142} & \OEIS{A000142} & \OEIS{A000142} & \OEIS{A084253} & ? & & & & & & $m! \cdot \OEIS{A158825}$
	\end{tabular}}
	\caption{Number of vertices of the multiplihedra~$\Multiplihedron \eqdef \Perm[m] \shuffleDP \Asso[n]$. See~$\OEIS{A158825}$.}
	\label{table:verticesMultiplihedra}
\end{table}

\begin{table}[h]
	\centerline{\begin{tabular}{r|rrrrrrrrrr|l}
		$m \backslash n$ & 0 & 1 & 2 & 3 & 4 & 5 & 6 & 7 & 8 & 9 & \\
		\hline
		0 & . & 1 & 2 & 5 & 9 & 14 & 20 & 27 & 35 & 44 & \OEIS{A000096} \\
		1 & 1 & 2 & 6 & 13 & 25 & 46 & 84 & 155 & 291 & & \OEIS{A335439} \\
		2 & 2 & 6 & 14 & 29 & 57 & 110 & 212 & 411 & & & ? \\
		3 & 6 & 14 & 30 & 61 & 121 & 238 & 468 & & & & \\
		4 & 14 & 30 & 62 & 125 & 249 & 494 & & & & & \\
		5 & 30 & 62 & 126 & 253 & 505 & & & & & & \\
		6 & 62 & 126 & 254 & 509 & & & & & & & \\
		7 & 126 & 254 & 510 & & & & & & & & \\
		8 & 254 & 510 & & & & & & & & & \\
		9 & 510 & & & & & & & & & & \\
		\hline
		& \OEIS{A000918} & \OEIS{A000918} & \OEIS{A000918} & \OEIS{A036563} & \OEIS{A048490} & ? & & & & &
	\end{tabular}}
	\caption{Number of facets of the multiplihedra~$\Multiplihedron \eqdef \Perm[m] \shuffleDP \Asso[n]$.}
	\label{table:facetsMultiplihedra}
\end{table}

\begin{table}[h]
	\centerline{\begin{tabular}{r|rrrrrrrrrr|l}
		$m \backslash n$ & 0 & 1 & 2 & 3 & 4 & 5 & 6 & 7 & 8 & 9 & \\
		\hline
		0 & . & 1 & 3 & 11 & 45 & 197 & 903 & 4279 & 20793 & 103049 & \OEIS{A001003} \\
		1 & 1 & 3 & 13 & 67 & 381 & 2311 & 14681 & 96583 & 653049 & & ? \\
		2 & 3 & 13 & 75 & 497 & 3583 & 27393 & 218871 & 1810373 & & & \\
		3 & 13 & 75 & 541 & 4375 & 38073 & 349423 & 3341753 & & & & \\
		4 & 75 & 541 & 4683 & 44681 & 454855 & 4859697 & & & & & \\
		5 & 541 & 4683 & 47293 & 519847 & 6055401 & & & & & & \\
		6 & 4683 & 47293 & 545835 & 6790697 & & & & & & & \\
		7 & 47293 & 545835 & 7087261 & & & & & & & & \\
		8 & 545835 & 7087261 & & & & & & & & & \\
		9 & 7087261 & & & & & & & & & & \\
		\hline
		& \OEIS{A000670} & \OEIS{A000670} & \OEIS{A000670} & ? & & & & & & &
	\end{tabular}}
	\caption{Total number of faces of the multiplihedra~$\Multiplihedron \eqdef \Perm[m] \shuffleDP \Asso[n]$. The empty face is not counted, but the polytope itself is.}
	\label{table:facesMultiplihedra}
\end{table}

\clearpage
\subsection{Constrainahedra and biassociahedra} ~
\label{subsec:tablesConstrainahedraBiassociahedra}

\begin{table}[h]
	\centerline{\begin{tabular}{r|rrrrrrrrrr|l}
		$m \backslash n$ & 0 & 1 & 2 & 3 & 4 & 5 & 6 & 7 & 8 & 9 & \\
		\hline
		0 & . & 1 & 2 & 5 & 14 & 42 & 132 & 429 & 1430 & 4862 & \OEIS{A000108} \\ 
		1 & 1 & 2 & 6 & 21 & 80 & 322 & 1348 & 5814 & 25674 & & \OEIS{A121988} \\
		2 & 2 & 6 & 24 & 108 & 520 & 2620 & 13648 & 72956 & & & $2\cdot$\OEIS{A158826} \\
		3 & 5 & 21 & 108 & 606 & 3580 & 21910 & 137680 & & & & ? \\
		4 & 14 & 80 & 520 & 3580 & 25520 & 186420 & & & & & \\
		5 & 42 & 322 & 2620 & 21910 & 186420 & & & & & & \\
		6 & 132 & 1348 & 13648 & 137680 & & & & & & & \\
		7 & 429 & 5814 & 72956 & & & & & & & & \\
		8 & 1430 & 25674 & & & & & & & & & \\
		9 & 4862 & & & & & & & & & & \\
		\hline
		& \OEIS{A000108} & \OEIS{A121988} & $2\cdot$\OEIS{A158826} & ? & & & & & & &
	\end{tabular}}
	\caption{Number of vertices of the constrainahedra~$\Constrainahedron \eqdef \Asso[m] \shuffleDP \Asso[n]$ and of the biassociahedra~$\Biassociahedron \eqdef \Ossa[m] \shuffleDP \Asso[n]$.}
	\vspace{-.3cm}
	\label{table:verticesConstrainahedraBiassociahedra}
\end{table}

\begin{table}[h]
	\centerline{\begin{tabular}{r|rrrrrrrrrr|l}
		$m \backslash n$ & 0 & 1 & 2 & 3 & 4 & 5 & 6 & 7 & 8 & 9 & \\
		\hline
		0 & . & 0 & 2 & 5 & 9 & 14 & 20 & 27 & 35 & 44 & \OEIS{A000096} \\
		1 & 0 & 2 & 6 & 13 & 25 & 46 & 84 & 155 & 291 & & \OEIS{A335439} \\
		2 & 2 & 6 & 14 & 29 & 57 & 110 & 212 & 411 & & & ? \\
		3 & 5 & 13 & 29 & 60 & 120 & 237 & 467 & & & \\
		4 & 9 & 25 & 57 & 120 & 244 & 489 & & & & \\
		5 & 14 & 46 & 110 & 237 & 489 & & & & & \\
		6 & 20 & 84 & 212 & 467 & & & & & & \\
		7 & 27 & 155 & 411 & & & & & & & \\
		8 & 35 & 291 & & & & & & & & \\
		9 & 44 & & & & & & & & & \\
		\hline
		& \OEIS{A000096} & \OEIS{A335439} & ? 
	\end{tabular}}
	\caption{Number of facets of the constrainahedra~$\Constrainahedron \eqdef \Asso[m] \shuffleDP \Asso[n]$ and of the biassociahedra~$\Biassociahedron \eqdef \Ossa[m] \shuffleDP \Asso[n]$.}
	\vspace{-.3cm}
	\label{table:facetsConstrainahedraBiassociahedra}
\end{table}

\begin{table}[h]
	\centerline{\begin{tabular}{r|rrrrrrrrrr|l}
		$m \backslash n$ & 0 & 1 & 2 & 3 & 4 & 5 & 6 & 7 & 8 & 9 & \\
		\hline
		0 & . & 1 & 3 & 11 & 45 & 197 & 903 & 4279 & 20793 & 103049 & \OEIS{A001003} \\
		1 & 1 & 3 & 13 & 67 & 381 & 2311 & 14681 & 96583 & 653049 & & ? \\
		2 & 3 & 13 & 75 & 497 & 3583 & 27393 & 218871 & 1810373 & & & \\
		3 & 11 & 67 & 497 & 4099 & 36205 & 336107 & 3243085 & & & & \\
		4 & 45 & 381 & 3583 & 36205 & 384819 & 4251605 & & & & & \\
		5 & 197 & 2311 & 27393 & 336107 & 4251605 & & & & & & \\
		6 & 903 & 14681 & 218871 & 3243085 & & & & & & & \\
		7 & 4279 & 96583 & 1810373 & & & & & & & & \\
		8 & 20793 & 653049 & & & & & & & & & \\
		9 & 103049 & & & & & & & & & & \\
		\hline
		& \OEIS{A001003} & ? & & & & & & & & &
	\end{tabular}}
	\caption{Total number of faces of the constrainahedra~$\Constrainahedron \eqdef \Asso[m] \shuffleDP \Asso[n]$. The empty face is not counted, but the polytope itself is.}
	\label{table:facesConstrainahedra}
\end{table}

\clearpage
\begin{table}[h]
	\centerline{\begin{tabular}{r|rrrrrrrrrr|l}
		$m \backslash n$ & 0 & 1 & 2 & 3 & 4 & 5 & 6 & 7 & 8 & 9 & \\
		\hline
		0 & . & 1 & 3 & 11 & 45 & 197 & 903 & 4279 & 20793 & 103049 & \OEIS{A001003} \\
		1 & 1 & 3 & 13 & 67 & 381 & 2311 & 14681 & 96583 & 653049 & & ? \\
		2 & 3 & 13 & 75 & 497 & 3583 & 27393 & 218871 & 1810373 & & & \\
		3 & 11 & 67 & 497 & 4095 & 36137 & 335287 & 3234433 & & & & \\
		4 & 45 & 381 & 3583 & 36137 & 383375 & 4229985 & & & & & \\
		5 & 197 & 2311 & 27393 & 335287 & 4229985 & & & & & & \\
		6 & 903 & 14681 & 218871 & 3234433 & & & & & & & \\
		7 & 4279 & 96583 & 1810373 & & & & & & & & \\
		8 & 20793 & 653049 & & & & & & & & & \\
		9 & 103049 & & & & & & & & & & \\
		\hline
		& \OEIS{A001003} & ? & & & & & & & & &
	\end{tabular}}
	\caption{Total number of faces of the biassociahedra~$\Biassociahedron \eqdef \Ossa[m] \shuffleDP \Asso[n]$. The empty face is not counted, but the polytope itself is.}
	\label{table:facesBiassociahedra}
\end{table}

\end{document}